 \newcounter{cdef}[section]%
 \newcounter{cthm}[section]%
 \newcounter{cbsp}[section]%
\newcommand{\Dchaintwo}[4]{
\rule[-3\unitlength]{0pt}{8\unitlength}
\begin{picture}(14,5)(0,3)
\put(1,2){\ifthenelse{\equal{#1}{l}}{\circle*{2}}{\circle{2}}}
\put(2,2){\line(1,0){10}}
\put(13,2){\ifthenelse{\equal{#1}{r}}{\circle*{2}}{\circle{2}}}
\put(1,5){\makebox[0pt]{\scriptsize #2}}
\put(7,4){\makebox[0pt]{\scriptsize #3}}
\put(13,5){\makebox[0pt]{\scriptsize #4}}
\end{picture}}
\newcommand{\Dchainfive}[9]{
\rule[-4\unitlength]{0pt}{5\unitlength}
\begin{picture}(50,4)(0,3)
\put(1,1){\circle{2}}
\put(2,1){\line(1,0){10}}
\put(13,1){\circle{2}}
\put(14,1){\line(1,0){10}}
\put(25,1){\circle{2}}
\put(26,1){\line(1,0){10}}
\put(37,1){\circle{2}}
\put(38,1){\line(1,0){10}}
\put(49,1){\circle{2}}
\put(1,4){\makebox[0pt]{\scriptsize #1}}
\put(7,3){\makebox[0pt]{\scriptsize #2}}
\put(13,4){\makebox[0pt]{\scriptsize #3}}
\put(19,3){\makebox[0pt]{\scriptsize #4}}
\put(26,4){\makebox[0pt]{\scriptsize #5}}
\put(31,3){\makebox[0pt]{\scriptsize #6}}
\put(37,4){\makebox[0pt]{\scriptsize #7}}
\put(43,3){\makebox[0pt]{\scriptsize #8}}
\put(49,4){\makebox[0pt]{\scriptsize #9}}
\end{picture}}
\newcommand{\Dchainfivec}[9]{
\rule[-4\unitlength]{0pt}{5\unitlength}
\begin{picture}(50,4)(0,3)
\put(1,1){\circle{2}}
\put(2,1){\line(1,0){10}}
\put(13,1){\circle{2}}
\put(14,1){\line(1,0){10}}
\put(25,1){\circle*{2}}
\put(26,1){\line(1,0){10}}
\put(37,1){\circle{2}}
\put(38,1){\line(1,0){10}}
\put(49,1){\circle{2}}
\put(1,4){\makebox[0pt]{\scriptsize #1}}
\put(7,3){\makebox[0pt]{\scriptsize #2}}
\put(13,4){\makebox[0pt]{\scriptsize #3}}
\put(19,3){\makebox[0pt]{\scriptsize #4}}
\put(25,4){\makebox[0pt]{\scriptsize #5}}
\put(31,3){\makebox[0pt]{\scriptsize #6}}
\put(37,4){\makebox[0pt]{\scriptsize #7}}
\put(43,3){\makebox[0pt]{\scriptsize #8}}
\put(49,4){\makebox[0pt]{\scriptsize #9}}
\end{picture}}
\newcommand{\Dchainfivee}[9]{
\rule[-4\unitlength]{0pt}{5\unitlength}
\begin{picture}(50,4)(0,3)
\put(1,1){\circle{2}}
\put(2,1){\line(1,0){10}}
\put(13,1){\circle{2}}
\put(14,1){\line(1,0){10}}
\put(25,1){\circle{2}}
\put(26,1){\line(1,0){10}}
\put(37,1){\circle{2}}
\put(38,1){\line(1,0){10}}
\put(49,1){\circle*{2}}
\put(1,4){\makebox[0pt]{\scriptsize #1}}
\put(7,3){\makebox[0pt]{\scriptsize #2}}
\put(13,4){\makebox[0pt]{\scriptsize #3}}
\put(19,3){\makebox[0pt]{\scriptsize #4}}
\put(25,4){\makebox[0pt]{\scriptsize #5}}
\put(31,3){\makebox[0pt]{\scriptsize #6}}
\put(37,4){\makebox[0pt]{\scriptsize #7}}
\put(43,3){\makebox[0pt]{\scriptsize #8}}
\put(49,4){\makebox[0pt]{\scriptsize #9}}
\end{picture}}
\newcommand{\Drightofway}[9]{
\rule[-9\unitlength]{0pt}{12\unitlength}
\begin{picture}(28,12)(0,9)
\put(2,10){\ifthenelse{\equal{#1}{l}}{\circle*{2}}{\circle{2}}}
\put(3,10){\line(1,0){10}}
\put(14,10){\ifthenelse{\equal{#1}{m}}{\circle*{2}}{\circle{2}}}
\put(15,10){\line(1,1){7}}
\put(15,10){\line(1,-1){7}}
\put(22,18){\ifthenelse{\equal{#1}{t}}{\circle*{2}}{\circle{2}}}
\put(22,2){\ifthenelse{\equal{#1}{b}}{\circle*{2}}{\circle{2}}}
\put(22,3){\line(0,1){14}}
\put(2,12){\makebox[0pt]{\scriptsize #2}}
\put(8,11){\makebox[0pt]{\scriptsize #3}}
\put(14,12){\makebox[0pt]{\scriptsize #4}}
\put(19,16){\makebox[0pt][r]{\scriptsize #5}}
\put(19,4){\makebox[0pt][r]{\scriptsize #6}}
\put(24,18){\makebox[0pt][l]{\scriptsize #7}}
\put(23,10){\makebox[0pt][l]{\scriptsize #8}}
\put(24,1){\makebox[0pt][l]{\scriptsize #9}}
\end{picture}
}
\newcommand{\neworrenewcommand}[1]{\providecommand{#1}{}\renewcommand{#1}}
\newcommand{\Dchainsix}[9]{
    \neworrenewcommand{\ffoo}[2]{
\rule[-4\unitlength]{0pt}{5\unitlength}
\begin{picture}(62,4)(0,3)
\put(1,1){\circle{2}}
\put(2,1){\line(1,0){10}}
\put(13,1){\circle{2}}
\put(14,1){\line(1,0){10}}
\put(25,1){\circle{2}}
\put(26,1){\line(1,0){10}}
\put(37,1){\circle{2}}
\put(38,1){\line(1,0){10}}
\put(49,1){\circle{2}}
\put(50,1){\line(1,0){10}}
\put(61,1){\circle{2}}
\put(1,4){\makebox[0pt]{\scriptsize #1}}
\put(7,3){\makebox[0pt]{\scriptsize #2}}
\put(13,4){\makebox[0pt]{\scriptsize #3}}
\put(19,3){\makebox[0pt]{\scriptsize #4}}
\put(25,4){\makebox[0pt]{\scriptsize #5}}
\put(31,3){\makebox[0pt]{\scriptsize #6}}
\put(37,4){\makebox[0pt]{\scriptsize #7}}
\put(43,3){\makebox[0pt]{\scriptsize #8}}
\put(49,4){\makebox[0pt]{\scriptsize #9}}
\put(55,3){\makebox[0pt]{\scriptsize ##1}}
\put(61,4){\makebox[0pt]{\scriptsize ##2}}
\end{picture}
    }
    \ffoo
}
\newcommand{\Dchainfiveu}[9]{
     \neworrenewcommand{\ffooy}[1]{
\rule[-10\unitlength]{0pt}{12\unitlength}
\begin{picture}(36,10)(0,9)
\put(1,9){\circle{2}}
\put(2,9){\line(1,0){10}}
\put(13,9){\circle{2}}
\put(14,9){\line(1,0){10}}
\put(25,9){\circle{2}}
\put(26,9){\line(1,1){7}}
\put(26,9){\line(1,-1){7}}
\put(33,17){\circle{2}}
\put(33,1){\circle*{2}}
\put(33,2){\line(0,1){14}}
\put(1,12){\makebox[0pt]{\scriptsize #1}}
\put(7,11){\makebox[0pt]{\scriptsize #2}}
\put(13,12){\makebox[0pt]{\scriptsize #3}}
\put(19,11){\makebox[0pt]{\scriptsize #4}}
\put(24,12){\makebox[0pt]{\scriptsize #5}}
\put(30,14){\makebox[0pt][r]{\scriptsize #6}}
\put(29,4){\makebox[0pt][r]{\scriptsize #7}}
\put(35,16){\makebox[0pt][l]{\scriptsize #8}}
\put(34,9){\makebox[0pt][l]{\scriptsize #9}}
\put(35,1){\makebox[0pt][l]{\scriptsize ##1}}
\end{picture}
}
    \ffooy
}
\newcommand{\Dchainfivex}[9]{
     \neworrenewcommand{\ffool}[1]{
\rule[-4\unitlength]{0pt}{5\unitlength}
\begin{picture}(38,11)(0,3)
\put(1,1){\circle{2}}
\put(2,1){\line(1,0){10}}
\put(13,1){\circle{2}}
\put(13,2){\line(2,3){6}}
\put(14,1){\line(1,0){10}}
\put(25,1){\circle{2}}
\put(25,2){\line(-2,3){6}}
\put(19,12){\circle{2}}
\put(26,1){\line(1,0){10}}
\put(37,1){\circle{2}}
\put(1,4){\makebox[0pt]{\scriptsize #1}}
\put(7,3){\makebox[0pt]{\scriptsize #2}}
\put(12,4){\makebox[0pt]{\scriptsize #3}}
\put(19,3){\makebox[0pt]{\scriptsize #4}}
\put(26,4){\makebox[0pt]{\scriptsize #5}}
\put(31,3){\makebox[0pt]{\scriptsize #6}}
\put(37,4){\makebox[0pt]{\scriptsize #7}}
\put(13.3,8){\makebox[0pt]{\scriptsize #8}}
\put(23,8){\makebox[0pt][l]{\scriptsize #9}}
\put(24,14){\makebox[0pt]{\scriptsize ##1}}
\end{picture}
}
    \ffool
}
\newcommand{\Dchainfivew}[9]{
     \neworrenewcommand{\ffoor}[1]{
\rule[-10\unitlength]{0pt}{12\unitlength}
\begin{picture}(36,10)(0,9)
\put(1,9){\circle{2}}
\put(2,9){\line(1,0){10}}
\put(13,9){\circle{2}}
\put(14,9){\line(1,0){10}}
\put(25,9){\circle*{2}}
\put(26,9){\line(1,1){7}}
\put(26,9){\line(1,-1){7}}
\put(33,17){\circle{2}}
\put(33,1){\circle{2}}
\put(33,2){\line(0,1){14}}
\put(1,12){\makebox[0pt]{\scriptsize #1}}
\put(7,11){\makebox[0pt]{\scriptsize #2}}
\put(13,12){\makebox[0pt]{\scriptsize #3}}
\put(19,11){\makebox[0pt]{\scriptsize #4}}
\put(23.5,12){\makebox[0pt]{\scriptsize #5}}
\put(30,14){\makebox[0pt][r]{\scriptsize #6}}
\put(29,4){\makebox[0pt][r]{\scriptsize #7}}
\put(35,16){\makebox[0pt][l]{\scriptsize #8}}
\put(34,9){\makebox[0pt][l]{\scriptsize #9}}
\put(35,1){\makebox[0pt][l]{\scriptsize ##1}}
\end{picture}
}
    \ffoor
}
\newcommand{\Dchainfivet}[9]{
     \neworrenewcommand{\ffoou}[1]{
\rule[-10\unitlength]{0pt}{12\unitlength}
\begin{picture}(36,10)(0,9)
\put(1,9){\circle{2}}
\put(2,9){\line(1,0){10}}
\put(13,9){\circle{2}}
\put(14,9){\line(1,0){10}}
\put(25,9){\circle{2}}
\put(26,9){\line(1,1){7}}
\put(26,9){\line(1,-1){7}}
\put(33,17){\circle{2}}
\put(33,1){\circle{2}}
\put(33,2){\line(0,1){14}}
\put(1,12){\makebox[0pt]{\scriptsize #1}}
\put(7,11){\makebox[0pt]{\scriptsize #2}}
\put(13,12){\makebox[0pt]{\scriptsize #3}}
\put(19,11){\makebox[0pt]{\scriptsize #4}}
\put(24,12){\makebox[0pt]{\scriptsize #5}}
\put(30,14){\makebox[0pt][r]{\scriptsize #6}}
\put(29,4){\makebox[0pt][r]{\scriptsize #7}}
\put(35,16){\makebox[0pt][l]{\scriptsize #8}}
\put(34,9){\makebox[0pt][l]{\scriptsize #9}}
\put(35,1){\makebox[0pt][l]{\scriptsize ##1}}
\end{picture}
}
    \ffoou
}
\newcommand{\Dchainfivev}[9]{
     \neworrenewcommand{\ffoon}[1]{
\rule[-10\unitlength]{0pt}{12\unitlength}
\begin{picture}(36,10)(0,9)
\put(36,9){\circle{2}}
\put(35,9){\line(-1,0){10}}
\put(24,9){\circle{2}}
\put(23,9){\line(-1,0){10}}
\put(12,9){\circle{2}}
\put(11,9){\line(-1,1){7}}
\put(11,9){\line(-1,-1){7}}
\put(4,17){\circle{2}}
\put(4,1){\circle{2}}
\put(4,2){\line(0,1){14}}
\put(36,12){\makebox[0pt]{\scriptsize #1}}
\put(30,11){\makebox[0pt]{\scriptsize #2}}
\put(24,12){\makebox[0pt]{\scriptsize #3}}
\put(18,11){\makebox[0pt]{\scriptsize #4}}
\put(12,12){\makebox[0pt]{\scriptsize #5}}
\put(7,14){\makebox[0pt][l]{\scriptsize #6}}
\put(8,4){\makebox[0pt][l]{\scriptsize #7}}
\put(2,16){\makebox[0pt][r]{\scriptsize #8}}
\put(3,9){\makebox[0pt][r]{\scriptsize #9}}
\put(2,1){\makebox[0pt][r]{\scriptsize ##1}}
\end{picture}
}
    \ffoon
}
\newcommand{\Dchainsixa}[9]{
    \neworrenewcommand{\ffoop}[2]{
\rule[-4\unitlength]{0pt}{5\unitlength}
\begin{picture}(62,4)(0,3)
\put(1,1){\circle*{2}}
\put(2,1){\line(1,0){10}}
\put(13,1){\circle{2}}
\put(14,1){\line(1,0){10}}
\put(25,1){\circle{2}}
\put(26,1){\line(1,0){10}}
\put(37,1){\circle{2}}
\put(38,1){\line(1,0){10}}
\put(49,1){\circle{2}}
\put(50,1){\line(1,0){10}}
\put(61,1){\circle{2}}
\put(1,4){\makebox[0pt]{\scriptsize #1}}
\put(7,3){\makebox[0pt]{\scriptsize #2}}
\put(13,4){\makebox[0pt]{\scriptsize #3}}
\put(19,3){\makebox[0pt]{\scriptsize #4}}
\put(25,4){\makebox[0pt]{\scriptsize #5}}
\put(31,3){\makebox[0pt]{\scriptsize #6}}
\put(37,4){\makebox[0pt]{\scriptsize #7}}
\put(43,3){\makebox[0pt]{\scriptsize #8}}
\put(49,4){\makebox[0pt]{\scriptsize #9}}
\put(55,3){\makebox[0pt]{\scriptsize ##1}}
\put(61,4){\makebox[0pt]{\scriptsize ##2}}
\end{picture}
    }
    \ffoop
}
\newcommand{\Dchainsixb}[9]{
    \neworrenewcommand{\ffoob}[2]{
\rule[-4\unitlength]{0pt}{5\unitlength}
\begin{picture}(62,4)(0,3)
\put(1,1){\circle{2}}
\put(2,1){\line(1,0){10}}
\put(13,1){\circle*{2}}
\put(14,1){\line(1,0){10}}
\put(25,1){\circle{2}}
\put(26,1){\line(1,0){10}}
\put(37,1){\circle{2}}
\put(38,1){\line(1,0){10}}
\put(49,1){\circle{2}}
\put(50,1){\line(1,0){10}}
\put(61,1){\circle{2}}
\put(1,4){\makebox[0pt]{\scriptsize #1}}
\put(7,3){\makebox[0pt]{\scriptsize #2}}
\put(13,4){\makebox[0pt]{\scriptsize #3}}
\put(19,3){\makebox[0pt]{\scriptsize #4}}
\put(25,4){\makebox[0pt]{\scriptsize #5}}
\put(31,3){\makebox[0pt]{\scriptsize #6}}
\put(37,4){\makebox[0pt]{\scriptsize #7}}
\put(43,3){\makebox[0pt]{\scriptsize #8}}
\put(49,4){\makebox[0pt]{\scriptsize #9}}
\put(55,3){\makebox[0pt]{\scriptsize ##1}}
\put(61,4){\makebox[0pt]{\scriptsize ##2}}
\end{picture}
    }
    \ffoob
}
\newcommand{\Dchainsixc}[9]{
    \neworrenewcommand{\ffooc}[2]{
\rule[-4\unitlength]{0pt}{5\unitlength}
\begin{picture}(62,4)(0,3)
\put(1,1){\circle{2}}
\put(2,1){\line(1,0){10}}
\put(13,1){\circle{2}}
\put(14,1){\line(1,0){10}}
\put(25,1){\circle*{2}}
\put(26,1){\line(1,0){10}}
\put(37,1){\circle{2}}
\put(38,1){\line(1,0){10}}
\put(49,1){\circle{2}}
\put(50,1){\line(1,0){10}}
\put(61,1){\circle{2}}
\put(1,4){\makebox[0pt]{\scriptsize #1}}
\put(7,3){\makebox[0pt]{\scriptsize #2}}
\put(13,4){\makebox[0pt]{\scriptsize #3}}
\put(19,3){\makebox[0pt]{\scriptsize #4}}
\put(25,4){\makebox[0pt]{\scriptsize #5}}
\put(31,3){\makebox[0pt]{\scriptsize #6}}
\put(37,4){\makebox[0pt]{\scriptsize #7}}
\put(43,3){\makebox[0pt]{\scriptsize #8}}
\put(49,4){\makebox[0pt]{\scriptsize #9}}
\put(55,3){\makebox[0pt]{\scriptsize ##1}}
\put(61,4){\makebox[0pt]{\scriptsize ##2}}
\end{picture}
    }
    \ffooc
}
\newcommand{\Dchainsixd}[9]{
    \neworrenewcommand{\ffood}[2]{
\rule[-4\unitlength]{0pt}{5\unitlength}
\begin{picture}(62,4)(0,3)
\put(1,1){\circle{2}}
\put(2,1){\line(1,0){10}}
\put(13,1){\circle{2}}
\put(14,1){\line(1,0){10}}
\put(25,1){\circle{2}}
\put(26,1){\line(1,0){10}}
\put(37,1){\circle*{2}}
\put(38,1){\line(1,0){10}}
\put(49,1){\circle{2}}
\put(50,1){\line(1,0){10}}
\put(61,1){\circle{2}}
\put(1,4){\makebox[0pt]{\scriptsize #1}}
\put(7,3){\makebox[0pt]{\scriptsize #2}}
\put(13,4){\makebox[0pt]{\scriptsize #3}}
\put(19,3){\makebox[0pt]{\scriptsize #4}}
\put(25,4){\makebox[0pt]{\scriptsize #5}}
\put(31,3){\makebox[0pt]{\scriptsize #6}}
\put(37,4){\makebox[0pt]{\scriptsize #7}}
\put(43,3){\makebox[0pt]{\scriptsize #8}}
\put(49,4){\makebox[0pt]{\scriptsize #9}}
\put(55,3){\makebox[0pt]{\scriptsize ##1}}
\put(61,4){\makebox[0pt]{\scriptsize ##2}}
\end{picture}
    }
    \ffood
}
\newcommand{\Dchainsixf}[9]{
    \neworrenewcommand{\ffoof}[2]{
\rule[-4\unitlength]{0pt}{5\unitlength}
\begin{picture}(62,4)(0,3)
\put(1,1){\circle{2}}
\put(2,1){\line(1,0){10}}
\put(13,1){\circle{2}}
\put(14,1){\line(1,0){10}}
\put(25,1){\circle{2}}
\put(26,1){\line(1,0){10}}
\put(37,1){\circle{2}}
\put(38,1){\line(1,0){10}}
\put(49,1){\circle{2}}
\put(50,1){\line(1,0){10}}
\put(61,1){\circle*{2}}
\put(1,4){\makebox[0pt]{\scriptsize #1}}
\put(7,3){\makebox[0pt]{\scriptsize #2}}
\put(13,4){\makebox[0pt]{\scriptsize #3}}
\put(19,3){\makebox[0pt]{\scriptsize #4}}
\put(25,4){\makebox[0pt]{\scriptsize #5}}
\put(31,3){\makebox[0pt]{\scriptsize #6}}
\put(37,4){\makebox[0pt]{\scriptsize #7}}
\put(43,3){\makebox[0pt]{\scriptsize #8}}
\put(49,4){\makebox[0pt]{\scriptsize #9}}
\put(55,3){\makebox[0pt]{\scriptsize ##1}}
\put(61,4){\makebox[0pt]{\scriptsize ##2}}
\end{picture}
    }
    \ffoof
}
\newcommand{\Dchainsixe}[9]{
    \neworrenewcommand{\ffooe}[2]{
\rule[-4\unitlength]{0pt}{5\unitlength}
\begin{picture}(62,4)(0,3)
\put(1,1){\circle{2}}
\put(2,1){\line(1,0){10}}
\put(13,1){\circle{2}}
\put(14,1){\line(1,0){10}}
\put(25,1){\circle{2}}
\put(26,1){\line(1,0){10}}
\put(37,1){\circle{2}}
\put(38,1){\line(1,0){10}}
\put(49,1){\circle*{2}}
\put(50,1){\line(1,0){10}}
\put(61,1){\circle{2}}
\put(1,4){\makebox[0pt]{\scriptsize #1}}
\put(7,3){\makebox[0pt]{\scriptsize #2}}
\put(13,4){\makebox[0pt]{\scriptsize #3}}
\put(19,3){\makebox[0pt]{\scriptsize #4}}
\put(25,4){\makebox[0pt]{\scriptsize #5}}
\put(31,3){\makebox[0pt]{\scriptsize #6}}
\put(37,4){\makebox[0pt]{\scriptsize #7}}
\put(43,3){\makebox[0pt]{\scriptsize #8}}
\put(49,4){\makebox[0pt]{\scriptsize #9}}
\put(55,3){\makebox[0pt]{\scriptsize ##1}}
\put(61,4){\makebox[0pt]{\scriptsize ##2}}
\end{picture}
    }
    \ffooe
}
\newcommand{\DchainsixM}[9]{
    \neworrenewcommand{\fffoo}[3]{
\rule[-4\unitlength]{0pt}{5\unitlength}
\begin{picture}(50,11)(0,3)
\put(1,1){\circle{2}}
\put(2,1){\line(1,0){10}}
\put(13,1){\circle{2}}
\put(14,1){\line(1,0){10}}
\put(25,1){\circle{2}}
\put(25,2){\line(2,3){6}}
\put(26,1){\line(1,0){10}}
\put(37,1){\circle{2}}
\put(37,2){\line(-2,3){6}}
\put(31,12){\circle{2}}
\put(38,1){\line(1,0){10}}
\put(49,1){\circle{2}}
\put(1,4){\makebox[0pt]{\scriptsize #1}}
\put(7,3){\makebox[0pt]{\scriptsize #2}}
\put(13,4){\makebox[0pt]{\scriptsize #3}}
\put(19,3){\makebox[0pt]{\scriptsize #4}}
\put(24,4){\makebox[0pt]{\scriptsize #5}}
\put(31,3){\makebox[0pt]{\scriptsize #6}}
\put(38,4){\makebox[0pt]{\scriptsize #7}}
\put(43,3){\makebox[0pt]{\scriptsize #8}}
\put(49,4){\makebox[0pt]{\scriptsize #9}}
\put(26,8){\makebox[0pt]{\scriptsize ##1}}
\put(36,8){\makebox[0pt]{\scriptsize ##2}}
\put(34,13){\makebox[0pt]{\scriptsize ##3}}
\end{picture}
    }
    \fffoo
}
\newcommand{\Dchainsixze}[9]{
    \neworrenewcommand{\fffooq}[3]{
\rule[-4\unitlength]{0pt}{5\unitlength}
\begin{picture}(50,11)(0,3)
\put(1,1){\circle{2}}
\put(2,1){\line(1,0){10}}
\put(13,1){\circle{2}}
\put(14,1){\line(1,0){10}}
\put(25,1){\circle{2}}
\put(26,1){\line(1,0){10}}
\put(37,1){\circle{2}}
\put(37,2){\line(2,3){6}}
\put(43,12){\circle{2}}
\put(38,1){\line(1,0){10}}
\put(49,1){\circle{2}}
\put(49,2){\line(-2,3){6}}
\put(1,4){\makebox[0pt]{\scriptsize #1}}
\put(7,3){\makebox[0pt]{\scriptsize #2}}
\put(13,4){\makebox[0pt]{\scriptsize #3}}
\put(19,3){\makebox[0pt]{\scriptsize #4}}
\put(24,4){\makebox[0pt]{\scriptsize #5}}
\put(31,3){\makebox[0pt]{\scriptsize #6}}
\put(37,4){\makebox[0pt]{\scriptsize #7}}
\put(43,3){\makebox[0pt]{\scriptsize #8}}
\put(50,4){\makebox[0pt]{\scriptsize #9}}
\put(38,8){\makebox[0pt]{\scriptsize ##1}}
\put(48,8){\makebox[0pt]{\scriptsize ##2}}
\put(46,13){\makebox[0pt]{\scriptsize ##3}}
\end{picture}
    }
    \fffooq
}
\newcommand{\DchainsixMc}[9]{
    \neworrenewcommand{\fffooc}[3]{
\rule[-4\unitlength]{0pt}{5\unitlength}
\begin{picture}(50,11)(0,3)
\put(1,1){\circle{2}}
\put(2,1){\line(1,0){10}}
\put(13,1){\circle{2}}
\put(14,1){\line(1,0){10}}
\put(25,1){\circle*{2}}
\put(25,2){\line(2,3){6}}
\put(26,1){\line(1,0){10}}
\put(37,1){\circle{2}}
\put(37,2){\line(-2,3){6}}
\put(31,12){\circle{2}}
\put(38,1){\line(1,0){10}}
\put(49,1){\circle{2}}
\put(1,4){\makebox[0pt]{\scriptsize #1}}
\put(7,3){\makebox[0pt]{\scriptsize #2}}
\put(13,4){\makebox[0pt]{\scriptsize #3}}
\put(19,3){\makebox[0pt]{\scriptsize #4}}
\put(24,4){\makebox[0pt]{\scriptsize #5}}
\put(31,3){\makebox[0pt]{\scriptsize #6}}
\put(38,4){\makebox[0pt]{\scriptsize #7}}
\put(43,3){\makebox[0pt]{\scriptsize #8}}
\put(49,4){\makebox[0pt]{\scriptsize #9}}
\put(26,8){\makebox[0pt]{\scriptsize ##1}}
\put(36,8){\makebox[0pt]{\scriptsize ##2}}
\put(34,13){\makebox[0pt]{\scriptsize ##3}}
\end{picture}
    }
    \fffooc
}
\newcommand{\rneworrenewcommand}[1]{\providecommand{#1}{}\renewcommand{#1}}
\newcommand{\Dchainseven}[9]{
    \rneworrenewcommand{\ffoos}[4]{
\rule[-4\unitlength]{0pt}{5\unitlength}
\begin{picture}(62,4)(0,3)
\put(1,1){\circle{2}}
\put(2,1){\line(1,0){8}}
\put(11,1){\circle{2}}
\put(12,1){\line(1,0){8}}
\put(21,1){\circle{2}}
\put(22,1){\line(1,0){8}}
\put(31,1){\circle{2}}
\put(32,1){\line(1,0){8}}
\put(41,1){\circle{2}}
\put(42,1){\line(1,0){8}}
\put(51,1){\circle{2}}
\put(52,1){\line(1,0){8}}
\put(61,1){\circle{2}}
\put(1,4){\makebox[0pt]{\scriptsize #1}}
\put(6,3){\makebox[0pt]{\scriptsize #2}}
\put(11,4){\makebox[0pt]{\scriptsize #3}}
\put(16,3){\makebox[0pt]{\scriptsize #4}}
\put(21,4){\makebox[0pt]{\scriptsize #5}}
\put(26,3){\makebox[0pt]{\scriptsize #6}}
\put(31,4){\makebox[0pt]{\scriptsize #7}}
\put(36,3){\makebox[0pt]{\scriptsize #8}}
\put(41,4){\makebox[0pt]{\scriptsize #9}}
\put(46,3){\makebox[0pt]{\scriptsize ##1}}
\put(51,4){\makebox[0pt]{\scriptsize ##2}}
\put(56,3){\makebox[0pt]{\scriptsize ##3}}
\put(61,4){\makebox[0pt]{\scriptsize ##4}}
\end{picture}
 }
    \ffoos
}
\newcommand{\Dchainsevena}[9]{
    \rneworrenewcommand{\ffooz}[4]{
\rule[-4\unitlength]{0pt}{5\unitlength}
\begin{picture}(62,4)(0,3)
\put(1,1){\circle{2}}
\put(2,1){\line(1,0){8}}
\put(11,1){\circle*{2}}
\put(12,1){\line(1,0){8}}
\put(21,1){\circle{2}}
\put(22,1){\line(1,0){8}}
\put(31,1){\circle{2}}
\put(32,1){\line(1,0){8}}
\put(41,1){\circle{2}}
\put(42,1){\line(1,0){8}}
\put(51,1){\circle{2}}
\put(52,1){\line(1,0){8}}
\put(61,1){\circle{2}}
\put(1,4){\makebox[0pt]{\scriptsize #1}}
\put(6,3){\makebox[0pt]{\scriptsize #2}}
\put(11,4){\makebox[0pt]{\scriptsize #3}}
\put(16,3){\makebox[0pt]{\scriptsize #4}}
\put(21,4){\makebox[0pt]{\scriptsize #5}}
\put(26,3){\makebox[0pt]{\scriptsize #6}}
\put(31,4){\makebox[0pt]{\scriptsize #7}}
\put(36,3){\makebox[0pt]{\scriptsize #8}}
\put(41,4){\makebox[0pt]{\scriptsize #9}}
\put(46,3){\makebox[0pt]{\scriptsize ##1}}
\put(51,4){\makebox[0pt]{\scriptsize ##2}}
\put(56,3){\makebox[0pt]{\scriptsize ##3}}
\put(61,4){\makebox[0pt]{\scriptsize ##4}}
\end{picture}
 }
    \ffooz
}
\newcommand{\Dchainsevenb}[9]{
    \rneworrenewcommand{\ffooh}[4]{
\rule[-4\unitlength]{0pt}{5\unitlength}
\begin{picture}(62,4)(0,3)
\put(1,1){\circle*{2}}
\put(2,1){\line(1,0){8}}
\put(11,1){\circle{2}}
\put(12,1){\line(1,0){8}}
\put(21,1){\circle{2}}
\put(22,1){\line(1,0){8}}
\put(31,1){\circle{2}}
\put(32,1){\line(1,0){8}}
\put(41,1){\circle{2}}
\put(42,1){\line(1,0){8}}
\put(51,1){\circle{2}}
\put(52,1){\line(1,0){8}}
\put(61,1){\circle{2}}
\put(1,4){\makebox[0pt]{\scriptsize #1}}
\put(6,3){\makebox[0pt]{\scriptsize #2}}
\put(11,4){\makebox[0pt]{\scriptsize #3}}
\put(16,3){\makebox[0pt]{\scriptsize #4}}
\put(21,4){\makebox[0pt]{\scriptsize #5}}
\put(26,3){\makebox[0pt]{\scriptsize #6}}
\put(31,4){\makebox[0pt]{\scriptsize #7}}
\put(36,3){\makebox[0pt]{\scriptsize #8}}
\put(41,4){\makebox[0pt]{\scriptsize #9}}
\put(46,3){\makebox[0pt]{\scriptsize ##1}}
\put(51,4){\makebox[0pt]{\scriptsize ##2}}
\put(56,3){\makebox[0pt]{\scriptsize ##3}}
\put(61,4){\makebox[0pt]{\scriptsize ##4}}
\end{picture}
 }
    \ffooh
}
\newcommand{\Dchainsevenf}[9]{
    \rneworrenewcommand{\ffoom}[4]{
\rule[-4\unitlength]{0pt}{5\unitlength}
\begin{picture}(62,4)(0,3)
\put(1,1){\circle{2}}
\put(2,1){\line(1,0){8}}
\put(11,1){\circle{2}}
\put(12,1){\line(1,0){8}}
\put(21,1){\circle{2}}
\put(22,1){\line(1,0){8}}
\put(31,1){\circle*{2}}
\put(32,1){\line(1,0){8}}
\put(41,1){\circle{2}}
\put(42,1){\line(1,0){8}}
\put(51,1){\circle{2}}
\put(52,1){\line(1,0){8}}
\put(61,1){\circle{2}}
\put(1,4){\makebox[0pt]{\scriptsize #1}}
\put(6,3){\makebox[0pt]{\scriptsize #2}}
\put(11,4){\makebox[0pt]{\scriptsize #3}}
\put(16,3){\makebox[0pt]{\scriptsize #4}}
\put(21,4){\makebox[0pt]{\scriptsize #5}}
\put(26,3){\makebox[0pt]{\scriptsize #6}}
\put(31,4){\makebox[0pt]{\scriptsize #7}}
\put(36,3){\makebox[0pt]{\scriptsize #8}}
\put(41,4){\makebox[0pt]{\scriptsize #9}}
\put(46,3){\makebox[0pt]{\scriptsize ##1}}
\put(51,4){\makebox[0pt]{\scriptsize ##2}}
\put(56,3){\makebox[0pt]{\scriptsize ##3}}
\put(61,4){\makebox[0pt]{\scriptsize ##4}}
\end{picture}
 }
    \ffoom
}
\newcommand{\Dchainseveng}[9]{
    \rneworrenewcommand{\ffoog}[4]{
\rule[-4\unitlength]{0pt}{5\unitlength}
\begin{picture}(62,4)(0,3)
\put(1,1){\circle{2}}
\put(2,1){\line(1,0){8}}
\put(11,1){\circle{2}}
\put(12,1){\line(1,0){8}}
\put(21,1){\circle*{2}}
\put(22,1){\line(1,0){8}}
\put(31,1){\circle{2}}
\put(32,1){\line(1,0){8}}
\put(41,1){\circle{2}}
\put(42,1){\line(1,0){8}}
\put(51,1){\circle{2}}
\put(52,1){\line(1,0){8}}
\put(61,1){\circle{2}}
\put(1,4){\makebox[0pt]{\scriptsize #1}}
\put(6,3){\makebox[0pt]{\scriptsize #2}}
\put(11,4){\makebox[0pt]{\scriptsize #3}}
\put(16,3){\makebox[0pt]{\scriptsize #4}}
\put(21,4){\makebox[0pt]{\scriptsize #5}}
\put(26,3){\makebox[0pt]{\scriptsize #6}}
\put(31,4){\makebox[0pt]{\scriptsize #7}}
\put(36,3){\makebox[0pt]{\scriptsize #8}}
\put(41,4){\makebox[0pt]{\scriptsize #9}}
\put(46,3){\makebox[0pt]{\scriptsize ##1}}
\put(51,4){\makebox[0pt]{\scriptsize ##2}}
\put(56,3){\makebox[0pt]{\scriptsize ##3}}
\put(61,4){\makebox[0pt]{\scriptsize ##4}}
\end{picture}
 }
    \ffoog
}
\newcounter{cthm}%
\newcommand{\Aut }{\mathrm{Aut}}
\newcommand{\Hom }{\mathrm{Hom}}
\newcommand{\End }{\mathrm{End}}
\newcommand{\cB }{\mathcal{B}}
\newcommand{\cC }{\mathcal{C}}
\newcommand{\cR }{\mathcal{R}}
\newcommand{\cD }{\mathcal{D}}
\newcommand{\cI }{\mathcal{I}}
\newcommand{\cX }{\mathcal{X}}
\newcommand{\cY }{\mathcal{Y}}
\newcommand{\cW }{\mathcal{W}}
\newcommand{\gr }{\mathrm{gr}}
\newcommand{\ndN }{\mathbb{N}}
\newcommand{\ndZ }{\mathbb{Z}}
\newcommand{\roots }{\boldsymbol{\Delta }}
\newcommand{\YD }{Yetter--Drinfel'd }
\newcommand{\ydH }{ {}^H_H\mathcal{YD}}
\newcommand{\ydD }{ {}^{G}_{G}\mathcal{YD}}
\newcommand{\id}{\mathrm{id}}
\newcommand{\al }{\alpha }
\newcommand{\btxandshort}[1]{and}%
\newcommand{\btxpagesshort}[1]{pp.}%
\newcommand{\Btxinshort}[1]{In}%
\newcommand{\btxphdthesis}[1]{phd-thesis}%
\newcommand{\btxeditorshort}[1]{Ed.}%
\newcommand{\btxeditorsshort}[1]{Eds.}%
\newcommand{\btxvolumeshort}[1]{vol.}%
\newcommand{\btxofseriesshort}[1]{ser.}%
\newcommand{\ffg }{\mathcal{F}_r^{G} }   
\newcommand{\fiso }{\mathcal{X}_r }      
\newcommand{\rersys }[1]{\boldsymbol{\Delta }{}^{#1\,\mathrm{re}}}
\newcommand{\rsys }{\boldsymbol{\Delta }}
\newcommand{\ad }{\mathrm{ad}}
\newtheorem{theorem}{\bf Theorem}[section]
\newtheorem{lemma}[theorem]{\bf Lemma}
\newtheorem{prop}[theorem]{\bf Proposition}
\newtheorem{cor}[theorem]{\bf Corollary}
\newtheorem{question}[theorem]{\bf Question}
\newtheorem{definition}[theorem]{\bf Definition}
\newtheorem{remark}[theorem]{\bf Remark}
\newcommand{\gDd }{\mathcal{D}}
\begin{document}

\setlength{\unitlength}{1mm}

\title{Higher rank Nichols algebras of diagonal type with finite arithmetic root system in positive characteristic}

\author{Lijie Lei, Chen Yuan, Chen Qian, Jing Wang\affil{Beijing Forestry University, 100083, Beijing, China.}
\thanks{Corresponding author: wang\_jing619@163.com}
\thanks{supported by National Natural Science Foundation of China(No.12471038).}
\date{}
}

\maketitle
\begin{abstract}
The classification of finite dimensional Nichols algebras is a key part of the lifting method by N. Andruskiewitsch and H.-J. Schneider for finite dimensional pointed Hopf algebras. In this paper, we classify all rank $r\geq 5$ Nichols algebras of diagonal type with finite irreducible root systems over fields of positive characteristic. Using Weyl groupoids, arithmetic root systems, and Cartan graphs, we show that such Nichols algebras correspond exactly to the generalized Dynkin diagrams listed in Table \ref{tab.1}.

{\bf{Key Words}}$\colon$ Hopf algebra, Nichols algebra, Cartan graph, arithmetic root system, Weyl groupoid
\end{abstract}

\section{Introduction}
The theory of Nichols algebras has its origins in the theory of Hopf algebras and plays a significant role in that field. Furthermore, it has applications in other areas of research, such as Katz–Moody Lie superalgebras and conformal field theory~\cite{inp-Andr14,Lentner2021,Lentner2022,Semi-2011,Semi-2012,Semi-2013}.

Classifying all finite dimensional Hopf algebras remains a long-standing challenging open problem in Hopf algebra theory~\cite{inp-Andr02}. The classification of finite dimensional Nichols algebras is a core ingredient of the lifting method proposed by N. Andruskiewitsch and H.-J. Schneider for the classification of finite dimensional pointed Hopf algebras~\cite{AnS1998,AnS2000,AnS2001,AnS2010}.
Nichols algebras arise canonically in the structure analysis of finite dimensional pointed Hopf algebras via coradical filtrations. For a Hopf algebra $H$, consider its coradical filtration $$H_0\subset H_1\subset \ldots $$ such that $H_0$ is a Hopf subalgebra of $H$ and the associated graded coalgebra $$\gr H=\bigoplus _{n\geq0} H_n/H_{n-1},$$ where $H_{-1}=0$. Then $\gr H$ is a graded Hopf algebra, since the coradical $H_0$ of $H$ is a Hopf
subalgebra. Consider the projection $\pi\colon \gr H\to H_{0}$ and let $R$ be the algebra of coinvariants of $\pi$. A classical result due to Radford and Majid guarantees that $R$ is a braided Hopf algebra and $\gr H$ is the bosonization (or
biproduct) of $R$ and $H_{0}\colon \gr H \simeq  R\# H_{0}$.

The lifting method proceeds in three successive steps: first analyze the braided Hopf algebra $R$, then pull structural data back to $\gr H$ via bosonization, and finally lift information from $\gr H$ to $H$ \cite{AnS1998,AnS2001}. 
The braided Hopf algebra $R$ is generated by the vector space $V$ of $H_0$-coinvariants of $H_1/H_0$, namely Nichols algebra $\cB(V)$ which is generated by $V$. in commemoration of W.~Nichols who started to study these objects as bialgebras of type one~\cite{AnS1998,bialgebras}. Notably, the degree-one generation conjecture put forward by N. Andruskiewitsch and H.-J. Schneider fails over fields of positive characteristic, leaving an important direction for subsequent research. There exist multiple equivalent formulations for describing Nichols algebras; see for instance \cite{Lusztig01,Lusztig10,Rosso98,w1987,w1989}.

Accordingly, determining all finite dimensional Nichols algebras $\cB(V)$ becomes an indispensable prerequisite for classifying pointed Hopf algebras. This motivates the following open question originally raised by N.~Andruskiewitsch.

\begin{question} \textbf{(N.~Andruskiewitsch~\cite{inp-Andr02})}\label{quse:classification} \it ~
 Given a braiding matrix $(q_{ij})_{1\leq i,j\leq \theta} $ whose entries are roots of $1$, when $\cB(V)$ is finite dimensional, where $V$ is a vector space with basis $x_1,\dots ,x_{\theta}$ and braiding $c(x_i\otimes x_j)=q_{ij}(x_j\otimes x_i)$? If so,
compute $\dim_\Bbbk \cB(V)$, and give a ``nice'' presentation by generators and relations.
\end{question}

Over fields of characteristic zero, extensive classifications of finite dimensional and infinite dimensional Nichols algebras of Cartan type have been developed by numerous authors, with representative results presented in \cite{AnS2000, zbMATH05027328, Rosso98}. I. Heckenberger further accomplished the complete classification of all finite dimensional Nichols algebras of diagonal type in \cite{HeckcAS3, a-Heck04d, a-Heck04bb, HeckCAS}, while the generator-relation structures of such Nichols algebras are systematically investigated in \cite{Ang1,Ang2}.

Based on these foundational classification results, N. Andruskiewitsch and H.-J. Schneider \cite{AnS2010} established a classification theorem for finite dimensional pointed Hopf algebras under suitable technical assumptions. In subsequent series of works \cite{Lifting1,Lifting2,Lifting3,pointlifting}, the full classification of finite dimensional pointed Hopf algebras with abelian coradicals was finally completed. Motivated by these fruitful achievements in characteristic zero, the study of Nichols algebraic structures over fields of positive characteristic has emerged as a research topic of great significance, offering promising prospects for further applications.

To date, finite dimensional Nichols algebras of diagonal type of rank $2$, $3$, and $4$ over fields of positive characteristic have been classified in \cite{HW,W3,W4}. On the one hand, these low rank classification results demonstrate that the classical framework for characteristic zero cannot be directly extended to positive characteristic, necessitating innovative concepts and tools to characterize the distinctive structural features in positive characteristic settings. On the other hand, these classifications yield vital applications to the structure theory of odd-dimensional pointed Hopf algebras. Specifically, they provide an explicit characterization for the finite dimensionality of Nichols algebras associated with prime-dimensional \YD modules. Both research directions fundamentally depend on the aforementioned classification theory as an indispensable core tool \cite{HMV,AHV2024}.

The classification of finite dimensional Nichols algebras relies principally on two core theoretical tools: the combinatorics of root systems and the Weyl groupoid associated to such algebras. First, I. Heckenberger \cite{zbMATH05027328} introduced arithmetic root systems and Weyl groupoids of Nichols algebra of diagonal type based on the ideas of V. Kharchenko on Poincar\'{e}–Birkhoff–Witt basis of such algebras \cite{Khar1999}. Subsequent work extended root systems and Weyl groupoids to broader families of Nichols algebras \cite{a-AHS08,right13,HS10}. 
To quantify the scope of this generalization, M. Cuntz and I. Heckenberger completed a full classification of all finite Weyl groupoids in \cite{caH12,caH15}. In particular, the main theorem from \cite{caH15} reads as follows and furnishes the foundational framework for our present investigation:
\begin{theorem}\label{rootsystems}
There are exactly three families of connected simply connected Cartan graphs for which the real roots form a finite irreducible root system$\colon$
\begin{itemize}
\item[$(1)$] The family of Cartan graphs of rank two parametrized by triangulations of a convex \( n \)-gon by non-intersecting diagonals.

\item[$(2)$] For each rank \( r > 2 \), the standard Cartan graphs of type \( A_r, B_r, C_r \) and \( D_r \), and a series of \( r - 1 \) further Cartan graphs described explicitly in Theorem \ref{rootsd}.

\item[$(3)$] A family consisting of 74 further ``sporadic" Cartan graphs (including those of type $F_4, E_6, E_7$ and $E_8$).
\end{itemize}
\end{theorem}

This work concerns Nichols algebras of diagonal type with finite irreducible root systems over fields of positive characteristic. Our main contribution is the following theorem, which provides a complete classification of all such Nichols algebras of rank $r\geq 5$.
\begin{theorem}
Let $\Bbbk$ be a field of characteristic $p>0$. Let $r\geq5$ and $I=\{1,2,\ldots,r\}$. Let $(V,c)$ be a braided vector space of diagonal type over $\Bbbk$ with basis $\{x_k|k\in I\}$ satisfying
\begin{equation*}
c(x_i \otimes x_j) = q_{ij}x_j \otimes x_i , \quad q_{ij} \in \Bbbk^*.
\end{equation*}
Suppose that the braiding matrix $(q_{ij})_{i,j\in I}$ is indecomposable and set $M\coloneqq (\Bbbk x_i)_{i\in I}$. Then the Nichols algebra $\cB(V)$ generated by $(V,c)$ has a finite set of roots ${\roots}^{[M]}$ if and only if
the generalized Dynkin diagram $\cD$ of $V$ is listed in Table~\ref{tab.1}. 
\end{theorem}

The rest of the paper is organized as follows. Section $\ref{two}$ reviews the fundamental notations and preliminary results required throughout this work. One key preliminary result allows us to associate a connected finite Cartan graph of rank \(r\) with a tuple \(M=(M_1, \dots, M_r)\) consisting of one-dimensional \YD modules over an abelian group. For \(r=1\), the corresponding Cartan graph carries no valid structural information of \(\mathcal{B}(V)\). We therefore restrict our consideration to the case \(r\geq 2\). A finite Cartan graph is said to be non-sporadic if it is not sporadic.
Section $\ref{three}$ provides an explicit characterization of finite Cartan graphs of rank \(r\geq5\). We divide such finite Cartan graphs into two categories$\colon$ non-sporadic and sporadic ones. For non-sporadic cases, we first recall the construction of root systems for classical finite Cartan graphs of types \(A_r\), \(B_r\), \(C_r\) and \(D_r\), as well as a family of additional Cartan graphs of type \(D'(r,s)\) with \(s\in \{1,\dots ,r-1\}\). To enumerate all \(D'(r,s)\)-type Cartan graphs for admissible values of \(s\), we introduce the notion of a good \(D'_r\) neighborhood and employ GAP software to implement its construction and computation.
We prove in Theorem \ref{Dynkindiagrams} that every non-sporadic finite Cartan graph of rank $r\geq5$ is either standard or contains a point admitting a good $D'_r$ neighborhood. In the second subsection we further introduce the concept of good $A_r$ neighborhoods to explicitly characterize sporadic finite Cartan graphs. The relevant structural properties are formally summarized in Theorem \ref{thm:goodnei}. Finally, Section~\ref{sec:classification} establishes our main classification result in Theorem~\ref{theoremking}. As a direct consequence,
all finite dimensional rank $r \geq 5$ Nichols algebras of diagonal type over fields of positive characteristic are completely classified,
as recorded in Corollary~\ref{coro-cla}. In addition, Table \ref{tab.1} consists of all associated generalized Dynkin diagrams and in Table \ref{tab.2} we present the corresponding exchange graphs for the sporadic case of $\cC(M)$.

Throughout this paper, $\Bbbk$ denotes a field of characteristic $p> 0$.
Let $\Bbbk^*=\Bbbk\setminus \{0\}$ and $\ndN_0=\ndN\bigcup \{0\}$. For any $n \in \ndN$, define $G'_n=\{q\in \Bbbk^*|\,\, q^n=1, q^k\not=1~\text{for all}~ 1\leq k < n\}$.
Let $r \in \ndN_0$ with $ r\geq 2$, and let $I = \{1,2,3, \ldots, r\}$. We denote by $ E=(\alpha_i)_{i \in I} $ the standard basis of $ \mathbb{Z}^I $ , and let $\chi$ be a bicharacter on $\mathbb{Z}^I$. Set $q_{ij}\coloneqq \chi(\alpha_i,\alpha_j)$.
\section{Preliminaries}\label{two}

We recall the standard definitions from\cite{CaH,HW,HaY,W3,W4,HeckCAS}. Further details can be found in \cite{hopfandroot}.

\subsection{\YD modules and braided vector space}
In this section, we recall the basic definitions of braided vector spaces, braiding matrices, and Yetter–Drinfeld modules, with particular emphasis on those of diagonal type, and establish their equivalence relation.
\begin{definition}
Let $V$ be an $r$-dimensional vector space over $\Bbbk$. A pair $(V, c)$ is called a \textit{braided vector space}, if
$c\in \Aut(V\otimes V)$ satisfies the braid equation$\colon$
\begin{equation*}
 (c \otimes \id)(
 \id \otimes c)(c \otimes \id) = (\id \otimes c)(c \otimes \id)(\id\otimes c).
\end{equation*}
A braided vector space $(V, c)$ is said to be \textit{of diagonal type}
if $V$ admits a basis $\{x_i | i\in I\}$ such that 
\begin{equation*}
  c(x_i \otimes x_j) = q_{ij}x_j \otimes x_i \quad \textit{for some} \quad q_{ij} \in \Bbbk^*.
\end{equation*}
for all $i, j \in I$.
\end{definition}
\begin{definition}
The matrix $(q_{ij})_{i,j\in I}$ is called the \textit{braiding matrix} of $V$. The braiding matrix $(q_{ij})_{i,j\in I}$ is \textit{indecomposable} if for any $i\not=j$ there exists a sequence $i_1 = i, i_2, \dots, i_t = j$ of elements of $I$ such that $q_{i_si_{s+1}}q_{i_{s+1}i_s}\not= 1$ for all $1\leq s\leq t-1$.
\end{definition}
This paper studies braided vector spaces of diagonal type with indecomposable braiding matrices.
\begin{definition}
Let $H$ be a Hopf algebra. A \textit{\YD module} $V$ over $H$ is a left $H$-module with action $\lambda_L \colon 
H\otimes  V \longrightarrow V$ and a left $H$-comodule with coaction $\delta_L\colon V \longrightarrow H \otimes V$ satisfying the compatibility condition
\begin{equation*}
\delta_L(h.v) = h_{(1)}v_{(-1)} \kappa (h_{(3)}) \otimes h_{(2)}.v_{(0)}, h \in H, v\in V.
\end{equation*}
 A \YD module $V$ is \textit{of diagonal type} if $H=\Bbbk G$ for some abelian group $G$ and $V$ decomposes as a direct sum of one-dimensional \YD modules over the group algebra $\Bbbk G$.
\end{definition}

We write $\ydH$ for the category of \YD modules over $H$, whose morphisms preserve both the action and coaction of $H$.
The category $\ydH$ is braided with braiding
\begin{equation}\label{def.brading}
  c_{V,W}\colon V\otimes W \longrightarrow W \otimes V , \quad v\otimes w \longrightarrow  v_{(-1)}.w \otimes v_{(0)}
\end{equation}
for all $V, W\in \ydH$, $v\in V$, and $w\in W$. In fact, the category $\ydH$ is a braided monoidal category, whose monoidal structure is given by the tensor product over $\Bbbk$. In particular, every \YD module $V\in \ydH$ over $H$ admits a braiding $c_{V,V}$, making $(V, c_{V,V})$ a braided vector space. Conversely, a braided vector space can be realized as a \YD module over some Hopf algebras if and only if the braiding is rigid \cite[Section 2]{TakM}. Note that different Hopf algebras may yield isomorphic braidings on $V$
via distinct Yetter–Drinfeld module structures,
and not every braiding on $V$ is of the form \eqref{def.brading}.

Now let $ H = \Bbbk G $. We write $\ydD$ for the category of \YD  module over $\Bbbk G$, and we say that an object $ V \in \ydD $ is a  \YD  module over $G$. Notice that if $ V $ is of diagonal type, then $(V, c_{V,V})$ is a braided vector space of diagonal type. Conversely, any braided vector space of diagonal type can be regarded as a \YD module of diagonal type. Indeed, let $(V, c)$ be a braided vector space of diagonal type with an indecomposable braiding matrix $(q_{ij})_{i,j\in I}$ of a basis $\{x_i|i \in I\}$. Let $ G_0 $ be the free abelian group generated by elements $\{g_i|i \in I\}$. We define the left coaction and left action as follows
\[
\delta_L\colon V \longrightarrow \Bbbk G_0 \otimes V, \quad
x_i \longrightarrow g_i \otimes x_i, \quad g_i.x_j = q_{ij}x_j \in V.
\]
Then $ V = \oplus_{i \in I}  \Bbbk x_i $ and each $ \Bbbk x_i $ is one-dimensional \YD over $ G_0 $. Consequently, $V$ is a Yetter-Drinfel’d module of diagonal type over $ G_0 $.

\subsection{Nichols Algebras of Diagonal Type}
In this section, we recall the definition of the Nichols algebra $\cB(V)$ generated by $(V, c)$.
\begin{definition}
Let $H$ be a Hopf algebra. A \textit{braided Hopf algebra} in $\ydH$ is a $6$-tuple $\cB=(\cB, \mu, 1, \delta, \epsilon, \mathcal{S})$, where $(B, \mu, 1)$ is an algebra in $\ydH$ and a coalgebra $(B, \delta, \epsilon)$ in $\ydH$, and $\mathcal{S}\colon B\rightarrow B$ is a morphism in $\ydH$ such that $\delta$, $\epsilon$ and $\mathcal{S}$ satisfy $ \mathcal{S}(b^{(1)})b^{(2)}=b^{(1)}\mathcal{S}(b^{(2)})=\epsilon(b) 1$, where we define $\delta(b)=b^{(1)}\otimes b^{(2)}$ as the coproduct of $\cB$ to avoid the confusion.
\end{definition}
Let $(V, c)$ be an $r$-dimensional braided vector space of diagonal type.
The Nichols algebras generated by $(V, c)$ is defined as follows.

\begin{definition}
  The \textit{Nichols algebra} generated by $V\in \ydH$ is defined as the quotient
  \[\cB(V)=T(V)/\cI(V)=(\oplus_{n=0}^{\infty} V^{\otimes n})/\cI(V)\]
  where $\cI(V)$ is the unique maximal coideal of $T(V)$ contained in $\oplus_{n\geq 2}V^{\otimes n}$.
 A Nichols algebra $\cB(V)$ is said to be~\textit{of diagonal type} if $V$ is a \YD module of diagonal type.
  The dimension of $V$ is called the \textit{rank} of Nichols algebra $\cB(V)$.
\end{definition}

Note that if $B\in \ydH$ and $B$ is an algebra in $\ydH$, then $B\otimes B$ also admits an algebra structure in $\ydH$, whose  multiplication is given by
\begin{equation}\label{eq-alg}
  (a\otimes b)(c\otimes d)=a\bigl(b_{(-1)}. c\bigr) \otimes b_{(0)}d,
\end{equation}
for all $a, b, c, d\in B$, where $.$ stands for the left action of $H$ on $B$.

The \textit{tensor algebra} $T(V)$ carries a natural structure of a \YD module and admits an algebra structure in $\ydH$. Moreover, $T(V)$ is an $\ndN_0$-graded braided Hopf algebra in $\ydH$. Its comultiplication $\delta(v)$ and counit $\epsilon$ are defined on generators by $\delta(v)=1\otimes v+ v\otimes 1$ and $\epsilon(v)=0$ for all $v\in V$, where $\delta$ and $\epsilon$ are algebra morphisms. The antipode of $T(V)$ is well-defined \cite[Section 2.1]{AnS2001}. Note that the multiplication on $T(V)$ given by Equation (\ref{eq-alg}) is the unique algebra structure compatible with the condition $\delta(v)=1\otimes v+ v\otimes 1\in T(V)\otimes T(V)$ for all $v\in V$. The comultiplication can be uniquely extended from $V$ to $T(V)$. For instance, for any $v, w\in V$ (we omit the tensor product symbol for elements of $T(V)$ for simplicity), one computes
 \begin{align*}
 \begin{split}
\delta(vw)=&\delta(v)\delta(w)\\
           =&(1\otimes v+ v\otimes 1)(1\otimes w+ w\otimes 1)\\
           =& 1\otimes vw+ v_{(-1)}.w\otimes v_{(0)}+v\otimes w+vw\otimes 1.
\end{split}
\end{align*}
Let $(I_i)_{i\in I}$ denote the family of all coideals of $T(V)$ contained in $\oplus_{n\geq 2}V^{\otimes n}$, which satisfy
\[\delta(I_i)\subset I_i\otimes T(V)+ T(V)\otimes I_i.\]
Then $\cI(V)\coloneqq \sum_{i\in I}I_i$ is again a coideal. Consequently, the quotient $\cB(V)$ is a braided Hopf algebra in $\ydH$. By \cite[Proposition 3.2.12]{AnGr1998}, the Nichols algebra $\cB(V)$ is the unique $\ndN_0$-graded braided Hopf algebra generated by $V$ in $\ydH$ with graded components satisfying $\cB(V)(0)=\Bbbk$, $\cB(V)(1)=V$, and $P(\cB(V))=V$. Here $P(\cB(V))$ stands for the space of primitive elements of $\cB(V)$.

\subsection{Cartan graphs, root systems, and Weyl groupoids}
\begin{definition} \it 
A \textit{generalized Cartan matrix} is a matrix $A =(a_{ij})_{i,j\in I}$ with integer entries such that
\begin{itemize}
   \itemsep=0pt
     \item[$(1)$] $a_{ii}=2$ and $a_{jk}\le 0$ for any $i,j,k\in I$ with
    $j\not=k$,
     \item[$(2)$]  if $i,j\in I$ and $a _{ij}=0$, then $a_{ji}=0$.
\end{itemize}
\end{definition}

A generalized Cartan matrix $A\in \ndZ ^{I \times I}$ is \textit{decomposable} if there exists a nonempty proper subset $I_1\subset I$ such that $a_{ij}=0$ for any $i \in I_1$ and $j \in I\setminus I_1$. And $A$ is called \textit{indecomposable} if it is not decomposable.

\begin{definition}
  Let $\cX$ be a non-empty set and $A^X =(a_{ij}^X)_{i,j \in I}$ be a generalized Cartan matrix for all $X\in \cX$.
  For any $i\in I$ let $r_i \colon \cX \to \cX$, $X \mapsto r(i,X)$, where $r\colon I\times \cX \to \cX$ is a map.
  The quadruple
  \[\cC = \cC (I, \cX, r, (A^X)_{X \in \cX})
  \]
  is called a \textit{semi-Cartan graph} if
  $r_i^2 = \id_{\cX}$ for all $i \in I$,  and $a^X_{ij} = a^{r_i(X)}_{ij}$ for all $X\in \cX$ and $i,j\in I$.
  We say that a semi-Cartan graph $\cC$ is~\textit{indecomposable}
  if $A^X$ is indecomposable for all $X\in \cX$.
\end{definition}

The elements of the set $\{r_i(X), i\in I\}$ are called the \textit{neighbors} of $X$ for all $X\in \cX$.
The cardinality of $I$ is called the \textit{rank} of $\cC$ and the elements of $\cX$ are called the \textit{points} of $\cC$.

\begin{definition}
The \textit{exchange graph} of $\cC (I, \cX, r, (A^X)_{X \in \cX})$ is a labeled non-oriented graph with vertices corresponding to the elements of $\cX$, and edges marked by elements
$i \in I$, where two vertices $X, Y$ are connected by an edge $i$ if and only if $X\not=Y$ and $r_i(X)=Y$ (and $r_i(Y)=X$).
\end{definition}

Note that the exchange graph of $\cC$ may have multiple edges with different labels. For simplicity, we display only one edge with several labels instead of several edges.  

\begin{definition}\label{def: standard}
Let $\cC$ be a semi-Cartan graph. $\cC$ is said to be \textit{connected} if there is no proper non-empty subset $\cY \subseteq \cX$ such that $r_i(Y) \in \cY$ for all $i\in I$, $Y\in \cY$.
\end{definition}
Let $\cC = \cC (I, \cX, r, (A^X)_{X \in \cX})$ be a connected semi-Cartan graph. For any $X\in \cX$ and any $i\in I$ let
\begin{equation}\label{eq-si}
s_i^X\in \Aut(\ndZ^I), ~~
s_i^X \alpha _j=\alpha_j-a_{ij}^X \alpha_i,
\end{equation}
for all $j\in I$.
Let $\mathcal{D}(\mathcal{X}, I)$ be the category such that Ob$\mathcal{D}(\mathcal{X}, I)=\mathcal{X}$ and morphisms $\Hom(X,Y)=\{(Y,f,X)|f\in \End(\mathbb{Z} ^I)\}$ for $X, Y\in \mathcal{X}$,
where the composition of morphisms is defined by $(Z,g,Y)\circ (Y,f,X)=(Z, gf, X)$ for all $ X, Y, Z\in \mathcal{X}$, $f,g\in \End(\mathbb{Z} ^I)$. 
Let \textit{$\cW(\cC)$} be the smallest subcategory of $\cD(\cX, I)$ which contains all morphisms $(r_i(X), s_i^X, X)$ with $i\in I$, $X\in \cX$. For the sake of simplicity, we write $s_i^X$ instead of $(r_i(X), s_i^X, X)$ if no confusion is possible. Notice that $\cW(\cC)$ is a groupoid since all generators are invertible.

\begin{definition}
For all $X\in \cX$, the set
\begin{equation}\label{eq-realroots}
  \rersys{X}=\{\omega\alpha_i \in \ndZ^I|\omega \in \Hom(\cW(\cC),X)\}
\end{equation}
is called the set of real roots of $\cC$ at $X$. The real roots $\alpha_i$, $i\in I$, are called simple. The elements of $\rersys{X}_{\boldsymbol{+}}=\rersys{X}\cap \ndN_0^I$ and $\rersys{X}_{\boldsymbol{-}}=\rersys{X}\cap -\ndN_0^I$ are called positive roots and negative roots, respectively. If the set $\rersys{X}$ is finite for all $X\in \cX$, then we say that $\cC$ is finite.
\end{definition}

\begin{definition}\label{def.Weylgroupoid}
The semi-Cartan graph $\cC$ is called a Cartan graph if the following hold $\colon$
\begin{itemize}
  \item[$(1)$] For all $X\in \cX$, the set $\rersys{X}$ consists of positive and negative roots.
   \item[$(2)$] If $m_{ij}^X\coloneqq |\rersys{X}\cap (\ndN_0 \alpha_i+\ndN_0 \alpha_j)|$ is finite, then $(r_i r_j)^{m_{ij}^X}(X)=X$, where $i, j\in I$ and $X\in \cX$.
\end{itemize}
In this case, $\cW(\cC)$ is called the Weyl groupoid of $\cC$.
\end{definition}

\begin{definition}
$\cR=\cR(\cC,(\rsys ^{X})_{X\in \cX})$ is a \textit{root system of type $\cC$} if for all $X \in \cX $, the sets $\rsys ^X $ are the subsets of $\ndZ ^I$ such that
\begin{itemize}
\itemsep=0pt
\item[$(1)$] $\rsys ^X=(\rsys^ X\cap \ndN _0^I)\cup -(\rsys ^X\cap \ndN _0^I)$.
\item[$(2)$] $\rsys ^X\cap \ndZ \al _i=\{\al _i,-\al _i\}$ for all $i\in I$.
\item[$(3)$] $s_i^X(\rsys ^X)= \rsys ^{r_i(X)}$ for all $i \in I$.
\item[$(4)$] $(r_i r_j)^{m_{ij}^X}(X) = X$ for all $i\not=j \in I$ where $m_{ij}^X= |\rsys^X\cap (\ndN _0\al _i + \ndN _0 \al_j)|$ is finite.
\end{itemize}
For all $X\in \cX$, $\cR$ is~\textit{finite} if $\rsys ^{X}$ is finite.
\end{definition}

We call $\cW(R)\coloneqq \cW(\cC)$ is the groupoid of $\cR$. As in \cite[Definition 4.3]{caHW} we say that $\cR$ is \textit{reducible} if there exist non-empty disjoint subsets of $I',I''\subset I$ such that $I=I'\cup I''$ and $a_{i j}=0$ for all $i\in I'$, $j\in I''$ and
  \[\rsys ^{X}=\Big(\rsys ^{X}\cap \sum _{i\in I'}\ndZ \al _i\Big)\cup
  \Big(\rsys ^{X}\cap \sum _{j\in I''}\ndZ \al _j\Big)\qquad
  \text{for all}\quad X\in \cX.\]
In this case, we write $\cR =\cR |_{I_1}\oplus \cR |_{I_2}$. Conversely, we say that $\cR$ is irreducible if it is not reducible.

Let $\cR=\cR(\cC,(\rsys ^{X})_{X\in \cX})$ be a root system of type $\cC$. Next we recall the following properties of $\cR$ from \cite{caHW} and \cite{caH12}.

\begin{lemma}\label{lem:jik}
 Let $X\in \cX$, $k \in \ndZ$, and $i, j\in I$ such that $i\not= j$. Then $\alpha _j + k\alpha_i\in \rersys{X}$ if and only if $0\leq k\leq -a_{ij}^X$.
\end{lemma}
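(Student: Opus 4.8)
The plan is to read $\alpha_j+k\alpha_i$ as the $\alpha_i$-string through the simple root $\alpha_j$ and to exploit two facts recalled above: that every element of $\rersys{X}$ is either positive or negative (Cartan-graph axiom~(1)), and that each generator $s_i^X=(r_i(X),s_i^X,X)$ is a morphism of $\cW(\cC)$ with target $r_i(X)$, so that $s_i^X$ maps $\rersys{X}$ bijectively onto $\rersys{r_i(X)}$. I also use the semi-Cartan identity $a_{ij}^{r_i(X)}=a_{ij}^X$ throughout, and I work entirely inside the rank-two sublattice $\ndZ\alpha_i+\ndZ\alpha_j$.

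For the ''only if'' direction I exclude the two forbidden ranges by sign considerations. If $k<0$, then $\alpha_j+k\alpha_i$ has positive $\alpha_j$-coefficient and negative $\alpha_i$-coefficient; as $i\neq j$ it lies neither in $\ndN_0^I$ nor in $-\ndN_0^I$, so axiom~(1) forbids it from being a real root. If $k>-a_{ij}^X$, I apply $s_i^X$: from $s_i^X\alpha_i=-\alpha_i$ and $s_i^X\alpha_j=\alpha_j-a_{ij}^X\alpha_i$ I obtain $s_i^X(\alpha_j+k\alpha_i)=\alpha_j-(a_{ij}^X+k)\alpha_i$, whose $\alpha_i$-coefficient is strictly negative. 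If $\alpha_j+k\alpha_i$ were a root at $X$ its image would be a root at $r_i(X)$, contradicting the $k<0$ case now applied at $r_i(X)$. Hence $\rersys{X}$ meets this line only for $0\le k\le -a_{ij}^X$.

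The two endpoints of the ''if'' direction are immediate. For $k=0$ the vector is the simple root $\alpha_j$, which is a real root by definition. For $k=-a_{ij}^X$ I start from $\alpha_j\in\rersys{r_i(X)}$ and apply the morphism $s_i^{r_i(X)}\colon r_i(X)\to X$, obtaining $s_i^{r_i(X)}\alpha_j=\alpha_j-a_{ij}^{r_i(X)}\alpha_i=\alpha_j-a_{ij}^X\alpha_i\in\rersys{X}$, which is precisely the case $k=-a_{ij}^X$.

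The remaining and genuinely hardest point is the absence of gaps: every intermediate $k$ with $0<k<-a_{ij}^X$ must occur as well. Reflection symmetry alone is insufficient here, since $s_i^X$ only matches the strings at $X$ and $r_i(X)$ via $k\mapsto -a_{ij}^X-k$ and does not by itself rule out holes. I would obtain connectivity by restricting $\cR$ to the parabolic subgroupoid on $\{i,j\}$ and invoking the structure of finite rank-two root systems of Cartan graphs from \cite{c-Heck09b,c-Heck12a}: there the positive roots in $\ndN_0\alpha_i+\ndN_0\alpha_j$ form a single convex chain built by alternating reflections of $\alpha_i$ and $\alpha_j$, inside which the $\alpha_i$-string through $\alpha_j$ appears as a contiguous block of length $1-a_{ij}^X$, with the finiteness of $\rersys{X}$ guaranteeing that the chain closes up without skipping any lattice point. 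Establishing this contiguity is the crux of the argument, whereas the sign and endpoint steps above are routine.
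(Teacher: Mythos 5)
First, a structural point: the paper contains \emph{no proof} of this lemma — it is recalled verbatim from \cite{c-Heck09b,c-Heck12a} — so your argument has to stand on its own rather than be measured against an in-paper argument. The parts you actually carry out are correct: the mixed-sign exclusion of $k<0$, the computation $s_i^X(\alpha_j+k\alpha_i)=\alpha_j-(a_{ij}^X+k)\alpha_i$ ruling out $k>-a_{ij}^X$, and the two endpoints $k=0$ and $k=-a_{ij}^X$ obtained from $\alpha_j\in\rersys{r_i(X)}$ together with $a_{ij}^{r_i(X)}=a_{ij}^X$ all follow from the stated axioms. One cosmetic repair: since $\cC$ is only assumed to be a semi-Cartan graph, you should not invoke ``Cartan-graph axiom (1)''; use instead that $\rersys{X}\subseteq\rsys^X$ (which follows from axioms $(2)$ and $(3)$ of a root system by induction on word length) together with axiom $(1)$ of the root system $\cR$.

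The genuine gap is the one you flag yourself: for $0<k<-a_{ij}^X$ you prove nothing and defer to the rank-two theory of the cited papers. This deferral is not a completable sketch from the axioms you allow yourself, because the statement is actually \emph{false} at the stated level of generality. Take $\cX=\{X\}$, $I=\{1,2\}$, $r_1=r_2=\id$, $A^X=\left(\begin{smallmatrix}2&-2\\-2&2\end{smallmatrix}\right)$, and $\rsys^X:=\rersys{X}$. Then $\rersys{X}=\{\pm((n+1)\alpha_1+n\alpha_2),\ \pm(n\alpha_1+(n+1)\alpha_2):n\in\ndN_0\}$ (the real roots of affine $A_1$), and all four root-system axioms hold, axiom $(4)$ vacuously because the rank-two restriction is infinite; yet $\alpha_2+\alpha_1\notin\rersys{X}$ although $0\le 1\le -a_{12}^X=2$, i.e.\ exactly the intermediate value of the string is missing while both endpoints are present. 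Consequently any correct proof of the ``no gaps'' step must inject finiteness of $\rersys{X}\cap(\ndN_0\alpha_i+\ndN_0\alpha_j)$ — the standing hypothesis in the rank-two results you cite, and satisfied in every application of the lemma in this paper, since it is only ever applied to the finite root systems of the Appendix — but that hypothesis appears neither in the lemma as stated nor as a working assumption in your argument. So your instinct that ``finiteness guarantees the chain closes up'' is exactly right, but as written the contiguity you need is not a consequence of the axioms: it is a theorem about finite rank-two root systems and must be imported as such (or the lemma restated with the finiteness hypothesis made explicit).
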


 \begin{lemma}\label{lem:finite}
   Let $\cC$ be a connected semi-Cartan graph and $\cR=\cR(\cC,(\rsys ^{X})_{X\in \cX})$ be a root system of type $\cC$. Then the following are equivalent.
   \begin{itemize}
   \itemsep=0pt
     \item[$(1)$] $\cR$ is finite.
     \item[$(2)$] $\rsys^{X}$ is finite for some $X\in \cX$.
     \item[$(3)$] $\cC$ is finite.
     \item[$(4)$] $\cW(\cR)$ is finite.
   \end{itemize}
\end{lemma}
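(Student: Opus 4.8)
The plan is to establish the four conditions through the implications $(1)\Leftrightarrow(2)$, $(1)\Rightarrow(3)$, $(3)\Leftrightarrow(4)$ and $(4)\Rightarrow(1)$, reducing almost everything to two structural observations. First I would record that real roots are roots: since $\alpha_i\in\rsys^X$ by axiom~(2) and $s_i^X(\rsys^X)=\rsys^{r_i(X)}$ by axiom~(3), any morphism $\omega\in\Hom(\cW(\cC),X)$ maps a simple root into $\rsys^X$, so $\rersys{X}\subseteq\rsys^X$ for all $X$. Second, I would show that for each $i$ the reflection $s_i^X$ restricts to a bijection from $(\rsys^X\cap\ndN_0^I)\setminus\{\alpha_i\}$ onto $(\rsys^{r_i(X)}\cap\ndN_0^I)\setminus\{\alpha_i\}$, and similarly for real roots: a positive root $\beta\neq\alpha_i$ has, by axiom~(2), a strictly positive coefficient at some $j\neq i$, and this coefficient is unchanged by $s_i^X$, so axiom~(1) forces $s_i^X\beta$ to be positive again; the inverse map is $s_i^{r_i(X)}$. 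As $\cC$ is connected, any two points are linked by a chain of the maps $r_i$, whence $|\rsys^X\cap\ndN_0^I|$ and $|\rersys{X}_{\boldsymbol{+}}|$ are independent of $X$.

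These two facts settle the routine implications. Since axiom~(1) gives $|\rsys^X|=2\,|\rsys^X\cap\ndN_0^I|$, the constancy of the latter over $\cX$ yields $(1)\Leftrightarrow(2)$, and the inclusion $\rersys{X}\subseteq\rsys^X$ yields $(1)\Rightarrow(3)$. For $(3)\Rightarrow(4)$ I would exploit that an endomorphism of $\ndZ^I$ is determined by its values on the basis $\alpha_1,\dots,\alpha_\theta$: the assignment $\omega\mapsto(\omega\alpha_1,\dots,\omega\alpha_\theta)$ embeds $\Hom(\cW(\cC),X)$ into $(\rersys{X})^\theta$, which is finite by hypothesis, so $\Hom(\cW(\cC),X)$ is finite. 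By connectedness every point of $\cX$ is the source of some morphism with target $X$, so $\cX$ is finite; composing with a fixed morphism then makes every morphism set finite, and $\cW(\cC)$ is finite. The converse $(4)\Rightarrow(3)$ is immediate, since $\rersys{X}$ is the image of the finite set $\Hom(\cW(\cC),X)$ under evaluation at the $\theta$ simple roots.

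It remains to close the cycle by the substantive implication $(4)\Rightarrow(1)$, which amounts to proving that in a finite Weyl groupoid every positive root is real, i.e. $\rsys^X\cap\ndN_0^I=\rersys{X}_{\boldsymbol{+}}$ for all $X$. I would argue by an extremal, inductive method. Suppose $\cW(\cC)$, hence $\cX$, is finite but some positive root fails to be real; over the finitely many points choose such a root $\beta=\sum_k b_k\alpha_k$ of minimal height $\sum_k b_k$. Then $\beta$ is non-simple, and for each $i$ the root $s_i^X\beta$ is again positive and non-real (reflections preserve realness, as they merely relabel the generators of $\cW(\cC)$), so minimality forces $\mathrm{height}(s_i^X\beta)\geq\mathrm{height}(\beta)$, that is $(A^X\underline{\beta})_i\leq 0$ for all $i$, where $A^X=(a_{ij}^X)$ and $\underline{\beta}=(b_k)_k$. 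If the support $J=\{k:b_k>0\}$ is a proper subset of $I$, I would pass to the rank-$|J|$ parabolic sub-Cartan graph generated by the $s_k$ with $k\in J$: its real roots lie in $\rersys{}$ and are thus finite, so by induction on the rank its entire root system is real, contradicting the non-realness of $\beta$ (the base case $|J|=1$ is immediate from axiom~(2), and Lemma~\ref{lem:jik} controls the rank-two situation).

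The main obstacle is the residual full-support case $J=I$, where $\underline{\beta}>0$ and $A^X\underline{\beta}\leq 0$; here $A^X$ is necessarily of non-finite (affine or indefinite) type, and, unlike for Coxeter groups, a single point of $\cX$ supplies no positive-definite form with which to bound heights. The difficulty is genuine: a sign-tracking argument along a reduced decomposition of the longest element $w_0$ at $X$ (each simple reflection reverses the sign of only its own simple root among the positive roots) shows that any positive root which $w_0$ sends to a negative root must be real, so ruling out $\beta$ requires showing that $w_0$ cannot map a full-support element of the imaginary cone to a positive root. This is exactly the place where the finiteness of $\cW(\cC)$ must be used globally rather than point by point, and I would discharge it by invoking the structure theory of finite Weyl groupoids developed in \cite{c-Heck09b,c-Heck12a}, from which the non-existence of such imaginary roots, and hence $(4)\Rightarrow(1)$, follows.
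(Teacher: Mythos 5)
Your routine implications are fine --- the inclusion $\rersys{X}\subseteq\rsys^X$, the bijectivity of $s_i^X$ on positive roots away from $\alpha_i$, the resulting constancy of $|\rsys^X\cap\ndN_0^I|$ over the connected set $\cX$, and $(4)\Rightarrow(3)$ --- and, for the record, the paper offers no proof to compare against: it recalls this lemma from \cite{c-Heck09b,c-Heck12a}. But both substantive implications have genuine gaps. In $(3)\Rightarrow(4)$, your map $\omega\mapsto(\omega\alpha_1,\dots,\omega\alpha_\theta)$ records only the underlying endomorphism of $\ndZ^I$, whereas a morphism of $\cW(\cC)$ is a triple $(X,f,Y)$; injectivity of your map is precisely the claim that two morphisms with target $X$ and the same underlying map have the same source, equivalently that a morphism whose underlying map is $\id$ is an identity morphism. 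That is a theorem about root systems, not a formality, and your argument never uses $\cR$ --- yet without $\cR$ the implication is false. Concretely, take $\theta=2$, $\cX=\ndZ$, $a_{12}^X=a_{21}^X=0$ for all $X$, with $r_1$ swapping $2k\leftrightarrow 2k+1$ and $r_2$ swapping $2k+1\leftrightarrow 2k+2$: this is a connected semi-Cartan graph, every $\rersys{X}=\{\pm\alpha_1,\pm\alpha_2\}$ is finite, yet $\Hom(\cW(\cC),X)$ is infinite, because each of the infinitely many points is the source of some morphism with target $X$ and morphisms with distinct sources are distinct (this $\cC$ admits no root system, which is exactly the hypothesis your argument forgot to use). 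The correct route goes through Proposition~\ref{prop.allposroots}: for a reduced $\omega\in\Hom(\cW(\cC),X)$ the roots $\beta_n$ are pairwise distinct and, being images of simple roots, lie in $\rersys{X}_{\boldsymbol{+}}$; hence $\ell(\omega)\le|\rersys{X}_{\boldsymbol{+}}|$, so under $(3)$ lengths are bounded, and since a morphism with target $X$ is determined by any word representing it, $\Hom(\cW(\cC),X)$ is finite; connectedness then makes $\cX$, and so $\cW(\cC)$, finite.

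The second gap is worse: $(4)\Rightarrow(1)$ is not proved at all. Your minimal-counterexample scheme stalls at the full-support case, and you close it by ``invoking the structure theory of finite Weyl groupoids developed in \cite{c-Heck09b,c-Heck12a}'' --- the very papers from which this lemma is quoted, so the appeal is circular. The standard argument is short and uses only tools you already set up. Since $\cW(\cR)$ is finite, lengths are bounded; choose $w_0\in\Hom(Y,X)$ of maximal length $m$ among morphisms with target $X$. Then $w_0(\alpha_i)\in-\ndN_0^I$ for every $i$: otherwise, noting that $\ell(w_0s_i)\le m$ by maximality and that $\ell(w_0s_i)$ and $\ell(w_0)$ have opposite parities (every generator has determinant $-1$), any reduced word $u$ for $w_0s_i$ has length $m-1$, so $us_i$ is a reduced word for $w_0$, and Proposition~\ref{prop.allposroots} applied to it makes its last root $u(\alpha_i)=-w_0(\alpha_i)$ positive, contradicting $w_0(\alpha_i)>0$. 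By linearity $w_0$ therefore maps all of $\ndN_0^I$ --- in particular every positive root of $\rsys^Y$, real or not --- into $-\ndN_0^I$, and your own sign-tracking along a reduced word of $w_0$, powered by your bijection lemma, shows that each such root is the image of a simple root (hence real) and that distinct positive roots change sign at distinct steps, so $|\rsys^Y\cap\ndN_0^I|\le m$. Combined with your $(1)\Leftrightarrow(2)$ this yields $(1)$; no imaginary cone, parabolic induction, or external structure theory is needed.
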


\begin{prop}\label{prop.indecom}
  Let $\cC$ be a connected semi-Cartan graph and $\cR=\cR(\cC,(\rsys ^{X})_{X\in \cX})$ be a root system of type $\cC$. Then
  \begin{itemize}
    \item[$(1)$] there exists $X\in \cX$ such that $A^X$ is indecomposable if and only if the semi-Cartan graph $\cC$ is indecomposable.
    \item[$(2)$] if $\cR$ is finite then the semi-Cartan graph $\cC$ is indecomposable if and only if the root system $\cR$ is irreducible.
   \end{itemize}
\end{prop}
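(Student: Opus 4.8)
The plan is to reduce both statements to a single assertion about a fixed block decomposition of $I$, using Lemma~\ref{lem:jik} to read the Cartan entries off the roots. For $p\neq q$ in $I$, Lemma~\ref{lem:jik} gives $a_{pq}^X\neq 0$ if and only if $\al_p+\al_q\in\rersys{X}$; hence $A^X$ is decomposable along a partition $I=I_1\sqcup I_2$ exactly when $\al_p+\al_q\notin\rersys{X}$ for all $p\in I_1$, $q\in I_2$. Two implications are then immediate. Since $\cX\neq\varnothing$, ``every $A^X$ is indecomposable'' trivially implies ``some $A^X$ is indecomposable''. Conversely, if $\cR$ is reducible with $I=I'\cup I''$, then for $i\in I'$, $j\in I''$ the element $\al_i+\al_j$ lies in neither $\sum_{k\in I'}\ndZ\al_k$ nor $\sum_{k\in I''}\ndZ\al_k$, so $\al_i+\al_j\notin\rsys^X\supseteq\rersys{X}$ and hence $a_{ij}^X=0$ for every $X$; thus every $A^X$ is decomposable and in particular none is indecomposable. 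So ``some $A^X$ indecomposable'' forces $\cR$ irreducible.

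In view of these remarks it suffices to prove the converse: if $A^{X_0}$ is decomposable for some $X_0$, then $\cR$ is reducible. This single statement yields ``some $A^X$ indecomposable $\Rightarrow\cR$ irreducible $\Rightarrow$ every $A^X$ indecomposable'', which is (1), and, reading its hypothesis as the negation of indecomposability of $\cC$, it is exactly the remaining direction of (2). To prove it I would fix the partition $I=I_1\sqcup I_2$ coming from $A^{X_0}$ and show that the same block decomposition is respected by the roots at every point, which is precisely reducibility. The mechanism is that neighbouring Cartan matrices share their reflecting row, $a_{ij}^{X}=a_{ij}^{r_i(X)}$, so the reflection $s_i^{X}=s_i^{r_i(X)}$ is the same automorphism of $\ndZ^I$ at $X$ and at $r_i(X)$; if $i\in I_1$ and the block decomposition holds at $X$, then $s_i^{X}$ fixes $\al_q$ for all $q\in I_2$ and stabilises $\sum_{k\in I_1}\ndZ\al_k$, so it preserves the splitting $\ndZ^I=\ndZ^{I_1}\oplus\ndZ^{I_2}$. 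As $s_i^{X}(\rersys{X})=\rersys{r_i(X)}$, a block-pure root set at $X$ is carried to a block-pure root set at $r_i(X)$; since $\cC$ is connected, purity then spreads from $X_0$ to all of $\cX$, giving the decomposition of $\cR$.

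The crux, and the only place where the finiteness hypothesis of (2) is really needed, is the base of this propagation: decomposability of the \emph{single} matrix $A^{X_0}$ only forbids the height-two vectors $\al_p+\al_q$ as roots, whereas reducibility requires that \emph{no} root of $\rsys^{X_0}$ meet both blocks. I would establish this block-purity at $X_0$ using Lemma~\ref{lem:finite}: when $\cR$ is finite, so is $\rsys^{X_0}$, and one may argue by minimal height. If some positive root met both blocks, pick such a $\beta$ of least height and write it as $\beta=\gamma+\al_i$ with $\gamma\in\rsys^{X_0}\cap\ndN_0^I$; minimality forces $\gamma$ to be supported in a single block, say $I_2$, while $\al_i\in I_1$. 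Because $a_{iq}^{X_0}=0$ for all $q\in I_2$ one has $s_i^{X_0}\gamma=\gamma$, so the $\al_i$-string through $\gamma$ is symmetric about $\gamma$; invoking the unbroken–string property of the root system this forces the string to reduce to $\{\gamma\}$, contradicting $\gamma+\al_i\in\rsys^{X_0}$. This is the delicate step: it relies on promoting Lemma~\ref{lem:jik} from simple roots to an additivity/root–string statement for arbitrary roots, and on finiteness to terminate the height induction. Once block-purity holds at $X_0$, the reflection computation above propagates it, and the three implications assemble into both equivalences.
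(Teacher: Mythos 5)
A preliminary remark: the paper states this proposition \emph{without} proof (it is recalled from the Cuntz--Heckenberger sources cited just before Lemma~\ref{lem:jik}), so your argument must stand on its own. Its skeleton --- three implications assembled around the claim ``$A^{X_0}$ decomposable for some $X_0$ $\Rightarrow$ $\cR$ reducible'' --- is sound, but there are two genuine gaps. The first is one of scope: your proof of that central claim invokes Lemma~\ref{lem:finite} and a minimal-height induction, i.e.\ it needs $\cR$ finite, yet part (1) carries no finiteness hypothesis; as structured, you obtain (1) only for finite $\cR$. The finiteness is avoidable, because what propagates to neighbours for free is decomposability of the \emph{Cartan matrices}, not purity of the root sets. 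Concretely, suppose $A^X$ is decomposable along $I_1\sqcup I_2$ and let $i,p\in I_1$ with $p\neq i$ and $q\in I_2$ (the case $p=i$ is just the semi-Cartan axiom $a_{iq}^{r_i(X)}=a_{iq}^X=0$). If $a_{pq}^{r_i(X)}\neq 0$, then Lemma~\ref{lem:jik} gives $\al_p+\al_q\in\rersys{r_i(X)}=s_i^X(\rersys{X})$, hence $\beta:=\al_p+\al_q+m\al_i\in\rersys{X}\subseteq\rsys^X$, where $m=-a_{ip}^X\ge 0$ (use $a_{iq}^X=0$ and that $s_i^{r_i(X)}$ and $s_i^X$ are the same linear map). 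If $m=0$ this contradicts $a_{pq}^X=0$ via Lemma~\ref{lem:jik}; if $m\ge 1$, condition $(3)$ of the definition of a root system puts $s_q^X(\beta)=\al_p+m\al_i-\al_q$ in $\rsys^{r_q(X)}$ (here $a_{qp}^X=a_{qi}^X=0$), a vector lying neither in $\ndN_0^I$ nor in $-\ndN_0^I$, contradicting condition $(1)$. By connectedness, decomposability along the fixed blocks then holds at every point, which proves (1) with no finiteness at all and supplies the missing engine for (2).

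The second gap is the crux step itself, which would fail as written. It needs (i) every positive root of height at least $2$ in $\rsys^{X_0}$ to decompose as $\gamma+\al_i$ with $\gamma\in\rsys^{X_0}\cap\ndN_0^I$, and (ii) an unbroken $\al_i$-string property inside $\rsys^{X_0}$. Neither is among the axioms, and (ii) is not even meaningful in this framework: $s_i^{X_0}$ does \emph{not} preserve $\rsys^{X_0}$ --- by condition $(3)$ it maps it onto $\rsys^{r_i(X_0)}$ --- so ``the string through $\gamma$, symmetric about $\gamma$'' mixes two different root sets. (The contradiction you are after needs no strings: $s_i^{X_0}(\gamma+\al_i)=\gamma-\al_i$ lies in $\rsys^{r_i(X_0)}$ and is neither positive nor negative.) Property (i), however, remains unproved and is not easily repaired within a single root set. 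The efficient fix is to invert your two steps: first propagate matrix decomposability as above (no finiteness needed), so that every reflection $s_k^Z$, at every point $Z$, preserves the splitting $\ndZ^I=\bigl(\sum_{k\in I_1}\ndZ\al_k\bigr)\oplus\bigl(\sum_{k\in I_2}\ndZ\al_k\bigr)$; then let finiteness enter only through Proposition~\ref{prop.allposroots} and the remark following it, by which all roots of a finite $\cR$ are real, i.e.\ of the form $\omega\al_j$ --- images of simple roots under splitting-preserving maps, hence automatically supported in a single block. This yields purity at every point simultaneously, hence reducibility of $\cR$, completing (2).
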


The following proposition implies that if $\cR$ is a finite root system of type $\cC$ then all roots are real and $\cR$ is uniquely determined by $\cC$.

\begin{prop} \label{prop.allposroots}
  Let $\cR=\cR(\cC,(\rsys ^{X})_{X\in \cX})$ be a root system of type $\cC$. Let $X\in \cX$, $m\in \ndN _0$, and $i_1,\ldots ,i_m\in I$ such that
  \[\omega =\id_X s _{i_1} s_{i_2}\cdots s_{i_m}\in \Hom(\cW(\cC), X)\] and $\ell (\omega )=m$.
  Then the elements
  \[\beta _n=\id_X s_{i_1} s_{i_2} \cdots s_{i_{n-1}}(\alpha_{i_n})\in \rsys^ X\cap \ndN _0^I,\]
  are pairwise different, where $n\in \{1,2,\ldots ,m\}$ $($and $\beta _1=\alpha _{i_1}$$)$. Here,
  \[\ell(\omega)=\mathrm{min}\{m\in \ndN_0|\omega=\id_X s_{i_1}s_{i_2}\cdots s_{i_m}, i_1, i_2, \ldots, i_m\in I\}\]
   is the length of $\omega\in \Hom(\cW(\cC), X)$.
  In particular, if $\cR$ is finite and
  $\omega \in \Hom (\cW (\cC ))$ is the longest element, 
  then
  \[\{\beta _n\,|\,1\le n\le \ell (\omega )=|\rsys ^{X}|/2\}=\rsys^ X\cap \ndN _0^I.\]
\end{prop}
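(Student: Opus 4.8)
The plan is to transport the classical theory of reduced expressions in Coxeter groups to the Weyl groupoid $\cW(\cC)$, all signs being read off at the fixed object $X$. Two auxiliary facts carry the argument. First, a \emph{positivity lemma}: for every $Y\in\cX$ and $i\in I$, the reflection $s_i^Y$ sends $\alpha_i$ to $-\alpha_i$ and preserves the sign (positive or negative, in the sense of axiom (1) of a root system) of every root $\gamma\in\rsys^Y$ with $\gamma\neq\pm\alpha_i$. This follows from axiom (3) together with axiom (1): if $\gamma\in\rsys^Y\cap\ndN_0^I$ and $\gamma\neq\alpha_i$, then some coordinate $\gamma_j$ with $j\neq i$ is positive, and since $s_i^Y$ alters only the $\alpha_i$-coordinate, $s_i^Y\gamma$ still has positive $j$-th coordinate, forcing $s_i^Y\gamma\in\ndN_0^I$. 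Second, a \emph{length--sign lemma}: for $\tau\in\Hom(\cW(\cC),X)$ and $i\in I$ with $\tau s_i$ defined, one has $\ell(\tau s_i)=\ell(\tau)+1$ if and only if $\tau(\alpha_i)\in\ndN_0^I$. Establishing this length--sign lemma is the technical heart of the proof.

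Granting these, set $\omega_n=\id_X s_{i_1}\cdots s_{i_n}$, so $\omega_0=\id_X$, $\omega_m=\omega$, and $\beta_n=\omega_{n-1}(\alpha_{i_n})$. Any prefix of a reduced expression is reduced (otherwise $\omega$ itself would shorten), so $\ell(\omega_n)=n$; applying the length--sign lemma to $\tau=\omega_{n-1}$, $i=i_n$ gives $\beta_n\in\rsys^X\cap\ndN_0^I$, which is the positivity assertion. For distinctness I track the inversion set $N(\omega):=\{\gamma\in\rsys^X\cap\ndN_0^I\mid \omega^{-1}(\gamma)\in-\ndN_0^I\}$ and prove the recursion
\[
N(\omega_n)=N(\omega_{n-1})\sqcup\{\beta_n\}.
\]
Since $\omega_n^{-1}=s_{i_n}\omega_{n-1}^{-1}$, for $\gamma\in\rsys^X\cap\ndN_0^I$ one sets $\delta=\omega_{n-1}^{-1}(\gamma)\in\rsys^{Y}$ and inspects $s_{i_n}(\delta)$. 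If $\gamma=\beta_n$ then $\delta=\alpha_{i_n}$, so $s_{i_n}(\delta)=-\alpha_{i_n}<0$ while $\omega_{n-1}^{-1}(\beta_n)=\alpha_{i_n}>0$, whence $\beta_n\in N(\omega_n)\setminus N(\omega_{n-1})$; if $\gamma\neq\beta_n$ then $\delta\neq\pm\alpha_{i_n}$ (as $\delta=\alpha_{i_n}$ would give $\gamma=\beta_n$ and $\delta=-\alpha_{i_n}$ would give $\gamma=-\beta_n<0$), so the positivity lemma shows $s_{i_n}$ fixes the sign of $\delta$ and $\gamma\in N(\omega_n)\Leftrightarrow\gamma\in N(\omega_{n-1})$. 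Iterating from $N(\omega_0)=\emptyset$ yields $N(\omega)=\{\beta_1,\dots,\beta_m\}$ with the $\beta_n$ pairwise distinct and $|N(\omega)|=m$.

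For the final statement, assume $\cR$ is finite and $\omega$ is the longest element of $\Hom(\cW(\cC),X)$. Then $\ell(\omega s_i)<\ell(\omega)$ for every $i\in I$, so the length--sign lemma forces $\omega(\alpha_i)\in-\ndN_0^I$ for all $i$. By linearity every $\gamma\in\rsys^X\cap\ndN_0^I$ is a nonnegative combination of the $\alpha_i$, hence $\omega(\gamma)$ has all coordinates $\le 0$; being a nonzero root it lies in $-\ndN_0^I$. Thus $\omega$ maps every positive root to a negative one, so for a positive root $\gamma$ the preimage $\omega^{-1}(\gamma)$ cannot be positive (else $\gamma=\omega(\text{positive})<0$), giving $\omega^{-1}(\gamma)\in-\ndN_0^I$ and therefore $N(\omega)=\rsys^X\cap\ndN_0^I$. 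Combined with $N(\omega)=\{\beta_1,\dots,\beta_m\}$ this yields $\{\beta_n\mid 1\le n\le m\}=\rsys^X\cap\ndN_0^I$, and since axiom (1) makes $\rsys^X$ the disjoint union of its positive and negative parts, $m=\ell(\omega)=|\rsys^X\cap\ndN_0^I|=|\rsys^X|/2$.

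The main obstacle is the length--sign lemma, the groupoid analogue of the fundamental Coxeter-theoretic fact. I would prove it along classical lines: the direction $\tau(\alpha_i)\in\ndN_0^I\Rightarrow\ell(\tau s_i)=\ell(\tau)+1$ follows from the positivity lemma applied to a minimal expression of $\tau$, and the converse from a deletion argument, showing that $\tau(\alpha_i)\in\ndN_0^I$ together with $\ell(\tau s_i)\le\ell(\tau)$ would produce a strictly shorter expression for $\tau$. Two points need care beyond the classical setting: the groupoid bookkeeping, namely that every product must be a composable chain of morphisms and that each $s_i^Y$ is an involution because $a_{ij}^{r_i(Y)}=a_{ij}^Y$; and the observation that the positivity lemma, and hence the whole argument, rests only on the root-system axioms and Lemma \ref{lem:jik}, not on any Coxeter presentation of $\cW(\cC)$.
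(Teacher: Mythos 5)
The paper itself gives no proof of Proposition \ref{prop.allposroots}: it is recalled from the cited works \cite{c-Heck09b,c-Heck12a}, so your attempt has to be judged as a self-contained argument. Its architecture is in fact the one used in that literature, and the parts you carry out in detail are correct: the positivity lemma is proved correctly (axiom $(2)$ is what guarantees that a positive root $\gamma\neq\alpha_i$ has a positive coordinate away from $i$, and axioms $(1)$, $(3)$ give the sign statement at the target point $r_i(Y)$); the recursion $N(\omega_n)=N(\omega_{n-1})\sqcup\{\beta_n\}$ is correct granted positivity of the $\beta_n$; and the longest-element argument is fine, since the coordinatewise estimate $\omega(\ndN_0^I)\subseteq-\ndN_0^I$ only needs linearity.

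The genuine gap is the length--sign lemma, which you rightly call the technical heart and then do not prove. There are two problems with the sketch you give. First, the two directions are crossed: the implication $\tau(\alpha_i)\in\ndN_0^I\Rightarrow\ell(\tau s_i)=\ell(\tau)+1$ does \emph{not} follow from the positivity lemma applied to a reduced expression of $\tau$; it is deduced formally from the opposite implication applied to $\sigma=\tau s_i$, and that deduction additionally needs the parity fact $\ell(\tau s_i)\neq\ell(\tau)$ (say via $\det s_i^Y=-1$), which you never mention. It is the implication $\ell(\tau s_i)=\ell(\tau)+1\Rightarrow\tau(\alpha_i)\in\ndN_0^I$ that is proved by deletion, whereas the contradiction you describe (``$\tau(\alpha_i)\in\ndN_0^I$ together with $\ell(\tau s_i)\le\ell(\tau)$'') is a proof of the \emph{other} direction. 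Second, and more seriously, the deletion argument is not available ``along classical lines'': at the sign-change position it produces a morphism $u$ with $u(\alpha_i)=\alpha_j$, and to delete two letters one must know the conjugation identity $u\,s_i\,u^{-1}=s_j$ as composable morphisms of $\cW(\cC)$. For Weyl groups this is immediate because reflections are orthogonal with respect to an invariant form; in a Weyl groupoid the maps $s_i^Y$ are defined by Cartan matrices that change from point to point, and the semi-Cartan graph axiom only fixes the $i$-th row under $r_i$. Concretely, even the commutation $s_i s_j=s_j s_i$ when $a_{ij}^Y=a_{ji}^Y=0$ is not a formal computation, since it requires $a_{ik}^{r_j(Y)}=a_{ik}^{Y}$ for all $k$, which is not an axiom. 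Establishing this conjugation/exchange property from the root-system axioms is precisely the nontrivial content of \cite{c-Heck09b,c-Heck12a}, so your closing claim that the whole argument ``rests only on the root-system axioms and Lemma \ref{lem:jik}'' asserts, rather than proves, exactly the point at issue.
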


\begin{remark}
 If $\cC$ is a finite Cartan graph then $\cR$ is finite and hence $$\cR^{re}=\cR^{re}(\cC,(\rersys{X})_{X\in \cX})$$ is the unique root system of type $\cC$ by Proposition~\ref{prop.allposroots}, that is, $\cR$ is uniquely determined by $\cC$. And we say that $(\cR^{re})^X_{+}$ is the set of positive real roots at $X\in \mathcal{X}$.
\end{remark}

\subsection{The semi-Cartan Graph of a Nichols Algebra of Diagonal Type}
In this section, we attach a semi-Cartan graph to a tuple of finite dimensional \YD modules under some finiteness conditions.
Let $G$ be an abelian group and $\Bbbk G$ its group algebra. Let $\ydD$ denote the category of \YD modules over $\Bbbk G$. Let $\ffg$ be the collection of $r$-tuples of finite dimensional irreducible objects in $\ydD$.
Let $\fiso^G$ be the set of $r $-tuples of isomorphism classes of finite dimensional irreducible objects in $\ydD$. For any $(M_1, \dots, M_{r})\in \ffg$, we write $[M]\coloneqq ([M_1], \dots, [M_{r}])\in \fiso^G$ to represent its corresponding isomorphism class.

Fix a tuple $M=(\Bbbk x_1, \Bbbk x_2, \dots, \Bbbk x_{r})\in \ffg$ consisting of one-dimensional \YD over $G$ and $[M]\in \fiso^G$. Let $V_M=\oplus_{i\in I}\Bbbk x_i\in \ydD$ be a \YD module of diagonal type over $G$, where $\{x_i|i\in I\}$ forms a basis of $V$. Then there exist a matrix $(q_{ij})_{i,j\in I}$ and a set of elements $\{g_i|i\in I\} \subset G$ such that $\delta(x_i)=g_i\otimes x_i$ and $g_i. x_j=q_{ij}x_j$ for all $i,j\in I$. We refer to $(q_{ij})_{i,j\in I}$ as the braiding matrix of $M$. This matrix is invariant up to permutation of the index set $I$ and does not rely on the choice of basis $\{x_i|i\in I\}$. We define the Nichols algebra of the tuple $M$ as $\cB(V_M)=\cB(\oplus_{i=1}^{n}\Bbbk x_i)$, which is simply denoted by $\cB(M)$.

For a Nichols algebra $\cB(M)$ with antipode $S$, the adjoint action $\ad$ is given in \cite{hopfandroot} as the linear map 
\[\ad \colon \cB(M) \otimes \cB(M) \rightarrow \cB(M) \] satisfying
\[
x \otimes y \rightarrow  \ (ad x)(y) \coloneqq x_{(1)}yS(x_{(2)}) 
\] 
for all $x, y \in \cB(M)$.

In order to construct a semi-Cartan graph attached to $M$, we recall several finiteness conditions from \cite{AnS2001} and \cite{HS10}.

\begin{definition}
Let $i\in I$. $M$ is \textit{$i$-finite} if for any $j\in I\setminus \{i\}$, $(\ad_{c} x_i)^m (x_j)=0$ for some $m\in \ndN$.
\end{definition}
\begin{lemma}\label{le:aijM}
  For any $i,j\in I$ with $i\not=j$,
  the following are equivalent.
  \begin{itemize}
    \item[$(1)$] $(m+1)_{q_{ii}}(q_{ii}^mq_{ij}q_{ji}-1)=0$ and $(k+1)_{q_{ii}}(q_{ii}^kq_{ij}q_{ji}-1)\not=0$ for all $0\leq k<m$.
    \item[$(2)$] $(ad_{c}x_i)^{m+1}(x_j)=0$ and $(ad_{c}x_i)^m(x_j)\not=0$ in $\cB (V)$.
  \end{itemize}
  Here $(n)_q\coloneqq 1+q+\cdots+q^{n-1}$,
  which is $0$ if and only if $q^n=1$ for $q\not=1$ or $p|n$ for $q=1$. Notice that $(1)_q\not=0$ for any $q\in \ndN$.
\end{lemma}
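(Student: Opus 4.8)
The plan is to reduce the claimed equivalence to a single scalar recursion computed with the braided skew-derivations on $\cB(V)$. Write $z_n := (\ad_c x_i)^n(x_j)\in\cB(V)$, so that $z_0=x_j$ and, since $g_i$ acts on the $G$-homogeneous element $z_{n-1}$ of degree $g_i^{n-1}g_j$ by the scalar $q_{ii}^{n-1}q_{ij}$, one has
\[ z_n = x_i z_{n-1} - q_{ii}^{n-1}q_{ij}\, z_{n-1}x_i, \qquad n\ge 1. \]
Condition $(2)$ says precisely that $z_{m+1}=0$ while $z_m\neq 0$, so it suffices to decide, inductively in $n$, exactly when $z_n$ vanishes in $\cB(V)$.

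The main tool I would invoke is the family of braided derivations $\partial_\ell\colon\cB(V)\to\cB(V)$, $\ell\in I$, characterized by $\partial_\ell(x_k)=\delta_{\ell k}$ and the twisted Leibniz rule $\partial_\ell(uv)=(\partial_\ell u)\,v+\chi_u(\ell)\,u\,(\partial_\ell v)$, where for $G$-homogeneous $u$ of multidegree $\beta$ the scalar is $\chi_u(\ell)=\prod_{k}q_{k\ell}^{\beta_k}$; these are standard for Nichols algebras of diagonal type (see the references of the preliminaries). The second fact I need is the nondegeneracy criterion: a homogeneous element $z$ of degree $\ge 1$ satisfies $z=0$ in $\cB(V)$ if and only if $\partial_\ell(z)=0$ for all $\ell\in I$.

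Next I would compute $\partial_\ell(z_n)$ by induction on $n$. For $\ell\notin\{i,j\}$ this is identically zero, since no $x_\ell$ occurs. For $\ell=j$, the Leibniz rule applied to the recursion gives $\partial_j(z_n)=q_{ij}\,x_i\,\partial_j(z_{n-1})-q_{ii}^{n-1}q_{ij}\,(\partial_j z_{n-1})\,x_i$, and since a direct check yields $\partial_j(z_1)=0$, it follows that $\partial_j(z_n)=0$ for all $n\ge 1$. For $\ell=i$, the same computation, using $\chi_{z_{n-1}}(i)=q_{ii}^{n-1}q_{ji}$ and the identity $x_i z_{n-2}-q_{ii}^{n-2}q_{ij}z_{n-2}x_i=z_{n-1}$, collapses to a scalar multiple of $z_{n-1}$:
\[ \partial_i(z_n)=\lambda_n\, z_{n-1},\qquad \lambda_n=\big(1-q_{ii}^{2n-2}q_{ij}q_{ji}\big)+q_{ii}\,\lambda_{n-1},\quad \lambda_1=1-q_{ij}q_{ji}. \]
Solving this linear recursion (a routine telescoping of geometric sums) gives the closed form $\lambda_n=(n)_{q_{ii}}\big(1-q_{ii}^{n-1}q_{ij}q_{ji}\big)$, which vanishes exactly when $(n)_{q_{ii}}(q_{ii}^{n-1}q_{ij}q_{ji}-1)=0$.

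Combining these, the nondegeneracy criterion gives, for $n\ge 1$, that $z_n=0$ in $\cB(V)$ iff $\partial_i(z_n)=\lambda_n z_{n-1}=0$, i.e. iff $\lambda_n=0$ or $z_{n-1}=0$. Since $z_0=x_j\neq 0$, an induction on $n$ shows that the least index at which $z_n$ vanishes coincides with the least index at which the scalar $\lambda_n$ vanishes. Hence $z_{m+1}=0$ and $z_m\neq 0$ hold simultaneously if and only if $\lambda_{m+1}=0$ and $\lambda_k\neq 0$ for $1\le k\le m$, which, after reindexing $k\mapsto k+1$ and reading off $\lambda_n$, is exactly condition $(1)$. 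The characteristic $p$ enters only through the interpretation of when $(n)_{q_{ii}}$ vanishes, via the stated convention; the scalar computations themselves are characteristic-free. The main obstacle is therefore not conceptual but bookkeeping: keeping the braiding factors $q_{ii}^{n-1}$, $q_{ji}$ and $\chi_u(\ell)$ consistent through the Leibniz computation so that the $i$-derivative collapses cleanly to $\lambda_n z_{n-1}$ while the $j$-derivative cancels.
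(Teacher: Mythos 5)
Your proof is correct: the recursion $z_n=x_iz_{n-1}-q_{ii}^{n-1}q_{ij}z_{n-1}x_i$, the vanishing of $\partial_j(z_n)$, the collapse $\partial_i(z_n)=\lambda_n z_{n-1}$ with $\lambda_n=(n)_{q_{ii}}\bigl(1-q_{ii}^{n-1}q_{ij}q_{ji}\bigr)$, and the concluding induction via the kernel criterion $\bigcap_\ell\ker\partial_\ell=\Bbbk$ all check out under the paper's conventions (\ref{def.brading}) and (\ref{eq-alg}). Note that the paper itself gives no proof of Lemma~\ref{le:aijM} \textemdash{} it is recalled as a known result in the preliminaries \textemdash{} and your skew-derivation computation is precisely the standard argument from the cited literature, so there is nothing to reconcile between the two.
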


The lemma below follows directly from Lemma \ref{le:aijM}.

\begin{lemma}\label{lem:aij}
  Let $i\in I$.
  Then $M=(\Bbbk x_j)_{j\in I}$ is $i$-finite if and only if for any $j\in I\setminus\{i\}$ there is a non-negative integer $m$ satisfying $(m+1)_{q_{ii}}(q_{ii}^mq_{ij}q_{ji}-1)=0$.
\end{lemma}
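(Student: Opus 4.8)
The plan is to derive this lemma directly from Lemma~\ref{le:aijM}, which pins down, for each fixed pair $i\neq j$, the precise exponent at which the iterated braided adjoint action $(\ad_c x_i)^m(x_j)$ first vanishes in $\cB(M)$, in terms of the scalar data $q_{ii}$ and $q_{ij}q_{ji}$. The observation linking the two statements is that vanishing of $(\ad_c x_i)^m(x_j)$ is monotone in $m$: since $\ad_c x_i$ is a linear endomorphism of $\cB(M)$, we have $(\ad_c x_i)^{m+1}(x_j)=(\ad_c x_i)\big((\ad_c x_i)^m(x_j)\big)$, so once $(\ad_c x_i)^m(x_j)=0$ it stays zero for all larger exponents. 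Moreover $(\ad_c x_i)^0(x_j)=x_j\neq 0$, so for each $j\neq i$ there is a well-defined smallest exponent at which vanishing occurs, whenever any vanishing occurs at all.

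First I would treat the forward direction. Assume $M$ is $i$-finite and fix $j\in I\setminus\{i\}$. By definition there is some $N\in\ndN$ with $(\ad_c x_i)^N(x_j)=0$; by the monotonicity remark there is a smallest such $N$, and since $x_j\neq 0$ we have $N\geq 1$. Setting $m=N-1$ gives $(\ad_c x_i)^{m+1}(x_j)=0$ and $(\ad_c x_i)^m(x_j)\neq 0$, which is exactly condition~(2) of Lemma~\ref{le:aijM}. Invoking the equivalence, condition~(1) holds for this $m$, and in particular $(m+1)_{q_{ii}}(q_{ii}^m q_{ij}q_{ji}-1)=0$, which is the asserted scalar identity.

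Next I would handle the converse. Assume that for each $j\neq i$ there exists a non-negative integer satisfying the identity, and let $m_0$ be the smallest such integer for the fixed $j$. By minimality, $(k+1)_{q_{ii}}(q_{ii}^k q_{ij}q_{ji}-1)\neq 0$ for all $0\leq k<m_0$, while $(m_0+1)_{q_{ii}}(q_{ii}^{m_0} q_{ij}q_{ji}-1)=0$, so $(m_0,j)$ satisfies condition~(1) of Lemma~\ref{le:aijM}. The equivalence then yields $(\ad_c x_i)^{m_0+1}(x_j)=0$, so $(\ad_c x_i)^m(x_j)=0$ with $m=m_0+1\in\ndN$. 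As $j\neq i$ was arbitrary, $M$ is $i$-finite.

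The argument is essentially a bookkeeping translation of Lemma~\ref{le:aijM}, so there is no deep obstacle; the only point that needs care is the passage between the purely existential quantifier in the statement (``there is a non-negative integer $m$'') and the sharper, minimality-laden condition~(1) of Lemma~\ref{le:aijM}, which additionally demands nonvanishing for all smaller exponents. Taking the least admissible exponent in each direction resolves this, and the monotonicity of vanishing together with $(1)_q\neq 0$ guarantees that the least exponent is well-defined and that the $m=0$ boundary case (where the nonvanishing clause of condition~(1) is vacuous and the identity reduces to $q_{ij}q_{ji}=1$) is handled uniformly.
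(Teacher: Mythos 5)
Your proof is correct and takes essentially the same route as the paper, which obtains this lemma as an immediate consequence of Lemma~\ref{le:aijM}. Your write-up simply makes explicit the minimal-exponent bookkeeping (and the monotonicity of vanishing under $\ad_c x_i$) that the paper leaves implicit in its one-line derivation.
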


Let $i\in I$. Assume that $M$ is $i$-finite. Let $(a_{ij}^{M})_{j\in I}\in \ndZ^I$ and $R_i(M)=({R_i(M)}_j)_{j\in I}$, where
\begin{align*}
  a_{ij}^M=&
  \begin{cases}
    2& \text{if $j=i$,}\\
    -\mathrm{max}\{m\in \ndN_0 \,|\,(\ad_c  x_i)^m(x_j)\not=0 \}& \text{if $j\not=i$.}
  \end{cases}
\end{align*}
\begin{equation}\label{eq-ri}
{R_i(M)}_i= \Bbbk y_i,\qquad
{R_i(M)}_j= \Bbbk(\ad_{c}x_i)^{-a_{ij}^M}(x_j),
\end{equation}
where $y_i\in (\Bbbk x_i)^*\setminus \{0\}$. If $M$ is not $i$-finite, then let $R_i(M)=M$.
Then $R_i(M)$ is an $r$-tuple of one-dimensional \YD modules over $G$.

Let
\[
\ffg(M)=\{R_{i_1} \cdots R_{i_n}(M)\in \ffg|\, n\in \ndN_0, i_1,\dots, i_n\in I\}
\]
and
\[
\fiso^G(M)=\{[R_{i_1} \cdots R_{i_n}(M)]\in \fiso^G |\,n\in \ndN_0, i_1,\dots, i_n\in I\}.
\]

\begin{definition}\label{defn-admitsallref}
 If $N$ is $i$-finite for all $N\in \ffg(M)$, then $M$ \textit{admits all reflections}.
\end{definition}
Notice that these reflections depend solely on the braiding matrix $(q_{ij})_{i,j\in I}$. We next recall the definition of generalized Dynkin diagram\cite{Heck2008}. 

\begin{definition} 
Let $V$ be an $r$-dimensional braided vector space of diagonal type with the braiding matrix $(q_{ij})_{i,j\in I}$. The $\textit{generalized Dynkin diagram}$ of $V$ is a non-directed graph $\cD$ with the following properties$\colon$
\begin{itemize}
\itemsep=0pt
\item[$(1)$] there is a bijective map $\phi$ from $I$ to the vertices of $\cD$,
\item[$(2)$] for all $i\in I$ the vertex $\phi (i)$ is labeled by $q_{ii}$,
\item[$(3)$]  for all $i,j\in I$ with $i\not=j$,
 the number $n_{ij}$ of edges between $\phi (i)$ and $\phi (j)$ is either $0$ or $1$. If $q_{ij}q_{ji}=1$ then $n_{ij}=0$, otherwise $n_{ij}=1$ and the edge is labeled by $q_{ij}q_{ji}.$
\end{itemize}
\end{definition}
The \textit{generalized Dynkin diagram of $M$} is defined as the generalized Dynkin diagram associated with the braided vector space $\oplus_{i\in I}M_i$.
In particular, the generalized Dynkin diagram of $M$ is connected if its braiding matrix is indecomposable.

To characterize the labels of the generalized Dynkin diagram of $R_i(M)=(R_i(M)_j)_{j\in I}$, we establish the following results.

\begin{lemma}\label{jslemma}
Let $i\in I$.
Assume that $M$ is $i$-finite and let $a_{ij}\coloneqq a_{ij}^M$ for all $j\in I$.
Let $(q'_{jk})_{j,k\in I}$ be the braiding matrix of $R_i(M)$ with respect to $(y_j)_{j\in I}$.
Then
    \begin{gather*}
q_{jj}'=
 \begin{cases}
 q_{ii} & \text{if $j=i$},\\
 q_{jj}  & \text{if $j\not=i$, $q_{ij}q_{ji}=q_{ii}^{a_{ij}}$},\\
 q_{ii}q_{jj}{(q_{ij}q_{ji})^{-a_{ij}}} & \text{if $j\not=i$, $q_{ii}\in G'_{1-a_{ij}}$},\\
 q_{jj}{(q_{ij}q_{ji})^{-a_{ij}}} & \text{if $j\not=i$, $q_{ii}=1$},
 \end{cases}
 \end{gather*}
 \begin{gather*}
 q_{ij}'q_{ji}'=
 \begin{cases}
 q_{ij}q_{ji} & \text{if $j\not=i$, $q_{ij}q_{ji}=q_{ii}^{a_{ij}}$},\\
 q_{ii}^2(q_{ij}q_{ji})^{-1} & \text{if $j\not=i$, $q_{ii}\in G'_{1-a_{ij}}$},\\
 (q_{ij}q_{ji})^{-1} & \text{if $j\not=i$, $q_{ii}=1$},
 \end{cases}
 \end{gather*}
 and
 \begin{gather*}
q_{jk}'q_{kj}'=
 \begin{cases}
 q_{jk}q_{kj}  & \text{if $q_{ir}q_{ri}=q_{ii}^{a_{ir}}$, $r\in \{j, k\}$},\\
 q_{jk}q_{kj}(q_{ik}q_{ki}q_{ii}^{-1})^{-a_{ij}}& \text{if $q_{ij}q_{ji}=q_{ii}^{a_{ij}}$, $q_{ii}\in G'_{1-a_{ik}}$},\\
 q_{jk}q_{kj}(q_{ij}q_{ji})^{-a_{ik}}(q_{ik}q_{ki})^{-a_{ij}} &  \text{if $q_{ii}=1$,}\\
 q_{jk}q_{kj}q_{ii}^{2}(q_{ij}q_{ji}q_{ik}q_{ki})^{-a_{ij}} & \text{if $q_{ii}\in G'_{1-a_{ik}}$, $q_{ii}\in G'_{1-a_{ij}}$},
  \end{cases}
 \end{gather*}
 for $j\not=k$, $|j-i|=1$, $|k-i|=1$. For $|j-i|> 1$, $|k-i|> 1$, $q'_{jk}q'_{kj}=q_{jk}q_{kj}$.
Here, $G_n'$ denotes the set of primitive $n$-th roots of $1$
in $\Bbbk$, that is, $G'_n=\{q\in \Bbbk^*|\,\, q^n=1, q^k\not=1~\text{for all}~ 1\leq k < n\}$ for $n\in \ndN$.
 \end{lemma}

If $M$ admits all reflections, then we are able to construct a semi-Cartan graph $\cC(M)$ of $M$ by \cite[Proposition 1.5]{W3}.
\begin{theorem}\label{theo.regualrcar}
  Assume that $M$ admits all reflections.
  For all $X\in \fiso^G(M)$
  let \[[X]_{r}=\{Y\in \fiso^G(M)| \,\text{Y and X have the same generalized Dynkin diagram}\}.\]
  Let $\cY_{r}(M)=\{[X]_{r} |\, X\in \fiso^G(M)\}$
  and $A^{[X]_{r}}=A^X$ for all $X\in \fiso^G(M)$.
  Let $t\colon I\times \cY_{r}(M)\rightarrow \cY_{r}(M)$,
  $(i, [X]_{r})\mapsto [R_i(X)]_{r}$.
  Then the tuple
  \[
  \cC(M)=\{I, \cY_{r}(M), t, (A^Y)_{Y\in \cY_{r}(M)}\}
  \]
  is a connected semi-Cartan graph. We say that $\cC(M)$ is the semi-Cartan graph attached to $M$.
  \end{theorem}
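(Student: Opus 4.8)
The plan is to verify, one clause at a time, that the quadruple $\cC(M)=\{I,\cY_\theta(M),t,(A^Y)_{Y\in\cY_\theta(M)}\}$ meets the definition of a connected semi-Cartan graph. Since $M$ admits all reflections, every $N\in\ffg(M)$ is $i$-finite for each $i\in I$, so $R_i(N)$ is always defined; this is exactly what guarantees that $t$ is a total map. I would first treat the Cartan matrices. For a representative $X=[N]$ with braiding matrix $(q_{jk})$, the formula $a^X_{jj}=2$ and $a^X_{jk}=-\max\{m\mid(\ad_c x_j)^m(x_k)\neq 0\}$ for $j\neq k$ gives a finite integer in each entry precisely because of $j$-finiteness, so $A^X\in\ndZ^{I\times I}$ with $a^X_{jj}=2$ and $a^X_{jk}\leq 0$ for $j\neq k$. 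For the last axiom of a generalized Cartan matrix I would invoke Lemma~\ref{le:aijM}: $a^X_{jk}=0$ holds iff $(\ad_c x_j)(x_k)=0$, which by part~(1) of that lemma with $m=0$ (using $(1)_{q}\neq 0$) is equivalent to $q_{jk}q_{kj}=1$; as this condition is symmetric in $j,k$, we obtain $a^X_{jk}=0\iff a^X_{kj}=0$.

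Next I would establish that both $A^{[X]_\theta}$ and $t$ descend to $\cY_\theta(M)$. By Lemma~\ref{le:aijM} each integer $a^X_{jk}$ is determined by the scalars $q_{jj}$ and $q_{jk}q_{kj}$ alone, and these are exactly the vertex and edge labels of the generalized Dynkin diagram of $X$; hence two points sharing a diagram share a Cartan matrix, so $A^{[X]_\theta}:=A^X$ is independent of the representative. For $t$, I would use that reflections depend only on the braiding matrix, so $R_i$ passes to isomorphism classes and $R_i(X)\in\fiso^G(M)$ whenever $X\in\fiso^G(M)$. Lemma~\ref{le:Dynkin} expresses every label $q'_{jj}$ and $q'_{jk}q'_{kj}$ of $R_i(X)$ purely in terms of the labels of $X$ and the integers $a_{ij}$ (themselves diagram data); consequently $X,X'$ having the same diagram forces $R_i(X),R_i(X')$ to have the same diagram, so $t(i,[X]_\theta):=[R_i(X)]_\theta$ is well defined and lands in $\cY_\theta(M)$.

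It then remains to check the two semi-Cartan axioms. For $t_i^2=\id_{\cY_\theta(M)}$ I would show $R_i^2(X)$ has the same generalized Dynkin diagram as $X$; the cleanest route is the involution property $R_i^2(N)\cong N$ for reflections of \YD-tuples, but absent a direct citation one composes the three cases of Lemma~\ref{le:Dynkin} with themselves and confirms that every label returns to its original value. For the compatibility $a^Y_{ij}=a^{t_i(Y)}_{ij}$ I would compute $a^{R_i(X)}_{ij}$ from Lemma~\ref{le:aijM} using $q'_{ii}=q_{ii}$ together with the three formulas for $q'_{ij}q'_{ji}$ in Lemma~\ref{le:Dynkin}: when $q_{ij}q_{ji}=q_{ii}^{a_{ij}}$ the product is unchanged and so is the integer; in the cases $q_{ii}\in G'_{1-a_{ij}}$ and $q_{ii}=1$ one verifies that the transformed product $q'_{ij}q'_{ji}$, paired with $q'_{ii}=q_{ii}$, still solves the defining equation of Lemma~\ref{le:aijM} at the same value $-a_{ij}$, giving $a^{R_i(X)}_{ij}=a^X_{ij}$.

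Finally, for connectedness I would argue as follows. Every point of $\cY_\theta(M)$ has the form $[R_{i_1}\cdots R_{i_n}(M)]_\theta=t_{i_1}\cdots t_{i_n}([M]_\theta)$, so any non-empty $t$-stable subset $\cY\subseteq\cY_\theta(M)$ contains some point $Z$, and applying the $t_i$ in reverse order (legitimate since $t_i^2=\id$) moves $Z$ back to $[M]_\theta\in\cY$; then reapplying the $t_i$ reaches every point, forcing $\cY=\cY_\theta(M)$, which is the definition of connectedness. The main obstacle I anticipate lies in the two verifications $t_i^2=\id_{\cY_\theta(M)}$ and $a^{R_i(X)}_{ij}=a^X_{ij}$: in positive characteristic the quantities $(n)_q$ and the trichotomy $q_{ij}q_{ji}=q_{ii}^{a_{ij}}$, $q_{ii}\in G'_{1-a_{ij}}$, $q_{ii}=1$ behave differently from characteristic zero and must be tracked carefully through the case-by-case formulas of Lemma~\ref{le:Dynkin}, so that one confirms each diagram label returns to, respectively each $i$-th row entry is preserved at, its original value.
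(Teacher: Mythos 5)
Your proposal is correct, but note that the paper itself contains no proof of this theorem at all: the statement is justified solely by the sentence preceding it, which cites Proposition~1.5 of~\cite{W-17}. What you have written is therefore the self-contained verification that the paper delegates to that reference. Your route is the natural one and it hangs together: Lemma~\ref{le:aijM} makes each entry $a^X_{jk}$ a function of the diagram labels $q_{jj}$ and $q_{jk}q_{kj}$ (which gives both the symmetry of vanishing, via the $m=0$ case, and the well-definedness of $A^{[X]_\theta}$ on diagram-equivalence classes), while Lemma~\ref{le:Dynkin} makes the diagram of $R_i(X)$ a function of the diagram of $X$, so that $t$ descends to $\cY_\theta(M)$; and your connectedness argument from $[M]_\theta$ using $t_i^2=\id$ is exactly right. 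The one place that deserves the care you yourself flag is the pair of verifications $a^{R_i(X)}_{ij}=a^X_{ij}$ and $t_i^2=\id$: in Lemma~\ref{le:aijM} the integer is pinned down by the vanishing at $m=-a_{ij}$ \emph{together with} the non-vanishing for all $0\le k<-a_{ij}$, so in the cases $q_{ii}\in G'_{1-a_{ij}}$ and $q_{ii}=1$ one must check the minimality clause for the transformed product $q_{ii}^2(q_{ij}q_{ji})^{-1}$, respectively $(q_{ij}q_{ji})^{-1}$, and not only the vanishing; both checks do go through (for instance, when $q_{ii}\in G'_{1-a_{ij}}$ the set of forbidden exponents is permuted by $k\mapsto k+2$ modulo $1-a_{ij}$, and when $q_{ii}=1$ one gets $-a_{ij}=p-1$ on both sides). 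In exchange for its length, your argument makes the construction independent of the external reference and shows precisely where the hypothesis that $M$ admits all reflections and the positive-characteristic conventions for $(n)_q$ enter; the paper's citation buys brevity at the cost of self-containedness.
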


Furthermore, we can attach a groupoid $\cW(M)\coloneqq \cW(\cC(M))$ to $M$ if $M$ admits all reflections.\par
Notice that
Nichols algebra $\cB(M)$ is $\ndN_0^{r}$-graded with
$\deg M_i=\al_i $ for all $i\in I$.
Following the terminology in~\cite{HS10},
we say that the
Nichols algebra $\cB(M)$ is \textit{decomposable}
if there exists a totally ordered index set $(L,\le)$ and a sequence
$(W_l)_{l\in L}$ of finite dimensional irreducible $\ndN _0^r $-graded objects in
$\ydD $
such that
\begin{equation}\label{eq-decom}
  \cB (M)\simeq
  \bigotimes _{l\in L}\cB (W_l).
\end{equation}
For each decomposition~(\ref{eq-decom}),
we define the set of~ \textit{positive roots} $\rsys^{[M]}_{+}\subset \ndZ^I$ and the set of~ \textit{roots} $\rsys^{[M]}\subset \ndZ^I$ of $[M]$ by
$$\rsys^{[M]}_{+}=\{\deg(W_l)|\, l\in L\}, \quad
\rsys^{[M]}=\rsys^{[M]}_{+}\cup-\rsys^{[M]}_{+}.$$
By~\cite[Theorem~4.5]{HS10} we obtain that
the set of roots $\rsys^{[M]}$ of $[M]$ does not depend on the choice of the decomposition.
\begin{remark}\label{rem-decom}
 If $\dim M_i=1$ for all $i\in I$,
 then the Nichols algebra $\cB(M)$ is decomposable by a theorem of V.~Kharchenko~\cite[Theorem~2]{Khar1999}. The set of roots of Nichols algebra $\cB(M)$ is thus well-defined and denoted by $\rsys^{[M]}$. Whenever the set of roots $\rsys^{[M]}$ is finite, $M$ admits all reflections by \cite[Corollary 6.12]{HS10}.
\end{remark}

If $M$ admits all reflections and $\rsys^{[M]}$ is finite, then we can define a finite root system \\ $\cR(M)(\cC(M),(\rsys^{[N]})_{N\in \ffg(M)})$ of type $\cC(M)$ \cite[Theorem 1.25]{W4}.
\begin{theorem} \label{thm:rootofR_M}
 Assume that $M$ admits all reflections. Then the following are equivalent.
\begin{itemize}
\itemsep=0pt
\item[$(1)$] $\rsys^{[M]}$ is finite.
\item[$(2)$] $\cC(M)$ is a finite Cartan graph.
\item[$(3)$] $\cW(M)$ is finite.
\item[$(4)$] $\cR(M)\coloneqq \cR(M)(\cC(M),(\rsys^{[N]})_{N\in \ffg(M)})$ is finite.
\end{itemize}
In all cases, $\cR(M)$ is the unique root system of type $\cC(M)$.
\end{theorem}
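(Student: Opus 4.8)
The plan is to reduce the four equivalences to the general finiteness criterion for root systems over a connected semi-Cartan graph, Lemma~\ref{lem:finite}, and then to extract the Cartan-graph refinement and the uniqueness statement from Proposition~\ref{prop.allposroots}. Since $M$ admits all reflections, Theorem~\ref{theo.regualrcar} already supplies that $\cC(M)$ is a connected semi-Cartan graph, so the standing hypothesis of Lemma~\ref{lem:finite} will be in force once a root system of type $\cC(M)$ has been exhibited.

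The first and principal step is to check that the family $(\rsys^{[N]})_{N\in \ffg(M)}$, indexed through the points $\cY_{\theta}(M)$ of $\cC(M)$, is a root system of type $\cC(M)$. Each $\rsys^{[N]}$ is well-defined by Remark~\ref{rem-decom}, and axioms $(1)$ and $(2)$ are immediate from the $\ndN_0^{\theta}$-grading with $\deg M_i=\alpha_i$: the positive roots lie in $\ndN_0^I$ by definition, and the only roots on $\ndZ\alpha_i$ are $\pm\alpha_i$ because $\dim M_i=1$. The genuine input is the reflection-equivariance axiom $s_i^X(\rsys^{[X]})=\rsys^{[R_i(X)]}$ and the relation axiom $(r_ir_j)^{m_{ij}^X}(X)=X$; both encode how the reflection operators $R_i$ act on the decomposition~(\ref{eq-decom}) of $\cB(M)$, and I would import them from the reflection theory of \cite{HS10} (the well-definedness itself being \cite[Theorem~4.5]{HS10}). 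I expect this to be the main obstacle, since here the Nichols-algebraic content enters rather than pure combinatorics; everything afterwards is formal.

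Granting that $\cR(M)=(\rsys^{[N]})_{N\in \ffg(M)}$ is a root system of type the connected semi-Cartan graph $\cC(M)$, I would then invoke Lemma~\ref{lem:finite} directly. Matching its four conditions to ours yields at once $(1)\Leftrightarrow(3)\Leftrightarrow(4)$ together with the auxiliary fact that $\cC(M)$ is finite: our $(1)$ is the condition that $\rsys^{X}$ be finite for some $X$, taken at $X=[M]$; our $(4)$ is the finiteness of $\cR(M)$; and our $(3)$ is the finiteness of $\cW(\cR(M))=\cW(M)$. To incorporate $(2)$, note that in any of these equivalent situations $\cR(M)$ is a \emph{finite} root system of type $\cC(M)$, so Proposition~\ref{prop.allposroots} applies and forces every root to be real; consequently $\rersys{X}=\rsys^{[X]}$ at every point, and the root-system axioms $(1)$ and $(4)$ for $\cR(M)$ translate verbatim into the two defining conditions of a Cartan graph. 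Together with the finiteness of $\cC(M)$ already obtained, this makes $\cC(M)$ a finite Cartan graph. Conversely, if $(2)$ holds then $\cC(M)$ is in particular finite, which is the third condition of Lemma~\ref{lem:finite} and returns $(1)$. Hence $(2)$ is equivalent to the remaining three.

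The uniqueness assertion is then immediate. Under any of the equivalent hypotheses $\cR(M)$ is a finite root system of type $\cC(M)$, and Proposition~\ref{prop.allposroots} together with the remark following it shows that a finite root system is determined by its type, coinciding with the real-root system $\cR^{re}$. Therefore $\cR(M)=\cR^{re}(\cC(M),(\rersys{X})_{X})$ is the unique root system of type $\cC(M)$, which completes the plan.
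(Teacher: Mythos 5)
Your proposal is correct and follows essentially the same route as the paper: exhibit $\cR(M)$ as a root system of type the connected semi-Cartan graph $\cC(M)$ by importing the reflection theory of \cite{HS10} (the paper cites \cite[Theorem~6.11]{HS10} for exactly this), and then conclude via Lemma~\ref{lem:finite}. The only difference is that you spell out what the paper's one-line proof leaves implicit, namely the passage from ``$\cC(M)$ is finite'' to ``$\cC(M)$ is a finite Cartan graph'' and the uniqueness claim, both via Proposition~\ref{prop.allposroots} and the remark following it --- a welcome clarification, but not a different argument.
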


To simplify the presentation of the generalized Dynkin diagrams, we recall the notion of simple chains\cite{HeckCAS}. 

\begin{definition}\label{simplechains}
Assume that $(\Delta, \chi, E)$ is an arithmetic root system of rank $r > 2$ and $\mathcal{D}_{\chi, E}$ is a labeled path graph. Call this graph a \textit{simple chain} (of length $r$) if 
\[
(q_{11} q_{12} q_{21} - 1)(q_{11} + 1) = 0, \quad (q_{rr} q_{r,r-1} q_{r-1,r} - 1)(q_{rr} + 1) = 0
\]
\[
q_{ii}^2 q_{i-1,i} q_{i,i-1} q_{i,i+1} q_{i+1,i} = 1 \quad \text{for } 1 < i < r
\]
The latter equations hold if and only if 
\begin{equation*}\label{chains}
q_{ii} + 1 = q_{i-1,i} q_{i,i-1} q_{i,i+1} q_{i+1,i} - 1 = 0 \quad \text{or} \quad q_{ii} q_{i-1,i} q_{i,i-1} = q_{ii} q_{i,i+1} q_{i+1,i} = 1.
\end{equation*}
\end{definition}

Let $ \mathcal{D}_{\chi,E} $ be a simple chain and set $ q \coloneqq q^2_{rr}q_{r-1,r}q_{r,r-1} $. By Definition \ref{simplechains}, we obtain 
\[
q = 
\begin{cases}
q_{rr} & \text{if } q_{rr} \neq -1, \\
q_{r-1,r}q_{r,r-1} & \text{if } q_{rr}=-1.
\end{cases}
\]
\noindent
Then $q^2_{11}q_{12}q_{21} \in \{q,q^{-1}\}$ and $q_{i-1,i}q_{i-1,i} \in \{q,q^{-1}\}$ for all $i \in \{2,\ldots r\}$. Moreover, we observe that the simple chain $\mathcal{D}_{\chi,E}$ is uniquely determined by the value of $q$ and the indices $i$ $( 1 \leq i \leq r)$ satisfying $q_{i-1,i}q_{i,i-1} = q$, with $q_{01}q_{10}$ defined as $1/(q^2_{11}q_{12}q_{21})$. In this case, we use the symbol
\begin{align*}\label{eq-simplechaingraph}
\begin{picture}(26,6)
\put(13,3){\oval(26,6)}
\put(0,0){\makebox(26,6){\scriptsize $C(r,q;i_1,\ldots ,i_j)$}}
\end{picture}
\end{align*}

\noindent to denote $\mathcal{D}_{\chi,E}$ , where $q_{i-1,i}q_{i,i-1} = q$ if and only if $i \in \{i_1,i_2,\ldots,i_j\}$. 

\section{Finite Cartan graphs}\label{three}

In this section, we establish the key properties of finite connected indecomposable Cartan graphs of rank $r\geq5$ in Theorem~\ref{Dynkindiagrams} and Theorem~\ref{thm:goodnei}, which provide the foundation for our classification result in the subsequent section.

Let $M=(\Bbbk x_1,\dots,\Bbbk x_r)$ be a tuple of one-dimensional Yetter–Drinfeld modules over $\Bbbk G$. Suppose that $M$ admits all reflections, and let $\mathcal{C}(M)=\mathcal{C}(I,\mathcal{X},r,(A^X)_{X\in\mathcal{X}})$ be the associated indecomposable semi-Cartan graph of $M$. Recall that a semi-Cartan graph $\mathcal{C}$ is called standard if $A^X=A^Y$ for all $X,Y\in\mathcal{X}$.

Set $ \textit{A}_r \coloneqq
\begin{pmatrix}
    2 & -1 & 0 & 0 & 0 & \cdots & 0 & 0 & 0 \\
    -1 & 2 & -1 & 0 & 0 & \cdots & 0 & 0 & 0 \\
    0 & -1 & 2 & -1 & 0 & \cdots & 0 & 0 & 0 \\
    0 & 0 & -1 & 2 & -1 & \cdots & 0 & 0 & 0 \\
    \vdots & \vdots & \vdots & \vdots & \vdots & \ddots & \vdots & \vdots & \vdots \\
    0 & 0 & 0 & 0 & 0 & \cdots & -1 & 2 & -1\\
    0 & 0 & 0 & 0 & 0 & \cdots & 0 & -1 & 2
    
\end{pmatrix}$,

$\textit{B}_r \coloneqq
\begin{pmatrix}
    2 & -2 & 0 & 0 & 0 & \cdots & 0 & 0 & 0 \\
    -1 & 2 & -1 & 0 & 0 & \cdots & 0 & 0 & 0 \\
    0 & -1 & 2 & -1 & 0 & \cdots & 0 & 0 & 0 \\
    0 & 0 & -1 & 2 & -1 & \cdots & 0 & 0 & 0 \\
    \vdots & \vdots & \vdots & \vdots & \vdots & \ddots & \vdots & \vdots & \vdots \\
    0 & 0 & 0 & 0 & 0 & \cdots & -1 & 2 & -1\\
    0 & 0 & 0 & 0 & 0 & \cdots & 0 & -1 & 2
\end{pmatrix}$,

$\textit{C}_r \coloneqq B_r^{\mathrm{T}} =
\begin{pmatrix}
    2 & -1 & 0 & 0 & 0 & \cdots & 0 & 0 & 0 \\
    -2 & 2 & -1 & 0 & 0 & \cdots & 0 & 0 & 0 \\
    0 & -1 & 2 & -1 & 0 & \cdots & 0 & 0 & 0 \\
    0 & 0 & -1 & 2 & -1 & \cdots & 0 & 0 & 0 \\
    \vdots & \vdots & \vdots & \vdots & \vdots & \ddots & \vdots & \vdots & \vdots \\
    0 & 0 & 0 & 0 & 0 & \cdots & -1 & 2 & -1\\
    0 & 0 & 0 & 0 & 0 & \cdots & 0 & -1 & 2
\end{pmatrix}$,

$\textit{D}_r \coloneqq
\begin{pmatrix}
    2 & 0 & -1 & 0 &0 & \cdots & 0 & 0 & 0 \\
    0 & 2 & -1 & 0 & 0& \cdots & 0 & 0 & 0 \\
    -1 & -1 & 2 & -1 & 0 & \cdots & 0 & 0 & 0 \\
    0 & 0 & -1 & 2 & -1 & \cdots & 0 & 0 & 0 \\
    \vdots & \vdots & \vdots & \vdots & \vdots & \ddots & \vdots & \vdots & \vdots \\
    0 & 0 & 0 & 0 & 0 & \cdots & -1 & 2 & -1\\
    0 & 0 & 0 & 0 & 0 & \cdots & 0 & -1 & 2 
\end{pmatrix}$,

and $\textit{E}_r \coloneqq
\begin{pmatrix}
    2 & -1 & 0 & 0 & 0 & 0 & \cdots & 0 & 0 & 0 \\
    -1 & 2 & 0 & -1 & 0 & 0 &\cdots & 0 & 0 & 0 \\
    0 & 0 & 2 & -1 & 0 & 0 & \cdots & 0 & 0 & 0 \\
    0 & -1 & -1 & 2 & -1 & 0 & \cdots & 0 & 0 & 0 \\
    0 & 0 & 0 & -1 & 2 & -1 & \cdots & 0 & 0 & 0 \\
    \vdots & \vdots & \vdots & \vdots & \vdots & \vdots & \ddots & \vdots & \vdots & \vdots \\
    0 & 0 & 0 & 0 & 0 & 0 & \cdots & -1 & 2 & -1\\
    0 & 0 & 0 & 0 & 0 & 0 & \cdots & 0 & -1 & 2 
\end{pmatrix}$.

\subsection{Non-sporadic finite Cartan graphs}
In this section, we present the properties of non-sporadic finite Cartan graphs of rank $r\geq 5$ in Theorem~\ref{Dynkindiagrams}, which are crucial for our classification in Theorem \ref{Dydiagrams}. We first recall the arithmetic root systems of non-sporadic finite Cartan graphs of rank $r\geq5$. Motivated by the structure of these root systems, we introduce the notion of good $D'_r$ neighborhood characterize all such Cartan graphs.

\subsubsection{The root systems}
We mainly focus on the construction of root systems of type $D'(r,s)$ for $s\in \{1,\ldots , r-1\}$ from \cite{caH15}. 
For the details on the construction of root systems of standard type $A_r,B_r,C_r,D_r$, we refer to the book\cite[Chapter~III, \S~12.1]{humphreys1972lie}, where Humphreys’ classical theory of Lie algebra root systems serves as a standard reference for their construction. 

For \( 1 \leq i, j \leq r \), set
\[
\eta_{i,j} \coloneqq
\begin{cases}
\sum_{k=i}^{j} \alpha_k & i \leq j, \\
0 & i > j.
\end{cases}
\]

\begin{definition}
Let \( Z \subseteq \{1, \ldots, r-1\} \). Let \(\Phi_{r,Z}\) denote the set of roots
\begin{align*}
&\eta_{i,j-1}, \quad 1 \leq i < j \leq r, \\
&\eta_{i,r-2} + \alpha_r, \quad 1 \leq i < r, \\
&\eta_{i,r} + \eta_{j,r-2}, \quad 1 \leq i < j < r, \\
&\eta_{j,r} + \eta_{j,r-2}, \quad j \in Z.
\end{align*}
\noindent
Let \( Y \subseteq \{1, \ldots, r-1\} \). Let \(\Psi_{r,Y}\) denote the set of roots
\begin{align*}
&\eta_{i,j}, \quad 1 \leq i \leq j \leq r, \\
&\eta_{i,r} + \eta_{j,r-1}, \quad 1 \leq i < j < r, \\
&\eta_{j,r} + \eta_{j,r-1}, \quad j \in Y.
\end{align*}
Further, let $\Psi'_{r,Y}$ denote the set obtained from \(\Psi_{r,Y}\) by swapping \(\alpha_{r-1}\) and \(\alpha_r\).
\end{definition}

\begin{remark}
The sets \(\Phi_{r,\emptyset}\) and \(\Psi_{r,\{1, \ldots, r-1\}}\) are  precisely the sets of positive real roots of the Weyl groups of type \(D_r\) and \(C_r\), respectively, as constructed in \cite{humphreys1972lie}.
\end{remark}

\begin{prop}\label{root4} 
Let \( Z_1, Z_2, Y_1, Y_2 \subseteq \{1, \ldots, r-1\} \) with
\[
|Z_1| = |Z_2| = |Y_1| + 1 = |Y_2| + 1.
\]
Then there exists a finite Cartan graph \(\mathcal{C}\) with objects \(a, b, c, d\) such that
\[
(R^{re})^a_+ = \Phi_{r, Z_1},\quad (R^{re})^b_+ = \Phi_{r, Z_2},\quad (R^{re})^c_+ = \Psi_{r, Y_1}, \quad \text{and} \quad (R^{re})^d_+ = \Psi'_{r, Y_2}.
\]
\end{prop}

\begin{remark}
Let \( Z, Y\subseteq \{1, \ldots, r-1\} \) with $|Z| = |Y| + 1 $. Then $\Phi_{r, Z}$, $\Psi_{r, Y}$ and $\Psi'_{r, Y}$ are isomorphic by Proposition \ref{root4}.
\end{remark}

\begin{definition}
Let \(\mathcal{C}\) be a finite Cartan graph of rank \(r\geq3\). If there exists a subset \(Z \subseteq \{1, \ldots, r-1\}\) such that \(\Phi_{r, Z} = (R^{\mathrm{re}})^a_+\) for some object \(a\), then \(\mathcal{C}\) is said to be of type \(D'(r, |Z|)\). If there exists a subset \(Y \subseteq \{1, \ldots, r-1\}\) such that \(\Psi_{r, Y} = (R^{\mathrm{re}})^a_+\) for some object \(a\), then \(\mathcal{C}\) is said to be of type \(D'(r, |Y| + 1)\).

Note that if \(\mathcal{C}\) is of type \(D'(r, 0)\), then it is standard of type \(D\), and if \(\mathcal{C}\) is of type \(D'(r, r)\), then it is standard of type \(C\).
\end{definition}

\begin{prop}
Let $\mathcal{C}$ be a non-sporadic finite Cartan graph of rank $r\geq 3$. Let $a$ be an object of $\mathcal{C}$ whose generalized Dynkin diagram is of type $D'_r$. Then $\Phi_{r,\{r-1\}} \subseteq (R^{\mathrm{re}})_+^a$.
\end{prop}

\begin{remark}\cite[Proposition 3.8]{caH15}
Let $\mathcal{C}$ be a non-sporadic finite Cartan graph of rank $r\geq 3$. Then either $\mathcal{C}$ is a standard Cartan graph of type $A_r, B_r, C_r, D_r$, or it contains an object with generalized Dynkin diagram of type $D'_r$.
\end{remark}

\begin{theorem} \label{rootsd}
Let \(\mathcal{C}\) be a non-sporadic finite Cartan graph of rank \(r\geq3\), and set  
\[
\mathcal{R}_+ \coloneqq \{ (R^{\mathrm{re}})_+^a \mid a \in \mathcal{C} \}.
\]
\noindent
Then exactly one of the following holds:  

\begin{enumerate}
\item[$(1)$] The Cartan graph \(\mathcal{C}\) is standard (\(|\mathcal{R}_+| = 1\)) of type $A_r, B_r, C_r, D_r$.  
\item[$(2)$] Up to equivalence, the root sets of \(\mathcal{C}\) are given by  
\[
\mathcal{R}_+ = \{\Phi_{r,Z}, \Psi_{r,Y}, \Psi'_{r,Y} \mid Z, Y \subseteq \{1, \ldots, r-1\}, |Z| = s, |Y| = s-1\}
\]
for some \(s \in \{1, \ldots, r-1\}\).
\end{enumerate}
\end{theorem}

\begin{remark}
Let \(\mathcal{C}\) be a finite Cartan graph of rank \(r\geq3\). All its root systems can be constructed explicitly via Theorem \ref{rootsd}. In particular, we list all such root systems for ranks $3\leq r\leq 9$ in Appendix \ref{app:rslist}, where each entry records the positive real roots of a representative object. The rank $3$ cases are taken from \cite{caH12} and there are exactly five non-sporadic Cartan graphs, since $A_3=D_3$. 
\end{remark}

\subsubsection{Good $D'_r$ neighborhood} \label{non-sporadic finite cartan graphs}

Each point of a finite Cartan graph possesses its individual neighborhood structure. We compute all positive real root sets $\Delta^{[M]}$ for non-sporadic finite Cartan graphs via GAP computations. Motivated by these computational results, we introduce the definition of a good $D'_r$ neighborhood, which enables us to cover all finite connected indecomposable Cartan graphs: every such graph necessarily contains at least one point admitting a good $D'_r$ neighborhood.

\begin{definition}\label{goodneighborhood}
We say that $X$ has a \textbf{good $D'_r$ neighborhood} if, after a suitable permutation of the index set $I$, there exists $a \in \mathbb{N}$ such that
$\textit{A}^X$=$\textit{A}^{r_i(X)}$ for all $i\in\{1,4,\cdots,r\}$,

$\textit{A}^{r_2(X)}=
\begin{pmatrix}
    2 & -1 & -1 & 0 &0& \cdots & 0 & 0 & 0 \\
    -1 & 2 & -1 & 0 & 0 &\cdots & 0 & 0 & 0 \\
    -1 & -1 & 2 & -1 & 0 & \cdots & 0 & 0 & 0 \\
    0 & 0 & -1 & 2 & -1 & \cdots & 0 & 0 & 0 \\
    \vdots & \vdots & \vdots & \vdots & \vdots & \ddots & \vdots & \vdots & \vdots \\
    0 & 0 & 0 & 0 & 0 &\cdots & -1 & 2 & -1\\
    0 & 0 & 0 & 0 & 0 & \cdots & 0 & -1 & 2    
\end{pmatrix}$, 

$\textit{A}^{r_3(X)}=
\begin{pmatrix}
    2 & -1 & 0 & 0 & 0 & \cdots & 0 & 0 & 0 \\
    -a & 2 & -1 & 0 & 0 & \cdots & 0 & 0 & 0 \\
    0 & -1 & 2 & -1 & 0 &\cdots & 0 & 0 & 0 \\
    0 & 0 & -1 & 2 & -1 &\cdots & 0 & 0 & 0 \\
    \vdots & \vdots & \vdots & \vdots & \vdots & \ddots & \vdots & \vdots & \vdots \\
    0 & 0 & 0 & 0 & 0 &\cdots & -1 & 2 & -1\\
    0 & 0 & 0 & 0 & 0 &\cdots & 0 & -1 & 2
\end{pmatrix}$, \\where $a\in\{1,2\}$ and $a^{r_3r_2(X)}_{14}=0$.
\end{definition}

The following theorem establishes a fundamental property for non-sporadic finite Cartan graphs.
\begin{theorem}\label{Dynkindiagrams}
Let $\mathcal{C}$ be a non-sporadic finite Cartan graph of rank $r\geq 5$.  Then exactly one of the two alternatives holds:

\begin{enumerate}
    \item[$(1)$] The Cartan graph $ \mathcal{C}(M) $ is standard of type $ A_r, B_r, C_r$, or $D_r $.
    \item[$(2)$] Up to equivalence, there exists a point $ Y \in \mathcal{X} $ such that $ Y $ has a good $D'_r$ neighborhood.
\end{enumerate}
\end{theorem}

\begin{proof}
Let $\cR=\cR(\cC,(\rsys ^{X})_{X\in \cX})$ be a root system of $\mathcal{C}(M)$.
For any $X \in \mathcal{X} $, let $\rersys{X}_{\boldsymbol{+}}$ be the positive roots of $X$. By Theorem~\ref{rootsd}, upon permuting $I$, there exists a point $X \in \mathcal{X} $ satisfying that the set $\rersys{X}_{\boldsymbol{+}}$ is of the type \( A_r, B_r, C_r, D_r,\) or \(D'(r, s)\) for $s\in \{1,\ldots , r-1\}$. There are precisely $r-1+4$ distinct such possible sets of real roots. We analyze all $r+3$ sets 
in the following way. Fix an arbitrary point $Y$ and its positive roots set $\rersys{Y}_{\boldsymbol{+}}$. 
For any index $i\in I$, the reflection $s_i^Y$ restricts to a bijection $\rersys{Y}_{\boldsymbol{+}}\setminus \{\alpha_i\}\to \rersys{r_i(Y)}_{\boldsymbol{+}}\setminus \{\alpha_i\}$, together with Lemma \ref{lem:jik}, this determines the Cartan matrices of all neighbors of $Y$. If the Cartan graph $\cC(M)$ is standard or $Y$ has a good $D'_r$ neighborhood, the assertion is already satisfied. Otherwise repeat the the procedure for neighboring points. Since $\cX$ is finite, the iterative process terminates. Detailed computational checks via GAP are omitted here. 
\end{proof}

\subsection{Sporadic finite Cartan graphs}

In this section, we establish in Theorem \ref{thm:goodnei} the structural properties of sporadic finite Cartan graphs of rank \(r\geq 5\). We follow the strategy developed in Section \ref{non-sporadic finite cartan graphs} to investigate the positive real roots of sporadic finite Cartan graphs. All such roots are listed in \cite[Appendix B.2]{caH15}. Accordingly, we introduce the notions of good \(A_5\) neighborhood, good \(A_6\) neighborhood, and good \(A_7\) neighborhood for Cartan graphs of rank \(5\), \(6\), and \(7\), respectively.

\begin{definition}\label{defA5}
  We say that $X$ has a \textbf{good $A_5$ neighborhood} if there exists a permutation of $I$ and integers $a,b,c,d,e\in \ndN$ such that\\
$A^X=A^{r_1(X)}=A^{r_2(X)}=A_5$,
$A^{r_3(X)}=\begin{pmatrix}2&-1&0&0&0\\-1&2&-1&\bm{-a}&0\\0&-1&2&-1&0\\0&\bm{-b}&-1&2&-1\\0&0&0&-1&2\end{pmatrix}$,\\ 
$A^{r_4(X)}=\begin{pmatrix}2&-1&0&0&0\\-1&2&-1&0&0\\0&-1&2&-1&\bm{-c}\\0&0&-1&2&-1\\0&0&\bm{-d}&-1&2\end{pmatrix}$,  $A^{r_5(X)}=\begin{pmatrix}2&-1&0&0&0\\-1&2&-1&0&0\\0&-1&2&-1&0\\0&0&\bm{-e}&2&-1\\0&0&0&-1&2\end{pmatrix}$,\\  $a_{15}^{r_2r_3r_4(X)}=0$, 
  $a_{45}^{r_3r_2r_1(X)}\not=-3$, and one of the three conditions below holds:
  \begin{enumerate}
  	\item[$(A_{5_1})$] 
  	$(a, b, c, d, e)=(1,1,0,0,1)$,   	$(a_{41}^{r_2r_3(X)},a_{45}^{r_2r_3(X)})=(0,-1)$,
  	$(a_{32}^{r_4r_5(X)}, a_{34}^{r_2r_1(X)})\not=(-2,-2)$;
  	\item[$(A_{5_2})$] 
  	$(a, b, c, d, e)=(0,0,1,1,2)$, 
  	$a_{25}^{r_3r_4(X)}\in \{0, -1\}$, $(a_{45}^{r_3r_2r_1(X)},a_{45}^{r_3r_2(X)})\not=(-2,-2)$;
  	\item[$(A_{5_3})$] 
  	$(a, b, c, d, e)=(0,0,1,1,1)$,
  	$(a_{52}^{r_3r_4(X)}, a_{32}^{r_5r_4(X)})=(-1,-1)$.
  \end{enumerate}
\end{definition}

\begin{definition}\label{defA6}
  We say that $X$ has a \textbf{good $A_6$ neighborhood} if there exists a permutation of $I$ and $a\in \ndN$ satisfying $A^X=A^{r_1(X)}=A^{r_2(X)}=A^{r_3(X)}=A^{r_6(X)}=A_6$,
  \\  $A^{r_4(X)}=\begin{pmatrix}2&-1&0&0&0&0\\-1&2&-1&0&0&0\\0&-1&2&-1&-1&0\\0&0&-1&2&-1&0\\0&0&-1&-1&2&-1\\0&0&0&0&-1&2\end{pmatrix}$,
  $A^{r_5(X)}=\begin{pmatrix}2&-1&0&0&0&0\\-1&2&-1&0&0&0\\0&-1&2&-1&0&0\\0&0&-1&2&-1&\bm{-a}\\0&0&0&-1&2&-1\\0&0&0&\bm{-a}&-1&2\end{pmatrix}$, \\
$a_{36}^{r_5r_4(X)}=0$, and $a_{25}^{r_3r_4(X)}=0$,
  and either
  \begin{enumerate}
  	\item[$(A_{6_1})$] $a=0$,   	$a_{32}^{r_5r_4(X)}=-1$,  $a_{43}^{r_5r_6(X)}=-1$,
$(a_{45}^{r_3r_2r_1(X)},a_{45}^{r_3r_2(X)})\ne (-2,-2)$;
  	\item[$(A_{6_2})$] $a=1$, 
  	$a_{45}^{r_3r_2(x)}=-1$,
    $a_{45}^{r_3r_2r_1(X)}=-1$.
  \end{enumerate}
\end{definition}
\begin{definition}\label{defA7}
We say that $X$ has a \textbf{good $A_7$ neighborhood} if there exists a permutation of $I$  such that $A^{X}=A^{r_1(X)}=A^{r_2(X)}=A^{r_4(X)}=A^{r_5(X)}=A^{r_6(X)}=A^{r_7(X)}=A_7$,
  \\
  $A^{r_3(X)}=\begin{pmatrix}
  2&-1&0&0&0&0&0\\
  -1&2&-1&-1&0&0&0\\
  0&-1&2&-1&0&0&0\\
  0&-1&-1&2&-1&0&0\\
  0&0&0&-1&2&-1&0\\
  0&0&0&0&-1&2&-1\\
  0&0&0&0&0&-1&2\\
\end{pmatrix}$, 
together with $(a_{25}^{r_4r_3(X)},a_{21}^{r_4r_3(X)})=(0,-1)$, 
\\ $a_{45}^{r_2r_3(x)}=a_{34}^{r_2r_1(X)}=a_{32}^{r_4r_5(X)}=a_{32}^{r_{4}r_{5}r_{6}(x)}=a_{32}^{r_4r_5r_6r_7(X)}=-1$. 
\end{definition}

With these preparations, we establish the structural characterization for sporadic finite Cartan graphs in the next theorem.

\begin{theorem}\label{thm:goodnei}
Let $\mathcal{C}$ be a sporadic finite Cartan graph of rank $r\geq5$. Then exactly one of the following alternatives holds:
\begin{enumerate}
    \item[$(1)$] The Cartan graph $ \mathcal{C}(M) $ is standard of type $ E_6, E_7,$ or $E_8 $.
    \item[$(2)$] Up to equivalence, there exists a point $ Y \in \mathcal{X} $ such that $ Y $ has one of the good $A_{\theta}$ neighborhood, where $\theta\in\{5,6,7\}$.
\end{enumerate}
\end{theorem}
\begin{proof}
For any $X\in \cX$, let $\rersys{X}_{\boldsymbol{+}}$ denote its set of positive real roots. By~\cite[Theorem 4.1]{caH15} there exists some point $X\in \cX$ satisfying that $\rersys{X}_{\boldsymbol{+}}$ coincides with one of the root configurations listed in~\cite[Appendix B.2.]{caH15} up to a permutation of $I$. In particular, there are exactly $6$, $4$, $2$, and $1$  distinct root sets for rank $5$, $6$, $7$, and $8$, respectively. We analyze all these root sets case by case. For any $i\in I$, the reflection $s_i^X$ bijectively maps $\rersys{X}_{\boldsymbol{+}}\setminus \{\alpha_i\}$ to $\rersys{r_i(X)}_{\boldsymbol{+}}\setminus \{\alpha_i\}$. Combining this property with Lemma~\ref{lem:jik}, we explicitly determine the Cartan matrices of all neighboring vertices of $X$. If $\cC(M)$ is standard or $Y$ has a good $A_\theta$ neighborhood, the desired conclusion holds. Otherwise, we recursively implement the above analysis on the neighbors of $Y$. The finiteness of the set of points $\mathcal{X}$ ensures that this recursive procedure terminates in finite steps.
\end{proof}

\section{The classification theorem}\label{sec:classification}
In this section, we classify all rank $r\geq5$ Nichols algebras of diagonal type with a finite irreducible root system.
Our main classification result is stated in Theorem \ref{theoremking}, with all corresponding generalized Dynkin diagrams summarized in Table \ref{tab.1}.
\begin{theorem}\label{theoremking}
Let $\Bbbk$ be a field of characteristic $p>0$. Let $r\geq5$ and $I=\{1,2,\ldots,r\}$. Let $(V,c)$ be a braided vector space of diagonal type over $\Bbbk$ with basis $\{x_k|k\in I\}$ satisfying
\begin{equation*}
c(x_i \otimes x_j) = q_{ij}x_j \otimes x_i , \quad q_{ij} \in \Bbbk^*.
\end{equation*}
Suppose that $(q_{ij})_{i,j\in I}$ is indecomposable and set $M \colon =(\Bbbk x_i)_{i\in I}$. The Nichols algebra $\cB(V)$ generated by $(V,c)$ has a finite set of roots ${\roots}^{[M]}$ if and only if
the generalized Dynkin diagram $\cD$ of $V$ belongs to the list in Table~\ref{tab.1}. each row of Table~\ref{tab.1} exhausts all generalized Dynkin diagrams associated with the vertices of the Cartan graph $\cC(M)$. For sporadic Cartan graphs, the corresponding exchange graphs are recorded in Table~\ref{tab.2}.
\end{theorem}

By virtue of \cite[Corollary~6]{HW}, Theorem~\ref{theoremking} yields a complete classification of finite-dimensional diagonal-type Nichols algebras.

\begin{cor}\label{coro-cla}
Let $(V,c)$ be a braided vector space of diagonal type. The Nichols algebra $\cB(V)$ is finite dimensional if and only if the generalized Dynkin diagram $\cD$ of $V$ appears in Table~\ref{tab.1} and the labels of all vertices of $\cD$ are roots of unity (including $1$).
\end{cor}

\begin{remark}
We fix standard labeling conventions below to eliminate ambiguities in later arguments.
\begin{figure}[h!]
\begin{center}
\begin{picture}(58,20)(0,9)
\put(1,23){\circle{2}}
\put(1,8){\line(0,1){14}}
\put(1,7){\circle{2}}
\put(10,15){\circle{2}}
\put(9,15){\line(-1,1){7}}
\put(9,15){\line(-1,-1){7}}
\put(11,15){\line(1,0){10}}
\put(22,15){\circle{2}}
\put(23,15){\line(1,0){10}}
\put(34,15){\circle{2}}
\put(35,15){\line(1,0){2}}
\put(40,15){\makebox[0pt]{$\ldots $}}
\put(43,15){\line(1,0){2}}
\put(46,15){\circle{2}}
\put(47,15){\line(1,0){10}}
\put(58,15){\circle{2}}
\put(-2,23){\makebox[0pt]{\scriptsize $1$}}
\put(-2,7){\makebox[0pt]{\scriptsize $2$}}
\put(6,21){\makebox[0pt]{\scriptsize $ $}}
\put(6,9){\makebox[0pt]{\scriptsize $ $}}
\put(10,18){\makebox[0pt]{\scriptsize $3$}}
\put(16,17){\makebox[0pt]{\scriptsize $ $}}
\put(22,18){\makebox[0pt]{\scriptsize $4$}}
\put(28,17){\makebox[0pt]{\scriptsize $ $}}
\put(34,18){\makebox[0pt]{\scriptsize $5$}}
\put(46,18){\makebox[0pt]{\scriptsize $r-1$}}
\put(52,17){\makebox[0pt]{\scriptsize $ $}}
\put(58,18){\makebox[0pt]{\scriptsize $r$}}
\end{picture}
 \end{center}
\caption{ Dynkin diagram of type $D'_r$}
\label{fig_D_r}
\end{figure}

\begin{figure}[h!]
\begin{center}
\rule[-4\unitlength]{0pt}{5\unitlength}
\begin{picture}(63,4)(0,3)
\put(1,1){\circle{2}}
\put(2,1){\line(1,0){10}}
\put(13,1){\circle{2}}
\put(14,1){\line(1,0){10}}
\put(25,1){\circle{2}}
\put(26,1){\line(1,0){2}}
\put(31,1){\makebox[0pt]{$\ldots $}}
\put(34,1){\line(1,0){2}}
\put(37,1){\circle{2}}
\put(38,1){\line(1,0){10}}
\put(49,1){\circle{2}}
\put(50,1){\line(1,0){10}}
\put(61,1){\circle{2}}
\put(62,1){\line(1,0){2}}
\put(67,1){\makebox[0pt]{$\ldots $}}
\put(70,1){\line(1,0){2}}
\put(73,1){\circle{2}}
\put(74,1){\line(1,0){10}}
\put(85,1){\circle{2}}
\put(1,4){\makebox[0pt]{\scriptsize $q_{11}$}}
\put(7,3){\makebox[0pt]{\scriptsize $ $}}
\put(13,4){\makebox[0pt]{\scriptsize $q_{22}$}}
\put(19,3){\makebox[0pt]{\scriptsize $ $}}
\put(25,4){\makebox[0pt]{\scriptsize $q_{33}$}}
\put(37,4){\makebox[0pt]{\scriptsize $q_{k-1,k-1}$}}
\put(43,3){\makebox[0pt]{\scriptsize $ $}}
\put(49,4){\makebox[0pt]{\scriptsize $q_{kk}$}}
\put(49,-4){\makebox[0pt]{\scriptsize $k$}}
\put(55,3){\makebox[0pt]{\scriptsize $ $}}
\put(61,4){\makebox[0pt]{\scriptsize $q_{k+1,k+1}$}}
\put(75,4){\makebox[0pt]{\scriptsize $q_{r-1,r-1}$}}
\put(79,3){\makebox[0pt]{\scriptsize $ $}}
\put(85,4){\makebox[0pt]{\scriptsize $q_{rr}$}}
\end{picture}
\end{center}
\caption{generalized Dynkin diagram of type $A_r$}
\label{fig_A_r}
\end{figure}

\begin{itemize}
\item[$(1)$]
Vertices of generalized Dynkin diagrams are labeled consecutively $1,\dots,r$ from left to right and top to bottom. For example, the $D'_r$ case follows the labeling in Figure~\ref{fig_D_r}.

\item[$(2)$] 
Given a generalized Dynkin diagram $\mathcal{D}$ and indices $i_1,i_2,\dots,i_r\in I$, let $\tau_{i_1i_2\cdots i_r}\mathcal{D}$ denote the graph obtained by relabeling the vertices of $\mathcal{D}$ as $i_1,i_2,\dots,i_r$.

\item[$(3)$] For a generalized Dynkin diagram $\cD$ and $k\in I$, the integer $k$ denotes the label of the $k-$th vertex of the generalized Dynkin diagrams. For instance, label $A_r$ as in Fig.\ \ref{fig_A_r}).

\item[$(4)$] For a generalized Dynkin diagram $\cD$ and $i, j\in \ndN$, write $\cD^r_{i,j}$ for the diagram $\cD$ of rank $r$ located in the $j$-th position of row $i$.
\end{itemize}
\end{remark}

We verify the main theorem via preliminary analysis of several representative special cases.

Set $X=[M]$. Let $A^X \colon=(a_{ij})_{i,j\in I}$ be the Cartan matrix of $X$,  $(q_{i,j})_{i,j\in I}$ be the braiding matrix of $X$ and $(q_{i,j}^{r_i(X)})_{i,j\in I}$ be the braiding matrix of $r_i(X)$.  To shorten subsequent notation, write $\widetilde{q_{ij}}\colon=q_{ij}q_{ji}$ for $1\leq i,j\leq r$.

The next lemma simplifies the upcoming casework.

\begin{lemma}\label{one-classificationtheorem}
Suppose $\mathcal{D}_{\chi,E}$ admits the diagram
\begin{align*}
\rule[-4\unitlength]{0pt}{5\unitlength}
\begin{picture}(63,4)(0,3)
\put(1,1){\circle{2}}
\put(2,1){\line(1,0){10}}
\put(13,1){\circle{2}}
\put(14,1){\line(1,0){10}}
\put(25,1){\circle{2}}
\put(26,1){\line(1,0){10}}
\put(37,1){\circle{2}}
\put(38,1){\line(1,0){10}}
\put(49,1){\circle{2}}
\put(50,1){\line(1,0){2}}
\put(55,1){\makebox[0pt]{$\ldots $}}
\put(58,1){\line(1,0){2}}
\put(61,1){\circle{2}}
\put(62,1){\line(1,0){10}}
\put(73,1){\circle{2}}
\put(1,4){\makebox[0pt]{\scriptsize $q_{11}$}}
\put(7,3){\makebox[0pt]{\scriptsize $r$}}
\put(13,4){\makebox[0pt]{\scriptsize $q_{22}$}}
\put(19,3){\makebox[0pt]{\scriptsize $q$}}
\put(25,4){\makebox[0pt]{\scriptsize $q_{33}$}}
\put(31,3){\makebox[0pt]{\scriptsize $s$}}
\put(37,4){\makebox[0pt]{\scriptsize $q_{44}$}}
\put(43,3){\makebox[0pt]{\scriptsize $t$}}
\put(49,4){\makebox[0pt]{\scriptsize $q_{55}$}}
\put(60,4){\makebox[0pt]{\scriptsize $q_{r-1,r-1}$}}
\put(67,3){\makebox[0pt]{\scriptsize $u$}}
\put(73,4){\makebox[0pt]{\scriptsize $q_{rr}$}}
\end{picture}
\end{align*}
and has a good $D_r'$-neighborhood. Then $q_{22}=-1$ and $qr\neq 1$.
\end{lemma}

\begin{proof}
If $q_{22}\neq -1$, Lemma \ref{jslemma} yields $A^{r_2(X)}\cong A_r$, contradicting Definition \ref{goodneighborhood}. The condition $a_{13}^{r_2(X)}=-1$ further implies $qr\neq 1$.
\end{proof}

\begin{theorem}\label{Dydiagrams}
Suppose ${\roots}^{[M]}$ is a finite set of roots of non-sporadic finite Cartan graphs of rank $\geq 5$. Then every admissable generalized Dynkin diagrams of $V$ occurs in rows $1$-$10$ of Table \ref{tab.1}.
\end{theorem}

\begin{proof}
 By Theorem \ref{Dynkindiagrams}, either $\mathcal{C}(M)$ is standard or there exists a point $X$ such that $A^X$ has a good $D'_r$ neighborhood.

Case $a$. Suppose $\mathcal{C}(M)$ is standard of type $A_r$. Then $A^X = A_r$ yields $(2)_{q_{ii}}(q_{ii}\widetilde{q_{i,i+1}}-1) = (2)_{q_{jj}}(q_{jj}\widetilde{q_{j-1j}}-1) = 0$, for all $i \in \{1,2,3,\ldots,r-1\}$ and  $j \in \{2,3,4,\ldots, r\}$. Applying Lemma \ref{jslemma},  we separate the analysis into three subcases $\colon (a_1), (a_2), (a_3)$.

Subcase $a_1$. Let $q_{ii}\widetilde{q_{i,i+1}} = q_{jj}\widetilde{q_{j-1,j}} = 1$ for all $i \in \{1,2,3,\ldots, r-1\}$ and $j \in \{2,3,4,\ldots, r\}$. To shorten notation, set $q\colon=q_{11} \neq-1$. Then $\mathcal{D}\colon$
\begin{align*}
\rule[-4\unitlength]{0pt}{5\unitlength}
\begin{picture}(63,4)(0,3)
\put(1,1){\circle{2}}
\put(2,1){\line(1,0){10}}
\put(13,1){\circle{2}}
\put(14,1){\line(1,0){10}}
\put(25,1){\circle{2}}
\put(26,1){\line(1,0){10}}
\put(37,1){\circle{2}}
\put(38,1){\line(1,0){10}}
\put(49,1){\circle{2}}
\put(50,1){\line(1,0){2}}
\put(55,1){\makebox[0pt]{$\ldots $}}
\put(58,1){\line(1,0){2}}
\put(61,1){\circle{2}}
\put(62,1){\line(1,0){10}}
\put(73,1){\circle{2}}
\put(1,4){\makebox[0pt]{\scriptsize $q$}}
\put(7,3){\makebox[0pt]{\scriptsize $q^{-1}$}}
\put(13,4){\makebox[0pt]{\scriptsize $q$}}
\put(19,3){\makebox[0pt]{\scriptsize $q^{-1}$}}
\put(25,4){\makebox[0pt]{\scriptsize $q$}}
\put(31,3){\makebox[0pt]{\scriptsize $q^{-1}$}}
\put(37,4){\makebox[0pt]{\scriptsize $q$}}
\put(43,3){\makebox[0pt]{\scriptsize $q^{-1}$}}
\put(49,4){\makebox[0pt]{\scriptsize $q$}}
\put(60,4){\makebox[0pt]{\scriptsize $q$}}
\put(67,3){\makebox[0pt]{\scriptsize $q^{-1}$}}
\put(73,4){\makebox[0pt]{\scriptsize $q$}}
\end{picture}
\end{align*}
\noindent
is abbreviated by the simple chain
\begin{align*}
\rule[-4\unitlength]{0pt}{5\unitlength}
\begin{picture}(26,4)(0,3)
\put(13,3){\oval(26,6)}
\put(0,0){\makebox(26,6){\scriptsize $C(r,q;)$}}
\end{picture}
\end{align*}
This family appears in the first row of Tabl \ref{tab.1}.

Subcase $a_{2}$. Let $(2)_{q_{kk}}=0$ for all integers $1\le k\le (r+1)/2$, where this restriction eliminates redundant symmetric configurations. Assume
\[
q_{ii}\widetilde q_{i,i+1}=q_{jj}\widetilde q_{j-1,j}=1,\qquad
\forall\,i\in\{1,2,\dots,r-1\}\setminus\{k\},\ j\in\{2,3,\dots,r\}\setminus\{k\}.
\]
To simplify notation, set $r:=\widetilde q_{k-1,k}\neq -1$ and $q:=\widetilde q_{k,k+1}\neq -1$. When $k\neq 1$, the condition $a_{k-1,k+1}^{r_k(X)}=0$ forces $qr=1$. Hence $\mathcal{D}\colon$
\begin{align*}
\rule[-4\unitlength]{0pt}{5\unitlength}
\begin{picture}(63,4)(0,3)
\put(1,1){\circle{2}}
\put(2,1){\line(1,0){8}}
\put(11,1){\circle{2}}
\put(12,1){\line(1,0){8}}
\put(21,1){\circle{2}}
\put(22,1){\line(1,0){2}}
\put(27,1){\makebox[0pt]{$\ldots $}}
\put(30,1){\line(1,0){2}}
\put(33,1){\circle{2}}
\put(34,1){\line(1,0){8}}
\put(43,1){\circle{2}}
\put(44,1){\line(1,0){8}}
\put(53,1){\circle{2}}
\put(54,1){\line(1,0){2}}
\put(59,1){\makebox[0pt]{$\ldots $}}
\put(62,1){\line(1,0){2}}
\put(65,1){\circle{2}}
\put(66,1){\line(1,0){8}}
\put(75,1){\circle{2}}
\put(1,4){\makebox[0pt]{\scriptsize $q$}}
\put(6,3){\makebox[0pt]{\scriptsize $q^{-1}$}}
\put(11,4){\makebox[0pt]{\scriptsize $q$}}
\put(16,3){\makebox[0pt]{\scriptsize $q^{-1}$}}
\put(21,4){\makebox[0pt]{\scriptsize $q$}}
\put(33,4){\makebox[0pt]{\scriptsize $q$}}
\put(38,3){\makebox[0pt]{\scriptsize $q^{-1}$}}
\put(43,4){\makebox[0pt]{\scriptsize $-1$}}
\put(43,-4){\makebox[0pt]{\scriptsize $k$}}
\put(48,3){\makebox[0pt]{\scriptsize $q$}}
\put(53,4){\makebox[0pt]{\scriptsize $q^{-1}$}}
\put(65,4){\makebox[0pt]{\scriptsize $q^{-1}$}}
\put(70,3){\makebox[0pt]{\scriptsize $q$}}
\put(75,4){\makebox[0pt]{\scriptsize $q^{-1}$}}
\end{picture}
\end{align*}
is abbreviated by the simple chain
\begin{align*}
\begin{picture}(26,8)
\put(13,3){\oval(26,6)}
\put(0,0){\makebox(26,6){\scriptsize $C(r,q^{-1};1,\ldots ,k)$}}
\end{picture}
\end{align*}
This family appears in the second row of Table \ref{tab.1}.

When $k = 1$, $\mathcal{D}$ takes the form
\begin{align*}
\rule[-4\unitlength]{0pt}{5\unitlength}
\begin{picture}(63,4)(0,3)
\put(1,1){\circle{2}}
\put(2,1){\line(1,0){10}}
\put(13,1){\circle{2}}
\put(14,1){\line(1,0){10}}
\put(25,1){\circle{2}}
\put(26,1){\line(1,0){10}}
\put(37,1){\circle{2}}
\put(38,1){\line(1,0){10}}
\put(49,1){\circle{2}}
\put(50,1){\line(1,0){2}}
\put(55,1){\makebox[0pt]{$\ldots $}}
\put(58,1){\line(1,0){2}}
\put(61,1){\circle{2}}
\put(62,1){\line(1,0){10}}
\put(73,1){\circle{2}}
\put(1,4){\makebox[0pt]{\scriptsize $-1$}}
\put(7,3){\makebox[0pt]{\scriptsize $q$}}
\put(13,4){\makebox[0pt]{\scriptsize $q^{-1}$}}
\put(19,3){\makebox[0pt]{\scriptsize $q$}}
\put(25,4){\makebox[0pt]{\scriptsize $q^{-1}$}}
\put(31,3){\makebox[0pt]{\scriptsize $q$}}
\put(37,4){\makebox[0pt]{\scriptsize $q^{-1}$}}
\put(43,3){\makebox[0pt]{\scriptsize $q$}}
\put(49,4){\makebox[0pt]{\scriptsize $q^{-1}$}}
\put(60,4){\makebox[0pt]{\scriptsize $q^{-1}$}}
\put(67,3){\makebox[0pt]{\scriptsize $q$}}
\put(73,4){\makebox[0pt]{\scriptsize $q^{-1}$}}
\end{picture}
\end{align*}
\noindent
and is abbreviated by the simple chain
\begin{align*}
\begin{picture}(26,8)
\put(13,3){\oval(26,6)}
\put(0,0){\makebox(26,6){\scriptsize $C(r,q^{-1};1)$}}
\end{picture}
\end{align*}
which is listed in the second row of Table \ref{tab.1}.
The same reflection conditions hold for configurations with
\[
q_{i_1i_1}=q_{i_2i_2}=\cdots=q_{i_ki_k}=-1,\qquad i_1,i_2,\dots,i_k\in\{1,2,\dots,r\},\;2\le k<r,
\]
together with
\[
q_{ii}\widetilde q_{i,i+1}=q_{jj}\widetilde q_{j-1,j}=1,\qquad
\forall\,i\in\{1,2,\dots,r-1\}\setminus\{i_1,\dots,i_k\},\ j\in\{2,\dots,r\}\setminus\{i_1,\dots,i_k\}.
\]
All such generalized Dynkin diagrams $\mathcal{D}$ are collected in the second row of Table \ref{tab.1}.

Subcase $a_3$. Let $(2)_{q_{ii}}=0$ for all $i \in \{1,2,\ldots, r\}$. Then $\mathcal{D}$ is the case of $q = -1$ and $p=3$ in the first row of Table \ref{tab.1}.

Case $b$. Suppose $\mathcal{C}(M)$ is standard of type $B_r$. Then $A^{X}= B_r$ implies that $(3)_{q_{11}}(q_{11}^2\widetilde{q_{12}}-1)=(2)_{q_{ii}}(q_{ii}\widetilde{q_{i,i+1}}-1)=(2)_{q_{jj}}(q_{jj}\widetilde{q_{j-1,j}}-1)=0$ for all $i \in \{2,3,\ldots, r-1\}$ and $j \in \{2,3,\ldots, r\}$. Hence we divide the analysis into four subcases$\colon (b_1), (b_2), (b_3), (b_4)$.

Subcase $b_1$. Let $q_{11}^2\widetilde{q_{12}}=q_{ii}\widetilde{q_{i,i+1}}=q_{jj}\widetilde{q_{j-1,j}}=1$ for all $i \in \{2,3,\ldots, r-1\}$ and $j \in \{2,3,\ldots, r\}$. Set $q\colon=q_{11}$. Then $q\neq-1$ and $\mathcal{D}\colon$
\begin{align*}
\rule[-4\unitlength]{0pt}{5\unitlength}
\begin{picture}(63,4)(0,3)
\put(1,1){\circle{2}}
\put(2,1){\line(1,0){10}}
\put(13,1){\circle{2}}
\put(14,1){\line(1,0){10}}
\put(25,1){\circle{2}}
\put(26,1){\line(1,0){10}}
\put(37,1){\circle{2}}
\put(38,1){\line(1,0){10}}
\put(49,1){\circle{2}}
\put(50,1){\line(1,0){2}}
\put(55,1){\makebox[0pt]{$\ldots $}}
\put(58,1){\line(1,0){2}}
\put(61,1){\circle{2}}
\put(62,1){\line(1,0){10}}
\put(73,1){\circle{2}}
\put(1,4){\makebox[0pt]{\scriptsize $q$}}
\put(7,3){\makebox[0pt]{\scriptsize $q^{-2}$}}
\put(13,4){\makebox[0pt]{\scriptsize $q^2$}}
\put(19,3){\makebox[0pt]{\scriptsize $q^{-2}$}}
\put(25,4){\makebox[0pt]{\scriptsize $q^2$}}
\put(31,3){\makebox[0pt]{\scriptsize $q^{-2}$}}
\put(37,4){\makebox[0pt]{\scriptsize $q^2$}}
\put(43,3){\makebox[0pt]{\scriptsize $q^{-2}$}}
\put(49,4){\makebox[0pt]{\scriptsize $q^2$}}
\put(60,4){\makebox[0pt]{\scriptsize $q^2$}}
\put(67,3){\makebox[0pt]{\scriptsize $q^{-2}$}}
\put(73,4){\makebox[0pt]{\scriptsize $q^2$}}
\end{picture}
\end{align*}
\noindent
is abbreviated by the simple chain
\begin{align*}
\tau_{r\cdots321}\
\begin{picture}(48,8)
\put(17,3){\oval(34,6)}
\put(0,0){\makebox(34,6){\scriptsize $C(r-1,q^2;)$}}
\put(34,3){\line(1,0){10}}
\put(45,3){\circle{2}}
\put(40,4){\makebox[0pt]{\scriptsize $q^{-2}$}}
\put(45,5){\makebox[0pt]{\scriptsize $q$}}
\end{picture}
\end{align*}
which is listed in the third row of Table \ref{tab.1}.
\par
Subcase $b_2$. Let $q_{22}=-1$ and $q_{11}^2\widetilde{q_{12}}=q_{ii}\widetilde{q_{i,i+1}}=q_{jj}\widetilde{q_{j-1,j}}=1$ for all $i \in \{3,4,\ldots, r-1\}$ and $j \in \{3,4,\ldots, r\}$. In this case, we assume  $\widetilde{q_{23}}\neq -1$ to avoid redundant arguments. Set $q_{11}\colon=q$, $\widetilde{q_{23}}\colon=r$. Then $q\neq-1$, $\widetilde{q_{12}}=q^{-2}$. Hence $a^{r_2(X)}_{13}=0$ implies $\widetilde{q_{12}}\widetilde{q_{23}}=q^{-2}r=1$. Then $\mathcal{D}\colon$
\begin{align*}
\rule[-4\unitlength]{0pt}{5\unitlength}
\begin{picture}(63,4)(0,3)
\put(1,1){\circle{2}}
\put(2,1){\line(1,0){10}}
\put(13,1){\circle{2}}
\put(14,1){\line(1,0){10}}
\put(25,1){\circle{2}}
\put(26,1){\line(1,0){10}}
\put(37,1){\circle{2}}
\put(38,1){\line(1,0){10}}
\put(49,1){\circle{2}}
\put(50,1){\line(1,0){2}}
\put(55,1){\makebox[0pt]{$\ldots $}}
\put(58,1){\line(1,0){2}}
\put(61,1){\circle{2}}
\put(62,1){\line(1,0){10}}
\put(73,1){\circle{2}}
\put(1,4){\makebox[0pt]{\scriptsize $q$}}
\put(7,3){\makebox[0pt]{\scriptsize $q^{-2}$}}
\put(13,4){\makebox[0pt]{\scriptsize $-1$}}
\put(19,3){\makebox[0pt]{\scriptsize $q^2$}}
\put(25,4){\makebox[0pt]{\scriptsize $q^{-2}$}}
\put(31,3){\makebox[0pt]{\scriptsize $q^2$}}
\put(37,4){\makebox[0pt]{\scriptsize $q^{-2}$}}
\put(43,3){\makebox[0pt]{\scriptsize $q^2$}}
\put(49,4){\makebox[0pt]{\scriptsize $q^{-2}$}}
\put(60,4){\makebox[0pt]{\scriptsize $q^{-2}$}}
\put(67,3){\makebox[0pt]{\scriptsize $q^2$}}
\put(73,4){\makebox[0pt]{\scriptsize $q^{-2}$}}
\end{picture}
\end{align*}
\noindent
is abbreviated by the simple chain
\begin{align*}
\tau_{r\cdots321}\
\begin{picture}(44,8)
\put(17,3){\oval(34,6)}
\put(0,0){\makebox(34,6){\scriptsize $C(r-1,q^2;1,\ldots ,r-1)$}}
\put(34,3){\line(1,0){10}}
\put(45,3){\circle{2}}
\put(39,4){\makebox[0pt]{\scriptsize $q^{-2}$}}
\put(45,5){\makebox[0pt]{\scriptsize $q$}}
\end{picture}
\end{align*}
which is listed in the fourth row of Table \ref{tab.1}.

Identical reflection constraints hold for $q_{kk}=-1$ for $k \in \{3,\ldots, r\}$ and $q_{11}^2\widetilde{q_{12}}=q_{ii}\widetilde{q_{i,i+1}}=q_{jj}\widetilde{q_{j-1,j}}=1$ for all $i \in \{3,4,\ldots, r-1\}\verb|\|\{k\}$ and $j \in \{3,4,\ldots, r\}\verb|\|\{k\}$. The same reasoning extends to subcases with multiple distinguished vertices satisfying $q_{i_1,i_1}=q_{i_2,i_2}=\ldots=q_{i_k,i_k}=-1$ for $i_1,i_2,\ldots,i_k\in \{2,3,\ldots,r\}$ and $1<k<r$. All associated generalized Dynkin diagrams $\mathcal{D}$ are listed in the fourth row of Table \ref{tab.1}.

Subcase $b_3$. Let $(3)_{q_{11}}=0$ and $q_{ii}\widetilde{q_{i,i+1}}=q_{jj}\widetilde{q_{j-1,j}}=1$ for all $i \in \{2,3,\ldots, r-1\}$ and $j \in \{2,3,\ldots, r\}$. We assume  $q_{11}^{2}\widetilde{q_{12}}-1\neq 0$ and $q_{22}\neq-1$ to avoid redundant cases. Set $q_{11} \colon= \zeta$ and $q_{22}\colon=q$. The reflection of X is 
\begin{align*}
X\colon ~
\rule[-4\unitlength]{0pt}{5\unitlength}
\begin{picture}(63,4)(0,3)
\put(1,1){\circle*{2}}
\put(2,1){\line(1,0){8}}
\put(11,1){\circle{2}}
\put(12,1){\line(1,0){8}}
\put(21,1){\circle{2}}
\put(22,1){\line(1,0){8}}
\put(31,1){\circle{2}}
\put(32,1){\line(1,0){8}}
\put(41,1){\circle{2}}
\put(42,1){\line(1,0){2}}
\put(47,1){\makebox[0pt]{$\ldots $}}
\put(50,1){\line(1,0){2}}
\put(53,1){\circle{2}}
\put(54,1){\line(1,0){8}}
\put(63,1){\circle{2}}
\put(1,4){\makebox[0pt]{\scriptsize $\zeta$}}
\put(6,3){\makebox[0pt]{\scriptsize $q^{-1}$}}
\put(11,4){\makebox[0pt]{\scriptsize $q$}}
\put(16,3){\makebox[0pt]{\scriptsize $q^{-1}$}}
\put(21,4){\makebox[0pt]{\scriptsize $q$}}
\put(26,3){\makebox[0pt]{\scriptsize $q^{-1}$}}
\put(31,4){\makebox[0pt]{\scriptsize $q$}}
\put(36,3){\makebox[0pt]{\scriptsize $q^{-1}$}}
\put(41,4){\makebox[0pt]{\scriptsize $q$}}
\put(54,4){\makebox[0pt]{\scriptsize $q$}}
\put(59,3){\makebox[0pt]{\scriptsize $q^{-1}$}}
\put(63,4){\makebox[0pt]{\scriptsize $q$}}
\end{picture}
\end{align*}
\begin{align*}
\quad \Rightarrow
r_1(X)\colon ~
\rule[-4\unitlength]{0pt}{5\unitlength}
\begin{picture}(63,4)(0,3)
\put(1,1){\circle{2}}
\put(2,1){\line(1,0){10}}
\put(13,1){\circle{2}}
\put(14,1){\line(1,0){10}}
\put(25,1){\circle{2}}
\put(26,1){\line(1,0){10}}
\put(37,1){\circle{2}}
\put(38,1){\line(1,0){10}}
\put(49,1){\circle{2}}
\put(50,1){\line(1,0){2}}
\put(55,1){\makebox[0pt]{$\ldots $}}
\put(58,1){\line(1,0){2}}
\put(61,1){\circle{2}}
\put(62,1){\line(1,0){10}}
\put(73,1){\circle{2}}
\put(1,4){\makebox[0pt]{\scriptsize $\zeta$}}
\put(6,3){\makebox[0pt]{\scriptsize $\zeta^{-1}q$}}
\put(14,4){\makebox[0pt]{\scriptsize $\zeta q^{-1}$}}
\put(20,3){\makebox[0pt]{\scriptsize $q^{-1}$}}
\put(25,4){\makebox[0pt]{\scriptsize $q$}}
\put(31,3){\makebox[0pt]{\scriptsize $q^{-1}$}}
\put(37,4){\makebox[0pt]{\scriptsize $q$}}
\put(43,3){\makebox[0pt]{\scriptsize $q^{-1}$}}
\put(49,4){\makebox[0pt]{\scriptsize $q$}}
\put(61,4){\makebox[0pt]{\scriptsize $q$}}
\put(67,3){\makebox[0pt]{\scriptsize $q^{-1}$}}
\put(73,4){\makebox[0pt]{\scriptsize $q$}}
\end{picture}
\end{align*}
with $(3)_{\zeta}=0$ and $q\notin \{-1, \zeta,\zeta^{-1}\}$ and applying Lemma \ref{jslemma} to $A^{r_1(X)} = B_r$, we obtain $\zeta q^{-2}=1$ or $\zeta q^{-1}=-1$. Hence $q = -\zeta^{-1}$, $p\neq 2$ or $q=-\zeta$, $p\neq 2$. If $p\neq 2,3$, then one has $q=-\zeta^{-1}=-\zeta$. Hence $\mathcal{D}\colon$
\begin{align*}
\rule[-4\unitlength]{0pt}{5\unitlength}
\begin{picture}(63,4)(0,3)
\put(1,1){\circle{2}}
\put(2,1){\line(1,0){10}}
\put(13,1){\circle{2}}
\put(14,1){\line(1,0){10}}
\put(25,1){\circle{2}}
\put(26,1){\line(1,0){10}}
\put(37,1){\circle{2}}
\put(38,1){\line(1,0){10}}
\put(49,1){\circle{2}}
\put(50,1){\line(1,0){2}}
\put(55,1){\makebox[0pt]{$\ldots $}}
\put(58,1){\line(1,0){2}}
\put(61,1){\circle{2}}
\put(62,1){\line(1,0){10}}
\put(73,1){\circle{2}}
\put(1,4){\makebox[0pt]{\scriptsize $\zeta$}}
\put(7,3){\makebox[0pt]{\scriptsize $-\zeta$}}
\put(13,4){\makebox[0pt]{\scriptsize $-\zeta^{-1}$}}
\put(19,3){\makebox[0pt]{\scriptsize $-\zeta$}}
\put(25,4){\makebox[0pt]{\scriptsize $-\zeta^{-1}$}}
\put(31,3){\makebox[0pt]{\scriptsize $-\zeta$}}
\put(37,4){\makebox[0pt]{\scriptsize $-\zeta^{-1}$}}
\put(43,3){\makebox[0pt]{\scriptsize $-\zeta$}}
\put(49,4){\makebox[0pt]{\scriptsize $-\zeta^{-1}$}}
\put(60,4){\makebox[0pt]{\scriptsize $-\zeta^{-1}$}}
\put(67,3){\makebox[0pt]{\scriptsize $-\zeta$}}
\put(73,4){\makebox[0pt]{\scriptsize $-\zeta^{-1}$}}
\end{picture}
\end{align*}
\noindent 
is abbreviated by the simple chain
\begin{align*}
\tau_{r\cdots321}\
\begin{picture}(48,8)
\put(17,3){\oval(34,6)}
\put(0,0){\makebox(34,6){\scriptsize $C(r-1,-\zeta ^{-1};)$}}
\put(34,3){\line(1,0){10}}
\put(45,3){\circle{2}}
\put(40,4){\makebox[0pt]{\scriptsize $-\zeta $}}
\put(45,5){\makebox[0pt]{\scriptsize $\zeta $}}
\end{picture}
\end{align*}
which is listed in the fifth row of Table \ref{tab.1}.
If $p=3$, then one has $q=-1$ for $q=-\zeta^{-1}$ and $q=-\zeta$. Hence $\mathcal{D}$ appears in row $5'$ of Table \ref{tab.1}. 

Subcase $b_4$. Let $q_{22}=-1$, $(3)_{q_{11}}=0$ and $q_{ii}\widetilde{q_{i,i+1}}=q_{jj}\widetilde{q_{j-1,j}}=1$ for all $i \in \{3,4,\ldots, r-1\}$ and $j \in \{3,4,\ldots, r\}$. We exclude the cases $q_{11}^2\widetilde{q_{12}}= 1$ and $\widetilde{q_{23}}=-1$ to avoid redundancy. Set $q_{11}\colon=\zeta$, $q\colon=\widetilde{q_{12}}$ and $r\colon=\widetilde{q_{23}}$. Then $r\neq -1$, $qr=1$ by $A^{r_2(X)}=B_r$. And one has $(3)_\zeta =0$ and $q \notin \{-1,\zeta, \zeta^{-1}\}$. The reflection of $X$
\begin{align*}
X\colon~
\rule[-4\unitlength]{0pt}{5\unitlength}
\begin{picture}(63,4)(0,3)
\put(1,1){\circle*{2}}
\put(2,1){\line(1,0){8}}
\put(11,1){\circle{2}}
\put(12,1){\line(1,0){8}}
\put(21,1){\circle{2}}
\put(22,1){\line(1,0){8}}
\put(31,1){\circle{2}}
\put(32,1){\line(1,0){8}}
\put(41,1){\circle{2}}
\put(42,1){\line(1,0){2}}
\put(47,1){\makebox[0pt]{$\ldots $}}
\put(50,1){\line(1,0){2}}
\put(53,1){\circle{2}}
\put(54,1){\line(1,0){8}}
\put(63,1){\circle{2}}
\put(1,4){\makebox[0pt]{\scriptsize $\zeta$}}
\put(6,3){\makebox[0pt]{\scriptsize $q$}}
\put(11,4){\makebox[0pt]{\scriptsize $-1$}}
\put(16,3){\makebox[0pt]{\scriptsize $q^{-1}$}}
\put(21,4){\makebox[0pt]{\scriptsize $q$}}
\put(26,3){\makebox[0pt]{\scriptsize $q^{-1}$}}
\put(31,4){\makebox[0pt]{\scriptsize $q$}}
\put(36,3){\makebox[0pt]{\scriptsize $q^{-1}$}}
\put(41,4){\makebox[0pt]{\scriptsize $q$}}
\put(54,4){\makebox[0pt]{\scriptsize $q$}}
\put(59,3){\makebox[0pt]{\scriptsize $q^{-1}$}}
\put(63,4){\makebox[0pt]{\scriptsize $q$}}
\end{picture}
\end{align*}
\begin{align*}
\quad \Rightarrow
r_1(X)\colon ~
\rule[-4\unitlength]{0pt}{5\unitlength}
\begin{picture}(63,4)(0,3)
\put(1,1){\circle{2}}
\put(2,1){\line(1,0){12}}
\put(15,1){\circle{2}}
\put(16,1){\line(1,0){12}}
\put(29,1){\circle{2}}
\put(30,1){\line(1,0){12}}
\put(43,1){\circle{2}}
\put(44,1){\line(1,0){12}}
\put(57,1){\circle{2}}
\put(58,1){\line(1,0){2}}
\put(63,1){\makebox[0pt]{$\ldots $}}
\put(66,1){\line(1,0){2}}
\put(69,1){\circle{2}}
\put(70,1){\line(1,0){12}}
\put(83,1){\circle{2}}
\put(1,4){\makebox[0pt]{\scriptsize $\zeta$}}
\put(7,3){\makebox[0pt]{\scriptsize $(\zeta q)^{-1}$}}
\put(16,4){\makebox[0pt]{\scriptsize $-\zeta q^{2}$}}
\put(23,3){\makebox[0pt]{\scriptsize $q^{-1}$}}
\put(29,4){\makebox[0pt]{\scriptsize $q$}}
\put(36,3){\makebox[0pt]{\scriptsize $q^{-1}$}}
\put(43,4){\makebox[0pt]{\scriptsize $q$}}
\put(50,3){\makebox[0pt]{\scriptsize $q^{-1}$}}
\put(57,4){\makebox[0pt]{\scriptsize $q$}}
\put(69,4){\makebox[0pt]{\scriptsize $q$}}
\put(76,3){\makebox[0pt]{\scriptsize $q^{-1}$}}
\put(83,4){\makebox[0pt]{\scriptsize $q$}}
\end{picture}
\end{align*}
imply that $-\zeta q^{2}=-1$ or $-\zeta q^2(\zeta q)^{-1}= -\zeta q^2q^{-1}=1$ by $A^{r_1(X)}=B_r$. Then $q=-\zeta$, $p\neq 2,3$. Hence $\mathcal{D}\colon$
\begin{align*}
\rule[-4\unitlength]{0pt}{5\unitlength}
\begin{picture}(63,4)(0,3)
\put(1,1){\circle{2}}
\put(2,1){\line(1,0){10}}
\put(13,1){\circle{2}}
\put(14,1){\line(1,0){10}}
\put(25,1){\circle{2}}
\put(26,1){\line(1,0){10}}
\put(37,1){\circle{2}}
\put(38,1){\line(1,0){10}}
\put(49,1){\circle{2}}
\put(50,1){\line(1,0){2}}
\put(55,1){\makebox[0pt]{$\ldots $}}
\put(58,1){\line(1,0){2}}
\put(61,1){\circle{2}}
\put(62,1){\line(1,0){10}}
\put(73,1){\circle{2}}
\put(1,4){\makebox[0pt]{\scriptsize $\zeta$}}
\put(7,3){\makebox[0pt]{\scriptsize $-\zeta$}}
\put(13,4){\makebox[0pt]{\scriptsize $-1$}}
\put(19,3){\makebox[0pt]{\scriptsize $-\zeta^{-1}$}}
\put(25,4){\makebox[0pt]{\scriptsize $-\zeta$}}
\put(31,3){\makebox[0pt]{\scriptsize $-\zeta^{-1}$}}
\put(37,4){\makebox[0pt]{\scriptsize $-\zeta$}}
\put(43,3){\makebox[0pt]{\scriptsize $-\zeta^{-1}$}}
\put(49,4){\makebox[0pt]{\scriptsize $-\zeta$}}
\put(60,4){\makebox[0pt]{\scriptsize $-\zeta$}}
\put(67,3){\makebox[0pt]{\scriptsize $-\zeta^{-1}$}}
\put(73,4){\makebox[0pt]{\scriptsize $-\zeta$}}
\end{picture}
\end{align*}
\noindent 
is abbreviated by the simple chain
\begin{align*}
\tau_{r\cdots321}\
\begin{picture}(48,8)
\put(20,3){\oval(40,6)}
\put(0,0){\makebox(40,6){\scriptsize $C(r-1,-\zeta ^{-1};1,\ldots ,r-1)$}}
\put(40,3){\line(1,0){10}}
\put(51,3){\circle{2}}
\put(45,4){\makebox[0pt]{\scriptsize $-\zeta $}}
\put(51,5){\makebox[0pt]{\scriptsize $\zeta $}}
\end{picture}
\end{align*}
which is listed in the sixth row of Table \ref{tab.1}.
\par
Similar reflections and conditions occur in cases $q_{kk}=-1$ for $k \in \{3,4,\ldots, r\}$, $(3)_{q_{11}}=0$ and $q_{ii}\widetilde{q_{i,i+1}}=q_{jj}\widetilde{q_{j-1,j}}=1$ for all $i \in \{3,4,\ldots, r-1\}\verb|\|\{k\}$ and $j \in \{3,4,\ldots, r\}\verb|\|\{k\}$. It happens similarly for subcases with more vertices satisfying $q_{i_1,i_1}=q_{i_2,i_2}=\ldots=q_{i_k,i_k}=-1$ for $i_1,i_2,\ldots,i_k\in \{2,3,\ldots,r\}$ and $1<k<r$. In these cases, all the generalized Dynkin diagrams $\mathcal{D}$ are listed in the sixth row of Table \ref{tab.1}.

Case $c$. Suppose $\mathcal{C}(M)$ is standard of type $C_r$. Since $A^X=C_r$, we obtain $(3)_{q_{22}}(q_{22}^2\widetilde{q_{12}}-1)=(2)_{q_{ii}}(q_{ii}\widetilde{q_{i,i+1}}-1)=(2)_{q_{jj}}(q_{jj}\widetilde{q_{j-1,j}}-1)=0$ for all $i \in \{1,3,\ldots, r-1\}$ and $j \in \{3,4,\ldots, r\}$. We split the analysis into two subcases $\colon (c_1), (c_2)$.

Subcase $c_1$.
Let $q^2_{22}\widetilde{q_{12}}-1=q_{22}\widetilde{q_{23}}-1=(2)_{q_{ii}}(q_{ii}\widetilde{q_{i,i+1}}-1)=(2)_{q_{jj}}(q_{jj}\widetilde{q_{j-1,j}}-1)=0$ for all $i\in \{1,3,\ldots,r-1\}$ and $j\in \{3,4,\ldots,r\}$. Set $q\colon=q_{22}$. Then $q\neq-1$, $\widetilde{q_{12}}=q^{-2}$ and $\widetilde{q_{23}}=q^{-1}$. Hence $q_{33}\widetilde{q_{23}}=1$. Otherwise, if $q_{33}\widetilde{q_{23}}\neq1$ then $q_{33}=-1$ and hence $a^{r_3}_{21}=-1$, which is a contradiction. Then $q_{33}\widetilde{q_{34}}=1$ by $a_{34}=-1$. Hence $q_{44}\widetilde{q_{34}}=1$ and $q_{44}\neq -1$. Otherwise, if $q_{44}=-1$ then $a^{r_3r_4(X)}_{21}=-1$ by Lemma \ref{jslemma}, which is again a contradiction. In the same way, we obtain that $q_{ii}\widetilde{q_{i,i+1}}=q_{jj}\widetilde{q_{j-1,j}}=1$ for all $i \in \{3,4,\ldots, r-1\}$ and $j \in \{3,4,\ldots,r\}$. If $q_{11}=-1$ and $\widetilde{q_{12}} \neq -1$, then the reflection of X 
\begin{align*}
X\colon~
\rule[-4\unitlength]{0pt}{5\unitlength}
\begin{picture}(63,4)(0,3)
\put(1,1){\circle*{2}}
\put(2,1){\line(1,0){8}}
\put(11,1){\circle{2}}
\put(12,1){\line(1,0){8}}
\put(21,1){\circle{2}}
\put(22,1){\line(1,0){8}}
\put(31,1){\circle{2}}
\put(32,1){\line(1,0){8}}
\put(41,1){\circle{2}}
\put(42,1){\line(1,0){2}}
\put(47,1){\makebox[0pt]{$\ldots $}}
\put(50,1){\line(1,0){2}}
\put(53,1){\circle{2}}
\put(54,1){\line(1,0){8}}
\put(63,1){\circle{2}}
\put(1,4){\makebox[0pt]{\scriptsize $-1$}}
\put(6,3){\makebox[0pt]{\scriptsize $q^{-2}$}}
\put(11,4){\makebox[0pt]{\scriptsize $q$}}
\put(16,3){\makebox[0pt]{\scriptsize $q^{-1}$}}
\put(21,4){\makebox[0pt]{\scriptsize $q$}}
\put(26,3){\makebox[0pt]{\scriptsize $q^{-1}$}}
\put(31,4){\makebox[0pt]{\scriptsize $q$}}
\put(36,3){\makebox[0pt]{\scriptsize $q^{-1}$}}
\put(41,4){\makebox[0pt]{\scriptsize $q$}}
\put(54,4){\makebox[0pt]{\scriptsize $q$}}
\put(59,3){\makebox[0pt]{\scriptsize $q^{-1}$}}
\put(63,4){\makebox[0pt]{\scriptsize $q$}}
\end{picture}
\end{align*}
\begin{align*}
\quad \Rightarrow
r_1(X)\colon ~
\rule[-4\unitlength]{0pt}{5\unitlength}
\begin{picture}(63,4)(0,3)
\put(1,1){\circle{2}}
\put(2,1){\line(1,0){12}}
\put(15,1){\circle{2}}
\put(16,1){\line(1,0){12}}
\put(29,1){\circle{2}}
\put(30,1){\line(1,0){12}}
\put(43,1){\circle{2}}
\put(44,1){\line(1,0){12}}
\put(57,1){\circle{2}}
\put(58,1){\line(1,0){2}}
\put(63,1){\makebox[0pt]{$\ldots $}}
\put(66,1){\line(1,0){2}}
\put(69,1){\circle{2}}
\put(70,1){\line(1,0){12}}
\put(83,1){\circle{2}}
\put(1,4){\makebox[0pt]{\scriptsize $-1$}}
\put(8,3){\makebox[0pt]{\scriptsize $q^2$}}
\put(16,4){\makebox[0pt]{\scriptsize $-q^{-1}$}}
\put(23,3){\makebox[0pt]{\scriptsize $q^{-1}$}}
\put(29,4){\makebox[0pt]{\scriptsize $q$}}
\put(36,3){\makebox[0pt]{\scriptsize $q^{-1}$}}
\put(43,4){\makebox[0pt]{\scriptsize $q$}}
\put(50,3){\makebox[0pt]{\scriptsize $q^{-1}$}}
\put(57,4){\makebox[0pt]{\scriptsize $q$}}
\put(69,4){\makebox[0pt]{\scriptsize $q$}}
\put(76,3){\makebox[0pt]{\scriptsize $q^{-1}$}}
\put(83,4){\makebox[0pt]{\scriptsize $q$}}
\end{picture}
\end{align*}
gives that $-q^{-1}q^{-1}=1$ by applying Lemma \ref{jslemma} on $a^{r_1(X)}_{23}=-1$. Then $\widetilde{q_{12}}=q^{-2}=-1$, which is a contradiction. If $q_{11}\widetilde{q_{12}}=1$, then $\mathcal{D}\colon$
\begin{align*}
\rule[-4\unitlength]{0pt}{5\unitlength}
\begin{picture}(63,4)(0,3)
\put(1,1){\circle{2}}
\put(2,1){\line(1,0){10}}
\put(13,1){\circle{2}}
\put(14,1){\line(1,0){10}}
\put(25,1){\circle{2}}
\put(26,1){\line(1,0){10}}
\put(37,1){\circle{2}}
\put(38,1){\line(1,0){10}}
\put(49,1){\circle{2}}
\put(50,1){\line(1,0){2}}
\put(55,1){\makebox[0pt]{$\ldots $}}
\put(58,1){\line(1,0){2}}
\put(61,1){\circle{2}}
\put(62,1){\line(1,0){10}}
\put(73,1){\circle{2}}
\put(1,4){\makebox[0pt]{\scriptsize $q^2$}}
\put(7,3){\makebox[0pt]{\scriptsize $q^{-2}$}}
\put(13,4){\makebox[0pt]{\scriptsize $q$}}
\put(19,3){\makebox[0pt]{\scriptsize $q^{-1}$}}
\put(25,4){\makebox[0pt]{\scriptsize $q$}}
\put(31,3){\makebox[0pt]{\scriptsize $q^{-1}$}}
\put(37,4){\makebox[0pt]{\scriptsize $q$}}
\put(43,3){\makebox[0pt]{\scriptsize $q^{-1}$}}
\put(49,4){\makebox[0pt]{\scriptsize $q$}}
\put(60,4){\makebox[0pt]{\scriptsize $q$}}
\put(67,3){\makebox[0pt]{\scriptsize $q^{-1}$}}
\put(73,4){\makebox[0pt]{\scriptsize $q$}}
\end{picture}
\end{align*}
\noindent
is abbreviated by the simple chain
\begin{align*}
\tau_{r\cdots321}\
\rule[-4\unitlength]{0pt}{5\unitlength}
\begin{picture}(50,5)(0,3)
\put(13,3){\oval(26,6)}
\put(0,0){\makebox(26,6){\scriptsize $C(r-2,q;)$}}
\put(26,3){\line(1,0){10}}
\put(37,3){\circle{2}}
\put(38,3){\line(1,0){10}}
\put(49,3){\circle{2}}
\put(31,4){\makebox[0pt]{\scriptsize $q^{-1}$}}
\put(37,5){\makebox[0pt]{\scriptsize $q$}}
\put(43,4){\makebox[0pt]{\scriptsize $q^{-2}$}}
\put(49,5){\makebox[0pt]{\scriptsize $q^2$}}
\end{picture}
\end{align*}
which is listed in the seventh row of Table \ref{tab.1}.

Subcase $c_2$. Let $(3)_{q_{22}}=q_{22}\widetilde{q_{23}}-1=(2)_{ii}(q_{ii}\widetilde{q_{i,i+1}}-1)=(2)_{jj}(q_{jj}\widetilde{q_{j-1,j}}-1)=0$ for all $i\in \{1,3,\ldots, r-1\}$ and $j\in \{3,4,\ldots, r\}$. We may assume  $q^2_{22}\widetilde{q_{12}}-1 \neq 0$ avoid redundant cases. Set $q_{22}\colon=\zeta$. Then $\widetilde{q_{12}} \notin \{\zeta,\zeta^{-1}\}$. Since $q_{22}\widetilde{q_{23}}=1$, we get $\widetilde{q_{23}}=\zeta^{-1}$. Meanwhile, $a^{r_2(X)}_{13}=0$ yields $\widetilde{q_{12}}(q_{22})^{-1}=1$, so $\widetilde{q_{12}}=\zeta$. This is a contradiction.

Case $d$. Suppose $\mathcal{C}(M)$ is standard of type $D_r$. Applying Lemma \ref{jslemma} to $A^X=D_r$ shows that $(2)_{q_{33}}(q_{33}\widetilde{q_{3i}}-1)=(2)_{q_{ii}}(q_{ii}\widetilde{q_{3i}}-1)=0$
 for all $i \in \{1,2,4\}$ and $(2)_{q_{jj}}(q_{jj}\widetilde{q_{j,j+1}}-1)=(2)_{q_{kk}}(q_{kk}\widetilde{q_{k-1,k}}-1)=0$ for all $j\in \{4,5,\ldots, r-1\}$ and $k \in \{5,6,\ldots,r\}$. We divide the analysis into three subcases $\colon (d_1), (d_2), (d_3)$.

Subcase $d_1$. Let $q_{33}\widetilde{q_{3i}}=q_{ii}\widetilde{q_{3i}}=q_{jj}\widetilde{q_{j,j+1}}=q_{kk}\widetilde{q_{k-1,k}}=1$ for all $i \in \{1,2,4\}$, $j\in \{4,5,\ldots,r-1\}$ and $k \in \{5,6,\ldots,r\}$. To avoid redundant cases, set $q\colon=q_{33}\neq-1$. Then $\mathcal{D}\colon$
\begin{align*}
\rule[-9\unitlength]{0pt}{12\unitlength}
\begin{picture}(58,10)(0,9)
\put(1,18){\circle{2}}
\put(1,2){\circle{2}}
\put(10,10){\circle{2}}
\put(9,10){\line(-1,1){7}}
\put(9,10){\line(-1,-1){7}}
\put(11,10){\line(1,0){10}}
\put(22,10){\circle{2}}
\put(23,10){\line(1,0){10}}
\put(34,10){\circle{2}}
\put(35,10){\line(1,0){2}}
\put(40,10){\makebox[0pt]{$\ldots $}}
\put(43,10){\line(1,0){2}}
\put(46,10){\circle{2}}
\put(47,10){\line(1,0){10}}
\put(58,10){\circle{2}}
\put(1,20){\makebox[0pt]{\scriptsize $q$}}
\put(1,-1){\makebox[0pt]{\scriptsize $q$}}
\put(6,16){\makebox[0pt]{\scriptsize $q^{-1}$}}
\put(6,3){\makebox[0pt]{\scriptsize $q^{-1}$}}
\put(10,13){\makebox[0pt]{\scriptsize $q$}}
\put(16,12){\makebox[0pt]{\scriptsize $q^{-1}$}}
\put(22,13){\makebox[0pt]{\scriptsize $q$}}
\put(28,12){\makebox[0pt]{\scriptsize $q^{-1}$}}
\put(34,13){\makebox[0pt]{\scriptsize $q$}}
\put(46,13){\makebox[0pt]{\scriptsize $q$}}
\put(52,12){\makebox[0pt]{\scriptsize $q^{-1}$}}
\put(58,13){\makebox[0pt]{\scriptsize $q$}}
\end{picture}
\end{align*}
\noindent
is abbreviated by the simple chain
\begin{align*}
\tau_{r\cdots321}\
\rule[-10\unitlength]{0pt}{10\unitlength}
\begin{picture}(45,10)(0,10)
\put(17,10){\oval(34,6)}
\put(0,7){\makebox(34,6){\scriptsize $C(r-2,q;)$}}
\put(34,10){\line(1,1){7}}
\put(34,10){\line(1,-1){7}}
\put(41,18){\circle{2}}
\put(41,2){\circle{2}}
\put(39,15){\makebox[0pt][r]{\scriptsize $q^{-1}$}}
\put(39,3){\makebox[0pt][r]{\scriptsize $q^{-1}$}}
\put(43,17){\makebox[0pt][l]{\scriptsize $q$}}
\put(43,2){\makebox[0pt][l]{\scriptsize $q$}}
\end{picture}    
\end{align*}
which is listed in the eighth row of Table \ref{tab.1}.

\par
 Subcase $d_2$. Let $q_{33}\widetilde{q_{3i}}-1=0$ for all $i\in \{1,2,4\}$ and there exists $l\in \{1,2,4,5,\ldots,r\}$ such that $(2)_{q_{ll}}=(2)_{q_{mm}}(q_{mm}\widetilde{q_{3m}}-1)=(2)_{q_{jj}}(q_{jj}\widetilde{q_{j,j+1}}-1)=(2)_{q_{kk}}(q_{kk}\widetilde{q_{k-1,k}}-1)=0$ for all $m \in \{1,2,4\}\verb|\|\{l\}, j \in \{4,5,\ldots,r-1\}\verb|\|\{l\}, k \in \{5,6,\ldots,r\}\verb|\|\{l\}$. To avoid redundant arguments, suppose $q_{33}\neq -1$ here, and set $q\colon=q_{33}$. So $q \neq \pm1$. Applying Lemma \ref{jslemma} to $A^{r_3\cdots r_l(X)}=D_r$ gives $q^{-2}=1$, which is a contradiction. For other subcases where more  more vertices satisfy $(2)_{q_{i_1,i_1}}=(2)_{q_{i_2,i_2}}=\ldots=(2)_{q_{i_k,i_k}}=0$ with $i_1,i_2,\ldots,i_k\in \{1,2,4,5,\ldots,r\}$ and $1<k<r$, we reach a contradictionby the same reasoning.
\par
Subcase $d_3$. Consider $(2)_{q_{33}}=(2)_{q_{ii}}(q_{ii}\widetilde{q_{3i}}-1)=(2)_{q_{jj}}(q_{jj}\widetilde{q_{j,j+1}}-1)=(2)_{q_{kk}}(q_{kk}\widetilde{q_{k-1,k}}-1)=0$ for all $i \in \{1,2,4\}$, $j \in \{4,5,\ldots,r-1\}$, $ k \in \{5,\ldots, r\}$. Set $q\colon=\widetilde{q_{31}}, r\colon=\widetilde{q_{32}}, s\colon=\widetilde{q_{34}}$. To avoid repetition, let $q,r,s\neq -1$. Since $A^{r_3(X)}=D_r$, we have $qr=rs=qs=1$. Then $q=r=s=-1$, which is a contradiction. 

Case $d'$. Suppose $A^X$ has a good $D'_r$ neighborhood. Set $a \colon= a_{21}^{r_3(X)}$. Since $A^X = A_r$, we have $(2)_{q_{ii}}(q_{ii}\widetilde{q_{i,i+1}}-1) = (2)_{q_{jj}}(q_{jj}\widetilde{q_{j-1j}}-1) = 0$ for all $i \in \{1,2,3,\ldots r-1\}$ and  $j \in \{2,3,4,\ldots, r\}$. By Lemma \ref{one-classificationtheorem}, the conditions $q_{22} = -1$ and $\widetilde{q_{12}}\widetilde{q_{23}}\neq 1$ hold throughout. We now consider the following subcases $\colon (d'_1), (d'_2), (d'_3), (d'_4)$.
\par
Subcase $d'_1$. Let $q_{ii}\widetilde{q_{i,i+1}} = q_{jj}\widetilde{q_{j-1,j}}= 1$ for all $i \in \{1,3,4,\ldots, r-1\}$ and $j \in \{3,4,\ldots, r\}$. We assume that $\widetilde{q_{23}} \neq -1$ to avoid redundant cases. From Lemma \ref{jslemma} one has $a = 1$. Set $r \colon= \widetilde{q_{12}}$ and $q \colon= \widetilde{q_{23}}$. Then $qr \neq 1$ and $q \neq -1$. The reflections of $X$
\begin{align*}
X\colon~
\rule[-4\unitlength]{0pt}{5\unitlength}
\begin{picture}(63,4)(0,3)
\put(1,1){\circle{2}}
\put(2,1){\line(1,0){8}}
\put(11,1){\circle*{2}}
\put(12,1){\line(1,0){8}}
\put(21,1){\circle{2}}
\put(22,1){\line(1,0){8}}
\put(31,1){\circle{2}}
\put(32,1){\line(1,0){8}}
\put(41,1){\circle{2}}
\put(42,1){\line(1,0){2}}
\put(47,1){\makebox[0pt]{$\ldots $}}
\put(50,1){\line(1,0){2}}
\put(53,1){\circle{2}}
\put(54,1){\line(1,0){8}}
\put(63,1){\circle{2}}
\put(1,4){\makebox[0pt]{\scriptsize $r^{-1}$}}
\put(6,3){\makebox[0pt]{\scriptsize $r$}}
\put(11,4){\makebox[0pt]{\scriptsize $-1$}}
\put(16,3){\makebox[0pt]{\scriptsize $q$}}
\put(21,4){\makebox[0pt]{\scriptsize $q^{-1}$}}
\put(26,3){\makebox[0pt]{\scriptsize $q$}}
\put(31,4){\makebox[0pt]{\scriptsize $q^{-1}$}}
\put(36,3){\makebox[0pt]{\scriptsize $q$}}
\put(41,4){\makebox[0pt]{\scriptsize $q^{-1}$}}
\put(54,4){\makebox[0pt]{\scriptsize $q^{-1}$}}
\put(57,3){\makebox[0pt]{\scriptsize $q$}}
\put(63,4){\makebox[0pt]{\scriptsize $q^{-1}$}}
\end{picture}
\quad \Rightarrow
r_2(X)\colon ~
\rule[-9\unitlength]{0pt}{12\unitlength}
\begin{picture}(58,10)(0,9)
\put(2,18){\circle{2}}
\put(2,2){\circle{2}}
\put(2,3){\line(0,1){14}}
\put(10,10){\circle*{2}}
\put(9,10){\line(-1,1){7}}
\put(9,10){\line(-1,-1){7}}
\put(11,10){\line(1,0){10}}
\put(22,10){\circle{2}}
\put(23,10){\line(1,0){10}}
\put(34,10){\circle{2}}
\put(35,10){\line(1,0){2}}
\put(40,10){\makebox[0pt]{$\ldots $}}
\put(43,10){\line(1,0){2}}
\put(46,10){\circle{2}}
\put(47,10){\line(1,0){10}}
\put(58,10){\circle{2}}
\put(2,20){\makebox[0pt]{\scriptsize $-1$}}
\put(0,10){\makebox[0pt]{\scriptsize $r^{-1}$}}
\put(2,-1){\makebox[0pt]{\scriptsize $-1$}}
\put(6,15){\makebox[0pt]{\scriptsize $qr$}}
\put(7,4){\makebox[0pt]{\scriptsize $q^{-1}$}}
\put(10,13){\makebox[0pt]{\scriptsize $-1$}}
\put(16,12){\makebox[0pt]{\scriptsize $q$}}
\put(22,13){\makebox[0pt]{\scriptsize $q^{-1}$}}
\put(28,12){\makebox[0pt]{\scriptsize $q$}}
\put(34,13){\makebox[0pt]{\scriptsize $q^{-1}$}}
\put(46,13){\makebox[0pt]{\scriptsize $q^{-1}$}}
\put(52,12){\makebox[0pt]{\scriptsize $q$}}
\put(58,13){\makebox[0pt]{\scriptsize $q^{-1}$}}
\end{picture}
\end{align*}
\begin{align*}
\quad \Rightarrow
r_3r_2(X)\colon~
\tau_{2314\cdots r} \ \
%
%
%
\rule[-9\unitlength]{0pt}{12\unitlength}
\begin{picture}(58,10)(0,9)
\put(0,10){\circle{2}}
\put(1,10){\line(1,0){10}}
\put(12,10){\circle{2}}
\put(13,10){\line(1,2){5}}
\put(18,21){\circle{2}}
\put(13,10){\line(1,0){10}}
\put(23,10){\line(-1,2){5}}
\put(24,10){\circle{2}}
\put(25,10){\line(1,0){10}}
\put(36,10){\circle{2}}
\put(37,10){\line(1,0){2}}
\put(42,10){\makebox[0pt]{$\ldots $}}
\put(45,10){\line(1,0){2}}
\put(48,10){\circle{2}}
\put(49,10){\line(1,0){10}}
\put(60,10){\circle{2}}
\put(0,13){\makebox[0pt]{\scriptsize $q^{-1}$}}
\put(6,12){\makebox[0pt]{\scriptsize $q$}}
\put(12,13){\makebox[0pt]{\scriptsize $-1$}}
\put(18,24){\makebox[0pt]{\scriptsize $qr$}}
\put(12,16){\makebox[0pt]{\scriptsize $(qr)^{-1}$}}
\put(18,12){\makebox[0pt]{\scriptsize $q^{-1}$}}
\put(23,16){\makebox[0pt]{\scriptsize $q^2r$}}
\put(24,13){\makebox[0pt]{\scriptsize $-1$}}
\put(30,12){\makebox[0pt]{\scriptsize $q$}}
\put(36,13){\makebox[0pt]{\scriptsize $q^{-1}$}}
\put(48,13){\makebox[0pt]{\scriptsize $q^{-1}$}}
\put(54,12){\makebox[0pt]{\scriptsize $q$}}
\put(60,13){\makebox[0pt]{\scriptsize $q^{-1}$}}
\end{picture}
\end{align*}
imply that $q^2r = 1$ by $a^{r_3r_2(X)}_{14} = 0$, and hence $\mathcal{D}\colon$
\begin{align*}
\rule[-4\unitlength]{0pt}{5\unitlength}
\begin{picture}(63,4)(0,3)
\put(1,1){\circle{2}}
\put(2,1){\line(1,0){10}}
\put(13,1){\circle{2}}
\put(14,1){\line(1,0){10}}
\put(25,1){\circle{2}}
\put(26,1){\line(1,0){10}}
\put(37,1){\circle{2}}
\put(38,1){\line(1,0){10}}
\put(49,1){\circle{2}}
\put(50,1){\line(1,0){2}}
\put(55,1){\makebox[0pt]{$\ldots $}}
\put(58,1){\line(1,0){2}}
\put(61,1){\circle{2}}
\put(62,1){\line(1,0){10}}
\put(73,1){\circle{2}}
\put(1,4){\makebox[0pt]{\scriptsize $q^2$}}
\put(7,3){\makebox[0pt]{\scriptsize $q^{-2}$}}
\put(13,4){\makebox[0pt]{\scriptsize $-1$}}
\put(19,3){\makebox[0pt]{\scriptsize $q$}}
\put(25,4){\makebox[0pt]{\scriptsize $q^{-1}$}}
\put(31,3){\makebox[0pt]{\scriptsize $q$}}
\put(37,4){\makebox[0pt]{\scriptsize $q^{-1}$}}
\put(43,3){\makebox[0pt]{\scriptsize $q$}}
\put(49,4){\makebox[0pt]{\scriptsize $q^{-1}$}}
\put(60,4){\makebox[0pt]{\scriptsize $q^{-1}$}}
\put(67,3){\makebox[0pt]{\scriptsize $q$}}
\put(73,4){\makebox[0pt]{\scriptsize $q^{-1}$}}
\end{picture}
\end{align*}
\noindent
is abbreviated by the simple chain
\begin{align*}
\tau_{r\cdots321}\
\rule[-4\unitlength]{0pt}{5\unitlength}
\begin{picture}(42,5)(0,3)
\put(18,3){\oval(36,6)}
\put(0,0){\makebox(36,6){\scriptsize $C(r-1,q;1,\ldots ,r-1)$}}
\put(36,3){\line(1,0){10}}
\put(47,3){\circle{2}}
\put(40,4){\makebox[0pt]{\scriptsize $q^{-2}$}}
\put(47,5){\makebox[0pt]{\scriptsize $q^2$}}
\end{picture}
\end{align*}
which is listed in the tenth row of Table \ref{tab.1}.  

Similar reflections and conditions also hold in cases $q_{i_1,i_1}=q_{i_2,i_2}=\ldots=q_{i_k,i_k}=-1$ for $i_1,i_2,\ldots,i_k\in \{4,5,\ldots,r\}$, $1\leq k<r-2$, and $q_{ii}\widetilde{q_{i,i+1}} = q_{jj}\widetilde{q_{j-1,j}}= 1$ for all $i \in \{1,3,4,\ldots,r-1\}\verb|\|\{i_1,i_2,\ldots,i_k\}$ and $j \in \{3,4,\ldots,r\}\verb|\|\{i_1,i_2,\ldots,i_k\}$. In these cases, all the generalized Dynkin diagrams $\mathcal{D}$ are listed in the tenth row of Table \ref{tab.1}.

Subcase $d'_2$. Let $q_{11}=-1$ , $q_{ii}\widetilde{q_{i,i+1}}= q_{jj}\widetilde{q_{j-1,j}}= 1$ for all $i \in \{3,4,\ldots, r-1\}$ and $j \in \{3,4,\ldots, r\}$. From Lemma \ref{jslemma}, it follows that $a = 1$. Set $r \colon= \widetilde{q_{12}}$ and $q \colon= \widetilde{q_{23}}$. Then $qr \neq 1$. By applying Lemma \ref{jslemma} to $a^{r_1(X)}_{23}=-1$, we have $r=-1, q\neq -1$. The reflections of $X$
\begin{align*}
X\colon~
\rule[-4\unitlength]{0pt}{5\unitlength}
\begin{picture}(63,4)(0,3)
\put(1,1){\circle{2}}
\put(2,1){\line(1,0){8}}
\put(11,1){\circle*{2}}
\put(12,1){\line(1,0){8}}
\put(21,1){\circle{2}}
\put(22,1){\line(1,0){8}}
\put(31,1){\circle{2}}
\put(32,1){\line(1,0){8}}
\put(41,1){\circle{2}}
\put(42,1){\line(1,0){2}}
\put(47,1){\makebox[0pt]{$\ldots $}}
\put(50,1){\line(1,0){2}}
\put(53,1){\circle{2}}
\put(54,1){\line(1,0){8}}
\put(63,1){\circle{2}}
\put(1,4){\makebox[0pt]{\scriptsize $-1$}}
\put(6,3){\makebox[0pt]{\scriptsize $-1$}}
\put(11,4){\makebox[0pt]{\scriptsize $-1$}}
\put(16,3){\makebox[0pt]{\scriptsize $q$}}
\put(21,4){\makebox[0pt]{\scriptsize $q^{-1}$}}
\put(26,3){\makebox[0pt]{\scriptsize $q$}}
\put(31,4){\makebox[0pt]{\scriptsize $q^{-1}$}}
\put(36,3){\makebox[0pt]{\scriptsize $q$}}
\put(41,4){\makebox[0pt]{\scriptsize $q^{-1}$}}
\put(54,4){\makebox[0pt]{\scriptsize $q^{-1}$}}
\put(57,3){\makebox[0pt]{\scriptsize $q$}}
\put(63,4){\makebox[0pt]{\scriptsize $q^{-1}$}}
\end{picture}
\quad \Rightarrow
r_2(X)\colon ~
\rule[-9\unitlength]{0pt}{12\unitlength}
\begin{picture}(58,10)(0,9)
\put(2,18){\circle{2}}
\put(2,2){\circle{2}}
\put(2,3){\line(0,1){14}}
\put(10,10){\circle*{2}}
\put(9,10){\line(-1,1){7}}
\put(9,10){\line(-1,-1){7}}
\put(11,10){\line(1,0){10}}
\put(22,10){\circle{2}}
\put(23,10){\line(1,0){10}}
\put(34,10){\circle{2}}
\put(35,10){\line(1,0){2}}
\put(40,10){\makebox[0pt]{$\ldots $}}
\put(43,10){\line(1,0){2}}
\put(46,10){\circle{2}}
\put(47,10){\line(1,0){10}}
\put(58,10){\circle{2}}
\put(2,20){\makebox[0pt]{\scriptsize $-1$}}
\put(0,10){\makebox[0pt]{\scriptsize $-1$}}
\put(2,-1){\makebox[0pt]{\scriptsize $-1$}}
\put(6,15){\makebox[0pt]{\scriptsize $-q$}}
\put(7,4){\makebox[0pt]{\scriptsize $q^{-1}$}}
\put(10,13){\makebox[0pt]{\scriptsize $-1$}}
\put(16,12){\makebox[0pt]{\scriptsize $q$}}
\put(22,13){\makebox[0pt]{\scriptsize $q^{-1}$}}
\put(28,12){\makebox[0pt]{\scriptsize $q$}}
\put(34,13){\makebox[0pt]{\scriptsize $q^{-1}$}}
\put(46,13){\makebox[0pt]{\scriptsize $q^{-1}$}}
\put(52,12){\makebox[0pt]{\scriptsize $q$}}
\put(58,13){\makebox[0pt]{\scriptsize $q^{-1}$}}
\end{picture}
\end{align*}

\begin{align*}
\quad \Rightarrow
r_3r_2(X)\colon~
\tau_{2314\cdots r} \ \
%
%
%
\rule[-9\unitlength]{0pt}{12\unitlength}
\begin{picture}(58,10)(0,9)
\put(0,10){\circle{2}}
\put(1,10){\line(1,0){10}}
\put(12,10){\circle{2}}
\put(13,10){\line(1,2){5}}
\put(18,21){\circle{2}}
\put(13,10){\line(1,0){10}}
\put(23,10){\line(-1,2){5}}
\put(24,10){\circle{2}}
\put(25,10){\line(1,0){10}}
\put(36,10){\circle{2}}
\put(37,10){\line(1,0){2}}
\put(42,10){\makebox[0pt]{$\ldots $}}
\put(45,10){\line(1,0){2}}
\put(48,10){\circle{2}}
\put(49,10){\line(1,0){10}}
\put(60,10){\circle{2}}
\put(0,13){\makebox[0pt]{\scriptsize $q^{-1}$}}
\put(6,12){\makebox[0pt]{\scriptsize $q$}}
\put(12,13){\makebox[0pt]{\scriptsize $-1$}}
\put(18,24){\makebox[0pt]{\scriptsize $-q$}}
\put(12,16){\makebox[0pt]{\scriptsize $(-q)^{-1}$}}
\put(18,12){\makebox[0pt]{\scriptsize $q^{-1}$}}
\put(23,16){\makebox[0pt]{\scriptsize $-q^2$}}
\put(24,13){\makebox[0pt]{\scriptsize $-1$}}
\put(30,12){\makebox[0pt]{\scriptsize $q$}}
\put(36,13){\makebox[0pt]{\scriptsize $q^{-1}$}}
\put(48,13){\makebox[0pt]{\scriptsize $q^{-1}$}}
\put(54,12){\makebox[0pt]{\scriptsize $q$}}
\put(60,13){\makebox[0pt]{\scriptsize $q^{-1}$}}
\end{picture}
\end{align*}
   
\noindent imply that $q^2=-1$ by $a^{r_3r_2(X)}_{14}=0$. Then $q\in G'_4 $ and $p\neq 2$. Hence $\mathcal{D}\colon$ 
\begin{align*}
\rule[-4\unitlength]{0pt}{5\unitlength}
\begin{picture}(63,4)(0,3)
\put(1,1){\circle{2}}
\put(2,1){\line(1,0){8}}
\put(11,1){\circle{2}}
\put(12,1){\line(1,0){8}}
\put(21,1){\circle{2}}
\put(22,1){\line(1,0){8}}
\put(31,1){\circle{2}}
\put(32,1){\line(1,0){8}}
\put(41,1){\circle{2}}
\put(42,1){\line(1,0){8}}
\put(51,1){\circle{2}}
\put(52,1){\line(1,0){2}}
\put(57,1){\makebox[0pt]{$\ldots $}}
\put(60,1){\line(1,0){2}}
\put(63,1){\circle{2}}
\put(64,1){\line(1,0){8}}
\put(73,1){\circle{2}}
\put(1,4){\makebox[0pt]{\scriptsize $-1$}}
\put(6,3){\makebox[0pt]{\scriptsize $-1$}}
\put(11,4){\makebox[0pt]{\scriptsize $-1$}}
\put(16,3){\makebox[0pt]{\scriptsize $q$}}
\put(21,4){\makebox[0pt]{\scriptsize $q^{-1}$}}
\put(26,3){\makebox[0pt]{\scriptsize $q$}}
\put(31,4){\makebox[0pt]{\scriptsize $q^{-1}$}}
\put(36,3){\makebox[0pt]{\scriptsize $q$}}
\put(41,4){\makebox[0pt]{\scriptsize $q^{-1}$}}
\put(46,3){\makebox[0pt]{\scriptsize $q$}}
\put(51,4){\makebox[0pt]{\scriptsize $q^{-1}$}}
\put(63,4){\makebox[0pt]{\scriptsize $q^{-1}$}}
\put(67,3){\makebox[0pt]{\scriptsize $q$}}
\put(73,4){\makebox[0pt]{\scriptsize $q^{-1}$}}
\end{picture}
\end{align*}
is abbreviated by the simple chain
\begin{align*}
\tau_{r\cdots321}\
\rule[-4\unitlength]{0pt}{5\unitlength}
\begin{picture}(42,5)(0,3)
\put(18,3){\oval(36,6)}
\put(0,0){\makebox(36,6){\scriptsize $C(r-1,q;1,\ldots ,r-1)$}}
\put(36,3){\line(1,0){10}}
\put(47,3){\circle{2}}
\put(40,4){\makebox[0pt]{\scriptsize $-1$}}
\put(47,5){\makebox[0pt]{\scriptsize $-1$}}
\end{picture}
\end{align*}
which is the case of $q^2=-1$ and $p\neq 2$ in the tenth row of Table \ref{tab.1}.

We apply a similar proof as above in cases $q_{11}=q_{i_1,i_1}=q_{i_2,i_2}=\ldots=q_{i_k,i_k}=-1$ for $i_1,i_2,\ldots,i_k\in \{4,5,\ldots,r\}$, $1\leq k<r-2$, and $q_{ii}\widetilde{q_{i,i+1}} = q_{jj}\widetilde{q_{j-1,j}} = 1$ for all $i \in \{3,4,\ldots,r-1\}\verb|\|\{i_1,i_2,\ldots,i_k\}$ and $j \in \{3,4,\ldots,r\}\verb|\|\{i_1,i_2,\ldots,i_k\}$. All associated generalized Dynkin diagrams $\mathcal{D}$ are the cases of $q^2=-1$ and $p\neq 2$ in the tenth row of Table \ref{tab.1}. 



Subcase $d'_3$. Let $q_{33} = -1$, $q_{ii}\widetilde{q_{i,i+1}}-1 = q_{jj}\widetilde{q_{j-1,j}}-1 = 0$ for all $i \in \{1,4,\ldots,r-1\}$ and $j \in \{4,5,\ldots,r\}$. We may assume $\widetilde{q_{34}}\neq-1$ to avoid redundant cases. Set $r \colon= \widetilde{q_{12}}$, $q \colon= \widetilde{q_{23}}$ and $s \colon= \widetilde{q_{34}}$. Then $qr\neq 1$, $s\neq -1$. Since $a^{r_3(X)}_{24}=0$, we have $qs = 1$. The reflections of $X$
\begin{align*}
r_2(X)\colon ~
\rule[-9\unitlength]{0pt}{12\unitlength}
\begin{picture}(58,18)(0,9)
\put(2,18){\circle{2}}
\put(2,2){\circle{2}}
\put(2,3){\line(0,1){14}}
\put(10,10){\circle{2}}
\put(9,10){\line(-1,1){7}}
\put(9,10){\line(-1,-1){7}}
\put(11,10){\line(1,0){10}}
\put(22,10){\circle{2}}
\put(23,10){\line(1,0){10}}
\put(34,10){\circle{2}}
\put(35,10){\line(1,0){2}}
\put(40,10){\makebox[0pt]{$\ldots $}}
\put(43,10){\line(1,0){2}}
\put(46,10){\circle{2}}
\put(47,10){\line(1,0){10}}
\put(58,10){\circle{2}}
\put(2,20){\makebox[0pt]{\scriptsize $-1$}}
\put(1,10){\makebox[0pt]{\scriptsize $r^{-1}$}}
\put(2,-1){\makebox[0pt]{\scriptsize $-1$}}
\put(6,15){\makebox[0pt]{\scriptsize $qr$}}
\put(7,4){\makebox[0pt]{\scriptsize $q^{-1}$}}
\put(10,13){\makebox[0pt]{\scriptsize $q$}}
\put(16,12){\makebox[0pt]{\scriptsize $q^{-1}$}}
\put(22,13){\makebox[0pt]{\scriptsize $q$}}
\put(28,12){\makebox[0pt]{\scriptsize $q^{-1}$}}
\put(34,13){\makebox[0pt]{\scriptsize $q$}}
\put(46,13){\makebox[0pt]{\scriptsize $q$}}
\put(52,12){\makebox[0pt]{\scriptsize $q^{-1}$}}
\put(58,13){\makebox[0pt]{\scriptsize $q$}}
\end{picture}
\quad \Leftarrow
X\colon~
\rule[-4\unitlength]{0pt}{5\unitlength}
\begin{picture}(63,4)(0,3)
\put(1,1){\circle{2}}
\put(2,1){\line(1,0){8}}
\put(11,1){\circle{2}}
\put(12,1){\line(1,0){8}}
\put(21,1){\circle{2}}
\put(22,1){\line(1,0){8}}
\put(31,1){\circle{2}}
\put(32,1){\line(1,0){8}}
\put(41,1){\circle{2}}
\put(42,1){\line(1,0){2}}
\put(47,1){\makebox[0pt]{$\ldots $}}
\put(50,1){\line(1,0){2}}
\put(53,1){\circle{2}}
\put(54,1){\line(1,0){8}}
\put(63,1){\circle{2}}
\put(1,4){\makebox[0pt]{\scriptsize $r^{-1}$}}
\put(6,3){\makebox[0pt]{\scriptsize $r$}}
\put(11,4){\makebox[0pt]{\scriptsize $-1$}}
\put(16,3){\makebox[0pt]{\scriptsize $q$}}
\put(21,4){\makebox[0pt]{\scriptsize $-1$}}
\put(26,3){\makebox[0pt]{\scriptsize $q^{-1}$}}
\put(31,4){\makebox[0pt]{\scriptsize $q$}}
\put(36,3){\makebox[0pt]{\scriptsize $q^{-1}$}}
\put(41,4){\makebox[0pt]{\scriptsize $q$}}
\put(54,4){\makebox[0pt]{\scriptsize $q$}}
\put(58,3){\makebox[0pt]{\scriptsize $q^{-1}$}}
\put(63,4){\makebox[0pt]{\scriptsize $q$}}
\end{picture}
\end{align*}
\begin{align*}
\quad \Rightarrow
r_3(X)\colon ~ 
\rule[-4\unitlength]{0pt}{5\unitlength}
\begin{picture}(63,4)(0,3)
\put(1,1){\circle{2}}
\put(2,1){\line(1,0){8}}
\put(11,1){\circle{2}}
\put(12,1){\line(1,0){8}}
\put(21,1){\circle{2}}
\put(22,1){\line(1,0){8}}
\put(31,1){\circle{2}}
\put(32,1){\line(1,0){8}}
\put(41,1){\circle{2}}
\put(42,1){\line(1,0){2}}
\put(47,1){\makebox[0pt]{$\ldots $}}
\put(50,1){\line(1,0){2}}
\put(53,1){\circle{2}}
\put(54,1){\line(1,0){8}}
\put(63,1){\circle{2}}
\put(1,4){\makebox[0pt]{\scriptsize $r^{-1}$}}
\put(6,3){\makebox[0pt]{\scriptsize $r$}}
\put(11,4){\makebox[0pt]{\scriptsize $q$}}
\put(16,3){\makebox[0pt]{\scriptsize $q^{-1}$}}
\put(21,4){\makebox[0pt]{\scriptsize $-1$}}
\put(26,3){\makebox[0pt]{\scriptsize $q$}}
\put(31,4){\makebox[0pt]{\scriptsize $-1$}}
\put(36,3){\makebox[0pt]{\scriptsize $q^{-1}$}}
\put(41,4){\makebox[0pt]{\scriptsize $q$}}
\put(54,4){\makebox[0pt]{\scriptsize $q$}}
\put(58,3){\makebox[0pt]{\scriptsize $q^{-1}$}}
\put(63,4){\makebox[0pt]{\scriptsize $q$}}
\end{picture}
\end{align*}
imply that $q^2r=1$ by $a^{r_2(X)}_{13}=-1$ and $a=2$. Then $\mathcal{D}\colon$
\begin{align*}
\rule[-4\unitlength]{0pt}{5\unitlength}
\begin{picture}(63,4)(0,3)
\put(1,1){\circle{2}}
\put(2,1){\line(1,0){10}}
\put(13,1){\circle{2}}
\put(14,1){\line(1,0){10}}
\put(25,1){\circle{2}}
\put(26,1){\line(1,0){10}}
\put(37,1){\circle{2}}
\put(38,1){\line(1,0){10}}
\put(49,1){\circle{2}}
\put(50,1){\line(1,0){2}}
\put(55,1){\makebox[0pt]{$\ldots $}}
\put(58,1){\line(1,0){2}}
\put(61,1){\circle{2}}
\put(62,1){\line(1,0){10}}
\put(73,1){\circle{2}}
\put(1,4){\makebox[0pt]{\scriptsize $q^2$}}
\put(7,3){\makebox[0pt]{\scriptsize $q^{-2}$}}
\put(13,4){\makebox[0pt]{\scriptsize $-1$}}
\put(19,3){\makebox[0pt]{\scriptsize $q$}}
\put(25,4){\makebox[0pt]{\scriptsize $-1$}}
\put(31,3){\makebox[0pt]{\scriptsize $q^{-1}$}}
\put(37,4){\makebox[0pt]{\scriptsize $q$}}
\put(43,3){\makebox[0pt]{\scriptsize $q^{-1}$}}
\put(49,4){\makebox[0pt]{\scriptsize $q$}}
\put(60,4){\makebox[0pt]{\scriptsize $q$}}
\put(67,3){\makebox[0pt]{\scriptsize $q^{-1}$}}
\put(73,4){\makebox[0pt]{\scriptsize $q$}}
\end{picture}
\end{align*}
\noindent
is abbreviated by the simple chain
\begin{align*}
\tau_{r\cdots321}\
\rule[-4\unitlength]{0pt}{5\unitlength}
\begin{picture}(62,5)(0,3)
\put(25,3){\oval(50,6)}
\put(0,0){\makebox(50,6){\scriptsize $C(r-2,q^{-1};1,\ldots ,r-2)$}}
\put(50,3){\line(1,0){10}}
\put(61,3){\circle{2}}
\put(62,3){\line(1,0){10}}
\put(73,3){\circle{2}}
\put(55,4){\makebox[0pt]{\scriptsize $q$}}
\put(61,5){\makebox[0pt]{\scriptsize $-1$}}
\put(67,4){\makebox[0pt]{\scriptsize $q^{-2}$}}
\put(73,5){\makebox[0pt]{\scriptsize $q^2$}}
\end{picture}
\end{align*}
which is listed in the ninth row of Table \ref{tab.1}.

Applying the aforementioned method in cases $q_{33}=q_{i_1,i_1}=q_{i_2,i_2}=\ldots=q_{i_k,i_k}=-1$ for $i_1,i_2,\ldots,i_k\in \{4,5,\ldots,r\}$, $1\leq k<r-2$, and $q_{ii}\widetilde{q_{i,i+1}} = q_{jj}\widetilde{q_{j-1,j}} = 1$ for all $i \in \{1,4,\ldots,r-1\}\verb|\|\{i_1,i_2,\ldots,i_k\}$ and $j \in \{4,\ldots,r\}\verb|\|\{i_1,i_2,\ldots,i_k\}$, we obtain that all the generalized Dynkin diagrams $\mathcal{D}$ are listed in the tenth row of Table \ref{tab.1}.


Subcase $d'_4$. Let $q_{11}=q_{33} = -1$, $q_{ii}\widetilde{q_{i,i+1}} = q_{jj}\widetilde{q_{j-1,j}} = 1$ for all $i \in \{4,\ldots,r-1\}$ and $j \in \{4,5,\ldots,r\}$. 
Set $r \colon=\widetilde{ q'_{12}}$, $q \colon= \widetilde{q_{23}}$ and $s \colon= \widetilde{q_{34}}$. Then $qr\neq 1$.
To avoid repetition, let $s\neq-1$. We have $r=-1$, $p\neq 2$, and $qs = 1$ by $a^{r_1(X)}_{23}=-1$ and $a^{r_3(X)}_{24}=0$. The reflections of $X$
\begin{align*}
r_2(X)\colon ~
\rule[-9\unitlength]{0pt}{12\unitlength}
\begin{picture}(58,18)(0,9)
\put(2,18){\circle{2}}
\put(2,2){\circle{2}}
\put(2,3){\line(0,1){14}}
\put(10,10){\circle{2}}
\put(9,10){\line(-1,1){7}}
\put(9,10){\line(-1,-1){7}}
\put(11,10){\line(1,0){10}}
\put(22,10){\circle{2}}
\put(23,10){\line(1,0){10}}
\put(34,10){\circle{2}}
\put(35,10){\line(1,0){2}}
\put(40,10){\makebox[0pt]{$\ldots $}}
\put(43,10){\line(1,0){2}}
\put(46,10){\circle{2}}
\put(47,10){\line(1,0){10}}
\put(58,10){\circle{2}}
\put(1,20){\makebox[0pt]{\scriptsize $-1$}}
\put(0,10){\makebox[0pt]{\scriptsize $-1$}}
\put(2,-1){\makebox[0pt]{\scriptsize $-1$}}
\put(6,15){\makebox[0pt]{\scriptsize $-q$}}
\put(7,4){\makebox[0pt]{\scriptsize $q^{-1}$}}
\put(10,13){\makebox[0pt]{\scriptsize $q$}}
\put(16,12){\makebox[0pt]{\scriptsize $q^{-1}$}}
\put(22,13){\makebox[0pt]{\scriptsize $q$}}
\put(28,12){\makebox[0pt]{\scriptsize $q^{-1}$}}
\put(34,13){\makebox[0pt]{\scriptsize $q$}}
\put(46,13){\makebox[0pt]{\scriptsize $q$}}
\put(52,12){\makebox[0pt]{\scriptsize $q^{-1}$}}
\put(58,13){\makebox[0pt]{\scriptsize $q$}}
\end{picture}
\quad \Leftarrow
X\colon~
\rule[-4\unitlength]{0pt}{5\unitlength}
\begin{picture}(63,4)(0,3)
\put(1,1){\circle{2}}
\put(2,1){\line(1,0){8}}
\put(11,1){\circle{2}}
\put(12,1){\line(1,0){8}}
\put(21,1){\circle{2}}
\put(22,1){\line(1,0){8}}
\put(31,1){\circle{2}}
\put(32,1){\line(1,0){8}}
\put(41,1){\circle{2}}
\put(42,1){\line(1,0){2}}
\put(47,1){\makebox[0pt]{$\ldots $}}
\put(50,1){\line(1,0){2}}
\put(53,1){\circle{2}}
\put(54,1){\line(1,0){8}}
\put(63,1){\circle{2}}
\put(1,4){\makebox[0pt]{\scriptsize $-1$}}
\put(6,3){\makebox[0pt]{\scriptsize $-1$}}
\put(11,4){\makebox[0pt]{\scriptsize $-1$}}
\put(16,3){\makebox[0pt]{\scriptsize $q$}}
\put(21,4){\makebox[0pt]{\scriptsize $-1$}}
\put(26,3){\makebox[0pt]{\scriptsize $q^{-1}$}}
\put(31,4){\makebox[0pt]{\scriptsize $q$}}
\put(36,3){\makebox[0pt]{\scriptsize $q^{-1}$}}
\put(41,4){\makebox[0pt]{\scriptsize $q$}}
\put(54,4){\makebox[0pt]{\scriptsize $q$}}
\put(58,3){\makebox[0pt]{\scriptsize $q^{-1}$}}
\put(63,4){\makebox[0pt]{\scriptsize $q$}}
\end{picture}
\end{align*}
\begin{align*}
\quad \Rightarrow
r_3(X)\colon ~ 
\rule[-4\unitlength]{0pt}{5\unitlength}
\begin{picture}(63,4)(0,3)
\put(1,1){\circle{2}}
\put(2,1){\line(1,0){8}}
\put(11,1){\circle{2}}
\put(12,1){\line(1,0){8}}
\put(21,1){\circle{2}}
\put(22,1){\line(1,0){8}}
\put(31,1){\circle{2}}
\put(32,1){\line(1,0){8}}
\put(41,1){\circle{2}}
\put(42,1){\line(1,0){2}}
\put(47,1){\makebox[0pt]{$\ldots $}}
\put(50,1){\line(1,0){2}}
\put(53,1){\circle{2}}
\put(54,1){\line(1,0){8}}
\put(63,1){\circle{2}}
\put(1,4){\makebox[0pt]{\scriptsize $-1$}}
\put(6,3){\makebox[0pt]{\scriptsize $-1$}}
\put(11,4){\makebox[0pt]{\scriptsize $q$}}
\put(16,3){\makebox[0pt]{\scriptsize $q^{-1}$}}
\put(21,4){\makebox[0pt]{\scriptsize $-1$}}
\put(26,3){\makebox[0pt]{\scriptsize $q$}}
\put(31,4){\makebox[0pt]{\scriptsize $-1$}}
\put(36,3){\makebox[0pt]{\scriptsize $q^{-1}$}}
\put(41,4){\makebox[0pt]{\scriptsize $q$}}
\put(54,4){\makebox[0pt]{\scriptsize $q$}}
\put(58,3){\makebox[0pt]{\scriptsize $q^{-1}$}}
\put(63,4){\makebox[0pt]{\scriptsize $q$}}
\end{picture}
\end{align*}
imply that $q^2=-1$ by $a^{r_2(X)}_{31}=-1$ and hence $a=2$. Then $\mathcal{D}\colon$
\begin{align*}
\rule[-4\unitlength]{0pt}{5\unitlength}
\begin{picture}(63,4)(0,3)
\put(1,1){\circle{2}}
\put(2,1){\line(1,0){10}}
\put(13,1){\circle{2}}
\put(14,1){\line(1,0){10}}
\put(25,1){\circle{2}}
\put(26,1){\line(1,0){10}}
\put(37,1){\circle{2}}
\put(38,1){\line(1,0){10}}
\put(49,1){\circle{2}}
\put(50,1){\line(1,0){2}}
\put(55,1){\makebox[0pt]{$\ldots $}}
\put(58,1){\line(1,0){2}}
\put(61,1){\circle{2}}
\put(62,1){\line(1,0){10}}
\put(73,1){\circle{2}}
\put(1,4){\makebox[0pt]{\scriptsize $-1$}}
\put(7,3){\makebox[0pt]{\scriptsize $-1$}}
\put(13,4){\makebox[0pt]{\scriptsize $-1$}}
\put(19,3){\makebox[0pt]{\scriptsize $q$}}
\put(25,4){\makebox[0pt]{\scriptsize $-1$}}
\put(31,3){\makebox[0pt]{\scriptsize $q^{-1}$}}
\put(37,4){\makebox[0pt]{\scriptsize $q$}}
\put(43,3){\makebox[0pt]{\scriptsize $q^{-1}$}}
\put(49,4){\makebox[0pt]{\scriptsize $q$}}
\put(60,4){\makebox[0pt]{\scriptsize $q$}}
\put(67,3){\makebox[0pt]{\scriptsize $q^{-1}$}}
\put(73,4){\makebox[0pt]{\scriptsize $q$}}
\end{picture}
\end{align*}
\noindent
is abbreviated by the simple chain
\begin{align*}
\tau_{r\cdots321}\
\rule[-4\unitlength]{0pt}{5\unitlength}
\begin{picture}(62,5)(0,3)
\put(25,3){\oval(50,6)}
\put(0,0){\makebox(50,6){\scriptsize $C(r-2,q^{-1};1,\ldots ,r-2)$}}
\put(50,3){\line(1,0){10}}
\put(61,3){\circle{2}}
\put(62,3){\line(1,0){10}}
\put(73,3){\circle{2}}
\put(55,4){\makebox[0pt]{\scriptsize $q$}}
\put(61,5){\makebox[0pt]{\scriptsize $-1$}}
\put(67,4){\makebox[0pt]{\scriptsize $-1$}}
\put(73,5){\makebox[0pt]{\scriptsize $-1$}}
\end{picture}
\end{align*}
which is the case of $q^2=-1$ and $p\neq2$ in the ninth row of Table \ref{tab.1}.

By applying the aforementioned method in cases $q_{11}=q_{33}=q_{i_1,i_1}=q_{i_2,i_2}=\ldots=q_{i_k,i_k}=-1$ for $i_1,i_2,\ldots,i_k\in \{4,5,\ldots,r\}$, $1\leq k<r-2$, and $q_{ii}\widetilde{q_{i,i+1}} = q_{jj}\widetilde{q_{j-1,j}} = 1$ for all $i \in \{4,\ldots,r-1\}\verb|\|\{i_1,i_2,\ldots,i_k\}$ and $j \in \{4,\ldots,r\}\verb|\|\{i_1,i_2,\ldots,i_k\}$, we obtain that all associated generalized Dynkin diagrams $\mathcal{D}$ are the cases of $q^2=-1$ and $p\neq 2$ in the tenth row of Table \ref{tab.1}.
\end{proof}

Below we discuss the sporadic cases for ranks $5, 6, 7,$ and $8$. Theorem \ref{thm:goodnei} enables a direct case-by-case verification.

\begin{lemma}\label{l-5chainspe-a}
If $\gDd _{\chi ,E}$ is of the form
\setlength{\unitlength}{1mm}
\begin{align*}
\begin{picture}(50,4)(0,3)
\put(1,1){\circle{2}}
\put(2,1){\line(1,0){10}}
\put(13,1){\circle{2}}
\put(14,1){\line(1,0){10}}
\put(25,1){\circle{2}}
\put(26,1){\line(1,0){10}}
\put(37,1){\circle{2}}
\put(38,1){\line(1,0){10}}
\put(49,1){\circle{2}}
\put(1,4){\makebox[0pt]{\scriptsize $q_{11}$}}
\put(7,3){\makebox[0pt]{\scriptsize $q$}}
\put(13,4){\makebox[0pt]{\scriptsize $q_{22}$}}
\put(19,3){\makebox[0pt]{\scriptsize $r$}}
\put(26,4){\makebox[0pt]{\scriptsize $q_{33}$}}
\put(31,3){\makebox[0pt]{\scriptsize $s$}}
\put(37,4){\makebox[0pt]{\scriptsize $q_{44}$}}
\put(43,3){\makebox[0pt]{\scriptsize $t$}}
\put(49,4){\makebox[0pt]{\scriptsize $q_{55}$}}
\end{picture}
\end{align*}
and has a good $A_5$ neighborhood, then there exists $i\in\{3,4\}$ satisfies $q_{ii}=-1$, which renders all subsequent cases infeasible.
\setlength{\unitlength}{1mm}
\begin{itemize}
    \item[$(a_1)$]\ 
    \Dchainfive{$q^{-1}$}{$q$}{$q^{-1}$}{$q$}{$-1$}{$-1$}{$-1$}{$-1$}{$-1$}
    ,\quad  $q\not=-1$,\ $p\not=2$
\vspace{-2mm}
    \item[$(a_2)$]\ 
\Dchainfive{$-1$}{$-1$}
{$-1$}{$-1$}{$-1$}{$q$}{$q^{-1}$}{$q$}{$q^{-1}$} 
,\quad  $q\not=-1$,\ $p\not=2$
\vspace{-2mm}
    \item[$(a_3)$]\ 
\Dchainfive{$-1$}{$-1$}
{$-1$}{$-1$}{$-1$}{$q$}{$q^{-1}$}{$q$}{$-1$} ,\quad  $q\not=-1$,\ $p\not=2$
\vspace{-2mm}
    \item[$(a_4)$]\ 
 \Dchainfive{$-1$}{$q$}{$q^{-1}$}{$q$}{$-1$}
{$-1$}{$-1$}{$-1$}{$-1$} ,\quad  $q\not=-1$,\ $p\not=2$
\vspace{-2mm}
\item[$(b_1)$]\ 
 \Dchainfive{$-1$}{$q^{-2}$}{$q^{2}$}{$q^{-2}$}{$q^{2}$}
{$q^{-2}$}{$-1$}{$q$}{$-1$} ,\quad $q\notin G'_3$,\ $p\not=3$
\vspace{-2mm}
 \item[$(b_2)$]\ 
 \Dchainfive{$-1$}{$q^{2}$}{$-1$}{$q^{-2}$}{$q^{2}$}
{$q^{-2}$}{$-1$}{$q$}{$-1$} ,\quad $q\notin G'_3$,\ $p\not=3$
\vspace{-2mm}
 \item[$(b_3)$]\ 
 \Dchainfive{$-1$}{$-1$}{$-1$}{$-1$}{$-1$}
{$-1$}{$-1$}{$q$}{$-1$}  ,\quad $q\not=-1$,\ $p\not=2$
\vspace{-2mm}
\item[$(c_1)$]\ 
 \Dchainfive{$q^{-1}$}{$q$}{$-1$}{$q^{-1}$}{$q$}
{$q^{-1}$}{$-1$}{$-1$}{$-1$}  
,\quad $q\not=-1$,\ $p\not=2$
\vspace{-2mm}
 \item[$(c_2)$]\ 
 \Dchainfive{$-1$}{$q^{-1}$}{$q$}{$q^{-1}$}{$q$}
{$q^{-1}$}{$-1$}{$-1$}{$-1$}   
,\quad $q\not=-1$,\ $p\not=2$
\vspace{-2mm}
 \item[$(c_3)$]\ 
 \Dchainfive{$-1$}{$q$}{$-1$}{$q^{-1}$}{$q$}
{$q^{-1}$}{$-1$}{$-1$}{$-1$} 
 ,\quad $q\not=-1$,\ $p\not=2$
 \vspace{-2mm}
 \item[$(c_4)$]\ 
 \Dchainfive{$-1$}{$-1$}{$-1$}{$-1$}{$-1$}
{$-1$}{$-1$}{$q^{-1}$}{$q$} 
 ,\quad $q\not=-1$,\ $p\not=2$
\end{itemize}
\end{lemma}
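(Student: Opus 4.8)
The plan is to argue by contradiction: for each of the twelve diagrams I would assume that $\gDd_{\chi,E}$ has a good $A_5$ neighborhood and extract a contradiction with one of the conditions in Definition \ref{defA5}. The first point to note is that all twelve diagrams already pass the necessary test of Lemma \ref{l-5chainmid} (each has $q_{22}$, $q_{33}$ or $q_{44}$ equal to $-1$), so that lemma alone does not exclude them and the finer entry conditions of Definition \ref{defA5} must be used. Before invoking them I would check, for each diagram, that the underlying Cartan matrix is indeed $A^X=A_5$, and that $A^{r_1(X)}=A^{r_2(X)}=A_5$ as the definition requires: reading the Cartan entries off the vertex labels and the bond labels $q_{ij}q_{ji}$ via Lemma \ref{le:aijM} (equivalently Lemma \ref{lem:aij}), one checks that along each bond either a vertex carries $-1$ or the product of the vertex label with the bond label equals $1$, so that every $a^X_{i,i+1}=a^X_{i+1,i}=-1$. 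Here the standing assumptions $q\ne -1$, $\mathrm{char}\,\Bbbk\ne 2$ (respectively $q\notin G'_3$, $\mathrm{char}\,\Bbbk\ne 3$) are precisely what keeps the relevant $q$-integers $(m+1)_{q_{ii}}$ from degenerating, so that $R_1,\dots ,R_5$ are all defined at $X$ and the notion of a good $A_5$ neighborhood applies.

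The core of the argument is a reflection computation. For a fixed diagram I would use Lemma \ref{le:Dynkin} to write down the braiding matrices of the neighbors $r_3(X)$, $r_4(X)$, $r_5(X)$, and then read off from their Cartan matrices the five parameters that Definition \ref{defA5} records, namely $a=-a^{r_3(X)}_{24}$, $b=-a^{r_3(X)}_{42}$, $c=-a^{r_4(X)}_{35}$, $d=-a^{r_4(X)}_{53}$ and $e=-a^{r_5(X)}_{43}$. Comparing the resulting tuple $(a,b,c,d,e)$ with the three admissible tuples $(1,1,0,0,1)$, $(0,0,1,1,2)$, $(0,0,1,1,1)$ attached to $(A_{5a})$, $(A_{5b})$, $(A_{5c})$ should dispose of several diagrams outright, since for them the computed tuple is none of the three. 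For the diagrams whose tuple does match one of the three, I would push the computation one or two reflections further and evaluate the additional constrained entries of that subcase --- for instance $a^{r_2r_3r_4(X)}_{15}$, $a^{r_3r_2r_1(X)}_{45}$ and the bracket data $[a^{r_4r_5(X)}_{32},a^{r_2r_1(X)}_{34}]$ in $(A_{5a})$, or $a^{r_3r_4(X)}_{25}$ and $[a^{r_3r_2r_1(X)}_{45},a^{r_3r_2(X)}_{45}]$ in $(A_{5b})$ --- and exhibit a violation (a forbidden equality holding, or a prescribed value such as $a^{r_2r_3r_4(X)}_{15}=0$ failing). Throughout, each needed Cartan entry is obtained from the current braiding matrix by Lemma \ref{le:aijM}, with Lemma \ref{lem:jik} (which says $\alpha_j+k\alpha_i$ is a root exactly when $0\le k\le -a^X_{ij}$) available as a cross-check.

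I would organize the twelve diagrams into the three families $(a_i)$, $(b_i)$, $(c_i)$, since within each family the diagrams differ only in a few labels, so the reflection bookkeeping carries over with minor changes. The outcome in each case is that after the prescribed sequence of reflections some entry required by Definition \ref{defA5} takes a value it is forbidden to take, contradicting the assumption that $X$ has a good $A_5$ neighborhood; this shows that the twelve configurations cannot occur.

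The step I expect to be the genuine obstacle is the reflection bookkeeping itself. Lemma \ref{le:Dynkin} is a multi-branch formula whose branch depends on delicate conditions ($q_{ij}q_{ji}=q_{ii}^{a_{ij}}$, $q_{ii}\in G'_{1-a_{ij}}$, $q_{ii}=1$), and these branches interact non-trivially when reflections are composed, as in $r_2r_3r_4(X)$ or $r_3r_2r_1(X)$. Moreover, in positive characteristic a $q$-integer may vanish because $\mathrm{char}\,\Bbbk$ divides it rather than because of a root-of-unity relation, so at every step one must verify that the small-characteristic exclusions in the hypotheses ($\mathrm{char}\,\Bbbk\ne 2,3$ and $q\notin G'_3$) rule out the spurious degenerations. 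Keeping the branch selection consistent across the composed reflections, and checking the characteristic conditions at each stage, is the delicate part, and it is exactly where the \textsc{Gap}-assisted analysis of $\rersys{X}_{\boldsymbol{+}}$ mentioned earlier would guide and cross-check the hand computation.
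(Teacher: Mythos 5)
Your proposal is correct and follows essentially the same route as the paper's proof: assume one of the twelve labeled diagrams occurs together with a good $A_5$ neighborhood, compute the reflected diagrams $r_i(X)$, $r_jr_i(X),\dots$ via Lemma \ref{le:Dynkin}, and contradict the entry conditions of Definition \ref{defA5} (e.g.\ $a_{41}^{r_2r_3(X)}=0$, $a_{15}^{r_2r_3r_4(X)}=0$, $a_{45}^{r_3r_2r_1(X)}\ne-3$, $a_{32}^{r_5r_4(X)}=-1$, $a_{52}^{r_3r_4(X)}=-1$), organizing the cases by the families $(a_i)$, $(b_i)$, $(c_i)$ exactly as the paper does. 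The only mild difference is that in the paper none of the twelve cases is excluded by a mismatch of the tuple $(a,b,c,d,e)$ alone---each diagram does match one of $(A_{5a})$, $(A_{5b})$, $(A_{5c})$, and every contradiction comes from the finer composed-reflection entries or from forcing $q$ to a forbidden value such as $1$ or $-1$---but your fallback of pushing one or two reflections further covers precisely this situation.
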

\begin{proof}
If $q_{33} \neq -1$ and $q_{44} \neq -1$, then $A^{r_3(X)} = A^{r_4(X)} = A_5$ by Lemma \ref{jslemma}, which contradicts Definition \ref{defA5}. 
In the following we analyze the cases $(a_1)-(c_3)$.
\begin{itemize}
 \item[{(i)}] If $X$ is of the type \Dchainfive{$q_{11}$}{$q$}{$q^{-1}$}{$q$}{$-1$}
{$-1$}{$-1$}{$-1$}{$-1$}\ \ then it satisfies the case $(A_{5_1})$ of Definition \ref{defA5}. Then the transformation
\begin{align*}
\Dchainfivec{$q_{11}$}{$q$}{$q^{-1}$}{$q$}{$-1$}{$-1$}{$-1$}{$-1$}{$-1$}
\qquad \Rightarrow \qquad
\Dchainfivex{$q_{11}$}{$q$}{$-1$}{$-q$}{$-1$}{$-1$}{$-1$}{$q^{-1}$}{$-1$}{$-1$}
\end{align*}
implies $q_{11}=q^{-1}$ or $q_{11}=-1$. By $a_{41}^{r_2r_3(X)}=0$, one has $q\in G'_4$ and hence the further transformation
\begin{align*}
\begin{picture}(38,11)(0,3)
\put(1,1){\circle{2}}
\put(2,1){\line(1,0){10}}
\put(13,1){\circle*{2}}
\put(13,2){\line(2,3){6}}
\put(14,1){\line(1,0){10}}
\put(25,1){\circle{2}}
\put(25,2){\line(-2,3){6}}
\put(19,12){\circle{2}}
\put(26,1){\line(1,0){10}}
\put(37,1){\circle{2}}
\put(1,4){\makebox[0pt]{\scriptsize $q_{11}$}}
\put(7,3){\makebox[0pt]{\scriptsize $q$}}
\put(12,4){\makebox[0pt]{\scriptsize $-1$}}
\put(19,3){\makebox[0pt]{\scriptsize $-q$}}
\put(26,4){\makebox[0pt]{\scriptsize $-1$}}
\put(31,3){\makebox[0pt]{\scriptsize $-1$}}
\put(37,4){\makebox[0pt]{\scriptsize $-1$}}
\put(15,8){\makebox[0pt]{\scriptsize $q_{-1}$}}
\put(23,8){\makebox[0pt][l]{\scriptsize $-1$}}
\put(22,11){\makebox[0pt]{\scriptsize $-1$}}
\end{picture}
\qquad \Rightarrow \qquad
\begin{picture}(36,10)(0,9)
\put(36,9){\circle{2}}
\put(35,9){\line(-1,0){10}}
\put(24,9){\circle{2}}
\put(23,9){\line(-1,0){10}}
\put(12,9){\circle{2}}
\put(11,9){\line(-1,1){7}}
\put(11,9){\line(-1,-1){7}}
\put(4,17){\circle{2}}
\put(4,1){\circle{2}}
\put(36,12){\makebox[0pt]{\scriptsize $-1$}}
\put(30,11){\makebox[0pt]{\scriptsize $-1$}}
\put(24,12){\makebox[0pt]{\scriptsize $-q$}}
\put(18,11){\makebox[0pt]{\scriptsize $-q^{-1}$}}
\put(12,12){\makebox[0pt]{\scriptsize $-1$}}
\put(7,14){\makebox[0pt][l]{\scriptsize $q$}}
\put(8,4){\makebox[0pt][l]{\scriptsize $q^{-1}$}}
\put(2,16){\makebox[0pt][r]{\scriptsize $q^{-1}$}}
\put(2,1){\makebox[0pt][r]{\scriptsize $q'_{11}$}}
\end{picture}
\end{align*}
where
\begin{align*}
  q'_{11}=&
  \begin{cases}
    -1& \quad \text{if  $q_{11}=q^{-1}$},\\
    q& \quad \text{if  $q_{11}=-1$}.
  \end{cases}
\end{align*}
yields $a_{45}^{r_2r_3(X)}=-2$, conflicting with the required relation $a_{45}^{r_2r_3(X)}=-1$. Consequently, the cases $(a_1)$ and $(a_4)$ are excluded.

 \item[(ii)]\ 
If $X$ is of the type \Dchainfive{$-1$}{$-1$}
{$-1$}{$-1$}{$-1$}{$q$}{$q^{-1}$}{$q$}{$q_{55}$} \ \ then the transformation 
\begin{align*}
\begin{picture}(50,4)(0,3)
\put(1,1){\circle{2}}
\put(2,1){\line(1,0){10}}
\put(13,1){\circle{2}}
\put(14,1){\line(1,0){10}}
\put(25,1){\circle*{2}}
\put(26,1){\line(1,0){10}}
\put(37,1){\circle{2}}
\put(38,1){\line(1,0){10}}
\put(49,1){\circle{2}}
\put(1,4){\makebox[0pt]{\scriptsize $-1$}}
\put(7,3){\makebox[0pt]{\scriptsize $-1$}}
\put(13,4){\makebox[0pt]{\scriptsize $-1$}}
\put(19,3){\makebox[0pt]{\scriptsize $-1$}}
\put(25,4){\makebox[0pt]{\scriptsize $-1$}}
\put(31,3){\makebox[0pt]{\scriptsize $q$}}
\put(37,4){\makebox[0pt]{\scriptsize $q^{-1}$}}
\put(43,3){\makebox[0pt]{\scriptsize $q$}}
\put(49,4){\makebox[0pt]{\scriptsize $q_{55}$}}
\end{picture}
\quad \Rightarrow \quad
\begin{picture}(38,11)(0,3)
\put(1,1){\circle{2}}
\put(2,1){\line(1,0){10}}
\put(13,1){\circle{2}}
\put(13,2){\line(2,3){6}}
\put(14,1){\line(1,0){10}}
\put(25,1){\circle{2}}
\put(25,2){\line(-2,3){6}}
\put(19,12){\circle{2}}
\put(26,1){\line(1,0){10}}
\put(37,1){\circle{2}}
\put(1,4){\makebox[0pt]{\scriptsize $-1$}}
\put(7,3){\makebox[0pt]{\scriptsize $-1$}}
\put(12,4){\makebox[0pt]{\scriptsize $-1$}}
\put(19,3){\makebox[0pt]{\scriptsize $-q$}}
\put(26,4){\makebox[0pt]{\scriptsize $-1$}}
\put(31,3){\makebox[0pt]{\scriptsize $q$}}
\put(37,4){\makebox[0pt]{\scriptsize $q_{55}$}}
\put(13.3,8){\makebox[0pt]{\scriptsize $-1$}}
\put(23,8){\makebox[0pt][l]{\scriptsize $q^{-1}$}}
\put(24,14){\makebox[0pt]{\scriptsize $-1$}}
\end{picture}
\end{align*}
implies $q_{55}=q^{-1}$ or $q_{55}=-1$. From $a_{41}^{r_2r_3(X)}=0$, we have $q=1$, which is a contradiction.
Hence the cases $(a_2)$ and $(a_3)$ are ruled out.

\item[(iii)] If $X$ is of the type \Dchainfive{$-1$}{$q^{-2}$}{$q^{2}$}{$q^{-2}$}{$q^{2}$}
{$q^{-2}$}{$-1$}{$q$}{$-1$} \ \ then it falls into case $(A_{5_2})$ of Definition \ref{defA5}. 
The following transformations
\begin{align*}
\begin{picture}(50,4)(0,3)
\put(1,1){\circle*{2}}
\put(2,1){\line(1,0){10}}
\put(13,1){\circle{2}}
\put(14,1){\line(1,0){10}}
\put(25,1){\circle{2}}
\put(26,1){\line(1,0){10}}
\put(37,1){\circle{2}}
\put(38,1){\line(1,0){10}}
\put(49,1){\circle{2}}
\put(1,4){\makebox[0pt]{\scriptsize $-1$}}
\put(7,3){\makebox[0pt]{\scriptsize $q^{-2}$}}
\put(13,4){\makebox[0pt]{\scriptsize $q^2$}}
\put(19,3){\makebox[0pt]{\scriptsize $q^{-2}$}}
\put(25,4){\makebox[0pt]{\scriptsize $q^2$}}
\put(31,3){\makebox[0pt]{\scriptsize $q^{-2}$}}
\put(37,4){\makebox[0pt]{\scriptsize $-1$}}
\put(43,3){\makebox[0pt]{\scriptsize $q$}}
\put(49,4){\makebox[0pt]{\scriptsize $-1$}}
\end{picture}
\quad \Rightarrow \quad
\begin{picture}(50,4)(0,3)
\put(1,1){\circle{2}}
\put(2,1){\line(1,0){10}}
\put(13,1){\circle*{2}}
\put(14,1){\line(1,0){10}}
\put(25,1){\circle{2}}
\put(26,1){\line(1,0){10}}
\put(37,1){\circle{2}}
\put(38,1){\line(1,0){10}}
\put(49,1){\circle{2}}
\put(1,4){\makebox[0pt]{\scriptsize $-1$}}
\put(7,3){\makebox[0pt]{\scriptsize $q^2$}}
\put(13,4){\makebox[0pt]{\scriptsize $-1$}}
\put(19,3){\makebox[0pt]{\scriptsize $q^{-2}$}}
\put(25,4){\makebox[0pt]{\scriptsize $q^2$}}
\put(31,3){\makebox[0pt]{\scriptsize $q^{-2}$}}
\put(37,4){\makebox[0pt]{\scriptsize $-1$}}
\put(43,3){\makebox[0pt]{\scriptsize $q$}}
\put(49,4){\makebox[0pt]{\scriptsize $-1$}}
\end{picture}
\end{align*}
\begin{align*}
\Rightarrow \quad 
\begin{picture}(50,4)(0,3)
\put(1,1){\circle{2}}
\put(2,1){\line(1,0){10}}
\put(13,1){\circle{2}}
\put(14,1){\line(1,0){10}}
\put(25,1){\circle*{2}}
\put(26,1){\line(1,0){10}}
\put(37,1){\circle{2}}
\put(38,1){\line(1,0){10}}
\put(49,1){\circle{2}}
\put(1,4){\makebox[0pt]{\scriptsize $-1$}}
\put(7,3){\makebox[0pt]{\scriptsize $q^{-2}$}}
\put(13,4){\makebox[0pt]{\scriptsize $-1$}}
\put(19,3){\makebox[0pt]{\scriptsize $q^2$}}
\put(25,4){\makebox[0pt]{\scriptsize $-1$}}
\put(31,3){\makebox[0pt]{\scriptsize $q^{-2}$}}
\put(37,4){\makebox[0pt]{\scriptsize $-1$}}
\put(43,3){\makebox[0pt]{\scriptsize $q$}}
\put(49,4){\makebox[0pt]{\scriptsize $-1$}}
\end{picture}
\quad \Rightarrow \quad 
\begin{picture}(50,4)(0,3)
\put(1,1){\circle{2}}
\put(2,1){\line(1,0){10}}
\put(13,1){\circle{2}}
\put(14,1){\line(1,0){10}}
\put(25,1){\circle{2}}
\put(26,1){\line(1,0){10}}
\put(37,1){\circle{2}}
\put(38,1){\line(1,0){10}}
\put(49,1){\circle{2}}
\put(1,4){\makebox[0pt]{\scriptsize $-1$}}
\put(7,3){\makebox[0pt]{\scriptsize $q^{-2}$}}
\put(13,4){\makebox[0pt]{\scriptsize $q^2$}}
\put(19,3){\makebox[0pt]{\scriptsize $q^{-2}$}}
\put(26,4){\makebox[0pt]{\scriptsize $-1$}}
\put(31,3){\makebox[0pt]{\scriptsize $q^2$}}
\put(37,4){\makebox[0pt]{\scriptsize $q^{-2}$}}
\put(43,3){\makebox[0pt]{\scriptsize $q$}}
\put(49,4){\makebox[0pt]{\scriptsize $-1$}}
\end{picture}
\end{align*}
give $a_{45}^{r_3r_2r_1(X)}=-3$, which contradicts $a_{45}^{r_3r_2r_1(X)}\not=-3$. Hence the case $(b_1)$ is excluded.

 \item[(iv)] If $X$ is of the type \Dchainfive{$-1$}{$q^{2}$}{$-1$}{$q^{-2}$}{$q^{2}$}
{$q^{-2}$}{$-1$}{$q$}{$-1$}  \ \ then it satisfies the conditions for case $(A_{5_2})$ in Definition \ref{defA5}. Then we have 
\begin{align*}
\begin{picture}(50,4)(0,3)
\put(1,1){\circle{2}}
\put(2,1){\line(1,0){10}}
\put(13,1){\circle{2}}
\put(14,1){\line(1,0){10}}
\put(25,1){\circle{2}}
\put(26,1){\line(1,0){10}}
\put(37,1){\circle*{2}}
\put(38,1){\line(1,0){10}}
\put(49,1){\circle{2}}
\put(1,4){\makebox[0pt]{\scriptsize $-1$}}
\put(7,3){\makebox[0pt]{\scriptsize $q^2$}}
\put(13,4){\makebox[0pt]{\scriptsize $-1$}}
\put(19,3){\makebox[0pt]{\scriptsize $q^{-2}$}}
\put(25,4){\makebox[0pt]{\scriptsize $q^{2}$}}
\put(31,3){\makebox[0pt]{\scriptsize $q^{-2}$}}
\put(37,4){\makebox[0pt]{\scriptsize $-1$}}
\put(43,3){\makebox[0pt]{\scriptsize $q$}}
\put(49,4){\makebox[0pt]{\scriptsize $-1$}}
\end{picture}
~ \Rightarrow 
\Dchainfivew{$-1$}{$q^2$}{$-1$}{$q^{-2}$}{$-1$}{$q^2$}{$q^{-1}$}{$-1$}{$q^{-1}$}{$q$} 
 \quad \Rightarrow 
\tau_{12534}~ 
\begin{picture}(38,11)(0,3)
\put(1,1){\circle{2}}
\put(2,1){\line(1,0){10}}
\put(13,1){\circle{2}}
\put(13,2){\line(2,3){6}}
\put(14,1){\line(1,0){10}}
\put(25,1){\circle{2}}
\put(25,2){\line(-2,3){6}}
\put(19,12){\circle{2}}
\put(26,1){\line(1,0){10}}
\put(37,1){\circle{2}}
\put(1,4){\makebox[0pt]{\scriptsize $-1$}}
\put(7,3){\makebox[0pt]{\scriptsize $q^2$}}
\put(12,4){\makebox[0pt]{\scriptsize $q^{-2}$}}
\put(19,3){\makebox[0pt]{\scriptsize $q^2$}}
\put(26,4){\makebox[0pt]{\scriptsize $-1$}}
\put(31,3){\makebox[0pt]{\scriptsize $q^{-2}$}}
\put(37,4){\makebox[0pt]{\scriptsize $q^2$}}
\put(13.3,8){\makebox[0pt]{\scriptsize $q^{-3}$}}
\put(23,8){\makebox[0pt][l]{\scriptsize $q$}}
\put(24,14){\makebox[0pt]{\scriptsize $-1$}}
\end{picture}
\end{align*}

Hence $q=1$ by $a_{15}^{r_2r_3r_4(X)}=0$, which is a contradiction. Hence the case $(b_2)$ is ruled out.
 
 \item[(v)] If $X$ is of the type \Dchainfive{$-1$}{$-1$}{$-1$}{$-1$}{$-1$}
{$-1$}{$-1$}{$q$}{$-1$} 
\ \  it satisfies the case $(A_{5_2})$ in Definition \ref{defA5}. Then we have the reflections of $X$ 
\begin{align*}
\begin{picture}(50,4)(0,3)
\put(1,1){\circle{2}}
\put(2,1){\line(1,0){10}}
\put(13,1){\circle{2}}
\put(14,1){\line(1,0){10}}
\put(25,1){\circle{2}}
\put(26,1){\line(1,0){10}}
\put(37,1){\circle*{2}}
\put(38,1){\line(1,0){10}}
\put(49,1){\circle{2}}
\put(1,4){\makebox[0pt]{\scriptsize $-1$}}
\put(7,3){\makebox[0pt]{\scriptsize $-1$}}
\put(13,4){\makebox[0pt]{\scriptsize $-1$}}
\put(19,3){\makebox[0pt]{\scriptsize $-1$}}
\put(25,4){\makebox[0pt]{\scriptsize $-1$}}
\put(31,3){\makebox[0pt]{\scriptsize $-1$}}
\put(37,4){\makebox[0pt]{\scriptsize $-1$}}
\put(43,3){\makebox[0pt]{\scriptsize $q$}}
\put(49,4){\makebox[0pt]{\scriptsize $-1$}}
\end{picture}
\quad
\Rightarrow \quad
\begin{picture}(36,10)(0,9)
\put(1,9){\circle{2}}
\put(2,9){\line(1,0){10}}
\put(13,9){\circle{2}}
\put(14,9){\line(1,0){10}}
\put(25,9){\circle*{2}}
\put(26,9){\line(1,1){7}}
\put(26,9){\line(1,-1){7}}
\put(33,17){\circle{2}}
\put(33,1){\circle{2}}
\put(33,2){\line(0,1){14}}
\put(1,12){\makebox[0pt]{\scriptsize $-1$}}
\put(7,11){\makebox[0pt]{\scriptsize $-1$}}
\put(13,12){\makebox[0pt]{\scriptsize $-1$}}
\put(19,11){\makebox[0pt]{\scriptsize $-1$}}
\put(23.5,12){\makebox[0pt]{\scriptsize $-1$}}
\put(30,14){\makebox[0pt][r]{\scriptsize $-1$}}
\put(29,4){\makebox[0pt][r]{\scriptsize $-q$}}
\put(35,16){\makebox[0pt][l]{\scriptsize $-1$}}
\put(34,9){\makebox[0pt][l]{\scriptsize $q^{-1}$}}
\put(35,1){\makebox[0pt][l]{\scriptsize $q$}}
\end{picture}
\end{align*}
\begin{align*}
\Rightarrow \quad 
\tau_{12534} \quad 
\begin{picture}(38,11)(0,3)
\put(1,1){\circle{2}}
\put(2,1){\line(1,0){10}}
\put(13,1){\circle*{2}}
\put(13,2){\line(2,3){6}}
\put(14,1){\line(1,0){10}}
\put(25,1){\circle{2}}
\put(25,2){\line(-2,3){6}}
\put(19,12){\circle{2}}
\put(26,1){\line(1,0){10}}
\put(37,1){\circle{2}}
\put(1,4){\makebox[0pt]{\scriptsize $-1$}}
\put(7,3){\makebox[0pt]{\scriptsize $-1$}}
\put(12,4){\makebox[0pt]{\scriptsize $-1$}}
\put(19,3){\makebox[0pt]{\scriptsize $-1$}}
\put(26,4){\makebox[0pt]{\scriptsize $-1$}}
\put(31,3){\makebox[0pt]{\scriptsize $-1$}}
\put(37,4){\makebox[0pt]{\scriptsize $-1$}}
\put(15,8){\makebox[0pt]{\scriptsize $q$}}
\put(23,8){\makebox[0pt][l]{\scriptsize $q$}}
\put(22,11){\makebox[0pt]{\scriptsize $-1$}}
\end{picture}
\quad \Rightarrow \quad
\tau_{51234} \quad 
\Dchainfivev{$-1$}{$-1$}{$-1$}{$-1$}{$-1$}{$q^{-1}$}{$-1$}{$-1$}{$q^{-1}$}{$-1$}
\end{align*}
and obtain that $q\in G'_4$ by $a_{53}^{r_4(X)}=-1$ and $a_{15}^{r_2r_3r_4(X)}=-1$, conflicting with the requirement $a_{15}^{r_2r_3r_4(X)}=0$. Accordingly, the case $(b_3)$ is excluded.

  \item[(vi)] If $X$ is of the type \Dchainfive{$q^{-1}$}{$q$}{$-1$}{$q^{-1}$}{$q$}
{$q^{-1}$}{$-1$}{$-1$}{$-1$}   \ \ then $X$ falls under case $(A_{5_3})$ from Definition \ref{defA5}. On one side, the successive transformations 
\begin{align*}
\begin{picture}(50,4)(0,3)
\put(1,1){\circle{2}}
\put(2,1){\line(1,0){10}}
\put(13,1){\circle{2}}
\put(14,1){\line(1,0){10}}
\put(25,1){\circle{2}}
\put(26,1){\line(1,0){10}}
\put(37,1){\circle*{2}}
\put(38,1){\line(1,0){10}}
\put(49,1){\circle{2}}
\put(1,4){\makebox[0pt]{\scriptsize $q^{-1}$}}
\put(7,3){\makebox[0pt]{\scriptsize $q$}}
\put(13,4){\makebox[0pt]{\scriptsize $-1$}}
\put(19,3){\makebox[0pt]{\scriptsize $q^{-1}$}}
\put(25,4){\makebox[0pt]{\scriptsize $q$}}
\put(31,3){\makebox[0pt]{\scriptsize $q^{-1}$}}
\put(37,4){\makebox[0pt]{\scriptsize $-1$}}
\put(43,3){\makebox[0pt]{\scriptsize $-1$}}
\put(49,4){\makebox[0pt]{\scriptsize $-1$}}
\end{picture}
~\Rightarrow
\Dchainfivew{$q^{-1}$}{$q$}{$-1$}{$q^{-1}$}{$-1$}{$q$}{$-q^{-1}$}{$-1$}{$-1$}{$-1$}
\quad \Rightarrow
\tau_{12534}~ 
\begin{picture}(38,11)(0,3)
\put(1,1){\circle{2}}
\put(2,1){\line(1,0){10}}
\put(13,1){\circle{2}}
\put(13,2){\line(2,3){6}}
\put(14,1){\line(1,0){10}}
\put(25,1){\circle{2}}
\put(25,2){\line(-2,3){6}}
\put(19,12){\circle{2}}
\put(26,1){\line(1,0){10}}
\put(37,1){\circle{2}}
\put(1,4){\makebox[0pt]{\scriptsize $q^{-1}$}}
\put(7,3){\makebox[0pt]{\scriptsize $q$}}
\put(12,4){\makebox[0pt]{\scriptsize $q^{-1}$}}
\put(19,3){\makebox[0pt]{\scriptsize $q$}}
\put(26,4){\makebox[0pt]{\scriptsize $-1$}}
\put(31,3){\makebox[0pt]{\scriptsize $q^{-1}$}}
\put(37,4){\makebox[0pt]{\scriptsize $q$}}
\put(13.3,8){\makebox[0pt]{\scriptsize $-q^{-2}$}}
\put(23,8){\makebox[0pt][l]{\scriptsize $-q$}}
\put(24,14){\makebox[0pt]{\scriptsize $-q^{-1}$}}
\end{picture}
\end{align*}
together with the relation $a_{52}^{r_3r_4(X)}=-1$ force $q\in G'_3$.
 On the other side, applying the transformation
 \begin{align*}
\Dchainfiveu{$q^{-1}$}{$q$}{$-1$}{$q^{-1}$}{$-1$}{$q$}{$-q^{-1}$}{$-1$}{$-1$}{$-1$}
\qquad \Rightarrow \quad
\tau_{12354}\quad
\begin{picture}(50,4)(0,3)
\put(1,1){\circle{2}}
\put(2,1){\line(1,0){10}}
\put(13,1){\circle{2}}
\put(14,1){\line(1,0){10}}
\put(25,1){\circle{2}}
\put(26,1){\line(1,0){10}}
\put(37,1){\circle{2}}
\put(38,1){\line(1,0){10}}
\put(49,1){\circle{2}}
\put(1,4){\makebox[0pt]{\scriptsize $q^{-1}$}}
\put(7,3){\makebox[0pt]{\scriptsize $q$}}
\put(13,4){\makebox[0pt]{\scriptsize $-1$}}
\put(19,3){\makebox[0pt]{\scriptsize $q^{-1}$}}
\put(26,4){\makebox[0pt]{\scriptsize $q^{-1}$}}
\put(31,3){\makebox[0pt]{\scriptsize $-q$}}
\put(37,4){\makebox[0pt]{\scriptsize $-1$}}
\put(43,3){\makebox[0pt]{\scriptsize $-1$}}
\put(49,4){\makebox[0pt]{\scriptsize $-1$}}
\end{picture}
\end{align*}
 and substituting $q\in G'_3$ yields $a_{32}^{r_5r_4(X)}=-2$, conflicting with the required identity $a_{32}^{r_5r_4(X)}=-1$. Consequently, case $(c_1)$ is ruled out.

 By parallel reasoning to the treatment of case $(c_1)$, cases $(c_2)$ and $(c_3)$ are likewise excluded.
 
\item[(vii)] If $X$ has the type \Dchainfive{$-1$}{$-1$}{$-1$}{$-1$}{$-1$}
{$-1$}{$-1$}{$q^{-1}$}{$q$},    \ \ then 
the successive reflections 
\begin{align*}
\begin{picture}(50,4)(0,3)
\put(1,1){\circle{2}}
\put(2,1){\line(1,0){10}}
\put(13,1){\circle{2}}
\put(14,1){\line(1,0){10}}
\put(25,1){\circle{2}}
\put(26,1){\line(1,0){10}}
\put(37,1){\circle*{2}}
\put(38,1){\line(1,0){10}}
\put(49,1){\circle{2}}
\put(1,4){\makebox[0pt]{\scriptsize $-1$}}
\put(7,3){\makebox[0pt]{\scriptsize $-1$}}
\put(13,4){\makebox[0pt]{\scriptsize $-1$}}
\put(19,3){\makebox[0pt]{\scriptsize $-1$}}
\put(25,4){\makebox[0pt]{\scriptsize $-1$}}
\put(31,3){\makebox[0pt]{\scriptsize $-1$}}
\put(37,4){\makebox[0pt]{\scriptsize $-1$}}
\put(43,3){\makebox[0pt]{\scriptsize $q^{-1}$}}
\put(49,4){\makebox[0pt]{\scriptsize $q$}}
\end{picture}
\quad \Rightarrow \quad
\begin{picture}(36,10)(0,9)
\put(1,9){\circle{2}}
\put(2,9){\line(1,0){10}}
\put(13,9){\circle{2}}
\put(14,9){\line(1,0){10}}
\put(25,9){\circle{2}}
\put(26,9){\line(1,1){7}}
\put(26,9){\line(1,-1){7}}
\put(33,17){\circle{2}}
\put(33,1){\circle*{2}}
\put(33,2){\line(0,1){14}}
\put(1,12){\makebox[0pt]{\scriptsize $-1$}}
\put(7,11){\makebox[0pt]{\scriptsize $-1$}}
\put(13,12){\makebox[0pt]{\scriptsize $-1$}}
\put(19,11){\makebox[0pt]{\scriptsize $-1$}}
\put(24,12){\makebox[0pt]{\scriptsize $-1$}}
\put(30,14){\makebox[0pt][r]{\scriptsize $-1$}}
\put(29,4){\makebox[0pt][r]{\scriptsize $-q^{-1}$}}
\put(35,16){\makebox[0pt][l]{\scriptsize $-1$}}
\put(34,9){\makebox[0pt][l]{\scriptsize $q$}}
\put(35,1){\makebox[0pt][l]{\scriptsize $-1$}}
\end{picture}
\end{align*}
\begin{align*}
\qquad \Rightarrow \quad
\tau_{12354}\quad
\begin{picture}(50,4)(0,3)
\put(1,1){\circle{2}}
\put(2,1){\line(1,0){10}}
\put(13,1){\circle{2}}
\put(14,1){\line(1,0){10}}
\put(25,1){\circle{2}}
\put(26,1){\line(1,0){10}}
\put(37,1){\circle{2}}
\put(38,1){\line(1,0){10}}
\put(49,1){\circle{2}}
\put(1,4){\makebox[0pt]{\scriptsize $-1$}}
\put(7,3){\makebox[0pt]{\scriptsize $-1$}}
\put(13,4){\makebox[0pt]{\scriptsize $-1$}}
\put(19,3){\makebox[0pt]{\scriptsize $-1$}}
\put(26,4){\makebox[0pt]{\scriptsize $-q^{-1}$}}
\put(31,3){\makebox[0pt]{\scriptsize $-q$}}
\put(37,4){\makebox[0pt]{\scriptsize $-1$}}
\put(43,3){\makebox[0pt]{\scriptsize $q^{-1}$}}
\put(49,4){\makebox[0pt]{\scriptsize $q$}}
\end{picture}
\end{align*}
together with the condition $ a_{32}^{r_5r_4(X)}=-1$, force $q=1$ and immediate a contradiction. Accordingly, the case $(c_4)$ is ruled out.
\end{itemize}
\end{proof}

Lemma \ref{l-5chainspe-a} streamlines the subsequent arguments considerably.

\begin{theorem}\label{theo.rank5}
Suppose ${\roots}^{[M]}$ is a finite set of roots of sporadic finite Cartan graphs of rank $5$. Then every admissable generalized Dynkin diagrams of $V$ occurs in rows $11$-$15$ of Table \ref{tab.1}.
\end{theorem}

\begin{proof}
Define $a\colon= -a_{24}^{r_3(X)}$, $b\colon=-a_{42}^{r_3(X)}$, $c\colon=-a_{35}^{r_4(X)}$, $d\colon=-a_{53}^{r_4(X)}$, and $e\colon=-a_{43}^{r_5(X)}$.
By Theorem \ref{thm:goodnei}, $A^X$ has a good $A_5$ neighborhood. Lemma \ref{l-5chainspe-a} guarantees $q_{ii}=-1$ for at least one index $i\in\{3,4\}$ across all admissible configurations. Since $A^{X}=A_{5}$, the identities $(2)_{q_{ii}}(q_{ii}\widetilde{q_{i,i+1}}-1)=(2)_{q_{jj}}(q_{jj}\widetilde{q_{j-1,j}}-1)=0$ hold for every $i\in \{1,2,3,4\}$ and $j\in \{2,3,4,5\}$. We divide the subsequent analysis into Steps $1$–$5$ according to the total count of indices $i\in \{1,2,3,4,5\}$ satisfying $q_{ii}=-1$.

Step $1$. Precisely one entry satisfying $q_{ii}=-1$. This case is further separated into two subcases.

Step $1.1$.
Set $q_{33}=-1$ and suppose
\[
q_{ii}\widetilde q_{i,i+1}=q_{jj}\widetilde q_{j-1,j}=1,\quad \forall\,i\in\{1,2,4\},\;j\in\{2,4,5\}.
\]
This yields $e=1$, and Definition \ref{defA5} forces $(a,b,c,d,e)=(1,1,0,0,1)$. Let $q:=q_{11}$, $r:=q_{44}$. The condition $a_{24}^{r_3(X)}=-1$ gives $qr\neq 1$. Successive reflections of $X$ read
\begin{align*}
X\colon~ 
\begin{picture}(50,4)(0,3)
\put(1,1){\circle{2}}
\put(2,1){\line(1,0){10}}
\put(13,1){\circle{2}}
\put(14,1){\line(1,0){10}}
\put(25,1){\circle*{2}}
\put(26,1){\line(1,0){10}}
\put(37,1){\circle{2}}
\put(38,1){\line(1,0){10}}
\put(49,1){\circle{2}}
\put(1,4){\makebox[0pt]{\scriptsize $q$}}
\put(7,3){\makebox[0pt]{\scriptsize $q^{-1}$}}
\put(13,4){\makebox[0pt]{\scriptsize $q$}}
\put(19,3){\makebox[0pt]{\scriptsize $q^{-1}$}}
\put(25,4){\makebox[0pt]{\scriptsize $-1$}}
\put(31,3){\makebox[0pt]{\scriptsize $r^{-1}$}}
\put(37,4){\makebox[0pt]{\scriptsize $r$}}
\put(43,3){\makebox[0pt]{\scriptsize $r^{-1}$}}
\put(49,4){\makebox[0pt]{\scriptsize $r$}}
\end{picture}
\quad \Rightarrow \quad
r_3(X)\colon~ 
\begin{picture}(38,11)(0,3)
\put(1,1){\circle{2}}
\put(2,1){\line(1,0){10}}
\put(13,1){\circle{2}}
\put(13,2){\line(2,3){6}}
\put(14,1){\line(1,0){10}}
\put(25,1){\circle{2}}
\put(25,2){\line(-2,3){6}}
\put(19,12){\circle{2}}
\put(26,1){\line(1,0){10}}
\put(37,1){\circle{2}}
\put(1,4){\makebox[0pt]{\scriptsize $q$}}
\put(7,3){\makebox[0pt]{\scriptsize $q^{-1}$}}
\put(12,4){\makebox[0pt]{\scriptsize $-1$}}
\put(19,3){\makebox[0pt]{\scriptsize $(qr)^{-1}$}}
\put(26,4){\makebox[0pt]{\scriptsize $-1$}}
\put(31,3){\makebox[0pt]{\scriptsize $r^{-1}$}}
\put(37,4){\makebox[0pt]{\scriptsize $r$}}
\put(13.3,8){\makebox[0pt]{\scriptsize $q$}}
\put(23,8){\makebox[0pt][l]{\scriptsize $r$}}
\put(24,14){\makebox[0pt]{\scriptsize $-1$}}
\end{picture}
\end{align*}
\begin{align*}
\Rightarrow \quad 
r_2r_3(X)\colon~ \tau_{32145}
\begin{picture}(38,11)(0,3)
\put(1,1){\circle{2}}
\put(2,1){\line(1,0){10}}
\put(13,1){\circle{2}}
\put(13,2){\line(2,3){6}}
\put(14,1){\line(1,0){10}}
\put(25,1){\circle{2}}
\put(25,2){\line(-2,3){6}}
\put(19,12){\circle{2}}
\put(26,1){\line(1,0){10}}
\put(37,1){\circle{2}}
\put(1,4){\makebox[0pt]{\scriptsize $q$}}
\put(7,3){\makebox[0pt]{\scriptsize $q^{-1}$}}
\put(12,4){\makebox[0pt]{\scriptsize $-1$}}
\put(19,3){\makebox[0pt]{\scriptsize $qr$}}
\put(26,4){\makebox[0pt]{\scriptsize $(qr)^{-1}$}}
\put(31,3){\makebox[0pt]{\scriptsize $r^{-1}$}}
\put(37,4){\makebox[0pt]{\scriptsize $r$}}
\put(13.3,8){\makebox[0pt]{\scriptsize $q$}}
\put(23,8){\makebox[0pt][l]{\scriptsize $q^{-2}r^{-1}$}}
\put(24,14){\makebox[0pt]{\scriptsize $-1$}}
\end{picture}
\end{align*}
From $a_{41}^{r_2r_3(X)}=0$ we obtain $q^2r=1$, while $a_{45}^{r_2r_3(X)}=-1$ yields either $qr=-1$ or $qr^2=1$. Combining these relations gives $q=r\in G_3'$ and $p\neq3$, hence $\mathcal{D}=\mathcal{D}_{11,1}^5$.

Step $1.2$.
Let $q_{44}=-1$ and
\[
q_{ii}\widetilde q_{i,i+1}=q_{jj}\widetilde q_{j-1,j}=1,\quad \forall\,i\in\{1,2,3\},\;j\in\{2,3,5\}.
\]
Then $(a,b,e)=(0,0,1)$, which together with Definition \ref{defA5} implies $(a,b,c,d,e)=(0,0,1,1,1)$. Set $q:=q_{11}$, $r:=q_{55}$. The entry $a_{35}^{r_4(X)}=-1$ enforces $qr\neq 1$.

On one side, iterated reflections
\begin{align*}
X\colon~
\begin{picture}(50,4)(0,3)
\put(1,1){\circle{2}}
\put(2,1){\line(1,0){10}}
\put(13,1){\circle{2}}
\put(14,1){\line(1,0){10}}
\put(25,1){\circle{2}}
\put(26,1){\line(1,0){10}}
\put(37,1){\circle*{2}}
\put(38,1){\line(1,0){10}}
\put(49,1){\circle{2}}
\put(1,4){\makebox[0pt]{\scriptsize $q$}}
\put(7,3){\makebox[0pt]{\scriptsize $q^{-1}$}}
\put(13,4){\makebox[0pt]{\scriptsize $q$}}
\put(19,3){\makebox[0pt]{\scriptsize $q^{-1}$}}
\put(25,4){\makebox[0pt]{\scriptsize $q$}}
\put(31,3){\makebox[0pt]{\scriptsize $q^{-1}$}}
\put(37,4){\makebox[0pt]{\scriptsize $-1$}}
\put(43,3){\makebox[0pt]{\scriptsize $r^{-1}$}}
\put(49,4){\makebox[0pt]{\scriptsize $r$}}
\end{picture}
\quad
\Rightarrow \quad
r_4(X)\colon~ 
\begin{picture}(36,10)(0,9)
\put(1,9){\circle{2}}
\put(2,9){\line(1,0){10}}
\put(13,9){\circle{2}}
\put(14,9){\line(1,0){10}}
\put(25,9){\circle{2}}
\put(26,9){\line(1,1){7}}
\put(26,9){\line(1,-1){7}}
\put(33,17){\circle{2}}
\put(33,1){\circle*{2}}
\put(33,2){\line(0,1){14}}
\put(1,12){\makebox[0pt]{\scriptsize $q$}}
\put(7,11){\makebox[0pt]{\scriptsize $q^{-1}$}}
\put(13,12){\makebox[0pt]{\scriptsize $q$}}
\put(19,11){\makebox[0pt]{\scriptsize $q^{-1}$}}
\put(24,12){\makebox[0pt]{\scriptsize $-1$}}
\put(30,14){\makebox[0pt][r]{\scriptsize $q$}}
\put(29,4){\makebox[0pt][r]{\scriptsize $(qr)^{-1}$}}
\put(35,16){\makebox[0pt][l]{\scriptsize $-1$}}
\put(34,9){\makebox[0pt][l]{\scriptsize $r$}}
\put(35,1){\makebox[0pt][l]{\scriptsize $-1$}}
\end{picture}
\end{align*}
\begin{align*}
 \Rightarrow \quad 
r_5r_4(X)\colon~ \tau_{12354}~ 
\begin{picture}(50,4)(0,3)
\put(1,1){\circle{2}}
\put(2,1){\line(1,0){10}}
\put(13,1){\circle{2}}
\put(14,1){\line(1,0){10}}
\put(25,1){\circle{2}}
\put(26,1){\line(1,0){10}}
\put(37,1){\circle{2}}
\put(38,1){\line(1,0){10}}
\put(49,1){\circle{2}}
\put(1,4){\makebox[0pt]{\scriptsize $q$}}
\put(7,3){\makebox[0pt]{\scriptsize $q^{-1}$}}
\put(13,4){\makebox[0pt]{\scriptsize $q$}}
\put(19,3){\makebox[0pt]{\scriptsize $q^{-1}$}}
\put(26,4){\makebox[0pt]{\scriptsize $(qr)^{-1}$}}
\put(31,3){\makebox[0pt]{\scriptsize $qr$}}
\put(37,4){\makebox[0pt]{\scriptsize $-1$}}
\put(43,3){\makebox[0pt]{\scriptsize $r^{-1}$}}
\put(49,4){\makebox[0pt]{\scriptsize $r$}}
\end{picture}
\end{align*}
together with $a_{32}^{r_5r_4(X)}=-1$ give $qr=-1$ or $q^2r=1$.

On the other side, further successive reflections
\begin{align*}
X\colon~ 
\begin{picture}(50,4)(0,3)
\put(1,1){\circle{2}}
\put(2,1){\line(1,0){10}}
\put(13,1){\circle{2}}
\put(14,1){\line(1,0){10}}
\put(25,1){\circle{2}}
\put(26,1){\line(1,0){10}}
\put(37,1){\circle*{2}}
\put(38,1){\line(1,0){10}}
\put(49,1){\circle{2}}
\put(1,4){\makebox[0pt]{\scriptsize $q$}}
\put(7,3){\makebox[0pt]{\scriptsize $q^{-1}$}}
\put(13,4){\makebox[0pt]{\scriptsize $q$}}
\put(19,3){\makebox[0pt]{\scriptsize $q^{-1}$}}
\put(25,4){\makebox[0pt]{\scriptsize $q$}}
\put(31,3){\makebox[0pt]{\scriptsize $q^{-1}$}}
\put(37,4){\makebox[0pt]{\scriptsize $-1$}}
\put(43,3){\makebox[0pt]{\scriptsize $r^{-1}$}}
\put(49,4){\makebox[0pt]{\scriptsize $r$}}
\end{picture}
\quad
\Rightarrow \quad
r_4(X)\colon~ 
\begin{picture}(36,10)(0,9)
\put(1,9){\circle{2}}
\put(2,9){\line(1,0){10}}
\put(13,9){\circle{2}}
\put(14,9){\line(1,0){10}}
\put(25,9){\circle*{2}}
\put(26,9){\line(1,1){7}}
\put(26,9){\line(1,-1){7}}
\put(33,17){\circle{2}}
\put(33,1){\circle{2}}
\put(33,2){\line(0,1){14}}
\put(1,12){\makebox[0pt]{\scriptsize $q$}}
\put(7,11){\makebox[0pt]{\scriptsize $q^{-1}$}}
\put(13,12){\makebox[0pt]{\scriptsize $q$}}
\put(19,11){\makebox[0pt]{\scriptsize $q^{-1}$}}
\put(23.5,12){\makebox[0pt]{\scriptsize $-1$}}
\put(30,14){\makebox[0pt][r]{\scriptsize $q$}}
\put(29,4){\makebox[0pt][r]{\scriptsize $(qr)^{-1}$}}
\put(35,16){\makebox[0pt][l]{\scriptsize $-1$}}
\put(34,9){\makebox[0pt][l]{\scriptsize $r$}}
\put(35,1){\makebox[0pt][l]{\scriptsize $-1$}}
\end{picture}
\end{align*}
\begin{align*}
 \Rightarrow \quad 
r_3r_4(X)\colon~ \tau_{12534}~
\begin{picture}(38,11)(0,3)
\put(1,1){\circle{2}}
\put(2,1){\line(1,0){10}}
\put(13,1){\circle{2}}
\put(13,2){\line(2,3){6}}
\put(14,1){\line(1,0){10}}
\put(25,1){\circle{2}}
\put(25,2){\line(-2,3){6}}
\put(19,12){\circle{2}}
\put(26,1){\line(1,0){10}}
\put(37,1){\circle{2}}
\put(1,4){\makebox[0pt]{\scriptsize $q$}}
\put(7,3){\makebox[0pt]{\scriptsize $q^{-1}$}}
\put(12,4){\makebox[0pt]{\scriptsize $-1$}}
\put(19,3){\makebox[0pt]{\scriptsize $q$}}
\put(26,4){\makebox[0pt]{\scriptsize $-1$}}
\put(31,3){\makebox[0pt]{\scriptsize $q^{-1}$}}
\put(37,4){\makebox[0pt]{\scriptsize $q$}}
\put(13.3,8){\makebox[0pt]{\scriptsize $q^{-2}r^{-1}$}}
\put(23,8){\makebox[0pt][l]{\scriptsize $qr$}}
\put(24,14){\makebox[0pt]{\scriptsize $(qr)^{-1}$}}
\end{picture}
\end{align*}
and $a_{52}^{r_3r_4(X)}=-1$ yield $qr=-1$ or $q^3r^2=1$.

Combining all constraints forces $qr=-1$ with $p\neq2$. Substituting into $a_{15}^{r_2r_3r_4(X)}=0$ gives $-q^{-2}=1$, hence $q\in G_4'$ and $\mathcal{D}=\mathcal{D}_{14,4}^5$, $p\neq2$.

Step $2$.
Assume exactly two entries satisfy $q_{ii}=-1$. We divide the analysis into seven subcases.

Step $2.1$.
Set $q_{44}=q_{55}=-1$ and suppose
\[
q_{ii}\widetilde q_{i,i+1}=q_{jj}\widetilde q_{j-1,j}=1,\quad \forall\,i\in\{1,2,3\},\;j\in\{2,3\}.
\]
This yields $(a,b)=(0,0)$. By Definition \ref{defA5}, only two possibilities occur:
\[
(a,b,c,d,e)=(0,0,1,1,1)\quad\text{or}\quad (a,b,c,d,e)=(0,0,1,1,2).
\]
Set $q:=q_{11}$ and $r^{-1}:=\widetilde q_{45}$.

When $(a,b,c,d,e)=(0,0,1,1,1)$, we have $A^{r_3(X)}\cong A_5$,
\[
A^{r_4(X)}=\begin{pmatrix}
2&-1&0&0&0\\
-1&2&-1&0&0\\
0&-1&2&-1&-1\\
0&0&-1&2&-1\\
0&0&-1&-1&2
\end{pmatrix},\qquad A^{r_5(X)}= A_5.
\]
Here $qr\neq 1$. From the reflection
\begin{align*}
X\colon~ 
\begin{picture}(50,4)(0,3)
\put(1,1){\circle{2}}
\put(2,1){\line(1,0){10}}
\put(13,1){\circle{2}}
\put(14,1){\line(1,0){10}}
\put(25,1){\circle{2}}
\put(26,1){\line(1,0){10}}
\put(37,1){\circle{2}}
\put(38,1){\line(1,0){10}}
\put(49,1){\circle*{2}}
\put(1,4){\makebox[0pt]{\scriptsize $q$}}
\put(7,3){\makebox[0pt]{\scriptsize $q^{-1}$}}
\put(13,4){\makebox[0pt]{\scriptsize $q$}}
\put(19,3){\makebox[0pt]{\scriptsize $q^{-1}$}}
\put(25,4){\makebox[0pt]{\scriptsize $q$}}
\put(31,3){\makebox[0pt]{\scriptsize $q^{-1}$}}
\put(37,4){\makebox[0pt]{\scriptsize $-1$}}
\put(43,3){\makebox[0pt]{\scriptsize $r^{-1}$}}
\put(49,4){\makebox[0pt]{\scriptsize $-1$}}
\end{picture}
\quad \Rightarrow \quad
r_5(X)\colon~ \begin{picture}(50,4)(0,3)
\put(1,1){\circle{2}}
\put(2,1){\line(1,0){10}}
\put(13,1){\circle{2}}
\put(14,1){\line(1,0){10}}
\put(25,1){\circle{2}}
\put(26,1){\line(1,0){10}}
\put(37,1){\circle{2}}
\put(38,1){\line(1,0){10}}
\put(49,1){\circle{2}}
\put(1,4){\makebox[0pt]{\scriptsize $q$}}
\put(7,3){\makebox[0pt]{\scriptsize $q^{-1}$}}
\put(13,4){\makebox[0pt]{\scriptsize $q$}}
\put(19,3){\makebox[0pt]{\scriptsize $q^{-1}$}}
\put(26,4){\makebox[0pt]{\scriptsize $q$}}
\put(31,3){\makebox[0pt]{\scriptsize $q^{-1}$}}
\put(37,4){\makebox[0pt]{\scriptsize $r^{-1}$}}
\put(43,3){\makebox[0pt]{\scriptsize $r$}}
\put(49,4){\makebox[0pt]{\scriptsize $-1$}}
\end{picture}
\end{align*}
the condition $a_{43}^{r_5(X)}=-1$ forces $r=-1$ and $p\neq 2$.

Next iterate reflections of $X$
\begin{align*}
X\colon~ 
\begin{picture}(50,4)(0,3)
\put(1,1){\circle{2}}
\put(2,1){\line(1,0){10}}
\put(13,1){\circle{2}}
\put(14,1){\line(1,0){10}}
\put(25,1){\circle{2}}
\put(26,1){\line(1,0){10}}
\put(37,1){\circle*{2}}
\put(38,1){\line(1,0){10}}
\put(49,1){\circle{2}}
\put(1,4){\makebox[0pt]{\scriptsize $q$}}
\put(7,3){\makebox[0pt]{\scriptsize $q^{-1}$}}
\put(13,4){\makebox[0pt]{\scriptsize $q$}}
\put(19,3){\makebox[0pt]{\scriptsize $q^{-1}$}}
\put(25,4){\makebox[0pt]{\scriptsize $q$}}
\put(31,3){\makebox[0pt]{\scriptsize $q^{-1}$}}
\put(37,4){\makebox[0pt]{\scriptsize $-1$}}
\put(43,3){\makebox[0pt]{\scriptsize $-1$}}
\put(49,4){\makebox[0pt]{\scriptsize $-1$}}
\end{picture}
\quad \Rightarrow \quad
r_4(X)\colon~ 
\begin{picture}(36,10)(0,9)
\put(1,9){\circle{2}}
\put(2,9){\line(1,0){10}}
\put(13,9){\circle{2}}
\put(14,9){\line(1,0){10}}
\put(25,9){\circle{2}}
\put(26,9){\line(1,1){7}}
\put(26,9){\line(1,-1){7}}
\put(33,17){\circle{2}}
\put(33,1){\circle{2}}
\put(33,2){\line(0,1){14}}
\put(1,12){\makebox[0pt]{\scriptsize $q$}}
\put(7,11){\makebox[0pt]{\scriptsize $q^{-1}$}}
\put(13,12){\makebox[0pt]{\scriptsize $q$}}
\put(19,11){\makebox[0pt]{\scriptsize $q^{-1}$}}
\put(24,12){\makebox[0pt]{\scriptsize $-1$}}
\put(30,14){\makebox[0pt][r]{\scriptsize $q$}}
\put(29,4){\makebox[0pt][r]{\scriptsize $-q^{-1}$}}
\put(35,16){\makebox[0pt][l]{\scriptsize $-1$}}
\put(34,9){\makebox[0pt][l]{\scriptsize $-1$}}
\put(35,1){\makebox[0pt][l]{\scriptsize $-1$}}
\end{picture}
\end{align*}
\begin{align*}
\Rightarrow \quad
r_5r_4(X)\colon~ \tau_{12354}~ 
\begin{picture}(50,4)(0,3)
\put(1,1){\circle{2}}
\put(2,1){\line(1,0){10}}
\put(13,1){\circle{2}}
\put(14,1){\line(1,0){10}}
\put(25,1){\circle{2}}
\put(26,1){\line(1,0){10}}
\put(37,1){\circle{2}}
\put(38,1){\line(1,0){10}}
\put(49,1){\circle{2}}
\put(1,4){\makebox[0pt]{\scriptsize $q$}}
\put(7,3){\makebox[0pt]{\scriptsize $q^{-1}$}}
\put(13,4){\makebox[0pt]{\scriptsize $q$}}
\put(19,3){\makebox[0pt]{\scriptsize $q^{-1}$}}
\put(26,4){\makebox[0pt]{\scriptsize $-q^{-1}$}}
\put(31,3){\makebox[0pt]{\scriptsize $-q$}}
\put(37,4){\makebox[0pt]{\scriptsize $-1$}}
\put(43,3){\makebox[0pt]{\scriptsize $-1$}}
\put(49,4){\makebox[0pt]{\scriptsize $-1$}}
\end{picture}
\end{align*}
together with $a_{32}^{r_5r_4(X)}=-1$ force $q^2=-1$. Hece $q\in G_4'$. A further reflection
\begin{align*}
r_4(X)\colon~ \Dchainfivew{$q$}{$q^{-1}$}{$q$}{$q^{-1}$}{$-1$}{$q$}{$-q^{-1}$}{$-1$}{$-1$}{$-1$}
\quad \Rightarrow \quad
r_3r_4(X)\colon~ \tau_{12534}~
\begin{picture}(38,11)(0,3)
\put(1,1){\circle{2}}
\put(2,1){\line(1,0){10}}
\put(13,1){\circle{2}}
\put(13,2){\line(2,3){6}}
\put(14,1){\line(1,0){10}}
\put(25,1){\circle{2}}
\put(25,2){\line(-2,3){6}}
\put(19,12){\circle{2}}
\put(26,1){\line(1,0){10}}
\put(37,1){\circle{2}}
\put(1,4){\makebox[0pt]{\scriptsize $q$}}
\put(7,3){\makebox[0pt]{\scriptsize $q^{-1}$}}
\put(12,4){\makebox[0pt]{\scriptsize $-1$}}
\put(19,3){\makebox[0pt]{\scriptsize $q$}}
\put(26,4){\makebox[0pt]{\scriptsize $-1$}}
\put(31,3){\makebox[0pt]{\scriptsize $q^{-1}$}}
\put(37,4){\makebox[0pt]{\scriptsize $q$}}
\put(13.3,8){\makebox[0pt]{\scriptsize $-q^{-2}$}}
\put(23,8){\makebox[0pt][l]{\scriptsize $-q$}}
\put(24,14){\makebox[0pt]{\scriptsize $-q^{-1}$}}
\end{picture}
\end{align*}
together with $a_{52}^{r_3r_4(X)}=-1$ yields $q^3=1$ and $p\neq3$, i.e.\ $q\in G_3'$. The inclusions $q\in G_4'\cap G_3'$ are contradictory, so this subcase is impossible.

When $(a,b,c,d,e)=(0,0,1,1,2)$, we have
\[
A^{r_3(X)}\cong A_5,\quad
A^{r_4(X)}=\begin{pmatrix}
2&-1&0&0&0\\
-1&2&-1&0&0\\
0&-1&2&-1&-1\\
0&0&-1&2&-1\\
0&0&-1&-1&2
\end{pmatrix},\quad
A^{r_5(X)}=\begin{pmatrix}
2&-1&0&0&0\\
-1&2&-1&0&0\\
0&-1&2&-1&0\\
0&0&-2&2&-1\\
0&0&0&-1&2
\end{pmatrix}.
\]
Then $qr\neq1$. From
\begin{align*}
X\colon~ 
\begin{picture}(50,4)(0,3)
\put(1,1){\circle{2}}
\put(2,1){\line(1,0){10}}
\put(13,1){\circle{2}}
\put(14,1){\line(1,0){10}}
\put(25,1){\circle{2}}
\put(26,1){\line(1,0){10}}
\put(37,1){\circle*{2}}
\put(38,1){\line(1,0){10}}
\put(49,1){\circle{2}}
\put(1,4){\makebox[0pt]{\scriptsize $q$}}
\put(7,3){\makebox[0pt]{\scriptsize $q^{-1}$}}
\put(13,4){\makebox[0pt]{\scriptsize $q$}}
\put(19,3){\makebox[0pt]{\scriptsize $q^{-1}$}}
\put(25,4){\makebox[0pt]{\scriptsize $q$}}
\put(31,3){\makebox[0pt]{\scriptsize $q^{-1}$}}
\put(37,4){\makebox[0pt]{\scriptsize $-1$}}
\put(43,3){\makebox[0pt]{\scriptsize $r^{-1}$}}
\put(49,4){\makebox[0pt]{\scriptsize $-1$}}
\end{picture}
\quad \Rightarrow \quad
r_4(X)\colon~ \Dchainfivet{$q$}{$q^{-1}$}{$q$}{$q^{-1}$}{$-1$}{$q$}{$(qr)^{-1}$}{$-1$}{$r$}{$r^{-1}$}
\end{align*}
the entry $a_{53}^{r_4(X)}=-1$ gives the alternative $r=-1$ or $qr^2=1$.

If $r=-1$, inspect the reflection
\begin{align*}
X\colon~ 
\begin{picture}(50,4)(0,3)
\put(1,1){\circle{2}}
\put(2,1){\line(1,0){10}}
\put(13,1){\circle{2}}
\put(14,1){\line(1,0){10}}
\put(25,1){\circle{2}}
\put(26,1){\line(1,0){10}}
\put(37,1){\circle{2}}
\put(38,1){\line(1,0){10}}
\put(49,1){\circle*{2}}
\put(1,4){\makebox[0pt]{\scriptsize $q$}}
\put(7,3){\makebox[0pt]{\scriptsize $q^{-1}$}}
\put(13,4){\makebox[0pt]{\scriptsize $q$}}
\put(19,3){\makebox[0pt]{\scriptsize $q^{-1}$}}
\put(25,4){\makebox[0pt]{\scriptsize $q$}}
\put(31,3){\makebox[0pt]{\scriptsize $q^{-1}$}}
\put(37,4){\makebox[0pt]{\scriptsize $-1$}}
\put(43,3){\makebox[0pt]{\scriptsize $-1$}}
\put(49,4){\makebox[0pt]{\scriptsize $-1$}}
\end{picture}
\quad \Rightarrow \quad
r_5(X)\colon~ 
\begin{picture}(50,4)(0,3)
\put(1,1){\circle{2}}
\put(2,1){\line(1,0){10}}
\put(13,1){\circle{2}}
\put(14,1){\line(1,0){10}}
\put(25,1){\circle{2}}
\put(26,1){\line(1,0){10}}
\put(37,1){\circle{2}}
\put(38,1){\line(1,0){10}}
\put(49,1){\circle{2}}
\put(1,4){\makebox[0pt]{\scriptsize $q$}}
\put(7,3){\makebox[0pt]{\scriptsize $q^{-1}$}}
\put(13,4){\makebox[0pt]{\scriptsize $q$}}
\put(19,3){\makebox[0pt]{\scriptsize $q^{-1}$}}
\put(26,4){\makebox[0pt]{\scriptsize $q$}}
\put(31,3){\makebox[0pt]{\scriptsize $q^{-1}$}}
\put(37,4){\makebox[0pt]{\scriptsize $-1$}}
\put(43,3){\makebox[0pt]{\scriptsize $-1$}}
\put(49,4){\makebox[0pt]{\scriptsize $-1$}}
\end{picture}
\end{align*}
which directly contradicts $e=2$.

If $qr^2=1$, Definition \ref{defA5} (condition $(A_{5_2})$) splits on $a_{25}^{r_3r_4(X)}\in\{0,-1\}$.
When $a_{25}^{r_3r_4(X)}=0$, we get $r^3=1$, so $q=r\in G_3'$ with $p\neq3$, and $\mathcal{D}=\mathcal{D}_{12,2}^5$.
When $a_{25}^{r_3r_4(X)}=-1$, substitute $q=r^{-2}$ into the reflection
\begin{align*}
r_4(X)\colon~ \Dchainfivew{$r^{-2}$}{$r^2$}{$r^{-2}$}{$r^2$}{$-1$}{$r^{-2}$}{$r$}{$-1$}{$r$}{$r^{-1}$}
\quad \Rightarrow \quad
r_3r_4(X)\colon~ \tau_{12534}~~ 
\begin{picture}(38,11)(0,3)
\put(1,1){\circle{2}}
\put(2,1){\line(1,0){10}}
\put(13,1){\circle{2}}
\put(13,2){\line(2,3){6}}
\put(14,1){\line(1,0){10}}
\put(25,1){\circle{2}}
\put(25,2){\line(-2,3){6}}
\put(19,12){\circle{2}}
\put(26,1){\line(1,0){10}}
\put(37,1){\circle{2}}
\put(1,4){\makebox[0pt]{\scriptsize $r^{-2}$}}
\put(7,3){\makebox[0pt]{\scriptsize $r^2$}}
\put(12,4){\makebox[0pt]{\scriptsize $-1$}}
\put(19,3){\makebox[0pt]{\scriptsize $r^{-2}$}}
\put(26,4){\makebox[0pt]{\scriptsize $-1$}}
\put(31,3){\makebox[0pt]{\scriptsize $r^2$}}
\put(37,4){\makebox[0pt]{\scriptsize $r^{-2}$}}
\put(13.3,8){\makebox[0pt]{\scriptsize $r^3$}}
\put(23,8){\makebox[0pt][l]{\scriptsize $r^{-1}$}}
\put(24,14){\makebox[0pt]{\scriptsize $-1$}}
\end{picture}
\end{align*}
and deduce $r\in G_5'$, hence $\mathcal{D}=\mathcal{D}_{15,6}^5$ with $p\neq5$.

Step $2.2$. Let $q_{11}=q_{33}=-1$ and $q_{ii}\widetilde{q_{i,i+1}}=q_{jj}\widetilde{q_{j-1,j}}=1$ for all $i\in \{2,4\}$ and $j\in \{2,4,5\}$. Then $(c,d,e)=(0,0,1)$. Hence $(a,b,c,d,e)=(1,1,0,0,1)$. Set $q\colon=q_{22}$ and $r\colon=q_{44}$. We obtain $qr\ne 1$ by $a^{r_3(X)}_{24}=-1$. The reflections of $X$
\begin{align*}
X\colon~
\begin{picture}(50,4)(0,3)
\put(1,1){\circle{2}}
\put(2,1){\line(1,0){10}}
\put(13,1){\circle{2}}
\put(14,1){\line(1,0){10}}
\put(25,1){\circle*{2}}
\put(26,1){\line(1,0){10}}
\put(37,1){\circle{2}}
\put(38,1){\line(1,0){10}}
\put(49,1){\circle{2}}
\put(1,4){\makebox[0pt]{\scriptsize $-1$}}
\put(7,3){\makebox[0pt]{\scriptsize $q^{-1}$}}
\put(13,4){\makebox[0pt]{\scriptsize $q$}}
\put(19,3){\makebox[0pt]{\scriptsize $q^{-1}$}}
\put(25,4){\makebox[0pt]{\scriptsize $-1$}}
\put(31,3){\makebox[0pt]{\scriptsize $r^{-1}$}}
\put(37,4){\makebox[0pt]{\scriptsize $r$}}
\put(43,3){\makebox[0pt]{\scriptsize $r^{-1}$}}
\put(49,4){\makebox[0pt]{\scriptsize $r$}}
\end{picture}
\quad \Rightarrow \quad
r_3(X)\colon~ 
\begin{picture}(38,11)(0,3)
\put(1,1){\circle{2}}
\put(2,1){\line(1,0){10}}
\put(13,1){\circle{2}}
\put(13,2){\line(2,3){6}}
\put(14,1){\line(1,0){10}}
\put(25,1){\circle{2}}
\put(25,2){\line(-2,3){6}}
\put(19,12){\circle{2}}
\put(26,1){\line(1,0){10}}
\put(37,1){\circle{2}}
\put(1,4){\makebox[0pt]{\scriptsize $-1$}}
\put(7,3){\makebox[0pt]{\scriptsize $q^{-1}$}}
\put(12,4){\makebox[0pt]{\scriptsize $-1$}}
\put(19,3){\makebox[0pt]{\scriptsize $(qr)^{-1}$}}
\put(26,4){\makebox[0pt]{\scriptsize $-1$}}
\put(31,3){\makebox[0pt]{\scriptsize $r^{-1}$}}
\put(37,4){\makebox[0pt]{\scriptsize $r$}}
\put(13.3,8){\makebox[0pt]{\scriptsize $q$}}
\put(23,8){\makebox[0pt][l]{\scriptsize $r$}}
\put(24,14){\makebox[0pt]{\scriptsize $-1$}}
\end{picture}
\end{align*}
\begin{align*}
 \Rightarrow \quad 
r_2r_3(X)\colon~ 
\begin{picture}(38,11)(0,3)
\put(1,1){\circle{2}}
\put(2,1){\line(1,0){10}}
\put(13,1){\circle{2}}
\put(13,2){\line(2,3){6}}
\put(14,1){\line(1,0){10}}
\put(25,1){\circle{2}}
\put(25,2){\line(-2,3){6}}
\put(19,12){\circle{2}}
\put(26,1){\line(1,0){10}}
\put(37,1){\circle{2}}
\put(1,4){\makebox[0pt]{\scriptsize $q$}}
\put(7,3){\makebox[0pt]{\scriptsize $q^{-1}$}}
\put(12,4){\makebox[0pt]{\scriptsize $-1$}}
\put(19,3){\makebox[0pt]{\scriptsize $qr$}}
\put(26,4){\makebox[0pt]{\scriptsize $(qr)^{-1}$}}
\put(31,3){\makebox[0pt]{\scriptsize $r^{-1}$}}
\put(37,4){\makebox[0pt]{\scriptsize $r$}}
\put(13.3,8){\makebox[0pt]{\scriptsize $q$}}
\put(23,8){\makebox[0pt][l]{\scriptsize $q^{-2}r^{-1}$}}
\put(24,14){\makebox[0pt]{\scriptsize $q^{-1}$}}
\end{picture}
\end{align*}
imply $q^2r=1$ by $a_{41}^{r_2r_3(X)}=0$. And $a_{45}^{r_2r_3(X)}=-1$ gives $qr^2=1$ or $qr=-1$. Then $q=r\in G'_3$ and $p\ne 3$. Hence $\cD=\cD_{12,13}^5$. 

Step $2.3$. Let $q_{33}=q_{55}=-1$ and suppose $q_{ii}\widetilde{q_{i,i+1}}=q_{jj}\widetilde{q_{j-1,j}}=1$ for all $i\in \{1,2,4\}$ and $j\in \{2,4\}$. Then $(c,d)=(0,0)$. Hence $(a,b,c,d,e)=(1,1,0,0,1)$. 
This case is  symmetric to Step $2.2$. Hence $\cD=\tau_{54321}\cD_{12,13}^5$.

Step $2.4$. Let $q_{11}=q_{44}=-1$ and $q_{ii}\widetilde{q_{i,i+1}}=q_{jj}\widetilde{q_{j-1,j}}=1$ for all $i\in \{2,3\}$ and $j\in \{2,3,5\}$. Then $(a,b,e)=(0,0,1)$. By Definition \ref{defA5} one obtains $(a,b,c,d,e)=(0,0,1,1,1)$. Set $q\colon=q_{33}$ and $r\colon=q_{55}$. Lemma \ref{jslemma} gives $qr\ne 1$. On one hand, the reflections of $X$
\begin{align*}
X\colon~ 
\begin{picture}(50,4)(0,3)
\put(1,1){\circle{2}}
\put(2,1){\line(1,0){10}}
\put(13,1){\circle{2}}
\put(14,1){\line(1,0){10}}
\put(25,1){\circle{2}}
\put(26,1){\line(1,0){10}}
\put(37,1){\circle*{2}}
\put(38,1){\line(1,0){10}}
\put(49,1){\circle{2}}
\put(1,4){\makebox[0pt]{\scriptsize $-1$}}
\put(7,3){\makebox[0pt]{\scriptsize $q^{_1}$}}
\put(13,4){\makebox[0pt]{\scriptsize $q$}}
\put(19,3){\makebox[0pt]{\scriptsize $q^{-1}$}}
\put(25,4){\makebox[0pt]{\scriptsize $q$}}
\put(31,3){\makebox[0pt]{\scriptsize $q^{-1}$}}
\put(37,4){\makebox[0pt]{\scriptsize $-1$}}
\put(43,3){\makebox[0pt]{\scriptsize $r^{-1}$}}
\put(49,4){\makebox[0pt]{\scriptsize $r$}}
\end{picture}
\quad \Rightarrow \quad
r_4(X)\colon~ 
\begin{picture}(36,10)(0,9)
\put(1,9){\circle{2}}
\put(2,9){\line(1,0){10}}
\put(13,9){\circle{2}}
\put(14,9){\line(1,0){10}}
\put(25,9){\circle*{2}}
\put(26,9){\line(1,1){7}}
\put(26,9){\line(1,-1){7}}
\put(33,17){\circle{2}}
\put(33,1){\circle{2}}
\put(33,2){\line(0,1){14}}
\put(1,12){\makebox[0pt]{\scriptsize $-1$}}
\put(7,11){\makebox[0pt]{\scriptsize $q^{-1}$}}
\put(13,12){\makebox[0pt]{\scriptsize $q$}}
\put(19,11){\makebox[0pt]{\scriptsize $q^{-1}$}}
\put(23.5,12){\makebox[0pt]{\scriptsize $-1$}}
\put(30,14){\makebox[0pt][r]{\scriptsize $q$}}
\put(29,4){\makebox[0pt][r]{\scriptsize $(qr)^{-1}$}}
\put(35,16){\makebox[0pt][l]{\scriptsize $-1$}}
\put(34,9){\makebox[0pt][l]{\scriptsize $r$}}
\put(35,1){\makebox[0pt][l]{\scriptsize $-1$}}
\end{picture}
\end{align*}
\vspace{-3mm}
\begin{align}\label{eq-a7}
 \Rightarrow \quad 
r_3r_4(X)\colon~ \tau_{12534}~
\begin{picture}(38,11)(0,3)
\put(1,1){\circle{2}}
\put(2,1){\line(1,0){10}}
\put(13,1){\circle{2}}
\put(13,2){\line(2,3){6}}
\put(14,1){\line(1,0){10}}
\put(25,1){\circle{2}}
\put(25,2){\line(-2,3){6}}
\put(19,12){\circle{2}}
\put(26,1){\line(1,0){10}}
\put(37,1){\circle{2}}
\put(1,4){\makebox[0pt]{\scriptsize $-1$}}
\put(7,3){\makebox[0pt]{\scriptsize $q^{-1}$}}
\put(12,4){\makebox[0pt]{\scriptsize $-1$}}
\put(19,3){\makebox[0pt]{\scriptsize $q$}}
\put(26,4){\makebox[0pt]{\scriptsize $-1$}}
\put(31,3){\makebox[0pt]{\scriptsize $q^{-1}$}}
\put(37,4){\makebox[0pt]{\scriptsize $q$}}
\put(13.3,8){\makebox[0pt]{\scriptsize $q^{-2}r^{-1}$}}
\put(23,8){\makebox[0pt][l]{\scriptsize $qr$}}
\put(24,14){\makebox[0pt]{\scriptsize $(qr)^{-1}$}}
\end{picture}
\end{align}
imply $qr=-1$ or $q^3r^2=1$ by $a_{52}^{r_3r_4(X)}=-1$. And we have $q^{2}=-1$ by $a_{15}^{r_2r_3r_4(X)}=0$. Hence $q\in G'_4$. On the other hand, the reflections of $X$
\begin{align*}
X\colon~ 
\begin{picture}(50,4)(0,3)
\put(1,1){\circle{2}}
\put(2,1){\line(1,0){10}}
\put(13,1){\circle{2}}
\put(14,1){\line(1,0){10}}
\put(25,1){\circle{2}}
\put(26,1){\line(1,0){10}}
\put(37,1){\circle*{2}}
\put(38,1){\line(1,0){10}}
\put(49,1){\circle{2}}
\put(1,4){\makebox[0pt]{\scriptsize $-1$}}
\put(7,3){\makebox[0pt]{\scriptsize $q^{-1}$}}
\put(13,4){\makebox[0pt]{\scriptsize $q$}}
\put(19,3){\makebox[0pt]{\scriptsize $q^{-1}$}}
\put(25,4){\makebox[0pt]{\scriptsize $q$}}
\put(31,3){\makebox[0pt]{\scriptsize $q^{-1}$}}
\put(37,4){\makebox[0pt]{\scriptsize $-1$}}
\put(43,3){\makebox[0pt]{\scriptsize $r^{-1}$}}
\put(49,4){\makebox[0pt]{\scriptsize $r$}}
\end{picture}
\quad \Rightarrow \quad
r_4(X)\colon~ 
\begin{picture}(36,10)(0,9)
\put(1,9){\circle{2}}
\put(2,9){\line(1,0){10}}
\put(13,9){\circle{2}}
\put(14,9){\line(1,0){10}}
\put(25,9){\circle{2}}
\put(26,9){\line(1,1){7}}
\put(26,9){\line(1,-1){7}}
\put(33,17){\circle{2}}
\put(33,1){\circle*{2}}
\put(33,2){\line(0,1){14}}
\put(1,12){\makebox[0pt]{\scriptsize $-1$}}
\put(7,11){\makebox[0pt]{\scriptsize $q^{-1}$}}
\put(13,12){\makebox[0pt]{\scriptsize $q$}}
\put(19,11){\makebox[0pt]{\scriptsize $q^{-1}$}}
\put(24,12){\makebox[0pt]{\scriptsize $-1$}}
\put(30,14){\makebox[0pt][r]{\scriptsize $q$}}
\put(29,4){\makebox[0pt][r]{\scriptsize $(qr)^{-1}$}}
\put(35,16){\makebox[0pt][l]{\scriptsize $-1$}}
\put(34,9){\makebox[0pt][l]{\scriptsize $r$}}
\put(35,1){\makebox[0pt][l]{\scriptsize $-1$}}
\end{picture}
\end{align*}
\vspace{-4mm}
\begin{align*}
 \Rightarrow \quad 
r_5r_4(X)\colon~ \tau_{12354}~ 
\begin{picture}(50,4)(0,3)
\put(1,1){\circle{2}}
\put(2,1){\line(1,0){10}}
\put(13,1){\circle{2}}
\put(14,1){\line(1,0){10}}
\put(25,1){\circle{2}}
\put(26,1){\line(1,0){10}}
\put(37,1){\circle{2}}
\put(38,1){\line(1,0){10}}
\put(49,1){\circle{2}}
\put(1,4){\makebox[0pt]{\scriptsize $-1$}}
\put(7,3){\makebox[0pt]{\scriptsize $q^{-1}$}}
\put(13,4){\makebox[0pt]{\scriptsize $q$}}
\put(19,3){\makebox[0pt]{\scriptsize $q^{-1}$}}
\put(26,4){\makebox[0pt]{\scriptsize $(qr)^{-1}$}}
\put(31,3){\makebox[0pt]{\scriptsize $qr$}}
\put(37,4){\makebox[0pt]{\scriptsize $-1$}}
\put(43,3){\makebox[0pt]{\scriptsize $r^{-1}$}}
\put(49,4){\makebox[0pt]{\scriptsize $r$}}
\end{picture}
\end{align*}
imply $qr=-1$ or $q^2r=1$ by $a_{32}^{r_5r_4(X)}=-1$. Hence one has to have $qr=-1$ and $p\ne 2$. Then from the reflections of $X$
\begin{align*}
X\colon~ 
\begin{picture}(50,4)(0,3)
\put(1,1){\circle*{2}}
\put(2,1){\line(1,0){10}}
\put(13,1){\circle{2}}
\put(14,1){\line(1,0){10}}
\put(25,1){\circle{2}}
\put(26,1){\line(1,0){10}}
\put(37,1){\circle{2}}
\put(38,1){\line(1,0){10}}
\put(49,1){\circle{2}}
\put(1,4){\makebox[0pt]{\scriptsize $-1$}}
\put(7,3){\makebox[0pt]{\scriptsize $q^{-1}$}}
\put(13,4){\makebox[0pt]{\scriptsize $q$}}
\put(19,3){\makebox[0pt]{\scriptsize $q^{-1}$}}
\put(25,4){\makebox[0pt]{\scriptsize $q$}}
\put(31,3){\makebox[0pt]{\scriptsize $q^{-1}$}}
\put(37,4){\makebox[0pt]{\scriptsize $-1$}}
\put(43,3){\makebox[0pt]{\scriptsize $-q$}}
\put(49,4){\makebox[0pt]{\scriptsize $-q^{-1}$}}
\end{picture}
\quad \Rightarrow \quad
r_1(X)\colon~ 
\begin{picture}(50,4)(0,3)
\put(1,1){\circle{2}}
\put(2,1){\line(1,0){10}}
\put(13,1){\circle*{2}}
\put(14,1){\line(1,0){10}}
\put(25,1){\circle{2}}
\put(26,1){\line(1,0){10}}
\put(37,1){\circle{2}}
\put(38,1){\line(1,0){10}}
\put(49,1){\circle{2}}
\put(1,4){\makebox[0pt]{\scriptsize $-1$}}
\put(7,3){\makebox[0pt]{\scriptsize $q$}}
\put(13,4){\makebox[0pt]{\scriptsize $-1$}}
\put(19,3){\makebox[0pt]{\scriptsize $q^{-1}$}}
\put(25,4){\makebox[0pt]{\scriptsize $q$}}
\put(31,3){\makebox[0pt]{\scriptsize $q^{-1}$}}
\put(37,4){\makebox[0pt]{\scriptsize $-1$}}
\put(43,3){\makebox[0pt]{\scriptsize $-q$}}
\put(49,4){\makebox[0pt]{\scriptsize $-q^{-1}$}}
\end{picture}
\end{align*}
\vspace{-3mm}
\begin{align*}
 \Rightarrow
r_2r_1(X)\colon~ 
\begin{picture}(50,4)(0,3)
\put(1,1){\circle{2}}
\put(2,1){\line(1,0){10}}
\put(13,1){\circle{2}}
\put(14,1){\line(1,0){10}}
\put(25,1){\circle*{2}}
\put(26,1){\line(1,0){10}}
\put(37,1){\circle{2}}
\put(38,1){\line(1,0){10}}
\put(49,1){\circle{2}}
\put(1,4){\makebox[0pt]{\scriptsize $q$}}
\put(7,3){\makebox[0pt]{\scriptsize $q^{-1}$}}
\put(13,4){\makebox[0pt]{\scriptsize $-1$}}
\put(19,3){\makebox[0pt]{\scriptsize $q$}}
\put(25,4){\makebox[0pt]{\scriptsize $-1$}}
\put(31,3){\makebox[0pt]{\scriptsize $q^{-1}$}}
\put(37,4){\makebox[0pt]{\scriptsize $-1$}}
\put(43,3){\makebox[0pt]{\scriptsize $-q$}}
\put(49,4){\makebox[0pt]{\scriptsize $-q^{-1}$}}
\end{picture}
\quad \Rightarrow 
r_3r_2r_1(X)\colon~ 
\begin{picture}(50,4)(0,3)
\put(1,1){\circle{2}}
\put(2,1){\line(1,0){10}}
\put(13,1){\circle{2}}
\put(14,1){\line(1,0){10}}
\put(25,1){\circle{2}}
\put(26,1){\line(1,0){10}}
\put(37,1){\circle{2}}
\put(38,1){\line(1,0){10}}
\put(49,1){\circle{2}}
\put(1,4){\makebox[0pt]{\scriptsize $q$}}
\put(7,3){\makebox[0pt]{\scriptsize $q^{-1}$}}
\put(13,4){\makebox[0pt]{\scriptsize $q$}}
\put(19,3){\makebox[0pt]{\scriptsize $q^{-1}$}}
\put(26,4){\makebox[0pt]{\scriptsize $-1$}}
\put(31,3){\makebox[0pt]{\scriptsize $q$}}
\put(37,4){\makebox[0pt]{\scriptsize $q^{-1}$}}
\put(43,3){\makebox[0pt]{\scriptsize $-q$}}
\put(49,4){\makebox[0pt]{\scriptsize $-q^{-1}$}}
\end{picture}
\end{align*}
 we obtain $a_{45}^{r_3r_2r_1(X)}=-3$, which contradicts $a_{45}^{r_3r_2r_1(X)}\ne -3$.  

Step $2.5$. Let $q_{22}=q_{33}=-1$ and $q_{ii}\widetilde{q_{i,i+1}}=q_{jj}\widetilde{q_{j-1,j}}=1$ for all $i\in \{1,4\}$ and $j\in \{4,5\}$. Hence $(c,d,e)=(0,0,1)$. Definition \ref{defA5} implies $(a,b,c,d,e)=(1,1,0,0,1)$. Set $q\colon=q_{11}$, $r^{-1}\colon=\widetilde{q_{23}}$ and $s\colon=q_{44}$. By $A^{r_2(X)}=A_5$, one obtains $qr=1$. And $a^{r_4(X)}_{35}=-1$ yields $rs\ne 1$. Then the reflection of $X$
\begin{align*}
X\colon~ 
\begin{picture}(50,4)(0,3)
\put(1,1){\circle{2}}
\put(2,1){\line(1,0){10}}
\put(13,1){\circle*{2}}
\put(14,1){\line(1,0){10}}
\put(25,1){\circle{2}}
\put(26,1){\line(1,0){10}}
\put(37,1){\circle{2}}
\put(38,1){\line(1,0){10}}
\put(49,1){\circle{2}}
\put(1,4){\makebox[0pt]{\scriptsize $r^{-1}$}}
\put(7,3){\makebox[0pt]{\scriptsize $r$}}
\put(13,4){\makebox[0pt]{\scriptsize $-1$}}
\put(19,3){\makebox[0pt]{\scriptsize $r^{-1}$}}
\put(25,4){\makebox[0pt]{\scriptsize $-1$}}
\put(31,3){\makebox[0pt]{\scriptsize $s^{-1}$}}
\put(37,4){\makebox[0pt]{\scriptsize $s$}}
\put(43,3){\makebox[0pt]{\scriptsize $s^{-1}$}}
\put(49,4){\makebox[0pt]{\scriptsize $s$}}
\end{picture}
\quad \Rightarrow \quad
r_2(X)\colon~ 
\begin{picture}(50,4)(0,3)
\put(1,1){\circle{2}}
\put(2,1){\line(1,0){10}}
\put(13,1){\circle{2}}
\put(14,1){\line(1,0){10}}
\put(25,1){\circle{2}}
\put(26,1){\line(1,0){10}}
\put(37,1){\circle{2}}
\put(38,1){\line(1,0){10}}
\put(49,1){\circle{2}}
\put(1,4){\makebox[0pt]{\scriptsize $-1$}}
\put(7,3){\makebox[0pt]{\scriptsize $r^{-1}$}}
\put(13,4){\makebox[0pt]{\scriptsize $-1$}}
\put(19,3){\makebox[0pt]{\scriptsize $r$}}
\put(26,4){\makebox[0pt]{\scriptsize $r^{-1}$}}
\put(31,3){\makebox[0pt]{\scriptsize $s^{-1}$}}
\put(37,4){\makebox[0pt]{\scriptsize $s$}}
\put(43,3){\makebox[0pt]{\scriptsize $s^{-1}$}}
\put(49,4){\makebox[0pt]{\scriptsize $s$}}
\end{picture}
\end{align*}
together with the condition $a_{34}^{r_2(X)}=-1$, force $r=-1$ and $p\ne 2$. Lemma \ref{l-5chainspe-a}(a$_2$) excludes this configuration. 

Step $2.6$. Let $q_{33}=q_{44}=-1$ and $q_{ii}q'_{i,i+1}=q_{jj}q'_{j-1,j}=1$ for all $i\in \{1,2\}$ and $j\in \{2,5\}$. Then $e=1$. From the definition of a good $A_5$ neighborhood, either $(a,b,c,d)=(0,0,1,1)$ or $(a,b,c,d)=(1,1,0,0)$. Set $q\colon=q_{11}$, $r^{-1}\colon=q'_{34}$ and $s\colon=q_{55}$. When $(a,b,c,d)=(0,0,1,1)$, we have $A^{r_3(X)}=A_5$ and $A^{r_4(X)}=\begin{pmatrix}2&-1&0&0&0\\-1&2&-1&0&0\\0&-1&2&-1&-1\\0&0&-1&2&-1\\0&0&-1&-1&2\end{pmatrix}$, which yields $qr=1$ and $rs\ne 1$.
The reflection of $X$
\begin{align*}
X\colon~
\begin{picture}(50,4)(0,3)
\put(1,1){\circle{2}}
\put(2,1){\line(1,0){10}}
\put(13,1){\circle{2}}
\put(14,1){\line(1,0){10}}
\put(25,1){\circle*{2}}
\put(26,1){\line(1,0){10}}
\put(37,1){\circle{2}}
\put(38,1){\line(1,0){10}}
\put(49,1){\circle{2}}
\put(1,4){\makebox[0pt]{\scriptsize $r^{-1}$}}
\put(7,3){\makebox[0pt]{\scriptsize $r$}}
\put(13,4){\makebox[0pt]{\scriptsize $r^{-1}$}}
\put(19,3){\makebox[0pt]{\scriptsize $r$}}
\put(25,4){\makebox[0pt]{\scriptsize $-1$}}
\put(31,3){\makebox[0pt]{\scriptsize $r^{-1}$}}
\put(37,4){\makebox[0pt]{\scriptsize $-1$}}
\put(43,3){\makebox[0pt]{\scriptsize $s^{-1}$}}
\put(49,4){\makebox[0pt]{\scriptsize $s$}}
\end{picture}
\quad \Rightarrow \quad
r_3(X)\colon~
\begin{picture}(50,4)(0,3)
\put(1,1){\circle{2}}
\put(2,1){\line(1,0){10}}
\put(13,1){\circle{2}}
\put(14,1){\line(1,0){10}}
\put(25,1){\circle{2}}
\put(26,1){\line(1,0){10}}
\put(37,1){\circle{2}}
\put(38,1){\line(1,0){10}}
\put(49,1){\circle{2}}
\put(1,4){\makebox[0pt]{\scriptsize $r^{-1}$}}
\put(7,3){\makebox[0pt]{\scriptsize $r$}}
\put(13,4){\makebox[0pt]{\scriptsize $-1$}}
\put(19,3){\makebox[0pt]{\scriptsize $r^{-1}$}}
\put(26,4){\makebox[0pt]{\scriptsize $-1$}}
\put(31,3){\makebox[0pt]{\scriptsize $r$}}
\put(37,4){\makebox[0pt]{\scriptsize $r^{-1}$}}
\put(43,3){\makebox[0pt]{\scriptsize $s^{-1}$}}
\put(49,4){\makebox[0pt]{\scriptsize $s$}}
\end{picture}
\end{align*}
together with the identity $a_{45}^{r_3(X)}=-1$ implies $r=-1$ and $p\ne 2$.
Lemma \ref{l-5chainspe-a}$(c_4)$ excludes this configuration entirely. 

When $(a,b,c,d)=(1,1,0,0)$, we have $A^{r_4(X)}=A_5$ and
\[
A^{r_3(X)}=\begin{pmatrix}2&-1&0&0&0\\-1&2&-1&-1&0\\0&-1&2&-1&0\\0&-1&-1&2&-1\\0&0&0&-1&2\end{pmatrix},
\]
which gives $qr\neq 1$ and $rs=1$. The reflection transformation
\begin{align*}
X\colon~ 
\begin{picture}(50,4)(0,3)
\put(1,1){\circle{2}}
\put(2,1){\line(1,0){10}}
\put(13,1){\circle{2}}
\put(14,1){\line(1,0){10}}
\put(25,1){\circle{2}}
\put(26,1){\line(1,0){10}}
\put(37,1){\circle*{2}}
\put(38,1){\line(1,0){10}}
\put(49,1){\circle{2}}
\put(1,4){\makebox[0pt]{\scriptsize $q$}}
\put(7,3){\makebox[0pt]{\scriptsize $q^{-1}$}}
\put(13,4){\makebox[0pt]{\scriptsize $q$}}
\put(19,3){\makebox[0pt]{\scriptsize $q^{-1}$}}
\put(25,4){\makebox[0pt]{\scriptsize $-1$}}
\put(31,3){\makebox[0pt]{\scriptsize $r^{-1}$}}
\put(37,4){\makebox[0pt]{\scriptsize $-1$}}
\put(43,3){\makebox[0pt]{\scriptsize $r$}}
\put(49,4){\makebox[0pt]{\scriptsize $r^{-1}$}}
\end{picture}
\quad \Rightarrow \quad
r_4(X)\colon~ 
\begin{picture}(50,4)(0,3)
\put(1,1){\circle{2}}
\put(2,1){\line(1,0){10}}
\put(13,1){\circle{2}}
\put(14,1){\line(1,0){10}}
\put(25,1){\circle{2}}
\put(26,1){\line(1,0){10}}
\put(37,1){\circle{2}}
\put(38,1){\line(1,0){10}}
\put(49,1){\circle{2}}
\put(1,4){\makebox[0pt]{\scriptsize $q$}}
\put(7,3){\makebox[0pt]{\scriptsize $q^{-1}$}}
\put(13,4){\makebox[0pt]{\scriptsize $q$}}
\put(19,3){\makebox[0pt]{\scriptsize $q^{-1}$}}
\put(26,4){\makebox[0pt]{\scriptsize $r^{-1}$}}
\put(31,3){\makebox[0pt]{\scriptsize $r$}}
\put(37,4){\makebox[0pt]{\scriptsize $-1$}}
\put(43,3){\makebox[0pt]{\scriptsize $r^{-1}$}}
\put(49,4){\makebox[0pt]{\scriptsize $-1$}}
\end{picture}
\end{align*}
together with the condition $a_{32}^{r_4(X)}=-1$, forces $r=-1$ and $p\neq 2$. Lemma \ref{l-5chainspe-a}(a$_1$) rules out this configuration.

Step $2.7$. Suppose $q_{22}=q_{44}=-1$ and $q_{ii}\widetilde{q_{i,i+1}}=q_{jj}\widetilde{q_{j-1,j}}=1$ for all $i\in \{1,3\}$ and $j\in \{3,5\}$. These constraints force $(a,b,e)=(0,0,1)$, so that $(a,b,c,d,e)=(0,0,1,1,1)$. Define $s\colon=q_{11}$, $q\colon=q_{33}$ and $r\colon=q_{55}$. Then $rs\ne 1$ by $a^{r_4(X)}_{35}=-1$. And $A^{r_2(X)}=A_5$ yields $qr=1$.
Reproducing the computational procedure from $2.4$, we deduce $a_{45}^{r_3r_2r_1(X)}=-3$, which is a contradiction.

Step $3$. Suppose three diagonal entries equal $-1$. The subsequent analysis is divided into nine separate subcases, enumerated as Steps $3.1$–$3.9$.

Step $3.1$. Let $q_{11}=q_{22}=q_{33}=-1$, alongside the identities $q_{44}\widetilde q_{45}=1$ and $q_{jj}\widetilde q_{j-1,j}=1$ for $j\in\{4,5\}$. These relations yield $(c,d,e)=(0,0,1)$, and the definition of a good $A_5$-neighborhood forces $(a,b,c,d,e)=(1,1,0,0,1)$. Set $q^{-1}:=\widetilde q_{12}$, $r^{-1}:=\widetilde q_{23}$ and $s:=q_{44}$. The condition $a_{24}^{r_3(X)}=-1$ imposes $rs\neq 1$, while $A^{r_2(X)}\cong A_5$ gives $qr=1$.
The reflection transformation
\begin{align*}
X\colon~ 
\begin{picture}(50,4)(0,3)
\put(1,1){\circle{2}}
\put(2,1){\line(1,0){10}}
\put(13,1){\circle*{2}}
\put(14,1){\line(1,0){10}}
\put(25,1){\circle{2}}
\put(26,1){\line(1,0){10}}
\put(37,1){\circle{2}}
\put(38,1){\line(1,0){10}}
\put(49,1){\circle{2}}
\put(1,4){\makebox[0pt]{\scriptsize $-1$}}
\put(7,3){\makebox[0pt]{\scriptsize $r$}}
\put(13,4){\makebox[0pt]{\scriptsize $-1$}}
\put(19,3){\makebox[0pt]{\scriptsize $r^{-1}$}}
\put(25,4){\makebox[0pt]{\scriptsize $-1$}}
\put(31,3){\makebox[0pt]{\scriptsize $s^{-1}$}}
\put(37,4){\makebox[0pt]{\scriptsize $s$}}
\put(43,3){\makebox[0pt]{\scriptsize $s^{-1}$}}
\put(49,4){\makebox[0pt]{\scriptsize $s$}}
\end{picture}
\quad \Rightarrow \quad
r_2(X)\colon~ 
\begin{picture}(50,4)(0,3)
\put(1,1){\circle{2}}
\put(2,1){\line(1,0){10}}
\put(13,1){\circle{2}}
\put(14,1){\line(1,0){10}}
\put(25,1){\circle{2}}
\put(26,1){\line(1,0){10}}
\put(37,1){\circle{2}}
\put(38,1){\line(1,0){10}}
\put(49,1){\circle{2}}
\put(1,4){\makebox[0pt]{\scriptsize $r$}}
\put(7,3){\makebox[0pt]{\scriptsize $r^{-1}$}}
\put(13,4){\makebox[0pt]{\scriptsize $-1$}}
\put(19,3){\makebox[0pt]{\scriptsize $r$}}
\put(26,4){\makebox[0pt]{\scriptsize $r^{-1}$}}
\put(31,3){\makebox[0pt]{\scriptsize $s^{-1}$}}
\put(37,4){\makebox[0pt]{\scriptsize $s$}}
\put(43,3){\makebox[0pt]{\scriptsize $s^{-1}$}}
\put(49,4){\makebox[0pt]{\scriptsize $s$}}
\end{picture}
\end{align*}
together with $a_{34}^{r_2(X)}=-1$, forces $r=-1$ and $p\neq 2$. Lemma \ref{l-5chainspe-a}(a$_2$) excludes this configuration.

Step $3.2$. Suppose  $q_{33}=q_{44}=q_{55}=-1$, $q_{22}\widetilde{q_{12}}=1$ and $q_{ii}\widetilde{q_{i,i+1}}=1$ for $i\in \{1,2\}$. Definition \ref{defA5} restricts the admissible parameters to two possibilities: $(a,b,c,d)=(1,1,0,0)$ or $(a,b,c,d)=(0,0,1,1)$. By parallel reasoning to Step $2.6$, both assignments are excluded.

Step $3.3$. Let $q_{11}=q_{22}=q_{44}=-1$, $q_{33}\widetilde{q_{34}}=1$ and $q_{jj}\widetilde{q_{j-1,j}}=1$ for $j\in \{3,5\}$. These constraints give $(a,b,e)=(0,0,1)$, and the definition of a good $A_5$ neighborhood forces $(a,b,c,d,e)=(0,0,1,1,1)$.
Define $q^{-1}\colon=\widetilde{q_{12}}$, $r\colon=q_{33}$ and $s\colon=q_{55}$. The inequality $rs\ne 1$ holds, while the condition $A^{r_2(X)}=A_5$ imposes $qr=1$. On one hand, the successive reflections of $X$
\begin{align*}
X\colon~
\begin{picture}(50,4)(0,3)
\put(1,1){\circle{2}}
\put(2,1){\line(1,0){10}}
\put(13,1){\circle{2}}
\put(14,1){\line(1,0){10}}
\put(25,1){\circle{2}}
\put(26,1){\line(1,0){10}}
\put(37,1){\circle*{2}}
\put(38,1){\line(1,0){10}}
\put(49,1){\circle{2}}
\put(1,4){\makebox[0pt]{\scriptsize $-1$}}
\put(7,3){\makebox[0pt]{\scriptsize $r$}}
\put(13,4){\makebox[0pt]{\scriptsize $-1$}}
\put(19,3){\makebox[0pt]{\scriptsize $r^{-1}$}}
\put(25,4){\makebox[0pt]{\scriptsize $r$}}
\put(31,3){\makebox[0pt]{\scriptsize $r^{-1}$}}
\put(37,4){\makebox[0pt]{\scriptsize $-1$}}
\put(43,3){\makebox[0pt]{\scriptsize $s^{-1}$}}
\put(49,4){\makebox[0pt]{\scriptsize $s$}}
\end{picture}
\quad \Rightarrow \quad
r_4(X)\colon~ 
\begin{picture}(36,10)(0,9)
\put(1,9){\circle{2}}
\put(2,9){\line(1,0){10}}
\put(13,9){\circle{2}}
\put(14,9){\line(1,0){10}}
\put(25,9){\circle*{2}}
\put(26,9){\line(1,1){7}}
\put(26,9){\line(1,-1){7}}
\put(33,17){\circle{2}}
\put(33,1){\circle{2}}
\put(33,2){\line(0,1){14}}
\put(1,12){\makebox[0pt]{\scriptsize $-1$}}
\put(7,11){\makebox[0pt]{\scriptsize $r$}}
\put(13,12){\makebox[0pt]{\scriptsize $-1$}}
\put(19,11){\makebox[0pt]{\scriptsize $r^{-1}$}}
\put(23.5,12){\makebox[0pt]{\scriptsize $-1$}}
\put(30,14){\makebox[0pt][r]{\scriptsize $r$}}
\put(29,4){\makebox[0pt][r]{\scriptsize $(rs)^{-1}$}}
\put(35,16){\makebox[0pt][l]{\scriptsize $-1$}}
\put(34,9){\makebox[0pt][l]{\scriptsize $s$}}
\put(35,1){\makebox[0pt][l]{\scriptsize $-1$}}
\end{picture}
\end{align*}
\vspace{-3mm}
\begin{align*}
 \Rightarrow \quad 
r_3r_4(X)\colon~ \tau_{12534}~~
\begin{picture}(38,11)(0,3)
\put(1,1){\circle{2}}
\put(2,1){\line(1,0){10}}
\put(13,1){\circle{2}}
\put(13,2){\line(2,3){6}}
\put(14,1){\line(1,0){10}}
\put(25,1){\circle{2}}
\put(25,2){\line(-2,3){6}}
\put(19,12){\circle{2}}
\put(26,1){\line(1,0){10}}
\put(37,1){\circle{2}}
\put(1,4){\makebox[0pt]{\scriptsize $-1$}}
\put(7,3){\makebox[0pt]{\scriptsize $r$}}
\put(12,4){\makebox[0pt]{\scriptsize $r^{-1}$}}
\put(19,3){\makebox[0pt]{\scriptsize $r$}}
\put(26,4){\makebox[0pt]{\scriptsize $-1$}}
\put(31,3){\makebox[0pt]{\scriptsize $r^{-1}$}}
\put(37,4){\makebox[0pt]{\scriptsize $r$}}
\put(13.3,8){\makebox[0pt]{\scriptsize $r^{-2}s^{-1}$}}
\put(23,8){\makebox[0pt][l]{\scriptsize $rs$}}
\put(24,14){\makebox[0pt]{\scriptsize $(rs)^{-1}$}}
\end{picture}
\end{align*}
together with $a_{52}^{r_3r_4(X)}=-1$, yield the alternative constraints $rs=-1$ or $r^3s^2=1$. On the other hand, iterated reflections of $X$
\begin{align*}
X\colon~ 
\begin{picture}(50,4)(0,3)
\put(1,1){\circle{2}}
\put(2,1){\line(1,0){10}}
\put(13,1){\circle{2}}
\put(14,1){\line(1,0){10}}
\put(25,1){\circle{2}}
\put(26,1){\line(1,0){10}}
\put(37,1){\circle*{2}}
\put(38,1){\line(1,0){10}}
\put(49,1){\circle{2}}
\put(1,4){\makebox[0pt]{\scriptsize $-1$}}
\put(7,3){\makebox[0pt]{\scriptsize $r$}}
\put(13,4){\makebox[0pt]{\scriptsize $-1$}}
\put(19,3){\makebox[0pt]{\scriptsize $r^{-1}$}}
\put(25,4){\makebox[0pt]{\scriptsize $r$}}
\put(31,3){\makebox[0pt]{\scriptsize $r^{-1}$}}
\put(37,4){\makebox[0pt]{\scriptsize $-1$}}
\put(43,3){\makebox[0pt]{\scriptsize $s^{-1}$}}
\put(49,4){\makebox[0pt]{\scriptsize $s$}}
\end{picture}
\quad \Rightarrow \quad
r_4(X)\colon~ 
\begin{picture}(36,10)(0,9)
\put(1,9){\circle{2}}
\put(2,9){\line(1,0){10}}
\put(13,9){\circle{2}}
\put(14,9){\line(1,0){10}}
\put(25,9){\circle*{2}}
\put(26,9){\line(1,1){7}}
\put(26,9){\line(1,-1){7}}
\put(33,17){\circle{2}}
\put(33,1){\circle{2}}
\put(33,2){\line(0,1){14}}
\put(1,12){\makebox[0pt]{\scriptsize $-1$}}
\put(7,11){\makebox[0pt]{\scriptsize $r$}}
\put(13,12){\makebox[0pt]{\scriptsize $-1$}}
\put(19,11){\makebox[0pt]{\scriptsize $r^{-1}$}}
\put(23.5,12){\makebox[0pt]{\scriptsize $-1$}}
\put(30,14){\makebox[0pt][r]{\scriptsize $r$}}
\put(29,4){\makebox[0pt][r]{\scriptsize $(rs)^{-1}$}}
\put(35,16){\makebox[0pt][l]{\scriptsize $-1$}}
\put(34,9){\makebox[0pt][l]{\scriptsize $s$}}
\put(35,1){\makebox[0pt][l]{\scriptsize $-1$}}
\end{picture}
\end{align*}
\begin{align*}
 \Rightarrow \quad 
r_5r_4(X)\colon~ \tau_{12354}~~ 
\begin{picture}(50,4)(0,3)
\put(1,1){\circle{2}}
\put(2,1){\line(1,0){10}}
\put(13,1){\circle{2}}
\put(14,1){\line(1,0){10}}
\put(25,1){\circle{2}}
\put(26,1){\line(1,0){10}}
\put(37,1){\circle{2}}
\put(38,1){\line(1,0){10}}
\put(49,1){\circle{2}}
\put(1,4){\makebox[0pt]{\scriptsize $-1$}}
\put(7,3){\makebox[0pt]{\scriptsize $r$}}
\put(13,4){\makebox[0pt]{\scriptsize $-1$}}
\put(19,3){\makebox[0pt]{\scriptsize $r^{-1}$}}
\put(26,4){\makebox[0pt]{\scriptsize $(rs)^{-1}$}}
\put(31,3){\makebox[0pt]{\scriptsize $rs$}}
\put(37,4){\makebox[0pt]{\scriptsize $-1$}}
\put(43,3){\makebox[0pt]{\scriptsize $s^{-1}$}}
\put(49,4){\makebox[0pt]{\scriptsize $s$}}
\end{picture}
\end{align*}
and $a_{32}^{r_5r_4(X)}=-1$ produce either $rs=-1$ or $r^2s=1$. Then $rs=-1$ and $p\ne 2$. However $a_{15}^{r_2r_3r_4(X)}=0$ yields $rs=1$, a direct contradiction.

Step $3.4$. Let $q_{22}=q_{44}=q_{55}=-1$, alongside $q_{33}\widetilde{q_{23}}=1$ and $q_{ii}\widetilde{q_{i,i+1}}=1$ for $i\in \{1,3\}$. Then $(a,b)=(0,0)$. Definition \ref{defA5} restricts the parameter tuple to two alternatives: $(a,b,c,d,e)=(0,0,1,1,2)$ or $(a,b,c,d,e)=(0,0,1,1,1)$. Set $q\colon=q_{11}$, $r\colon=q_{33}$ and $s^{-1}\colon=\widetilde{q_{45}}$. 

Suppose first $(a,b,c,d,e)=(0,0,1,1,2)$. Then $A^{r_3(X)}=A_5$, $A^{r_4(X)}=\begin{pmatrix}2&-1&0&0&0\\-1&2&-1&0&0\\0&-1&2&-1&-1\\0&0&-1&2&-1\\0&0&-1&-1&2\end{pmatrix}$ and $A^{r_5(X)}=\begin{pmatrix}2&-1&0&0&0\\-1&2&-1&0&0\\0&-1&2&-1&0\\0&0&-2&2&-1\\0&0&0&-1&2\end{pmatrix}$, which gives  $qr=1$ and $rs\ne 1$. From the reflection transformation
\begin{align}\label{eq-9}
X\colon~ 
\begin{picture}(50,4)(0,3)
\put(1,1){\circle{2}}
\put(2,1){\line(1,0){10}}
\put(13,1){\circle{2}}
\put(14,1){\line(1,0){10}}
\put(25,1){\circle{2}}
\put(26,1){\line(1,0){10}}
\put(37,1){\circle*{2}}
\put(38,1){\line(1,0){10}}
\put(49,1){\circle{2}}
\put(1,4){\makebox[0pt]{\scriptsize $r^{-1}$}}
\put(7,3){\makebox[0pt]{\scriptsize $r$}}
\put(13,4){\makebox[0pt]{\scriptsize $-1$}}
\put(19,3){\makebox[0pt]{\scriptsize $r^{-1}$}}
\put(25,4){\makebox[0pt]{\scriptsize $r$}}
\put(31,3){\makebox[0pt]{\scriptsize $r^{-1}$}}
\put(37,4){\makebox[0pt]{\scriptsize $-1$}}
\put(43,3){\makebox[0pt]{\scriptsize $s^{-1}$}}
\put(49,4){\makebox[0pt]{\scriptsize $-1$}}
\end{picture}
\quad \Rightarrow \quad
r_4(X)\colon~ 
\Dchainfivet{$r^{-1}$}{$r$}{$-1$}{$r^{-1}$}{$-1$}{$r$}{$(rs)^{-1}$}{$-1$}{$s$}{$s^{-1}$}
\end{align}
the condition $a_{53}^{r_4(X)}=-1$ imposes $s=-1$ or $rs^2=1$. Moreover, the reflection of $X$
\begin{align*}
X\colon~ 
\begin{picture}(50,4)(0,3)
\put(1,1){\circle{2}}
\put(2,1){\line(1,0){10}}
\put(13,1){\circle{2}}
\put(14,1){\line(1,0){10}}
\put(25,1){\circle{2}}
\put(26,1){\line(1,0){10}}
\put(37,1){\circle{2}}
\put(38,1){\line(1,0){10}}
\put(49,1){\circle*{2}}
\put(1,4){\makebox[0pt]{\scriptsize $r^{-1}$}}
\put(7,3){\makebox[0pt]{\scriptsize $r$}}
\put(13,4){\makebox[0pt]{\scriptsize $-1$}}
\put(19,3){\makebox[0pt]{\scriptsize $r^{-1}$}}
\put(25,4){\makebox[0pt]{\scriptsize $r$}}
\put(31,3){\makebox[0pt]{\scriptsize $r^{-1}$}}
\put(37,4){\makebox[0pt]{\scriptsize $-1$}}
\put(43,3){\makebox[0pt]{\scriptsize $s^{-1}$}}
\put(49,4){\makebox[0pt]{\scriptsize $-1$}}
\end{picture}
\quad \Rightarrow \quad
r_5(X)\colon~ 
\begin{picture}(50,4)(0,3)
\put(1,1){\circle{2}}
\put(2,1){\line(1,0){10}}
\put(13,1){\circle{2}}
\put(14,1){\line(1,0){10}}
\put(25,1){\circle{2}}
\put(26,1){\line(1,0){10}}
\put(37,1){\circle{2}}
\put(38,1){\line(1,0){10}}
\put(49,1){\circle{2}}
\put(1,4){\makebox[0pt]{\scriptsize $r^{-1}$}}
\put(7,3){\makebox[0pt]{\scriptsize $r$}}
\put(13,4){\makebox[0pt]{\scriptsize $-1$}}
\put(19,3){\makebox[0pt]{\scriptsize $r^{-1}$}}
\put(26,4){\makebox[0pt]{\scriptsize $r$}}
\put(31,3){\makebox[0pt]{\scriptsize $r^{-1}$}}
\put(37,4){\makebox[0pt]{\scriptsize $s^{-1}$}}
\put(43,3){\makebox[0pt]{\scriptsize $s$}}
\put(49,4){\makebox[0pt]{\scriptsize $-1$}}
\end{picture}
\end{align*}
together with $a_{43}^{r_5(X)}=-2$ excludes $s=-1$,  leaving the constraint $rs^2=1$. By Definition \ref{defA5} $(A_{5_2})$, the value $a_{25}^{r_3r_4(X)}$ equals  either $0$ or $-1$. When $a_{25}^{r_3r_4(X)}=0$, the reflection (\ref{eq-9}) of $X$ yields $r^2s=1$. Hence $r=s\in G'_3$ and $p\ne 3$. Iterated reflections
\begin{align*}
X\colon~
\begin{picture}(50,4)(0,3)
\put(1,1){\circle{2}}
\put(2,1){\line(1,0){10}}
\put(13,1){\circle*{2}}
\put(14,1){\line(1,0){10}}
\put(25,1){\circle{2}}
\put(26,1){\line(1,0){10}}
\put(37,1){\circle{2}}
\put(38,1){\line(1,0){10}}
\put(49,1){\circle{2}}
\put(1,4){\makebox[0pt]{\scriptsize $r^{-1}$}}
\put(7,3){\makebox[0pt]{\scriptsize $r$}}
\put(13,4){\makebox[0pt]{\scriptsize $-1$}}
\put(19,3){\makebox[0pt]{\scriptsize $r^{-1}$}}
\put(25,4){\makebox[0pt]{\scriptsize $r$}}
\put(31,3){\makebox[0pt]{\scriptsize $r^{-1}$}}
\put(37,4){\makebox[0pt]{\scriptsize $-1$}}
\put(43,3){\makebox[0pt]{\scriptsize $r^{-1}$}}
\put(49,4){\makebox[0pt]{\scriptsize $-1$}}
\end{picture}
\quad \Rightarrow \quad
r_2(X)\colon~
\begin{picture}(50,4)(0,3)
\put(1,1){\circle{2}}
\put(2,1){\line(1,0){10}}
\put(13,1){\circle{2}}
\put(14,1){\line(1,0){10}}
\put(25,1){\circle*{2}}
\put(26,1){\line(1,0){10}}
\put(37,1){\circle{2}}
\put(38,1){\line(1,0){10}}
\put(49,1){\circle{2}}
\put(1,4){\makebox[0pt]{\scriptsize $-1$}}
\put(7,3){\makebox[0pt]{\scriptsize $r^{-1}$}}
\put(13,4){\makebox[0pt]{\scriptsize $-1$}}
\put(19,3){\makebox[0pt]{\scriptsize $r$}}
\put(25,4){\makebox[0pt]{\scriptsize $-1$}}
\put(31,3){\makebox[0pt]{\scriptsize $r^{-1}$}}
\put(37,4){\makebox[0pt]{\scriptsize $-1$}}
\put(43,3){\makebox[0pt]{\scriptsize $r^{-1}$}}
\put(49,4){\makebox[0pt]{\scriptsize $-1$}}
\end{picture}
\end{align*}
\vspace{-3mm}
\begin{align*}
 \Rightarrow \quad 
r_3r_2(X)\colon~ 
\Dchainfive{$-1$}{$r^{-1}$}{$r$}{$r^{-1}$}{$-1$}{$r$}{$r^{-1}$}{$r^{-1}$}{$-1$}
\end{align*}
produce $(a_{45}^{r_3r_2r_1(X)},a_{45}^{r_3r_2(X)})=(-2,-2)$, which contradicts $(a_{45}^{r_3r_2r_1(X)},a_{45}^{r_3r_2(X)})\ne (-2,-2)$. 

If instead  $a_{25}^{r_3r_4(X)}=-1$, further reflection of $X$
\begin{align*}
r_4(X)\colon~ 
\Dchainfivew{$r^{-1}$}{$r$}{$-1$}{$r^{-1}$}{$-1$}{$r$}{$(rs)^{-1}$}{$-1$}{$s$}{$s^{-1}$}
\quad \Rightarrow \quad
r_3r_4(X)\colon~ \tau_{12534}~~ 
\begin{picture}(38,11)(0,3)
\put(1,1){\circle{2}}
\put(2,1){\line(1,0){10}}
\put(13,1){\circle{2}}
\put(13,2){\line(2,3){6}}
\put(14,1){\line(1,0){10}}
\put(25,1){\circle{2}}
\put(25,2){\line(-2,3){6}}
\put(19,12){\circle{2}}
\put(26,1){\line(1,0){10}}
\put(37,1){\circle{2}}
\put(1,4){\makebox[0pt]{\scriptsize $r^{-1}$}}
\put(7,3){\makebox[0pt]{\scriptsize $r$}}
\put(12,4){\makebox[0pt]{\scriptsize $r^{-1}$}}
\put(19,3){\makebox[0pt]{\scriptsize $r$}}
\put(26,4){\makebox[0pt]{\scriptsize $-1$}}
\put(31,3){\makebox[0pt]{\scriptsize $r^{-1}$}}
\put(37,4){\makebox[0pt]{\scriptsize $r$}}
\put(13.3,8){\makebox[0pt]{\scriptsize $r^{-2}s^{-1}$}}
\put(23,8){\makebox[0pt][l]{\scriptsize $rs$}}
\put(24,14){\makebox[0pt]{\scriptsize $-r^{-1}s^{-2}$}}
\end{picture}
\end{align*}
enforces either $r=-1$ or $r^3s=1$. The case $r=-1$ combined with $a_{15}^{r_2r_3r_4(X)}=0$ yields $s=-1$ and $p\ne 2$, a contradiction. Under $r^3s=1$ together with $rs^2=1$, we obtain $r\in G'_5$. By arguments parallel to Case $a_{2_4}$, we have $a_{45}^{r_3r_2r_1(X)}=-3$, conflicting with $a_{45}^{r_3r_2r_1(X)}\ne -3$.

Now take $(a,b,c,d,e)=(0,0,1,1,1)$. Then $A^{r_3(X)}=A_5$, $A^{r_4(X)}=\begin{pmatrix}2&-1&0&0&0\\-1&2&-1&0&0\\0&-1&2&-1&-1\\0&0&-1&2&-1\\0&0&-1&-1&2\end{pmatrix}$ and $A^{r_5(X)}=A_5$. Hence $qr=1$ and $rs\ne 1$. From
\begin{align*}
X\colon~ 
\Dchainfivee{$r^{-1}$}{$r$}{$-1$}{$r^{-1}$}{$r$}{$r^{-1}$}{$-1$}{$s^{-1}$}{$-1$}
\quad \Rightarrow \quad
r_5(X)\colon~ 
\Dchainfive{$r^{-1}$}{$r$}{$-1$}{$r^{-1}$}{$r$}{$r^{-1}$}{$s^{-1}$}{$s$}{$-1$}
\end{align*}
the condition $a_{43}^{r_5(X)}=-1$ forces $s=-1$ and $p\ne 2$, which is eliminated by Lemma \ref{l-5chainspe-a}$(c_1)$.

Step $3.5$. Let $q_{11}=q_{44}=q_{55}=-1$ and $q_{ii}\widetilde{q_{i,i+1}}=q_{jj}\widetilde{q_{j-1,j}}=1$ for all $i\in \{2,3\}$ and $j\in \{2,3\}$. These equalities give $(a,b)=(0,0)$, leaving two admissible assignments $(a,b,c,d,e)=(0,0,1,1,2)$ or $(a,b,c,d,e)=(0,0,1,1,1)$. Set $q\colon=q_{22}$ and $r^{-1}\colon=\widetilde{q_{45}}$.

For $(a,b,c,d,e)=(0,0,1,1,2)$, we have $A^{r_3(X)}=A_5$, $A^{r_4(X)}=\begin{pmatrix}2&-1&0&0&0\\-1&2&-1&0&0\\0&-1&2&-1&-1\\0&0&-1&2&-1\\0&0&-1&-1&2\end{pmatrix}$ and $A^{r_5(X)}=\begin{pmatrix}2&-1&0&0&0\\-1&2&-1&0&0\\0&-1&2&-1&0\\0&0&-2&2&-1\\0&0&0&-1&2\end{pmatrix}$. So $qr\ne 1$. The reflection
\begin{align}\label{eq-10}
X\colon~
\begin{picture}(50,4)(0,3)
\put(1,1){\circle{2}}
\put(2,1){\line(1,0){10}}
\put(13,1){\circle{2}}
\put(14,1){\line(1,0){10}}
\put(25,1){\circle{2}}
\put(26,1){\line(1,0){10}}
\put(37,1){\circle*{2}}
\put(38,1){\line(1,0){10}}
\put(49,1){\circle{2}}
\put(1,4){\makebox[0pt]{\scriptsize $-1$}}
\put(7,3){\makebox[0pt]{\scriptsize $q^{-1}$}}
\put(13,4){\makebox[0pt]{\scriptsize $q$}}
\put(19,3){\makebox[0pt]{\scriptsize $q^{-1}$}}
\put(25,4){\makebox[0pt]{\scriptsize $q$}}
\put(31,3){\makebox[0pt]{\scriptsize $q^{-1}$}}
\put(37,4){\makebox[0pt]{\scriptsize $-1$}}
\put(43,3){\makebox[0pt]{\scriptsize $r^{-1}$}}
\put(49,4){\makebox[0pt]{\scriptsize $-1$}}
\end{picture}
\quad \Rightarrow \quad
r_4(X)\colon~
\Dchainfivet{$-1$}{$q^{-1}$}{$q$}{$q^{-1}$}{$-1$}{$q$}{$(qr)^{-1}$}{$-1$}{$r$}{$r^{-1}$}
\end{align}
gives $r=-1$ or $qr^2=1$ by $a_{53}^{r_4(X)}=-1$. A further reflection of $X$
\begin{align*}
X\colon~
\begin{picture}(50,4)(0,3)
\put(1,1){\circle{2}}
\put(2,1){\line(1,0){10}}
\put(13,1){\circle{2}}
\put(14,1){\line(1,0){10}}
\put(25,1){\circle{2}}
\put(26,1){\line(1,0){10}}
\put(37,1){\circle{2}}
\put(38,1){\line(1,0){10}}
\put(49,1){\circle*{2}}
\put(1,4){\makebox[0pt]{\scriptsize $-1$}}
\put(7,3){\makebox[0pt]{\scriptsize $q^{-1}$}}
\put(13,4){\makebox[0pt]{\scriptsize $q$}}
\put(19,3){\makebox[0pt]{\scriptsize $q^{-1}$}}
\put(25,4){\makebox[0pt]{\scriptsize $q$}}
\put(31,3){\makebox[0pt]{\scriptsize $q^{-1}$}}
\put(37,4){\makebox[0pt]{\scriptsize $-1$}}
\put(43,3){\makebox[0pt]{\scriptsize $r^{-1}$}}
\put(49,4){\makebox[0pt]{\scriptsize $-1$}}
\end{picture}
\quad \Rightarrow \quad
r_5(X)\colon~ 
\begin{picture}(50,4)(0,3)
\put(1,1){\circle{2}}
\put(2,1){\line(1,0){10}}
\put(13,1){\circle{2}}
\put(14,1){\line(1,0){10}}
\put(25,1){\circle{2}}
\put(26,1){\line(1,0){10}}
\put(37,1){\circle{2}}
\put(38,1){\line(1,0){10}}
\put(49,1){\circle{2}}
\put(1,4){\makebox[0pt]{\scriptsize $-1$}}
\put(7,3){\makebox[0pt]{\scriptsize $q^{-1}$}}
\put(13,4){\makebox[0pt]{\scriptsize $q$}}
\put(19,3){\makebox[0pt]{\scriptsize $q^{-1}$}}
\put(26,4){\makebox[0pt]{\scriptsize $q$}}
\put(31,3){\makebox[0pt]{\scriptsize $q^{-1}$}}
\put(37,4){\makebox[0pt]{\scriptsize $r^{-1}$}}
\put(43,3){\makebox[0pt]{\scriptsize $r$}}
\put(49,4){\makebox[0pt]{\scriptsize $-1$}}
\end{picture}
\end{align*}
rules out  $r= -1$, whence $qr^2=1$. By Definition \ref{defA5} $(A_{5_2})$, we obtain $a_{25}^{r_3r_4(X)}\in \{0,1\}$. 

When $a_{25}^{r_3r_4(X)}=0$, the reflection (\ref{eq-10}) shows $q^2r=1$. Then we deduce $r=q\in G'_3$ with $p\ne 3$ from $qr^2=1$. Hence $\cD=\cD_{13,3}^5$. 

When $a_{25}^{r_3r_4(X)}=-1$, the reflection 
\begin{align*}
r_4(X)\colon~ 
\Dchainfivew{$-1$}{$q^{-1}$}{$q$}{$q^{-1}$}{$-1$}{$q$}{$(qr)^{-1}$}{$-1$}{$r$}{$r^{-1}$}
\quad \Rightarrow \quad
r_3r_4(X)\colon~ \tau_{12534}~~
\begin{picture}(38,11)(0,3)
\put(1,1){\circle{2}}
\put(2,1){\line(1,0){10}}
\put(13,1){\circle{2}}
\put(13,2){\line(2,3){6}}
\put(14,1){\line(1,0){10}}
\put(25,1){\circle{2}}
\put(25,2){\line(-2,3){6}}
\put(19,12){\circle{2}}
\put(26,1){\line(1,0){10}}
\put(37,1){\circle{2}}
\put(1,4){\makebox[0pt]{\scriptsize $-1$}}
\put(7,3){\makebox[0pt]{\scriptsize $q^{-1}$}}
\put(12,4){\makebox[0pt]{\scriptsize $-1$}}
\put(19,3){\makebox[0pt]{\scriptsize $q$}}
\put(26,4){\makebox[0pt]{\scriptsize $-1$}}
\put(31,3){\makebox[0pt]{\scriptsize $q^{-1}$}}
\put(37,4){\makebox[0pt]{\scriptsize $q$}}
\put(13.3,8){\makebox[0pt]{\scriptsize $q^{-2}r^{-1}$}}
\put(23,8){\makebox[0pt][l]{\scriptsize $qr$}}
\put(24,14){\makebox[0pt]{\scriptsize $-q^{-1}r^{-2}$}}
\end{picture}
\end{align*}
and $a_{15}^{r_2r_3r_4(X)}=0$ force $q^3r=1$. Together with $qr^2=1$ we obtain $r\in G'_5$. 
Then $\mathcal{D}$ is
\begin{align*} 
\Dchainfive{$-1$}{$r^2$}{$r^{-2}$}{$r^2$}{$r^{-2}$}{$r^2$}{$-1$}{$r^{-1}$}{$-1$}
\end{align*}
which is forbidden by Lemma \ref{l-5chainspe-a}$(b_1)$.

Finally take $(a,b,c,d,e)=(0,0,1,1,1)$. Then $A^{r_3(X)}=A_5$, $A^{r_4(X)}=\begin{pmatrix}2&-1&0&0&0\\-1&2&-1&0&0\\0&-1&2&-1&-1\\0&0&-1&2&-1\\0&0&-1&-1&2\end{pmatrix}$ and $A^{r_5(X)}=A_5$. Hence $qr\ne 1$. From 
\begin{align*}
X\colon~ 
\begin{picture}(50,4)(0,3)
\put(1,1){\circle{2}}
\put(2,1){\line(1,0){10}}
\put(13,1){\circle{2}}
\put(14,1){\line(1,0){10}}
\put(25,1){\circle{2}}
\put(26,1){\line(1,0){10}}
\put(37,1){\circle{2}}
\put(38,1){\line(1,0){10}}
\put(49,1){\circle*{2}}
\put(1,4){\makebox[0pt]{\scriptsize $-1$}}
\put(7,3){\makebox[0pt]{\scriptsize $q^{-1}$}}
\put(13,4){\makebox[0pt]{\scriptsize $q$}}
\put(19,3){\makebox[0pt]{\scriptsize $q^{-1}$}}
\put(25,4){\makebox[0pt]{\scriptsize $q$}}
\put(31,3){\makebox[0pt]{\scriptsize $q^{-1}$}}
\put(37,4){\makebox[0pt]{\scriptsize $-1$}}
\put(43,3){\makebox[0pt]{\scriptsize $r^{-1}$}}
\put(49,4){\makebox[0pt]{\scriptsize $-1$}}
\end{picture}
\quad \Rightarrow \quad
r_5(X)\colon~ 
\Dchainfive{$-1$}{$q^{-1}$}{$q$}{$q^{-1}$}{$q$}{$q^{-1}$}{$r^{-1}$}{$r$}{$-1$}
\end{align*}
the condition $a_{43}^{r_5(X)}=-1$ implies $r=-1$ and $p\ne 2$, which is excluded via Lemma \ref{l-5chainspe-a}$(c_2)$.

Step $3.6$. Let $q_{11}=q_{33}=q_{44}=-1$, $q_{22}\widetilde{q_{23}}=1$ and $q_{jj}\widetilde{q_{j-1,j}}=1$ for $j\in \{2,5\}$. Then $e=1$. Since $X$ has a good $A_5$ neighborhood,
we have either $(a,b,c,d,e)=(1,1,0,0,1)$ or $(a,b,c,d,e)=(0,0,1,1,1)$. Set $q\colon=q_{22}$, $r^{-1}\colon=\widetilde{q_{34}}$ and $s\colon=q_{55}$. If $(a,b,c,d,e)=(1,1,0,0,1)$, that is  $A^{r_3(X)}=\begin{pmatrix}2&-1&0&0&0\\-1&2&-1&-1&0\\0&-1&2&-1&0\\0&-1&-1&2&-1\\0&0&0&-1&2\end{pmatrix}$ and $A^{r_4(X)}=A^{r_5(X)}=A_5$. Then $qr\ne 1$ and $rs=1$. By $a_{32}^{r_4(X)}=-1$, we obtain $r=-1$ and $p\ne 2$.
By Lemma \ref{l-5chainspe-a} $(a_4)$, this configuration is ruled out.

When $(a,b,c,d,e)=(0,0,1,1,1)$, we have $A^{r_3(X)}=A_5$, $A^{r_4(X)}=\begin{pmatrix}2&-1&0&0&0\\-1&2&-1&0&0\\0&-1&2&-1&-1\\0&0&-1&2&-1\\0&0&-1&-1&2\end{pmatrix}$ and $A^{r_5(X)}=A_5$, which yield $qr=1$ and $rs\ne 1$.  The reflection of $X$
\begin{align*}
X\colon~ 
\begin{picture}(50,4)(0,3)
\put(1,1){\circle{2}}
\put(2,1){\line(1,0){10}}
\put(13,1){\circle{2}}
\put(14,1){\line(1,0){10}}
\put(25,1){\circle*{2}}
\put(26,1){\line(1,0){10}}
\put(37,1){\circle{2}}
\put(38,1){\line(1,0){10}}
\put(49,1){\circle{2}}
\put(1,4){\makebox[0pt]{\scriptsize $-1$}}
\put(7,3){\makebox[0pt]{\scriptsize $r$}}
\put(13,4){\makebox[0pt]{\scriptsize $r^{-1}$}}
\put(19,3){\makebox[0pt]{\scriptsize $r$}}
\put(25,4){\makebox[0pt]{\scriptsize $-1$}}
\put(31,3){\makebox[0pt]{\scriptsize $r^{-1}$}}
\put(37,4){\makebox[0pt]{\scriptsize $-1$}}
\put(43,3){\makebox[0pt]{\scriptsize $s^{-1}$}}
\put(49,4){\makebox[0pt]{\scriptsize $s$}}
\end{picture}
\quad \Rightarrow \quad
r_3(X)\colon~
\begin{picture}(50,4)(0,3)
\put(1,1){\circle{2}}
\put(2,1){\line(1,0){10}}
\put(13,1){\circle{2}}
\put(14,1){\line(1,0){10}}
\put(25,1){\circle{2}}
\put(26,1){\line(1,0){10}}
\put(37,1){\circle{2}}
\put(38,1){\line(1,0){10}}
\put(49,1){\circle{2}}
\put(1,4){\makebox[0pt]{\scriptsize $-1$}}
\put(7,3){\makebox[0pt]{\scriptsize $r$}}
\put(13,4){\makebox[0pt]{\scriptsize $-1$}}
\put(19,3){\makebox[0pt]{\scriptsize $r^{-1}$}}
\put(26,4){\makebox[0pt]{\scriptsize $-1$}}
\put(31,3){\makebox[0pt]{\scriptsize $r$}}
\put(37,4){\makebox[0pt]{\scriptsize $r^{-1}$}}
\put(43,3){\makebox[0pt]{\scriptsize $s^{-1}$}}
\put(49,4){\makebox[0pt]{\scriptsize $s$}}
\end{picture}
\end{align*}
together with $a_{45}^{r_3(X)}=-1$, imply $r=-1$ and $p\ne 2$. This case is excluded by Lemma \ref{l-5chainspe-a}$(c_4)$.

Step $3.7$. Let $q_{22}=q_{33}=q_{55}=-1$, $q_{44}\widetilde{q_{34}}=1$ and suppose $q_{ii}\widetilde{q_{i,i+1}}=1$ for $i\in \{1,4\}$. Then $(a,b,c,d,e)=(1,1,0,0,1)$. Set $q\colon=q_{11}$, $r^{-1}\colon=\widetilde{q_{23}}$ and $s\colon=q_{44}$. By $a^{r_3(X)}_{24}=-1$ one obtains $rs\ne 1$. The reflection of $X$
\begin{align*}
X\colon~ 
\begin{picture}(50,4)(0,3)
\put(1,1){\circle{2}}
\put(2,1){\line(1,0){10}}
\put(13,1){\circle*{2}}
\put(14,1){\line(1,0){10}}
\put(25,1){\circle{2}}
\put(26,1){\line(1,0){10}}
\put(37,1){\circle{2}}
\put(38,1){\line(1,0){10}}
\put(49,1){\circle{2}}
\put(1,4){\makebox[0pt]{\scriptsize $q$}}
\put(7,3){\makebox[0pt]{\scriptsize $q^{-1}$}}
\put(13,4){\makebox[0pt]{\scriptsize $-1$}}
\put(19,3){\makebox[0pt]{\scriptsize $r^{-1}$}}
\put(25,4){\makebox[0pt]{\scriptsize $-1$}}
\put(31,3){\makebox[0pt]{\scriptsize $s^{-1}$}}
\put(37,4){\makebox[0pt]{\scriptsize $s$}}
\put(43,3){\makebox[0pt]{\scriptsize $s^{-1}$}}
\put(49,4){\makebox[0pt]{\scriptsize $-1$}}
\end{picture}
\quad \Rightarrow \quad
r_2(X)\colon~ 
\begin{picture}(50,4)(0,3)
\put(1,1){\circle{2}}
\put(2,1){\line(1,0){10}}
\put(13,1){\circle{2}}
\put(14,1){\line(1,0){10}}
\put(25,1){\circle{2}}
\put(26,1){\line(1,0){10}}
\put(37,1){\circle{2}}
\put(38,1){\line(1,0){10}}
\put(49,1){\circle{2}}
\put(1,4){\makebox[0pt]{\scriptsize $-1$}}
\put(7,3){\makebox[0pt]{\scriptsize $q$}}
\put(13,4){\makebox[0pt]{\scriptsize $-1$}}
\put(19,3){\makebox[0pt]{\scriptsize $r$}}
\put(26,4){\makebox[0pt]{\scriptsize $r^{-1}$}}
\put(31,3){\makebox[0pt]{\scriptsize $s^{-1}$}}
\put(37,4){\makebox[0pt]{\scriptsize $s$}}
\put(43,3){\makebox[0pt]{\scriptsize $s^{-1}$}}
\put(49,4){\makebox[0pt]{\scriptsize $-1$}}
\end{picture}
\end{align*}
implies $qr=1$ by $a_{13}^{r_2(X)}=0$ and $r=-1$ by $a_{34}^{r_2(X)}=-1$. Then $q=r=-1$ and $p\ne 2$.  Invoking Lemma \ref{l-5chainspe-a} $(a_3)$, we conclude that this case is also impossible.

Step $3.8$. Let $q_{11}=q_{33}=q_{55}=-1$ and suppose $q_{ii}\widetilde{q_{i,i+1}}=q_{jj}\widetilde{q_{j-1,j}}=1$ for all $i\in \{2,4\}$ and $j\in \{2,4\}$. Then $(c,d)=(0,0)$. Hence $(a,b,c,d,e)=(1,1,0,0,1)$. 
Set $q\colon=q_{22}$ and $r\colon=q_{44}$. We obtain $qr\ne 1$ by $a^{r_3(X)}_{24}=-1$. The reflection of $X$
\begin{align*}
X\colon~ 
\begin{picture}(50,4)(0,3)
\put(1,1){\circle{2}}
\put(2,1){\line(1,0){10}}
\put(13,1){\circle{2}}
\put(14,1){\line(1,0){10}}
\put(25,1){\circle*{2}}
\put(26,1){\line(1,0){10}}
\put(37,1){\circle{2}}
\put(38,1){\line(1,0){10}}
\put(49,1){\circle{2}}
\put(1,4){\makebox[0pt]{\scriptsize $-1$}}
\put(7,3){\makebox[0pt]{\scriptsize $q^{-1}$}}
\put(13,4){\makebox[0pt]{\scriptsize $q$}}
\put(19,3){\makebox[0pt]{\scriptsize $q^{-1}$}}
\put(25,4){\makebox[0pt]{\scriptsize $-1$}}
\put(31,3){\makebox[0pt]{\scriptsize $r^{-1}$}}
\put(37,4){\makebox[0pt]{\scriptsize $r$}}
\put(43,3){\makebox[0pt]{\scriptsize $r^{-1}$}}
\put(49,4){\makebox[0pt]{\scriptsize $-1$}}
\end{picture}
\quad \Rightarrow \quad
r_3(X)\colon~ 
\begin{picture}(38,11)(0,3)
\put(1,1){\circle{2}}
\put(2,1){\line(1,0){10}}
\put(13,1){\circle{2}}
\put(13,2){\line(2,3){6}}
\put(14,1){\line(1,0){10}}
\put(25,1){\circle{2}}
\put(25,2){\line(-2,3){6}}
\put(19,12){\circle{2}}
\put(26,1){\line(1,0){10}}
\put(37,1){\circle{2}}
\put(1,4){\makebox[0pt]{\scriptsize $-1$}}
\put(7,3){\makebox[0pt]{\scriptsize $q^{-1}$}}
\put(12,4){\makebox[0pt]{\scriptsize $-1$}}
\put(19,3){\makebox[0pt]{\scriptsize $(qr)^{-1}$}}
\put(26,4){\makebox[0pt]{\scriptsize $-1$}}
\put(31,3){\makebox[0pt]{\scriptsize $r^{-1}$}}
\put(37,4){\makebox[0pt]{\scriptsize $-1$}}
\put(13.3,8){\makebox[0pt]{\scriptsize $q$}}
\put(23,8){\makebox[0pt][l]{\scriptsize $r$}}
\put(24,14){\makebox[0pt]{\scriptsize $-1$}}
\end{picture}
\end{align*}
implies $q^2r=1$ by $a_{41}^{r_2r_3(X)}=0$. The condition $a_{45}^{r_2r_3(X)}=-1$ implies $qr=-1$ or $qr^2=1$. Then $q=r\in G'_3$ and $p\ne 3$. Hence we have $a_{32}^{r_4r_5(X)}=-2$ by the reflections of $X$
\begin{align*}
X\colon~ 
\begin{picture}(50,4)(0,3)
\put(1,1){\circle{2}}
\put(2,1){\line(1,0){10}}
\put(13,1){\circle{2}}
\put(14,1){\line(1,0){10}}
\put(25,1){\circle{2}}
\put(26,1){\line(1,0){10}}
\put(37,1){\circle{2}}
\put(38,1){\line(1,0){10}}
\put(49,1){\circle*{2}}
\put(1,4){\makebox[0pt]{\scriptsize $-1$}}
\put(7,3){\makebox[0pt]{\scriptsize $q^{-1}$}}
\put(13,4){\makebox[0pt]{\scriptsize $q$}}
\put(19,3){\makebox[0pt]{\scriptsize $q^{-1}$}}
\put(25,4){\makebox[0pt]{\scriptsize $-1$}}
\put(31,3){\makebox[0pt]{\scriptsize $r^{-1}$}}
\put(37,4){\makebox[0pt]{\scriptsize $r$}}
\put(43,3){\makebox[0pt]{\scriptsize $r^{-1}$}}
\put(49,4){\makebox[0pt]{\scriptsize $-1$}}
\end{picture}
\quad \Rightarrow \quad
r_5(X)\colon~ 
\begin{picture}(50,4)(0,3)
\put(1,1){\circle{2}}
\put(2,1){\line(1,0){10}}
\put(13,1){\circle{2}}
\put(14,1){\line(1,0){10}}
\put(25,1){\circle{2}}
\put(26,1){\line(1,0){10}}
\put(37,1){\circle*{2}}
\put(38,1){\line(1,0){10}}
\put(49,1){\circle{2}}
\put(1,4){\makebox[0pt]{\scriptsize $-1$}}
\put(7,3){\makebox[0pt]{\scriptsize $q^{-1}$}}
\put(13,4){\makebox[0pt]{\scriptsize $q$}}
\put(19,3){\makebox[0pt]{\scriptsize $q^{-1}$}}
\put(25,4){\makebox[0pt]{\scriptsize $-1$}}
\put(31,3){\makebox[0pt]{\scriptsize $r^{-1}$}}
\put(37,4){\makebox[0pt]{\scriptsize $-1$}}
\put(43,3){\makebox[0pt]{\scriptsize $r$}}
\put(49,4){\makebox[0pt]{\scriptsize $-1$}}
\end{picture}
\end{align*}
\begin{align*}
 \Rightarrow \quad 
r_4r_5(X)\colon~
\begin{picture}(50,4)(0,3)
\put(1,1){\circle{2}}
\put(2,1){\line(1,0){10}}
\put(13,1){\circle{2}}
\put(14,1){\line(1,0){10}}
\put(25,1){\circle{2}}
\put(26,1){\line(1,0){10}}
\put(37,1){\circle{2}}
\put(38,1){\line(1,0){10}}
\put(49,1){\circle{2}}
\put(1,4){\makebox[0pt]{\scriptsize $-1$}}
\put(7,3){\makebox[0pt]{\scriptsize $q^{-1}$}}
\put(13,4){\makebox[0pt]{\scriptsize $q$}}
\put(19,3){\makebox[0pt]{\scriptsize $q^{-1}$}}
\put(26,4){\makebox[0pt]{\scriptsize $r^{-1}$}}
\put(31,3){\makebox[0pt]{\scriptsize $r$}}
\put(37,4){\makebox[0pt]{\scriptsize $-1$}}
\put(43,3){\makebox[0pt]{\scriptsize $r^{-1}$}}
\put(49,4){\makebox[0pt]{\scriptsize $r$}}
\end{picture}
\end{align*}
And the reflections of $X$
\begin{align*}
X\colon~ 
\begin{picture}(50,4)(0,3)
\put(1,1){\circle*{2}}
\put(2,1){\line(1,0){10}}
\put(13,1){\circle{2}}
\put(14,1){\line(1,0){10}}
\put(25,1){\circle{2}}
\put(26,1){\line(1,0){10}}
\put(37,1){\circle{2}}
\put(38,1){\line(1,0){10}}
\put(49,1){\circle{2}}
\put(1,4){\makebox[0pt]{\scriptsize $-1$}}
\put(7,3){\makebox[0pt]{\scriptsize $q^{-1}$}}
\put(13,4){\makebox[0pt]{\scriptsize $q$}}
\put(19,3){\makebox[0pt]{\scriptsize $q^{-1}$}}
\put(25,4){\makebox[0pt]{\scriptsize $-1$}}
\put(31,3){\makebox[0pt]{\scriptsize $r^{-1}$}}
\put(37,4){\makebox[0pt]{\scriptsize $r$}}
\put(43,3){\makebox[0pt]{\scriptsize $r^{-1}$}}
\put(49,4){\makebox[0pt]{\scriptsize $-1$}}
\end{picture}
\quad \Rightarrow  
r_1(X)\colon~
\begin{picture}(50,4)(0,3)
\put(1,1){\circle{2}}
\put(2,1){\line(1,0){10}}
\put(13,1){\circle*{2}}
\put(14,1){\line(1,0){10}}
\put(25,1){\circle{2}}
\put(26,1){\line(1,0){10}}
\put(37,1){\circle{2}}
\put(38,1){\line(1,0){10}}
\put(49,1){\circle{2}}
\put(1,4){\makebox[0pt]{\scriptsize $-1$}}
\put(7,3){\makebox[0pt]{\scriptsize $q$}}
\put(13,4){\makebox[0pt]{\scriptsize $-1$}}
\put(19,3){\makebox[0pt]{\scriptsize$q^{-1}$}}
\put(25,4){\makebox[0pt]{\scriptsize $-1$}}
\put(31,3){\makebox[0pt]{\scriptsize $r^{-1}$}}
\put(37,4){\makebox[0pt]{\scriptsize $r$}}
\put(43,3){\makebox[0pt]{\scriptsize $r^{-1}$}}
\put(49,4){\makebox[0pt]{\scriptsize $-1$}}
\end{picture}
\end{align*}
\begin{align*}
 \Rightarrow \quad 
r_2r_1(X)\colon~ 
\begin{picture}(50,4)(0,3)
\put(1,1){\circle{2}}
\put(2,1){\line(1,0){10}}
\put(13,1){\circle{2}}
\put(14,1){\line(1,0){10}}
\put(25,1){\circle{2}}
\put(26,1){\line(1,0){10}}
\put(37,1){\circle{2}}
\put(38,1){\line(1,0){10}}
\put(49,1){\circle{2}}
\put(1,4){\makebox[0pt]{\scriptsize $q$}}
\put(7,3){\makebox[0pt]{\scriptsize $q^{-1}$}}
\put(13,4){\makebox[0pt]{\scriptsize $-1$}}
\put(19,3){\makebox[0pt]{\scriptsize $q$}}
\put(26,4){\makebox[0pt]{\scriptsize $q^{-1}$}}
\put(31,3){\makebox[0pt]{\scriptsize $r^{-1}$}}
\put(37,4){\makebox[0pt]{\scriptsize $r$}}
\put(43,3){\makebox[0pt]{\scriptsize $r^{-1}$}}
\put(49,4){\makebox[0pt]{\scriptsize $-1$}}
\end{picture}
\end{align*}
imply $a_{34}^{r_2r_1(X)}=-2$, which contradicts $(a_{32}^{r_4r_5(X)},a_{34}^{r_2r_1(X)})\ne (-2,-2)$.

Step $3.9$. Let $q_{22}=q_{33}=q_{44}=-1$ and suppose $q_{11}\widetilde{q_{12}}=q_{55}\widetilde{q_{45}}=1$. We have $e=1$. By Definition~\ref{defA5}, the only admissible tuples satisfy $(a,b,c,d,e)=(1,1,0,0,1)$ or $(a,b,c,d,e)=(0,0,1,1,1)$.
Set $q\colon=q_{11}$, $r^{-1}\colon=\widetilde{q_{23}}$, $s^{-1}\colon=\widetilde{q_{34}}$ and $t\colon=q_{55}$. 

When $(a,b,c,d,e)=(1,1,0,0,1)$, we get \\ $A^{r_3(X)}=\begin{pmatrix}2&-1&0&0&0\\-1&2&-1&-1&0\\0&-1&2&-1&0\\0&-1&-1&2&-1\\0&0&0&-1&2\end{pmatrix}$ and $A^{r_4(X)}=A_5$. Thus $qr=st=1$ and $rs\ne 1$. The reflection of $X$
\begin{align*}
X\colon~ 
\begin{picture}(50,4)(0,3)
\put(1,1){\circle{2}}
\put(2,1){\line(1,0){10}}
\put(13,1){\circle*{2}}
\put(14,1){\line(1,0){10}}
\put(25,1){\circle{2}}
\put(26,1){\line(1,0){10}}
\put(37,1){\circle{2}}
\put(38,1){\line(1,0){10}}
\put(49,1){\circle{2}}
\put(1,4){\makebox[0pt]{\scriptsize $r^{-1}$}}
\put(7,3){\makebox[0pt]{\scriptsize $r$}}
\put(13,4){\makebox[0pt]{\scriptsize $-1$}}
\put(19,3){\makebox[0pt]{\scriptsize $r^{-1}$}}
\put(25,4){\makebox[0pt]{\scriptsize $-1$}}
\put(31,3){\makebox[0pt]{\scriptsize $s^{-1}$}}
\put(37,4){\makebox[0pt]{\scriptsize $-1$}}
\put(43,3){\makebox[0pt]{\scriptsize $s$}}
\put(49,4){\makebox[0pt]{\scriptsize $s^{-1}$}}
\end{picture}
\quad \Rightarrow \quad
r_2(X)\colon~ 
\begin{picture}(50,4)(0,3)
\put(1,1){\circle{2}}
\put(2,1){\line(1,0){10}}
\put(13,1){\circle{2}}
\put(14,1){\line(1,0){10}}
\put(25,1){\circle{2}}
\put(26,1){\line(1,0){10}}
\put(37,1){\circle{2}}
\put(38,1){\line(1,0){10}}
\put(49,1){\circle{2}}
\put(1,4){\makebox[0pt]{\scriptsize $-1$}}
\put(7,3){\makebox[0pt]{\scriptsize $r^{-1}$}}
\put(13,4){\makebox[0pt]{\scriptsize $-1$}}
\put(19,3){\makebox[0pt]{\scriptsize $r$}}
\put(26,4){\makebox[0pt]{\scriptsize $r^{-1}$}}
\put(31,3){\makebox[0pt]{\scriptsize $s^{-1}$}}
\put(37,4){\makebox[0pt]{\scriptsize $-1$}}
\put(43,3){\makebox[0pt]{\scriptsize $s$}}
\put(49,4){\makebox[0pt]{\scriptsize $s^{-1}$}}
\end{picture}
\end{align*}
implies $r=-1$ and $p\ne 2$ by $a_{34}^{r_2(X)}=-1$. And the reflection of $X$
\begin{align*}
X\colon~ 
\begin{picture}(50,4)(0,3)
\put(1,1){\circle{2}}
\put(2,1){\line(1,0){10}}
\put(13,1){\circle{2}}
\put(14,1){\line(1,0){10}}
\put(25,1){\circle{2}}
\put(26,1){\line(1,0){10}}
\put(37,1){\circle*{2}}
\put(38,1){\line(1,0){10}}
\put(49,1){\circle{2}}
\put(1,4){\makebox[0pt]{\scriptsize $-1$}}
\put(7,3){\makebox[0pt]{\scriptsize $-1$}}
\put(13,4){\makebox[0pt]{\scriptsize $-1$}}
\put(19,3){\makebox[0pt]{\scriptsize $-1$}}
\put(25,4){\makebox[0pt]{\scriptsize $-1$}}
\put(31,3){\makebox[0pt]{\scriptsize $s^{-1}$}}
\put(37,4){\makebox[0pt]{\scriptsize $-1$}}
\put(43,3){\makebox[0pt]{\scriptsize $s$}}
\put(49,4){\makebox[0pt]{\scriptsize $s^{-1}$}}
\end{picture}
\quad \Rightarrow \quad
r_4(X)\colon~ 
\begin{picture}(50,4)(0,3)
\put(1,1){\circle{2}}
\put(2,1){\line(1,0){10}}
\put(13,1){\circle{2}}
\put(14,1){\line(1,0){10}}
\put(25,1){\circle{2}}
\put(26,1){\line(1,0){10}}
\put(37,1){\circle{2}}
\put(38,1){\line(1,0){10}}
\put(49,1){\circle{2}}
\put(1,4){\makebox[0pt]{\scriptsize $-1$}}
\put(7,3){\makebox[0pt]{\scriptsize $-1$}}
\put(13,4){\makebox[0pt]{\scriptsize $-1$}}
\put(19,3){\makebox[0pt]{\scriptsize $-1$}}
\put(26,4){\makebox[0pt]{\scriptsize $s^{-1}$}}
\put(31,3){\makebox[0pt]{\scriptsize $s$}}
\put(37,4){\makebox[0pt]{\scriptsize $-1$}}
\put(43,3){\makebox[0pt]{\scriptsize $s^{-1}$}}
\put(49,4){\makebox[0pt]{\scriptsize $-1$}}
\end{picture}
\end{align*}
implies $s=-1$ and $p\ne 2$ by $a_{32}^{r_4(X)}=-1$. Hence this case is a contradiction to $rs\ne 1$.

When $(a,b,c,d,e)=(0,0,1,1,1)$, we obtain $A^{r_3(X)}=A_5$ and $A^{r_4(X)}=\begin{pmatrix}2&-1&0&0&0\\-1&2&-1&0&0\\0&-1&2&-1&-1\\0&0&-1&2&-1\\0&0&-1&-1&2\end{pmatrix}$. Hence $qr=rs=1$ and $st\ne 1$. The condition $a_{45}^{r_3(X)}=-1$ yields $s=-1$ and $p\ne 2$.
By Lemma \ref{l-5chainspe-a}$(c_4)$, this configuration cannot occur.

Step $4$. Suppose exactly four vertices are labeled $-1$. We proceed with five subcases as follows.

Step $4.1$. Let $q_{11}=q_{22}=q_{33}=q_{44}=-1$ and $q_{55}\widetilde{q_{45}}=1$. Then $e=1$. Hence, we have either $(a,b,c,d,e)=(1,1,0,0,1)$ or $(a,b,c,d,e)=(0,0,1,1,1)$. By the same reasoning as in Step $3.9$ shows that both cases are impossible.

Step $4.2$. Let $q_{22}=q_{33}=q_{44}=q_{55}=-1$ and $q_{11}\widetilde{q_{12}}=1$. Since $X$ has a good $A_5$ neighborhood, we have either $(a,b,c,d)=(1,1,0,0)$ or $(a,b,c,d)=(0,0,1,1)$. Set $q\colon=q_{11}$, $r^{-1}\colon=\widetilde{q_{23}}$, $s^{-1}\colon=\widetilde{q_{34}}$ and $t^{-1}\colon=\widetilde{q_{45}}$. If $(a,b,c,d)=(1,1,0,0)$.
Applying the same directional reflection as $(a,b,c,d,e)=(1,1,0,0,1)$ of Step $3.9$ for the same reason leads to a contradiction.

If $(a,b,c,d)=(0,0,1,1)$, that is $A^{r_3(X)}=A_5$ and $A^{r_4(X)}=\begin{pmatrix}2&-1&0&0&0\\-1&2&-1&0&0\\0&-1&2&-1&-1\\0&0&-1&2&-1\\0&0&-1&-1&2\end{pmatrix}$. Hence $qr=rs=1$ and $st\ne 1$. What's more, the reflection of $X$
\begin{align*}
X\colon~ 
\begin{picture}(50,4)(0,3)
\put(1,1){\circle{2}}
\put(2,1){\line(1,0){10}}
\put(13,1){\circle{2}}
\put(14,1){\line(1,0){10}}
\put(25,1){\circle*{2}}
\put(26,1){\line(1,0){10}}
\put(37,1){\circle{2}}
\put(38,1){\line(1,0){10}}
\put(49,1){\circle{2}}
\put(1,4){\makebox[0pt]{\scriptsize $s$}}
\put(7,3){\makebox[0pt]{\scriptsize $s^{-1}$}}
\put(13,4){\makebox[0pt]{\scriptsize $-1$}}
\put(19,3){\makebox[0pt]{\scriptsize $s$}}
\put(25,4){\makebox[0pt]{\scriptsize $-1$}}
\put(31,3){\makebox[0pt]{\scriptsize $s^{-1}$}}
\put(37,4){\makebox[0pt]{\scriptsize $-1$}}
\put(43,3){\makebox[0pt]{\scriptsize $t^{-1}$}}
\put(49,4){\makebox[0pt]{\scriptsize $-1$}}
\end{picture}
\quad \Rightarrow \quad
r_3(X)\colon~ 
\begin{picture}(50,4)(0,3)
\put(1,1){\circle{2}}
\put(2,1){\line(1,0){10}}
\put(13,1){\circle{2}}
\put(14,1){\line(1,0){10}}
\put(25,1){\circle{2}}
\put(26,1){\line(1,0){10}}
\put(37,1){\circle{2}}
\put(38,1){\line(1,0){10}}
\put(49,1){\circle{2}}
\put(1,4){\makebox[0pt]{\scriptsize $s$}}
\put(7,3){\makebox[0pt]{\scriptsize $s^{-1}$}}
\put(13,4){\makebox[0pt]{\scriptsize $s$}}
\put(19,3){\makebox[0pt]{\scriptsize $s^{-1}$}}
\put(26,4){\makebox[0pt]{\scriptsize $-1$}}
\put(31,3){\makebox[0pt]{\scriptsize $s$}}
\put(37,4){\makebox[0pt]{\scriptsize $s^{-1}$}}
\put(43,3){\makebox[0pt]{\scriptsize $t^{-1}$}}
\put(49,4){\makebox[0pt]{\scriptsize $-1$}}
\end{picture}
\end{align*}
implies $s=-1$ and $p\ne 2$ by $a_{45}^{r_3(X)}=-1$. By Lemma \ref{l-5chainspe-a} $(b_3)$, this configuration is ruled out.

Step $4.3$. Let $q_{11}=q_{22}=q_{33}=q_{55}=-1$ and $q_{44}\widetilde{q_{45}}=q_{44}\widetilde{q_{34}}=1$.  Then $(c,d)=(0,0)$. Hence in this case $(a,b,c,d,e)=(1,1,0,0,1)$. Set $q^{-1}\colon=\widetilde{q_{12}}$, $r^{-1}\colon=\widetilde{q_{23}}$ and $s\colon=q_{44}$. Then one obtains $rs\ne 1$ by $a^{r_3(X)}_{24}=-1$. The reflection of $X$
\begin{align*}
X\colon~ 
\begin{picture}(50,4)(0,3)
\put(1,1){\circle{2}}
\put(2,1){\line(1,0){10}}
\put(13,1){\circle*{2}}
\put(14,1){\line(1,0){10}}
\put(25,1){\circle{2}}
\put(26,1){\line(1,0){10}}
\put(37,1){\circle{2}}
\put(38,1){\line(1,0){10}}
\put(49,1){\circle{2}}
\put(1,4){\makebox[0pt]{\scriptsize $-1$}}
\put(7,3){\makebox[0pt]{\scriptsize $q^{-1}$}}
\put(13,4){\makebox[0pt]{\scriptsize $-1$}}
\put(19,3){\makebox[0pt]{\scriptsize $r^{-1}$}}
\put(25,4){\makebox[0pt]{\scriptsize $-1$}}
\put(31,3){\makebox[0pt]{\scriptsize $s^{-1}$}}
\put(37,4){\makebox[0pt]{\scriptsize $s$}}
\put(43,3){\makebox[0pt]{\scriptsize $s^{-1}$}}
\put(49,4){\makebox[0pt]{\scriptsize $-1$}}
\end{picture}
\quad \Rightarrow \quad
r_2(X)\colon~ 
\begin{picture}(50,4)(0,3)
\put(1,1){\circle{2}}
\put(2,1){\line(1,0){10}}
\put(13,1){\circle{2}}
\put(14,1){\line(1,0){10}}
\put(25,1){\circle{2}}
\put(26,1){\line(1,0){10}}
\put(37,1){\circle{2}}
\put(38,1){\line(1,0){10}}
\put(49,1){\circle{2}}
\put(1,4){\makebox[0pt]{\scriptsize $q^{-1}$}}
\put(7,3){\makebox[0pt]{\scriptsize $q$}}
\put(13,4){\makebox[0pt]{\scriptsize $-1$}}
\put(19,3){\makebox[0pt]{\scriptsize $r$}}
\put(26,4){\makebox[0pt]{\scriptsize $r^{-1}$}}
\put(31,3){\makebox[0pt]{\scriptsize $s^{-1}$}}
\put(37,4){\makebox[0pt]{\scriptsize $s$}}
\put(43,3){\makebox[0pt]{\scriptsize $s^{-1}$}}
\put(49,4){\makebox[0pt]{\scriptsize $-1$}}
\end{picture}
\end{align*}
implies $qr=1$ by $a_{13}^{r_2(X)}=0$ and $r=-1$ by $a_{34}^{r_2(X)}=-1$. Hence $q=r=-1$ and $p\ne 2$. By Lemma \ref{l-5chainspe-a} $(a_3)$, this configuration is excluded. 

Step $4.4$. Let $q_{11}=q_{33}=q_{44}=q_{55}=-1$ and $q_{22}\widetilde{q_{23}}=q_{22}\widetilde{q_{12}}=1$. From Definition \ref{defA5}, the tuple $(a, b, c, d)$ can only be $(1, 1, 0, 0)$ or $(0, 0, 1, 1)$.
Set $q\colon=q_{22}$, $r^{-1}\colon=\widetilde{q_{34}}$ and $s^{-1}\colon=\widetilde{q_{45}}$. If $(a,b,c,d)=(1,1,0,0)$, that is \\ $A^{r_3(X)}=\begin{pmatrix}2&-1&0&0&0\\-1&2&-1&-1&0\\0&-1&2&-1&0\\0&-1&-1&2&-1\\0&0&0&-1&2\end{pmatrix}$ and $A^{r_4(X)}=A_5$. Then $qr\ne 1$ and $rs=1$. The reflection of $X$
\begin{align*}
X\colon~ 
\begin{picture}(50,4)(0,3)
\put(1,1){\circle{2}}
\put(2,1){\line(1,0){10}}
\put(13,1){\circle{2}}
\put(14,1){\line(1,0){10}}
\put(25,1){\circle{2}}
\put(26,1){\line(1,0){10}}
\put(37,1){\circle*{2}}
\put(38,1){\line(1,0){10}}
\put(49,1){\circle{2}}
\put(1,4){\makebox[0pt]{\scriptsize $-1$}}
\put(7,3){\makebox[0pt]{\scriptsize $q^{-1}$}}
\put(13,4){\makebox[0pt]{\scriptsize $q$}}
\put(19,3){\makebox[0pt]{\scriptsize $q^{-1}$}}
\put(25,4){\makebox[0pt]{\scriptsize $-1$}}
\put(31,3){\makebox[0pt]{\scriptsize $r^{-1}$}}
\put(37,4){\makebox[0pt]{\scriptsize $-1$}}
\put(43,3){\makebox[0pt]{\scriptsize $r$}}
\put(49,4){\makebox[0pt]{\scriptsize $-1$}}
\end{picture}
\quad \Rightarrow \quad
r_4(X)\colon~
\begin{picture}(50,4)(0,3)
\put(1,1){\circle{2}}
\put(2,1){\line(1,0){10}}
\put(13,1){\circle{2}}
\put(14,1){\line(1,0){10}}
\put(25,1){\circle{2}}
\put(26,1){\line(1,0){10}}
\put(37,1){\circle{2}}
\put(38,1){\line(1,0){10}}
\put(49,1){\circle{2}}
\put(1,4){\makebox[0pt]{\scriptsize $-1$}}
\put(7,3){\makebox[0pt]{\scriptsize $q^{-1}$}}
\put(13,4){\makebox[0pt]{\scriptsize $q$}}
\put(19,3){\makebox[0pt]{\scriptsize $q^{-1}$}}
\put(26,4){\makebox[0pt]{\scriptsize $r^{-1}$}}
\put(31,3){\makebox[0pt]{\scriptsize $r$}}
\put(37,4){\makebox[0pt]{\scriptsize $-1$}}
\put(43,3){\makebox[0pt]{\scriptsize $r^{-1}$}}
\put(49,4){\makebox[0pt]{\scriptsize $r$}}
\end{picture}
\end{align*}
implies $r=-1$ and $p\ne 2$ by $a_{32}^{r_4(X)}=-1$. By Lemma \ref{l-5chainspe-a} $(a_4)$, this configuration cannot occur.

If $(a,b,c,d)=(0,0,1,1)$, then $A^{r_4(X)}=\begin{pmatrix}2&-1&0&0&0\\-1&2&-1&0&0\\0&-1&2&-1&-1\\0&0&-1&2&-1\\0&0&-1&-1&2\end{pmatrix}$ and $A^{r_3(X)}=A_5$. In this case $qr=1$ and $rs\ne 1$. The reflection of $X$
\begin{align*}
X\colon~ 
\begin{picture}(50,4)(0,3)
\put(1,1){\circle{2}}
\put(2,1){\line(1,0){10}}
\put(13,1){\circle{2}}
\put(14,1){\line(1,0){10}}
\put(25,1){\circle*{2}}
\put(26,1){\line(1,0){10}}
\put(37,1){\circle{2}}
\put(38,1){\line(1,0){10}}
\put(49,1){\circle{2}}
\put(1,4){\makebox[0pt]{\scriptsize $-1$}}
\put(7,3){\makebox[0pt]{\scriptsize $r$}}
\put(13,4){\makebox[0pt]{\scriptsize $r^{-1}$}}
\put(19,3){\makebox[0pt]{\scriptsize $r$}}
\put(25,4){\makebox[0pt]{\scriptsize $-1$}}
\put(31,3){\makebox[0pt]{\scriptsize $r^{-1}$}}
\put(37,4){\makebox[0pt]{\scriptsize $-1$}}
\put(43,3){\makebox[0pt]{\scriptsize $s^{-1}$}}
\put(49,4){\makebox[0pt]{\scriptsize $-1$}}
\end{picture}
\quad \Rightarrow \quad
r_3(X)\colon~ 
\begin{picture}(50,4)(0,3)
\put(1,1){\circle{2}}
\put(2,1){\line(1,0){10}}
\put(13,1){\circle{2}}
\put(14,1){\line(1,0){10}}
\put(25,1){\circle{2}}
\put(26,1){\line(1,0){10}}
\put(37,1){\circle{2}}
\put(38,1){\line(1,0){10}}
\put(49,1){\circle{2}}
\put(1,4){\makebox[0pt]{\scriptsize $-1$}}
\put(7,3){\makebox[0pt]{\scriptsize $r$}}
\put(13,4){\makebox[0pt]{\scriptsize $-1$}}
\put(19,3){\makebox[0pt]{\scriptsize $r^{-1}$}}
\put(26,4){\makebox[0pt]{\scriptsize $-1$}}
\put(31,3){\makebox[0pt]{\scriptsize $r$}}
\put(37,4){\makebox[0pt]{\scriptsize $r^{-1}$}}
\put(43,3){\makebox[0pt]{\scriptsize $s^{-1}$}}
\put(49,4){\makebox[0pt]{\scriptsize $-1$}}
\end{picture}
\end{align*}
implies $r=-1$ and $p\ne 2$ by $a_{45}^{r_3(X)}=-1$. Accordingly, this case is ruled out by Lemma \ref{l-5chainspe-a}$(b_3)$.

Step $4.5$. Let $q_{11}=q_{22}=q_{44}=q_{55}=-1$ and $q_{33}\widetilde{q_{34}}=q_{33}\widetilde{q_{23}}=1$. Then $(a,b)=(0,0)$. Thus, either $(a,b,c,d,e)=(0,0,1,1,2)$ or $(a,b,c,d,e)=(0,0,1,1,1)$. Set $q^{-1}\colon=\widetilde{q_{12}}$, $r\colon=q_{33}$ and $s^{-1}\colon=\widetilde{q_{45}}$. If $(a,b,c,d,e)=(0,0,1,1,2)$, that is $A^{r_3(X)}=A_5$, $A^{r_4(X)}=\begin{pmatrix}2&-1&0&0&0\\-1&2&-1&0&0\\0&-1&2&-1&-1\\0&0&-1&2&-1\\0&0&-1&-1&2\end{pmatrix}$ and $A^{r_5(X)}=\begin{pmatrix}2&-1&0&0&0\\-1&2&-1&0&0\\0&-1&2&-1&0\\0&0&-2&2&-1\\0&0&0&-1&2\end{pmatrix}$. Then $qr=1$ and $rs\ne 1$. On one hand, the reflection of $X$
\begin{align}\label{eq-11}
X\colon~
\begin{picture}(50,4)(0,3)
\put(1,1){\circle{2}}
\put(2,1){\line(1,0){10}}
\put(13,1){\circle*{2}}
\put(14,1){\line(1,0){10}}
\put(25,1){\circle{2}}
\put(26,1){\line(1,0){10}}
\put(37,1){\circle{2}}
\put(38,1){\line(1,0){10}}
\put(49,1){\circle{2}}
\put(1,4){\makebox[0pt]{\scriptsize $-1$}}
\put(7,3){\makebox[0pt]{\scriptsize $r$}}
\put(13,4){\makebox[0pt]{\scriptsize $-1$}}
\put(19,3){\makebox[0pt]{\scriptsize $r^{-1}$}}
\put(25,4){\makebox[0pt]{\scriptsize $r$}}
\put(31,3){\makebox[0pt]{\scriptsize $r^{-1}$}}
\put(37,4){\makebox[0pt]{\scriptsize $-1$}}
\put(43,3){\makebox[0pt]{\scriptsize $s^{-1}$}}
\put(49,4){\makebox[0pt]{\scriptsize $-1$}}
\end{picture}
\quad \Rightarrow \quad
r_4(X)\colon~ 
\Dchainfivet{$-1$}{$r$}{$-1$}{$r^{-1}$}{$-1$}{$r$}{$(rs)^{-1}$}{$-1$}{$s$}{$s^{-1}$}
\end{align}
gives $s=-1$ or $rs^2=1$ by $a_{53}^{r_4(X)}=-1$. On the other hand, the reflection of $X$
\begin{align*}
X\colon~ 
\begin{picture}(50,4)(0,3)
\put(1,1){\circle{2}}
\put(2,1){\line(1,0){10}}
\put(13,1){\circle{2}}
\put(14,1){\line(1,0){10}}
\put(25,1){\circle{2}}
\put(26,1){\line(1,0){10}}
\put(37,1){\circle{2}}
\put(38,1){\line(1,0){10}}
\put(49,1){\circle*{2}}
\put(1,4){\makebox[0pt]{\scriptsize $-1$}}
\put(7,3){\makebox[0pt]{\scriptsize $r$}}
\put(13,4){\makebox[0pt]{\scriptsize $-1$}}
\put(19,3){\makebox[0pt]{\scriptsize $r^{-1}$}}
\put(25,4){\makebox[0pt]{\scriptsize $r$}}
\put(31,3){\makebox[0pt]{\scriptsize $r^{-1}$}}
\put(37,4){\makebox[0pt]{\scriptsize $-1$}}
\put(43,3){\makebox[0pt]{\scriptsize $s^{-1}$}}
\put(49,4){\makebox[0pt]{\scriptsize $-1$}}
\end{picture}
\quad \Rightarrow \quad
r_5(X)\colon~ 
\begin{picture}(50,4)(0,3)
\put(1,1){\circle{2}}
\put(2,1){\line(1,0){10}}
\put(13,1){\circle{2}}
\put(14,1){\line(1,0){10}}
\put(25,1){\circle{2}}
\put(26,1){\line(1,0){10}}
\put(37,1){\circle{2}}
\put(38,1){\line(1,0){10}}
\put(49,1){\circle{2}}
\put(1,4){\makebox[0pt]{\scriptsize $-1$}}
\put(7,3){\makebox[0pt]{\scriptsize $r$}}
\put(13,4){\makebox[0pt]{\scriptsize $-1$}}
\put(19,3){\makebox[0pt]{\scriptsize $r^{-1}$}}
\put(26,4){\makebox[0pt]{\scriptsize $r$}}
\put(31,3){\makebox[0pt]{\scriptsize $r^{-1}$}}
\put(37,4){\makebox[0pt]{\scriptsize $s^{-1}$}}
\put(43,3){\makebox[0pt]{\scriptsize $s$}}
\put(49,4){\makebox[0pt]{\scriptsize $-1$}}
\end{picture}
\end{align*}
implies $s\ne -1$. Hence we obtain $rs^2=1$. From Definition \ref{defA5}  $(A_{5_2})$, $a_{25}^{r_3r_4(X)}
\in \{0,1\}$. If $a_{25}^{r_3r_4(X)}=0$, the reflection (\ref{eq-11}) of $X$ implies $r^2s=1$. Then $r=s\in G'_3$ and $p\ne 3$. 
Hence $\cD=\cD_{13,5}^5$. If $a_{25}^{r_3r_4(X)}=-1$, the reflection of $X$
\begin{align*}
r_4(X)\colon~
 \Dchainfivew{$-1$}{$r$}{$-1$}{$r^{-1}$}{$-1$}{$r$}{$(rs)^{-1}$}{$-1$}{$s$}{$s^{-1}$}
\quad \Rightarrow \quad
r_3r_4(X)\colon~ \tau_{12534}~~ 
\begin{picture}(38,11)(0,3)
\put(1,1){\circle{2}}
\put(2,1){\line(1,0){10}}
\put(13,1){\circle{2}}
\put(13,2){\line(2,3){6}}
\put(14,1){\line(1,0){10}}
\put(25,1){\circle{2}}
\put(25,2){\line(-2,3){6}}
\put(19,12){\circle{2}}
\put(26,1){\line(1,0){10}}
\put(37,1){\circle{2}}
\put(1,4){\makebox[0pt]{\scriptsize $-1$}}
\put(7,3){\makebox[0pt]{\scriptsize $r$}}
\put(12,4){\makebox[0pt]{\scriptsize $r^{-1}$}}
\put(19,3){\makebox[0pt]{\scriptsize $r$}}
\put(26,4){\makebox[0pt]{\scriptsize $-1$}}
\put(31,3){\makebox[0pt]{\scriptsize $r^{-1}$}}
\put(37,4){\makebox[0pt]{\scriptsize $r$}}
\put(13.3,8){\makebox[0pt]{\scriptsize $r^{-2}s^{-1}$}}
\put(23,8){\makebox[0pt][l]{\scriptsize $rs$}}
\put(24,14){\makebox[0pt]{\scriptsize $r^{-1}s^{-2}$}}
\end{picture}
\end{align*}
implies $r=-1$ or $r^3s=1$. If $r=-1$, we have $s=-1$ and $p\ne 2$ by $a_{15}^{r_2r_3r_4(X)}=0$, which is a contradiction. If $r^3s=1$, we obtain $s\in G'_5$ by $rs^2=1$.
Then $\mathcal{D}$ is
\begin{align*}
\begin{picture}(50,4)(0,3)
\put(1,1){\circle{2}}
\put(2,1){\line(1,0){10}}
\put(13,1){\circle{2}}
\put(14,1){\line(1,0){10}}
\put(25,1){\circle{2}}
\put(26,1){\line(1,0){10}}
\put(37,1){\circle{2}}
\put(38,1){\line(1,0){10}}
\put(49,1){\circle{2}}
\put(1,4){\makebox[0pt]{\scriptsize $-1$}}
\put(7,3){\makebox[0pt]{\scriptsize $s^{-2}$}}
\put(13,4){\makebox[0pt]{\scriptsize $-1$}}
\put(19,3){\makebox[0pt]{\scriptsize $s^2$}}
\put(26,4){\makebox[0pt]{\scriptsize $s^{-2}$}}
\put(31,3){\makebox[0pt]{\scriptsize $s^2$}}
\put(37,4){\makebox[0pt]{\scriptsize $-1$}}
\put(43,3){\makebox[0pt]{\scriptsize $s^{-1}$}}
\put(49,4){\makebox[0pt]{\scriptsize $-1$}}
\end{picture}
\end{align*}
Accordingly, this case is excluded by Lemma \ref{l-5chainspe-a}$(b_2)$.

When $(a,b,c,d,e)=(0,0,1,1,1)$, we obtain $A^{r_3(X)}=A_5$, $A^{r_4(X)}=\begin{pmatrix}2&-1&0&0&0\\-1&2&-1&0&0\\0&-1&2&-1&-1\\0&0&-1&2&-1\\0&0&-1&-1&2\end{pmatrix}$ and $A^{r_5(X)}=A_5$. Then $qr=1$ and $rs\ne 1$. Since $a_{43}^{r_5(X)}=-1$, we obtain $s=-1$ and $p\ne 2$.  By Lemma \ref{l-5chainspe-a} $(c_3)$, this configuration cannot occur.

Step $5$. Suppose all five vertices are labelled $-1$. That is $q_{11}=q_{22}=q_{33}=q_{44}=q_{55}=-1$.  
By the same argument as in Step $4.2$, this configuration is ruled out. 
\end{proof}

\begin{lemma}\label{lem:impossible6}
Suppose $\gDd _{\chi,E}$ admits the form
\begin{equation*}
	\Dchainsix{$q_{11}$}{$r$}{$q_{22}$}{$s$}{$q_{33}$}
{$t$}{$q_{44}$}{$u$}{$q_{55}$}{$v$}{$q_{66}$}
\end{equation*}
and has a good $A_6$ neighborhood. Then $q_{44}=-1$ and the configurations listed below cannot occur.
\begin{itemize}
	\item[$(a_1)$]\ \Dchainsix{$q$}{$q^{-1}$}{$q$}{$q^{-1}$}{$q$}{$q^{-1}$}{$-1$}{$-1$}{$-1$}{$-1$}{$-1$},\quad $q\ne -1, \ p\ne 2$
    \vspace{-2mm}
	\item[$(a_2)$]\ \Dchainsix{$-1$}{$q^{-1}$}{$q$}{$q^{-1}$}{$q$}{$q^{-1}$}{$-1$}{$-1$}{$-1$}{$-1$}{$-1$},\quad $q\ne -1, \ p\ne 2$
    \vspace{-2mm}
	\item[$(a_3)$]\ \Dchainsix{$q^{-1}$}{$q$}{$-1$}{$q^{-1}$}{$q$}{$q^{-1}$}{$-1$}{$-1$}{$-1$}{$-1$}{$-1$},\quad $q\ne -1, \ p\ne 2$
    \vspace{-2mm}
    \item[$(a_4)$]\ \Dchainsix{$-1$}{$q$}{$-1$}{$q^{-1}$}{$q$}{$q^{-1}$}{$-1$}{$-1$}{$-1$}{$-1$}{$-1$},\quad $q\ne -1, \ p\ne 2$
    \vspace{-2mm}
    \item[$(a_5)$]\ \Dchainsix{$q$}{$q^{-1}$}{$-1$}{$q$}{$-1$}{$q^{-1}$}{$-1$}{$-1$}{$-1$}{$-1$}{$-1$},\quad $q\ne -1, \ p\ne 2$
    \vspace{-2mm}
	\item[$(b_1)$]\ \Dchainsix{$-1$}{$-1$}{$-1$}{$-1$}{$-1$}{$-1$}{$-1$}{$q^{-1}$}{$-1$}{$-1$}{$-1$}, \quad $q\in G'_4, \ p\ne 2$
    \vspace{-2mm}
	\item[$(b_2)$]\ \Dchainsix{$-1$}{$-1$}{$-1$}{$-1$}{$-1$}{$-1$}{$-1$}{$q^{-1}$}{$q$}{$q^{-1}$}{$q$},\quad $q\ne -1, \ p\ne 2$
    \vspace{-2mm}
	\item[$(b_3)$]\ \Dchainsix{$-1$}{$-1$}{$-1$}{$-1$}{$-1$}{$-1$}{$-1$}{$q^{-1}$}{$q$}{$-1$}{$q$}, \quad $q\ne -1, \ p\ne 2$
    \vspace{-2mm}
	\item[$(b_4)$]\ \Dchainsix{$-1$}{$-1$}{$-1$}{$-1$}{$-1$}{$-1$}{$-1$}{$q^{-1}$}{$q$}{$q^{-1}$}{$-1$}, \quad $q\ne -1, \ p\ne 2$
    \vspace{-2mm}
	\item[$(b_5)$]\ \Dchainsix{$-1$}{$-1$}{$-1$}{$-1$}{$-1$}{$-1$}{$-1$}{$q^{-1}$}{$-1$}{$q$}{$-1$}, \quad $q\ne -1, \ p\ne 2$
    \vspace{-2mm}
	\item[$(b_6)$]\ \Dchainsix{$-1$}{$-1$}{$-1$}{$-1$}{$-1$}{$-1$}{$-1$}{$q$}{$-1$}{$q^{-1}$}{$q$}, \quad $q\ne -1, \ p\ne 2$
    \end{itemize}
\end{lemma}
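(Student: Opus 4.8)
The plan is to argue case by case through the eleven diagrams $(a_1)$--$(b_6)$, in complete parallel to the proof of Lemma~\ref{l-5chainspe-a}. In each case I would assume that the displayed chain is realized at a point $X$ (after the permutation of $I$ permitted in Definition~\ref{defA6}) and that $X$ carries a good $A_6$ neighborhood, and I would derive a contradiction with one of the numerical constraints of Definition~\ref{defA6}. The two tools are Lemma~\ref{le:Dynkin}, which produces the labels $q'_{jj}$ and $q'_{jk}q'_{kj}$ of $R_i(M)$ from those of $M$, and Lemma~\ref{lem:jik}, which reads a Cartan entry $a_{jk}$ off $q_{jj}$ together with $q_{jk}q_{kj}$. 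It is convenient to repackage the last display of Lemma~\ref{le:Dynkin} into the uniform bicharacter form $q'_{jk}q'_{kj}=q_{jk}q_{kj}(q_{ij}q_{ji})^{-a_{ik}}(q_{ik}q_{ki})^{-a_{ij}}q_{ii}^{\,2a_{ij}a_{ik}}$, valid for $j,k\ne i$, which makes transparent that $R_i$ changes an edge only when both of its endpoints are joined to $i$; in particular the $-1$-segments of a chain stay inert until a reflection acts right at their boundary.

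The first step I would take in every case is to decide, by a single reflection $r_5$ and the resulting value $a:=-a_{46}^{r_5(X)}$, whether $X$ is of type $(A_{6a})$ ($a=0$) or $(A_{6b})$ ($a=1$); one notes, consistently with Lemma~\ref{l-6chainmid}, that $q_{44}=-1$ in every one of the eleven diagrams, so none is eliminated by that lemma and the finer analysis is genuinely needed. The heart of the argument is then a short chain of reflections evaluating the control entries of the relevant type. For a diagram of type $(A_{6a})$ I would reflect by $r_4,r_5$ and by $r_5,r_6$ to compute $a_{32}^{r_5r_4(X)}$ and $a_{43}^{r_5r_6(X)}$; because $R_4$ creates a new edge joining the boundary of the $-1$-segment to a vertex carrying a power of $q$, these entries translate, via Lemma~\ref{lem:jik}, into order conditions on $q$. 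Since the good-neighborhood hypothesis forces $a_{32}^{r_5r_4(X)}=a_{43}^{r_5r_6(X)}=-1$, these constrain $q$ to a definite root of unity (typically $q\in G'_3$, $G'_4$ or $G'_5$), after which reflecting by $r_1,r_2,r_3$ shows the remaining datum $[a_{45}^{r_3r_2r_1(X)},a_{45}^{r_3r_2(X)}]$ is forced to the pair $[-2,-2]$, in the same manner that Case $a_7$ in the proof of Theorem~\ref{theo.rank5} produced a forbidden $a_{45}$-value, contradicting $(A_{6a})$. For a diagram of type $(A_{6b})$ the same reflections instead violate one of $a_{45}^{r_3r_2(X)}=-1$, $a_{45}^{r_3r_2r_1(X)}=-1$, or the shared constraints $a_{36}^{r_5r_4(X)}=0$ and $a_{25}^{r_3r_4(X)}=0$.

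The only structural difference between the blocks $(a_1)$--$(a_5)$ and $(b_1)$--$(b_6)$ is the position of the nontrivial braiding: in the former it sits on vertices $1,2,3$ with $4,5,6$ a $-1$-chain, in the latter it sits near $5,6$ with $1,\dots,4$ a $-1$-chain, so I would apply the decisive reflections from the corresponding end. As in Lemma~\ref{l-5chainspe-a}, many diagrams are specializations of one another (setting a free label equal to $-1$), so $(a_3),(a_4)$ should follow from $(a_2)$ and several of the $(b)$ diagrams from $(b_2)$ once the latter are settled; moreover, when the extreme $-1$-vertices remain inert under the reflections used, the reflected chain reproduces a rank-$5$ configuration already excluded, and I would simply cite the appropriate item of Lemma~\ref{l-5chainspe-a} rather than repeat the computation.

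The genuine difficulty will be the bookkeeping, not any new idea. Each use of Lemma~\ref{le:Dynkin} branches according to whether the reflecting label $q_{ii}$ equals $-1$, equals $1$, or is a primitive $(1-a_{ij})$-th root of unity, and after every reflection the chain must be relabeled by a permutation $\tau$ so that its vertices are read left to right before the next reflection applies; one must also check at each intermediate point that the good-$A_6$ hypotheses still hold, so that the control entries are legitimately constrained. Carrying these branches and relabelings correctly across all eleven diagrams, each requiring two or three reflections, is where essentially all the labor lies, while the final contradiction is immediate once the relevant entry has been computed.
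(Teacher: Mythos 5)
Your toolkit (reflections via Lemma~\ref{le:Dynkin}, Cartan entries via Lemma~\ref{lem:jik}, contradiction with Definition~\ref{defA6}) is the right one, but the plan has a genuine gap and rests on two incorrect structural claims. The gap: every decisive assertion — that $a_{32}^{r_5r_4(X)}=-1$ and $a_{43}^{r_5r_6(X)}=-1$ ``constrain $q$ to a definite root of unity,'' and that reflecting by $r_1,r_2,r_3$ then forces $[a_{45}^{r_3r_2r_1(X)},a_{45}^{r_3r_2(X)}]=[-2,-2]$ — is asserted but never computed, and for a lemma of this type the computations \emph{are} the proof. Your plan is also needlessly roundabout in a way that hides the real mechanism: the constraints $a_{36}^{r_5r_4(X)}=0$ and $a_{25}^{r_3r_4(X)}=0$ are imposed in Definition~\ref{defA6} \emph{before} the split into $(A_{6a})$ and $(A_{6b})$, so they hold at every point with a good $A_6$ neighborhood. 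The paper's proof uses only these: a single reflection $r_4$ (which, since $q_{44}=-1$ in all eleven diagrams, creates the triangle on vertices $3,4,5$), after which the $3$--$6$ edge of $r_5r_4(X)$ (for the $(a)$ cases) or the $2$--$5$ edge of $r_3r_4(X)$ (for the $(b)$ cases) is a nontrivial power of $q$; the vanishing constraint then forces $q=\pm 1$, an immediate contradiction with $q\ne -1$ (resp.\ $q\in G_4'$) in every case. No case split on $a$, no long reflection chains, and no $[-2,-2]$ analysis are needed.

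The two incorrect claims: (i) $(a_3)$ and $(a_4)$ are not specializations of $(a_2)$ — setting a label of $(a_2)$ equal to $-1$ does not produce either diagram; in the paper these cases are settled by parallel computations of the same shape, not by logical implication from $(a_2)$. (ii) Citing Lemma~\ref{l-5chainspe-a} for a rank-6 diagram whose visible part resembles an excluded rank-5 chain is not legitimate: that lemma assumes a good $A_5$ neighborhood of a rank-5 point, and a good $A_6$ neighborhood does not restrict to a good $A_5$ neighborhood on five of the six vertices — the constraint sets in Definitions~\ref{defA5} and~\ref{defA6} are different and live in different Cartan graphs. Since your plan relies on (i) and (ii) to dispatch several of the eleven cases, those cases remain unproven; the paper's proof of this lemma is entirely self-contained in rank 6, and a correct write-up of your approach would have to be as well.
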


\begin{proof}
If $q_{44}\neq-1$, then $A^{r_4(X)}=A_6$, contradicting Definition \ref{defA6}. We now split the remaining discussion into separate cases below.
\begin{itemize}
    \item[(i)] Let $X$ have the diagram \Dchainsix{$q$}{$q^{-1}$}{$-1$}{$q$}{$-1$}{$q^{-1}$}{$-1$}{$-1$}{$-1$}{$-1$}{$-1$}\, 
	with $q\ne -1$. From the reflection
    \begin{gather*}
	    	\Dchainsixd{$q$}{$q^{-1}$}{$-1$}{$q$}{$-1$}{$q^{-1}$}{$-1$}{$-1$}{$-1$}{$-1$}{$-1$}
		\quad \Rightarrow \quad  \DchainsixM{$q$}{$q^{-1}$}{$-1$}{$q$}{$-1$}{$-q^{-1}$}{$-1$}{$-1$}{$-1$}{$-1$}{$-1$}{$-1$}\\
    \end{gather*}
the condition $a^{r_5r_4}_{36}=0$ force $q=-1$ and $p\ne 2$, contradicting $q\ne -1$. Consequently, configuration $(a_1)$ is excluded.
    
    \item[(ii)] Suppose $X$ is given by \Dchainsix{$q^{-1}$}{$q$}{$-1$}{$q^{-1}$}{$q$}{$q^{-1}$}{$-1$}{$-1$}{$-1$}{$-1$}{$-1$} where $q\ne -1$. The reflection
 \begin{gather*}
    \Dchainsixd{$q^{-1}$}{$q$}{$-1$}{$q^{-1}$}{$q$}{$q^{-1}$}{$-1$}{$-1$}{$-1$}{$-1$}{$-1$} \quad \Rightarrow \quad  \DchainsixM{$q^{-1}$}{$q$}{$-1$}{$q^{-1}$}{$-1$}{$-q^{-1}$}{$-1$}{$-1$}{$-1$}{$q$}{$-1$}{$-1$}
    \end{gather*}
 together with $a^{r_5r_4}_{36}=0$ yields $q=1$, conflicting with $q\ne 1$. Thus case $(a_2)$ cannot hold.
 
An analogous argument, based on similar reflections and conditions, applies to cases $(a_3)$, $(a_4)$ and $(a_5)$, so these three configurations are ruled out.

    \item[(iii)] Let $X$ takes the form \Dchainsix{$-1$}{$-1$}{$-1$}{$-1$}{$-1$}{$-1$}{$-1$}{$q^{-1}$}{$-1$}{$-1$}{$-1$} where $q\in G'_4$. Using
    \begin{gather*}
    	\Dchainsixd{$-1$}{$-1$}{$-1$}{$-1$}{$-1$}{$-1$}{$-1$}{$q^{-1}$}{$-1$}{$-1$}{$-1$}\quad \Rightarrow \quad  \DchainsixMc{$-1$}{$-1$}{$-1$}{$-1$}{$-1$}{$-q^{-1}$}{$-1$}{$-1$}{$-1$}{$-1$}{$q$}{$-1$}
    \end{gather*}
the identity $a^{r_3r_4(X)}_{25}=0$ implies $q=1$, a contradiction to $q\ne 1$. Hence $(b_1)$ is eliminated.

Cases $(b_2)$-$(b_6)$ share identical reflection rules and restrictive conditions, so the same argument as for $(b_1)$  rules out all remaining $b$-type configurations. 
      \end{itemize}
\end{proof}

\begin{theorem}\label{theo.rank6}
Suppose ${\roots}^{[M]}$ is a finite set of roots of sporadic finite Cartan graphs of rank $6$. Then every admissable generalized Dynkin diagrams of $V$ occurs in rows $16$-$19$ of Table \ref{tab.1}.
\end{theorem}
\begin{proof}
By Theorem \ref{thm:goodnei},  either $C(M)$ is standard of type $E_6$ or there exists a point $X$ such that $A^X$ has a good $A_6$ neighborhood.

Case $e$. Suppose $C(M)$ is standard of
type $E_6$. Since $A^{X}=E_{6}$, we have $(2)_{q_{22}}(q_{22}\widetilde{q_{24}}-1)=(2)_{q_{ii}}(q_{ii}\widetilde{q_{i,i+1}}-1)=(2)_{q_{jj}}(q_{jj}\widetilde{q_{j-1,j}}-1)=0$ for all $i\in \{1,3,4,5\}$ and $j\in \{2,4,5,6\}$. Set $q\colon=\widetilde{q_{12}}$. If $q\neq-1$, then $q^2\neq1$. By Lemma \ref{jslemma}, we obtain $q_{ii}\neq-1$ for all $i\in \{1,2,3,4,5,6\}$ and $q_{22}\widetilde{q_{24}}=q_{ii}\widetilde{q_{i,i+1}}=q_{jj}\widetilde{q_{j-1,j}}=1$ for all $i\in \{1,3,4,5\}$ and $j\in \{2,4,5,6\}$. Otherwise, if any vertex is -1, then $q^2=1$ by $A^X=E_6$, which is a contradiction. Hence $\cD=\cD^6_{16,1}$. If $q=-1$, then $q_{ii}=-1$ for all $i \in \{1,2,3,4,5,6\}$ by $A^X=E_6$. Hence $\cD=\cD^6_{16',1}$.

Case $a$. Suppose that $X$ has a good $A_6$ neighborhood.
Then $A^{X}=A_{6}$ implies $(2)_{q_{ii}}(q_{ii}\widetilde{q_{i,i+1}}-1)=(2)_{q_{jj}}(q_{jj}\widetilde{q_{j-1,j}}-1)=0$, for all $i\in \{1,2,3,4,5\}$ and $j\in \{2,3,4,5,6\}$. 
From Lemma \ref{lem:impossible6}, we know $q_{44}=-1$ in all admissible configurations. We further classify all cases into six categories (Step $1$- Step $6$) according to the number of vertices with $q_{ii}=-1$.

Step $1$. Suppose exactly one vertex is labelled $-1$. By Lemma \ref{lem:impossible6}, we have $q_{44}=-1$, leaving only one case $\colon$ $q_{44}=-1$ and $q_{ii}\widetilde{q_{i,i+1}}=q_{jj}\widetilde{q_{j-1,j}}=1$ for all $i\in \{1,2,3,5\}$ and $j\in \{2,3,5,6\}$. It follows that $a=0$. Set $q\colon=q_{11}$ and $r\colon=q_{55}$. We get $qr\ne 1$ by $a_{35}^{r_4(X)}=-1$.
We next analyze the reflections of $X$
\vspace{-3mm}
\begin{gather*}
X\colon \quad 
\Dchainsixd{$q$}{$q^{-1}$}{$q$}{$q^{-1}$}{$q$}{$q^{-1}$}{$-1$}{$r^{-1}$}{$r$}{$r^{-1}$}{$r$}
\quad 
\Rightarrow \quad r_4(X)\colon \quad 
\DchainsixM{$q$}{$q^{-1}$}{$q$}{$q^{-1}$}{$-1$}{$(qr)^{-1}$}{$-1$}{$r^{-1}$}{$r$}{$q$}{$r$}{$-1$}
\end{gather*}
\vspace{-3mm}
\begin{align*}
	\quad \Rightarrow r_5r_4(X)\colon  \tau_{123564}~
\rule[-4\unitlength]{0pt}{5\unitlength}
\begin{picture}(50,16)(0,3)
\put(1,3){\circle{2}}
\put(2,3){\line(1,0){10}}
\put(13,3){\circle{2}}
\put(14,3){\line(1,0){10}}
\put(25,3){\circle{2}}
\put(26,3){\line(1,0){10}}
\put(37,3){\circle{2}}
\put(38,3){\line(1,1){7}}
\put(46,10){\circle{2}}
\put(38,3){\line(1,-1){7}}
\put(46,-4){\circle{2}}
\put(1,6){\makebox[0pt]{\scriptsize $q$}}
\put(7,5){\makebox[0pt]{\scriptsize $q^{-1}$}}
\put(13,6){\makebox[0pt]{\scriptsize $q$}}
\put(18,5){\makebox[0pt]{\scriptsize $q^{-1}$}}
\put(25,6){\makebox[0pt]{\scriptsize $(qr)^{-1}$}}
\put(32,5){\makebox[0pt]{\scriptsize $qr$}}
\put(37,5){\makebox[0pt]{\scriptsize $-1$}}
\put(39,-3){\makebox[0pt]{\scriptsize $r^{-1}$}}
\put(49,-4){\makebox[0pt]{\scriptsize $r$}}
\put(41,8){\makebox[0pt]{\scriptsize $r$}}
\put(49,10){\makebox[0pt]{\scriptsize $-1$}}
\end{picture}.
\end{align*}
From $a_{32}^{r_5r_4(X)}=-1$, we obtain the alternatives $rq=-1$ or $q^{2}r=1$. The condition $a_{36}^{r_5r_4(X)}=0$ further yields $qr^2=1$. Combining these relations gives $q=r\in G'_3$, so $\gDd=\gDd_{17,1}^6$ and $p\ne 3$.

Step $2$. Suppose exactly two vertices carry the label $-1$. Since $q_{44}=-1$, we divide the analysis into five mutually exclusive subcases.

Step $2.1$. Let $q_{11}=q_{44}=-1$ and $q_{ii}\widetilde{q_{i,i+1}}=q_{jj}\widetilde{q_{j-1,j}}=1$ for all $i\in \{2,3,5\}$ and $j\in \{2,3,5,6\}$. This yields $a=0$. Set $q\colon=q_{22}$ and $r\colon=q_{55}$. Then we have $qr \ne 1$ by $a_{35}^{r_4(X)}=-1$. The reflection of $X$
  \begin{align*}
	X\colon\ \Dchainsixd{$-1$}{$q^{-1}$}{$q$}{$q^{-1}$}{$q$}{$q^{-1}$}{$-1$}{$r^{-1}$}{$r$}{$r^{-1}$}{$r$}\quad\Rightarrow\quad r_4(X)\colon\ \DchainsixM{$-1$}{$q^{-1}$}{$q$}{$q^{-1}$}{$-1$}{$(qr)^{-1}$}{$-1$}{$r^{-1}$}{$r$}{$q$}{$r$}{$-1$}
  \end{align*}
implies $r=q^2$ by $a_{25}^{r_3r_4(X)}=0$ and $q=r^2$ by $a_{36}^{r_5r_4(X)}=0$. Then $r=q\in G'_3$. Hence $\gDd =\gDd_{18,19}^6$ and $p\ne 3$. 

Step $2.2$. Let $q_{22}=q_{44}=-1$ and $q_{ii}\widetilde{q_{i,i+1}}=q_{jj}\widetilde{q_{j-1,j}}=1$ for all $i\in \{1,3,5\}$ and $j\in \{3,5,6\}$. These relations force $a=0$. Set $q\colon=q_{11}, r\colon=q_{33}$ and $s\colon=q_{55}$. Then we obtain $qr=1$ and $rs\ne 1$. The reflection of $X$
  \begin{align*}
  	X\colon\ \Dchainsixd{$q$}{$q^{-1}$}{$-1$}{$q$}{$q^{-1}$}{$q$}{$-1$}{$s^{-1}$}{$s$}{$s^{-1}$}{$s$}\quad \Rightarrow \quad r_4(X)\colon\ \DchainsixM{$q$}{$q^{-1}$}{$-1$}{$q$}{$-1$}{$qs^{-1}$}{$-1$}{$s^{-1}$}{$s$}{$q^{-1}$}{$s$}{$-1$}
  \end{align*}
  implies $s=q^2$ by $a_{25}^{r_3r_4(X)}=0$ and $q=s^2$ by $a_{36}^{r_5r_4(X)}=0$. Hence $q\in G'_3$ and $p\ne 3$. Then the reflections of $X$
\begin{align*}
  X\colon\ \Dchainsixb{$q$}{$q^{-1}$}{$-1$}{$q$}{$q^{-1}$}{$q$}{$-1$}{$s^{-1}$}{$s$}{$s^{-1}$}{$s$}\quad 
  	\Rightarrow \quad r_2(X)\colon\ \Dchainsixc{$-1$}{$q$}{$-1$}{$q^{-1}$}{$-1$}{$q$}{$-1$}{$s^{-1}$}{$s$}{$s^{-1}$}{$s$}
\end{align*}
\begin{align*}
  \Rightarrow\quad r_3r_2(X)\colon\ \Dchainsix{$-1$}{$q$}{$q^{-1}$}{$q$}{$-1$}{$q^{-1}$}{$q$}{$s^{-1}$}{$s$}{$s^{-1}$}{$s$}
  \end{align*}
imply $(a_{45}^{r_3r_2r_1(X)},a_{45}^{r_3r_2(X)})=(-2,-2)$, which contradicts $(a_{45}^{r_3r_2r_1(X)},a_{45}^{r_3r_2(X)})\ne (-2,-2)$. 

Step $2.3$. Let $q_{33}=q_{44}=-1$ and $q_{ii}\widetilde{q_{i,i+1}}=q_{jj}\widetilde{q_{j-1,j}}=1$ for all $i\in \{1,2,5\}$ and $j\in \{2,5,6\}$. Hence $a=0$. Set $q\colon=q_{11}, \ r^{-1}\colon=\widetilde{q_{34}},\ s\colon=q_{55}$. By Lemma \ref{jslemma}, we have $qr=1$ and $rs\ne 1$. Then the reflection of $X$
\begin{align*}
  X\colon\ \Dchainsixc{$r^{-1}$}{$r$}{$r^{-1}$}{$r$}{$-1$}{$r^{-1}$}{$-1$}{$s^{-1}$}{$s$}{$s^{-1}$}{$s$}\quad 
  	\Rightarrow \quad r_3(X)\colon\ \Dchainsix{$r^{-1}$}{$r$}{$-1$}{$r^{-1}$}{$-1$}{$r$}{$r^{-1}$}{$s^{-1}$}{$s$}{$s^{-1}$}{$s$}
\end{align*}
implies $r=-1$ and $p\ne 2$ by $a_{45}^{r_3(X)}=-1$. By Lemma \ref{lem:impossible6} $(b_2)$, this configuration cannot occur.

Step $2.4$. Let $q_{44}=q_{55}=-1$ and $q_{ii}\widetilde{q_{i,i+1}}=q_{jj}\widetilde{q_{j-1,j}}=1$ for all $i\in \{1,2,3\}$ and $j\in \{2,3,6\}$. Set $q\colon=q_{11}, r^{-1}\colon=\widetilde{q_{45}}$ and $s\colon=q_{66}$. Since $X$ has a good $A_6$ neighborhood, we have $qr\ne 1$. And we obtain $r=-1$ and $p\ne 2$ by $a_{43}^{r_5(X)}=-1$. The reflection of X
  \begin{align*}
  	X\colon\ \Dchainsixd{$q$}{$q^{-1}$}{$q$}{$q^{-1}$}{$q$}{$q^{-1}$}{$-1$}{$-1$}{$-1$}{$s^{-1}$}{$s$}\quad \Rightarrow \quad r_4(X)\colon \ \DchainsixM{$q$}{$q^{-1}$}{$q$}{$q^{-1}$}{$-1$}{$-q^{-1}$}{$-1$}{$s^{-1}$}{$s$}{$q$}{$-1$}{$-1$}
  \end{align*}
implies $q^2=-1$ by $a_{25}^{r_3r_4(X)}=0$ and $qs=-1$ by $a_{36}^{r_5r_4(X)}=0$. Hence $q=s\in G'_4$ and $p\ne 2$. 
 Then $\gDd=\gDd_{19,1}^6$ and $p\not=2$.

Step $2.5$. Let $q_{44}=q_{66}=-1$ and $q_{ii}\widetilde{q_{i,i+1}}=q_{jj}\widetilde{q_{j-1,j}}=1$ for all $i\in \{1,2,3,5\}$ and $j\in \{2,3,5\}$. Then $a=0$. Set $q\colon=q_{11}$ and $r\colon=q_{55}$. Then $qr \ne 1$ by $a^{r_4(X)}_{35}=-1$. Hence we obtain $r=q^{-2}$ by $a^{r_3r_4(X)}_{25}=0$ and $q= r^{-2}$ by $a^{r_5r_4(X)}_{36}=-1$. Thus $r=q\in G'_3$ and $p\ne 3$.
Then the reflections of $X$
\begin{align*}
  	X\colon\ \Dchainsixf{$q$}{$q^{-1}$}{$q$}{$q^{-1}$}{$q$}{$q^{-1}$}{$-1$}{$q^{-1}$}{$q$}{$q^{-1}$}{$-1$} \quad 
  	\Rightarrow \quad r_6(X)\colon\ 
  	\Dchainsixe{$q$}{$q^{-1}$}{$q$}{$q^{-1}$}{$q$}{$q^{-1}$}{$-1$}{$q^{-1}$}{$-1$}{$q$}{$-1$} 
  \end{align*}
\vspace{-4mm}
\begin{align*}
  \Rightarrow	r_5r_6(X)\colon \ \Dchainsix{$q$}{$q^{-1}$}{$q$}{$q^{-1}$}{$q$}{$q^{-1}$}{$q^{-1}$}{$q$}{$-1$}{$q^{-1}$}{$q$}
  \end{align*}
imply $q=-1$ and $p\ne 2$ by $a^{r_5r_6(X)}_{43}=-1$, which  contradicts $q\in G'_3$. 

Step $3$. Suppose exactly three vertices carry the label $-1$. Since $q_{44}=-1$ is fixed, we divide the analysis into eight mutually exclusive subcases.

Step $3.1$. Let $q_{11}=q_{22}=q_{44}=-1$ and $q_{ii}\widetilde{q_{i,i+1}}=q_{jj}\widetilde{q_{j-1,j}}=1$ for all $i\in \{3,5\}$ and $j\in \{3,5,6\}$. Then $a=0$. Set $q^{-1}\colon=\widetilde{q_{12}}, r\colon=q_{33}$ and $s\colon=q_{55}$. We obtain $rs \ne 1$ by $a_{35}^{r_4(X)}=-1$ and $qr=1$ by $a_{13}^{r_2(X)}=0$. Then the reflection of $X$
 \begin{align*}
    	X\colon\ \Dchainsixd{$-1$}{$r$}{$-1$}{$r^{-1}$}{$r$}{$r^{-1}$}{$-1$}{$s^{-1}$}{$s$}{$s^{-1}$}{$s$}\quad\Rightarrow\quad r_4(X)\colon\ \DchainsixM{$-1$}{$r$}{$-1$}{$r^{-1}$}{$-1$}{$(rs)^{-1}$}{$-1$}{$s^{-1}$}{$s$}{$r$}{$s$}{$-1$}
    \end{align*}
gives $s=r^{-2}$ by $a_{25}^{r_3r_4(X)}=0$ and $r=s^{-2}$ by $a_{36}^{r_5r_4(X)}=0$. Hence $r=s\in G'_3$ and $p\ne 3$. Then $\cD=\cD_{18,16}^6$. 

Step $3.2$. Let $q_{11}=q_{33}=q_{44}=-1$ and $q_{ii}\widetilde{q_{i,i+1}}=q_{jj}\widetilde{q_{j-1,j}}=1$ for all $i\in \{2,5\}$ and $j\in \{2,5,6\}$. hese conditions force $a=0$. Combining Definition \ref{defA6} and the conclusion of Step $2.3$, we exclude the present configuration. 

For the symmetric counterpart  $q_{33}=q_{44}=q_{66}=-1$ and $q_{ii}\widetilde{q_{i,i+1}}-1=q_{jj}\widetilde{q_{j-1,j}}-1=0$ for all $i\in \{1,2,5\}$ and $j\in \{2,5\}$, Lemma~\ref{l-7chainspe-a}$(b_4)$implies that this symmetric case cannot occur.

Step $3.3$. Let $q_{11}=q_{44}=q_{55}=-1$ and $q_{ii}\widetilde{q_{i,i+1}}=q_{jj}\widetilde{q_{j-1,j}}=1$ for all $i\in \{2,3\}$ and $j\in \{2,3,6\}$. Definition \ref{defA6} restricts $a \in \{0,1\}$. Set $q\colon=q_{22}, r^{-1}\colon=\widetilde{q_{45}}$ and $s\colon=q_{66}$. By $a_{35}^{r_4(X)}=-1$, we obtain $qr\ne 1$.  We further separate the remaining argument into two subcases.

Step $3.3.1$. Let $a=0$. The condition $a^{r_5(X)}_{46}=0$ enforces $rs=1$. Then the reflection of $X$
\begin{align*}
        X\colon\ \Dchainsixe{$-1$}{$q^{-1}$}{$q$}{$q^{-1}$}{$q$}{$q^{-1}$}{$-1$}{$r^{-1}$}{$-1$}{$r$}{$r^{-1}$}\quad
        \Rightarrow\quad r_5(X)\colon\ \Dchainsix{$-1$}{$q^{-1}$}{$q$}{$q^{-1}$}{$q$}{$q^{-1}$}{$r^{-1}$}{$r$}{$-1$}{$r^{-1}$}{$-1$}
    \end{align*}
implies $r=-1$ and $p\ne 2$ by $a_{43}^{r_5(X)}=-1$. By Lemma~\ref{lem:impossible6} $(a_2)$, this configuration cannot occur.

Step $3.3.2$. Suppose $a=1$. Here $a^{r_5(X)}_{46}=-1$, which yields $rs\ne 1$. 
We inspect the following reflection of $X$
\begin{align*}
X\colon\ \Dchainsixe{$-1$}{$q^{-1}$}{$q$}{$q^{-1}$}{$q$}{$q^{-1}$}{$-1$}{$r^{-1}$}{$-1$}{$s^{-1}$}{$s$}
\quad \Rightarrow \quad 
r_5(X)\colon\ \Dchainsixze{$-1$}{$q^{-1}$}{$q$}{$q^{-1}$}{$q$}{$q^{-1}$}{$r^{-1}$}{$(rs)^{-1}$}{$-1$}{$r$}{$s$}{$-1$}
\end{align*}.
The condition $a_{43}^{r_5(X)}=-1$ implies $r=-1$ and $p\ne 2$. Next we examine further successive reflections of $X$
    \begin{align*}
    	X\colon\ \Dchainsixa{$-1$}{$q^{-1}$}{$q$}{$q^{-1}$}{$q$}{$q^{-1}$}{$-1$}{$-1$}{$-1$}{$s^{-1}$}{$s$}\quad
    	\Rightarrow\quad r_1(X)\colon\ \Dchainsixb{$-1$}{$q$}{$-1$}{$q^{-1}$}{$q$}{$q^{-1}$}{$-1$}{$-1$}{$-1$}{$s^{-1}$}{$s$}
    \end{align*}
\vspace{-4mm}
    \begin{align*}
        \Rightarrow  r_2r_1(X)\colon\  \Dchainsixc{$q$}{$q^{-1}$}{$-1$}{$q$}{$-1$}{$q^{-1}$}{$-1$}{$-1$}{$-1$}{$s^{-1}$}{$s$}
    \end{align*}
\vspace{-4mm}
\begin{align*}
    	\ \Rightarrow\ r_3r_2r_1(X)\colon\ \Dchainsix{$q$}{$q^{-1}$}{$q$}{$q^{-1}$}{$-1$}{$q$}{$q^{-1}$}{$-1$}{$-1$}{$s^{-1}$}{$s$}
    \end{align*}.
Combining $a^{r_3r_2r_1(X)}_{45}=-1$ gives
$q=-1$ and $p\ne 2$, contradicting the earlier constraint $qr\ne 1$.

Step $3.4$. Let $q_{11}=q_{44}=q_{66}=-1$ and $q_{ii}\widetilde{q_{i,i+1}}=q_{jj}\widetilde{q_{j-1,j}}=1$ for all $i\in \{2,3,5\}$ and $j\in \{2,3,5\}$. Hence $a=0$. Set $q\colon=q_{22}$ and $r\colon=q_{55}$. By $a^{r_4(X)}_{35}=-1$, we obtain $qr \ne 1$. Then the reflections of $X$
\begin{align*}
    X\colon\ \Dchainsixf{$-1$}{$q^{-1}$}{$q$}{$q^{-1}$}{$q$}{$q^{-1}$}{$-1$}{$r^{-1}$}{$r$}{$r^{-1}$}{$-1$}\quad
    \Rightarrow\quad r_6(X)\colon\ \Dchainsixe{$-1$}{$q^{-1}$}{$q$}{$q^{-1}$}{$q$}{$q^{-1}$}{$-1$}{$r^{-1}$}{$-1$}{$r$}{$-1$}
\end{align*}
\vspace{-4mm}
\begin{align*}
     \Rightarrow\ r_5r_6(X)\colon\ \Dchainsix{$-1$}{$q^{-1}$}{$q$}{$q^{-1}$}{$q$}{$q^{-1}$}{$r^{-1}$}{$r$}{$-1$}{$r^{-1}$}{$r$}
    \end{align*}
imply $r=-1$ and $p\ne 2$ by $a_{43}^{r_5r_6(X)}=-1$. By Lemma~\ref{lem:impossible6} $(a_2)$, this case is ruled out.

Step $3.5$. Let $q_{22}=q_{33}=q_{44}=-1$ and $q_{ii}\widetilde{q_{i,i+1}}=q_{jj}\widetilde{q_{j-1,j}}=1$ for all $i\in \{1,5\}$ and $j\in \{5,6\}$. Hence $a=0$. From Definition \ref{defA6} and Step $2.3$, this case is excluded.

Consider the symmetric case that $q_{33}=q_{44}=q_{55}=-1$ and $q_{ii}\widetilde{q_{i,i+1}}=q_{jj}\widetilde{q_{j-1,j}}=1$ for all $i\in \{1,2\}$ and $j\in \{2,6\}$. Set $q\colon=q_{11}, r^{-1}\colon=\widetilde{q_{34}}, s^{-1}\colon=\widetilde{q_{45}}$ and $t\colon=q_{66}$. In this case we have $qr=1$ and $rs\ne 1$. The reflection of $X$
\begin{align*}
    	X\colon\ \Dchainsixc{$r^{-1}$}{$r$}{$r^{-1}$}{$r$}{$-1$}{$r^{-1}$}{$-1$}{$s^{-1}$}{$-1$}{$t^{-1}$}{$t$}\quad\Rightarrow\quad r_3(X)\colon\ \Dchainsix{$r^{-1}$}{$r$}{$-1$}{$r^{-1}$}{$-1$}{$r$}{$r^{-1}$}{$s^{-1}$}{$-1$}{$t^{-1}$}{$t$}
    \end{align*}
gives $r=-1$ and $p\ne 2$ by $a_{45}^{r_3(X)}=-1$. Hence $s\ne -1$ and the reflection of $X$
\begin{align*}
  	X\colon\ \Dchainsixd{$r^{-1}$}{$r$}{$r^{-1}$}{$r$}{$-1$}{$r^{-1}$}{$-1$}{$s^{-1}$}{$-1$}{$t^{-1}$}{$t$}\quad\Rightarrow\quad r_4(X)\colon\ \DchainsixM{$r^{-1}$}{$r$}{$r^{-1}$}{$r$}{$r^{-1}$}{$(rs)^{-1}$}{$s^{-1}$}{$t^{-1}$}{$t$}{$r$}{$s$}{$-1$}
    \end{align*}
implies $st=1$ by $a_{56}^{r_4(X)}=-1$. By Lemma~\ref{lem:impossible6} $(b_6)$,  this configuration cannot occur.

Step $3.6$. Let $q_{22}=q_{44}=q_{55}=-1$ and $q_{ii}\widetilde{q_{i,i+1}}=q_{jj}\widetilde{q_{j-1,j}}=1$ for all $i\in \{1,3\}$ and $j\in \{3,6\}$. By Definition \ref{defA6}, we have either $a=0$ or $a=1$. Set $q\colon=q_{11},\ r\colon=q_{33},\ s^{-1}\colon=\widetilde{q_{45}}$ and 
 $t\colon=q_{66}$. From $a_{13}^{r_2(X)}=0$ we get $qr=1$, while $a_{35}^{r_4(X)}=-1$ yields $rs\ne 1$. We split the remaining analysis into two subcases.

 Step $3.6.1$. Let $a=0$. Then  $a^{r_5(X)}_{46}=0$. 
Hence $st=1$. Then we have the reflection
\begin{align*}
        	X\colon\ \Dchainsixe{$r^{-1}$}{$r$}{$-1$}{$r^{-1}$}{$r$}{$r^{-1}$}{$-1$}{$s^{-1}$}{$-1$}{$s$}{$s^{-1}$}\quad
        	\Rightarrow\quad r_5(X)\colon\ \Dchainsix{$r^{-1}$}{$r$}{$-1$}{$r^{-1}$}{$r$}{$r^{-1}$}{$s^{-1}$}{$s$}{$-1$}{$s^{-1}$}{$-1$}
        \end{align*}
and $s=-1$ and $p\ne 2$ by $a_{43}^{r_5(X)}=-1$. By Lemma~\ref{lem:impossible6} $(a_3)$, 
this configuration cannot occur.

Step $3.6.2$. If $a=1$, then $a^{r_5(X)}_{46}=-1$. 
Then we have $st \ne 1$. The reflection 
\begin{align*}
X\colon\ \Dchainsixe{$r^{-1}$}{$r$}{$-1$}{$r^{-1}$}{$r$}{$r^{-1}$}{$-1$}{$s^{-1}$}{$-1$}{$t^{-1}$}{$t$}
\quad \Rightarrow \quad 
r_5(X)\colon\ \Dchainsixze{$r^{-1}$}{$r$}{$-1$}{$r^{-1}$}{$r$}{$r^{-1}$}{$s^{-1}$}{$(st)^{-1}$}{$-1$}{$s$}{$t$}{$-1$}
\end{align*}
implies $s=-1$ and $p\ne 2$ by $a_{43}^{r_5(X)}=-1$. One the other hand, the reflections of $X$
        \begin{align*}
        	X\colon\ \Dchainsixb{$r^{-1}$}{$r$}{$-1$}{$r^{-1}$}{$r$}{$r^{-1}$}{$-1$}{$-1$}{$-1$}{$t^{-1}$}{$t$}\quad
        	\Rightarrow\quad r_2(X)\colon\ \Dchainsixc{$-1$}{$r^{-1}$}{$-1$}{$r$}{$-1$}{$r^{-1}$}{$-1$}{$-1$}{$-1$}{$t^{-1}$}{$t$}
        \end{align*}
    \vspace{-4mm}
        \begin{align*}
        	\Rightarrow\quad r_3r_2(X)\colon\ \Dchainsix{$-1$}{$r^{-1}$}{$r$}{$r^{-1}$}{$-1$}{$r$}{$r^{-1}$}{$-1$}{$-1$}{$t^{-1}$}{$t$}
        \end{align*}
give $r=-1$ and $p\ne 2$ by $a^{r_3r_2(X)}_{45}=-1$, which contradicts $rs\ne 1$.

  Step $3.7$. Let $q_{22}=q_{44}=q_{66}=-1$ and $q_{ii}\widetilde{q_{i,i+1}}=q_{jj}\widetilde{q_{j-1,j}}=1$ for all $i\in \{1,3,5\}$ and $j\in \{3,5\}$. In this case, we have $a=0$. Set $q\colon=q_{11},\ r\colon=q_{33}$ and $s\colon=q_{55}$. By Lemma \ref{jslemma}, we obtain $qr=1$ and $rs \ne 1$. Then the reflections of $X$
 \begin{align*}
     X\colon\ \Dchainsixf{$r^{-1}$}{$r$}{$-1$}{$r^{-1}$}{$r$}{$r^{-1}$}{$-1$}{$s^{-1}$}{$s$}{$s^{-1}$}{$-1$}\quad\Rightarrow\quad r_6(X)\colon\ \Dchainsixe{$r^{-1}$}{$r$}{$-1$}{$r^{-1}$}{$r$}{$r^{-1}$}{$-1$}{$s^{-1}$}{$-1$}{$s$}{$-1$}
    \end{align*}
    \vspace{-4mm}
\begin{align*}
     \quad\Rightarrow\quad r_5r_6(X)\colon\ \Dchainsix{$r^{-1}$}{$r$}{$-1$}{$r^{-1}$}{$r$}{$r^{-1}$}{$s^{-1}$}{$s$}{$-1$}{$s^{-1}$}{$s$}
    \end{align*}
imply $s=-1$ and $p\ne 2$ by $a_{43}^{r_5r_6(X)}=-1$. By Lemma~\ref{lem:impossible6} $(a_3)$, this configuration cannot occur. 

Step $3.8$. Let $q_{44}=q_{55}=q_{66}=-1$ and $q_{ii}\widetilde{q_{i,i+1}}=q_{jj}\widetilde{q_{j-1,j}}=1$ for all $i\in \{1,2,3\}$ and $j\in \{2,3\}$. Then $a \in \{0,1\}$. Set $q\colon=q_{11}, \ r^{-1}\colon=\widetilde{q_{45}},\ s^{-1}\colon=\widetilde{q_{56}}$. By $a_{35}^{r_4(X)}=-1$, we obtain $qr\ne 1$. We proceed with the proof in two separate steps. 

Step $3.8.1$. If $a=0$, then $a^{r_5(X)}_{46}=0$ implise 
$rs=1$. In addition, the reflection of $X$
\begin{align*}
X\colon\ \Dchainsixe{$q$}{$q^{-1}$}{$q$}{$q^{-1}$}{$q$}{$q^{-1}$}{$-1$}{$r^{-1}$}{$-1$}{$s^{-1}$}{$-1$}\quad \Rightarrow \quad 
r_5(X)\colon\ \Dchainsix{$q$}{$q^{-1}$}{$q$}{$q^{-1}$}{$q$}{$q^{-1}$}{$r^{-1}$}{$r$}{$-1$}{$s$}{$s^{-1}$}
\end{align*}
implies $r=-1$ and $p\ne 2$ by $a_{43}^{r_5(X)}=-1$. Hence $r=s=-1$. By Lemma \ref{lem:impossible6} $(a_1)$, this configuration cannot occur. 

Step $3.8.2$. If $a=1$, then $a^{r_5(X)}_{46}=-1$.
Hence $rs\ne 1$. The reflection of $X$
\begin{align*}
X\colon\ \Dchainsixe{$q$}{$q^{-1}$}{$q$}{$q^{-1}$}{$q$}{$q^{-1}$}{$-1$}{$r^{-1}$}{$-1$}{$s^{-1}$}{$-1$}
\quad \Rightarrow \quad 
r_5(X)\colon\ \Dchainsixze{$q$}{$q^{-1}$}{$q$}{$q^{-1}$}{$q$}{$q^{-1}$}{$r^{-1}$}{$(rs)^{-1}$}{$s^{-1}$}{$r$}{$s$}{$-1$}
\end{align*}
implies $r=-1$ and $p\ne 2$ by $a_{46}^{r_5(X)}=-1$. Additionally, the reflection of $X$
\begin{align*}
X\colon\ \Dchainsixf{$q$}{$q^{-1}$}{$q$}{$q^{-1}$}{$q$}{$q^{-1}$}{$-1$}{$-1$}{$-1$}{$s^{-1}$}{$-1$}
\quad \Rightarrow \quad 
r_6(X)\colon\ \Dchainsix{$q$}{$q^{-1}$}{$q$}{$q^{-1}$}{$q$}{$q^{-1}$}{$-1$}{$-1$}{$s^{-1}$}{$s$}{$-1$}
\end{align*}
implies $s=-1$ and $p\ne 2$ by $a_{54}^{r_6(X)}=-1$, which contradicts $rs\ne 1$.

Step $4$. Assume exactly four vertices carry label $-1$. Combined with $q_{44}=-1$, we subdivide the analysis into eight subcases.

Step $4.1$. Let $q_{11}=q_{22}=q_{33}=q_{44}=-1$ and  $q_{55}\widetilde{q_{56}}=q_{jj}\widetilde{q_{j-1,j}}=1$  for $j\in \{5,6\}$. Then $a=0$. By Definition \ref{defA6} and Step $2.3$, this configuration cannot occur. 

Consider the symmetric case that $q_{33}=q_{44}=q_{55}=q_{66}=-1$, $q_{22}\widetilde{q_{21}}=1$ and $q_{ii}\widetilde{q_{i,i+1}}=1$ for $i\in \{1,2\}$. Set $q\colon=q_{11},\ r^{-1}\colon=\widetilde{q_{34}},\ s^{-1}\colon=\widetilde{q_{45}}$ and $t^{-1}\colon=\widetilde{q_{56}}$. It follows from Lemma \ref{jslemma} and Definition \ref{defA6} that $qr=1$ and $rs \ne 1$. The reflection of $X$
    \begin{align*}
    	X\colon\ \Dchainsixc{$r^{-1}$}{$r$}{$r^{-1}$}{$r$}{$-1$}{$r^{-1}$}{$-1$}{$s^{-1}$}{$-1$}{$t^{-1}$}{$-1$}\quad
    	\Rightarrow\quad r_3(X)\colon\ \Dchainsix{$r^{-1}$}{$r$}{$-1$}{$r^{-1}$}{$-1$}{$r$}{$r^{-1}$}{$s^{-1}$}{$-1$}{$t^{-1}$}{$-1$}  
    \end{align*}
gives $r=-1$ and $p\ne 2$ by $a_{45}^{r_3(X)}=-1$. Then the reflection of $X$
\begin{align*}
  	X\colon\ \Dchainsixf{$-1$}{$-1$}{$-1$}{$-1$}{$-1$}{$-1$}{$-1$}{$s^{-1}$}{$-1$}{$t^{-1}$}{$-1$} \quad
  	\Rightarrow \quad r_6(X)\colon\ 
  	\Dchainsix{$-1$}{$-1$}{$-1$}{$-1$}{$-1$}{$-1$}{$-1$}{$s^{-1}$}{$t^{-1}$}{$t$}{$-1$} 
  \end{align*}
implies $st=1$ or $t=-1$ by $a_{54}^{r_6(X)}=-1$. If $st=1$, this case is impossible by Lemma~\ref{lem:impossible6} $(b_5)$. If $t=-1$, then $s\ne -1$ and the reflection of $X$
\begin{align*}
    	X\colon\ \Dchainsixd{$-1$}{$-1$}{$-1$}{$-1$}{$-1$}{$-1$}{$-1$}{$s^{-1}$}{$-1$}{$-1$}{$-1$}\quad
    	\Rightarrow\quad r_4(X)\colon\ \DchainsixM{$-1$}{$-1$}{$-1$}{$-1$}{$-1$}{$-s^{-1}$}{$s^{-1}$}{$-1$}{$-1$}{$-1$}{$s$}{$-1$}  
    \end{align*}
implies $s^2=-1$ by $a_{53}^{r_4(X)}=-1$. Hence $s\in G'_4$. By Lemma~\ref{lem:impossible6} $(b_1)$, this configuration cannot occur.

Step $4.2$. Let $q_{11}=q_{22}=q_{44}=q_{55}=-1$,  $q_{33}\widetilde{q_{34}}=1$ and $q_{jj}\widetilde{q_{j-1,j}}=1$ for $j\in \{3,6\}$. Then $a \in \{0,1\}$. Set $q^{-1}\colon=\widetilde{q_{12}},\ r\colon=q_{33},\ s^{-1}\colon=\widetilde{q_{45}}$ and $t\colon=q_{66}$. We obtain $qr=1$ by $a_{13}^{r_2(X)}=0$ and $rs\ne 1$ by $a_{35}^{r_4(X)}=-1$.

Step $4.2.1$. If $a=0$. Then $a^{r_5(X)}_{46}=0$ yields $st=1$.
Hence the reflection of $X$
\begin{align*}
    X\colon\ \Dchainsixe{$-1$}{$r$}{$-1$}{$r^{-1}$}{$r$}{$r^{-1}$}{$-1$}{$s^{-1}$}{$-1$}{$s$}{$s^{-1}$}\quad\Rightarrow\quad 
    r_5(X)\colon\ \Dchainsix{$-1$}{$r$}{$-1$}{$r^{-1}$}{$r$}{$r^{-1}$}{$s^{-1}$}{$s$}{$-1$}{$s^{-1}$}{$-1$}
\end{align*}
implies $s=-1$ and $p\ne 2$ by $a_{43}^{r_5(X)}=-1$. By Lemma \ref{lem:impossible6} $(a_4)$, this configuration cannot occur.

Step $4.2.2$. If $a=1$, that is $a^{r_5(X)}_{46}=-1$.
Hence we have $st\ne 1$. Following the same method as in the proof of Step $3.6.2$, we obtain a contradiction.

Step $4.3$. Let $q_{11}=q_{22}=q_{44}=q_{66}=-1$ and $q_{ii}\widetilde{q_{i,i+1}}=q_{jj}\widetilde{q_{j-1,j}}=1$ for all $i\in \{3,5\}$ and $j\in \{3,5\}$. Then $a=0$. Set $q^{-1}\colon=\widetilde{q_{12}},\ r\colon=q_{33}$ and $s\colon=q_{55}$. We obtain $qr=1$ by $a^{r_2(X)}_{13}=0$ and $rs \ne 1$ by $a^{r_4(X)}_{35}=-1$. 
Then the reflections of $X$
\begin{align*}
  	X\colon\ \Dchainsixf{$-1$}{$r$}{$-1$}{$r^{-1}$}{$r$}{$r^{-1}$}{$-1$}{$s^{-1}$}{$s$}{$s^{-1}$}{$-1$} \quad
  	\Rightarrow \quad r_6(X)\colon\ 
  	\Dchainsixe{$-1$}{$r$}{$-1$}{$r^{-1}$}{$r$}{$r^{-1}$}{$-1$}{$s^{-1}$}{$-1$}{$s$}{$-1$} 
\end{align*}
\begin{align*}
 \quad\Rightarrow \quad r_5r_6(X)\colon\ 
  	\Dchainsix{$-1$}{$r$}{$-1$}{$r^{-1}$}{$r$}{$r^{-1}$}{$s^{-1}$}{$s$}{$-1$}{$s^{-1}$}{$s$} 
\end{align*}
imply $s=-1$ and $p\ne 2$ by $a_{43}^{r_5r_6(X)}=-1$. By Lemma \ref{lem:impossible6} $(a_4)$, this configuration cannot occur.  

Step $4.4$. Let $q_{11}=q_{33}=q_{44}=q_{55}=-1$,  $q_{22}\widetilde{q_{23}}=1$ and $q_{jj}\widetilde{q_{j-1,j}}=1$ for $j\in \{2,6\}$. Set $q\colon=q_{22},\ r^{-1}\colon=\widetilde{q_{34}},\ s^{-1}\colon=\widetilde{q_{45}}$ and $t\colon=q_{66}$. Then we have $qr=1$ by $a^{r_3(X)}_{24}=0$ and $rs\ne 1$ by $a^{r_4(X)}_{35}=-1$. The reflection of $X$
    \begin{align*}
    	X\colon\ \Dchainsixc{$-1$}{$r$}{$r^{-1}$}{$r$}{$-1$}{$r^{-1}$}{$-1$}{$s^{-1}$}{$-1$}{$t^{-1}$}{$t$}\quad
    	\Rightarrow\quad r_3(X)\colon\ \Dchainsix{$-1$}{$r$}{$-1$}{$r^{-1}$}{$-1$}{$r$}{$r^{-1}$}{$s^{-1}$}{$-1$}{$t^{-1}$}{$t$}  
    \end{align*}
implies $r=-1$ and $p\ne 2$ by $a_{45}^{r_3(X)}=-1$. Hence $s\ne -1$. In addition, the reflection of $X$
        \begin{align*}
        	X\colon\ \Dchainsixd{$-1$}{$-1$}{$-1$}{$-1$}{$-1$}{$-1$}{$-1$}{$s^{-1}$}{$-1$}{$t^{-1}$}{$t$}\quad\Rightarrow\quad r_4(X)\colon\ \DchainsixM{$-1$}{$-1$}{$-1$}{$-1$}{$-1$}{$-s^{-1}$}{$s^{-1}$}{$t^{-1}$}{$t$}{$-1$}{$s$}{$-1$}
        \end{align*}
implies $st=1$ by $a^{r_4(X)}_{56}=-1$. By Lemma \ref{lem:impossible6} $(b_6)$, this configuration cannot occur. 

Consider the symmetric case that $q_{22}=q_{33}=q_{44}=q_{66}=-1$,  $q_{55}\widetilde{q_{45}}=1$ and $q_{ii}\widetilde{q_{i,i+1}}=1$ for $i\in \{1,5\}$. In this case $a=0$. Set $q\colon=q_{11},\ r^{-1}\colon=\widetilde{q_{23}},\ s^{-1}\colon=\widetilde{q_{34}}$ and $t\colon=q_{55}$. By Definition \ref{defA6}, we have $qr=rs=1$ and $st\ne 1$. Hence the reflections of $X$
\begin{align*}
  	X\colon\ \Dchainsixf{$s$}{$s^{-1}$}{$-1$}{$s$}{$-1$}{$s^{-1}$}{$-1$}{$t^{-1}$}{$t$}{$t^{-1}$}{$-1$} \quad
  	\Rightarrow \quad r_6(X)\colon\ 
  	\Dchainsixe{$s$}{$s^{-1}$}{$-1$}{$s$}{$-1$}{$s^{-1}$}{$-1$}{$t^{-1}$}{$-1$}{$t$}{$-1$} 
\end{align*}
\begin{align*}
 \quad\Rightarrow \quad r_5r_6(X)\colon\ 
  	\Dchainsix{$s$}{$s^{-1}$}{$-1$}{$s$}{$-1$}{$s^{-1}$}{$t^{-1}$}{$t$}{$-1$}{$t^{-1}$}{$t$} 
\end{align*}
imply $t=-1$ and $p\ne 2$ by $a_{43}^{r_5r_6(X)}=-1$. By Lemma \ref{lem:impossible6} $(a_5)$, this configuration cannot occur.

Step $4.5$. Let $q_{11}=q_{33}=q_{44}=q_{66}=-1$ and $q_{ii}\widetilde{q_{i,i+1}}=q_{jj}\widetilde{q_{j-1,j}}=1$ for all $i\in \{2,5\}$ and $j\in \{2,5\}$. Then $a=0$. Set $q\colon=q_{22},\ r^{-1}\colon=\widetilde{q_{34}}$ and $s\colon=q_{55}$. We obtain $qr=1$ by $a^{r_3(X)}_{24}=0$ and $rs\ne 1$ by $a^{r_4(X)}_{35}=-1$. Then the reflection of $X$
    \begin{align*}
    	X\colon\ \Dchainsixc{$-1$}{$r$}{$r^{-1}$}{$r$}{$-1$}{$r^{-1}$}{$-1$}{$s^{-1}$}{$s$}{$s^{-1}$}{$-1$}\quad
    	\Rightarrow\quad r_3(X)\colon\ \Dchainsix{$-1$}{$r$}{$-1$}{$r^{-1}$}{$-1$}{$r$}{$r^{-1}$}{$s^{-1}$}{$s$}{$s^{-1}$}{$-1$}  
    \end{align*}
implies $r=-1$ and $p\ne 2$ by $a^{r_3(X)}_{45}=-1$. By Lemma~\ref{lem:impossible6} $(b_4)$, this configuration cannot occur.

Step $4.6$. Let $q_{11}=q_{44}=q_{55}=q_{66}=-1$ and $q_{ii}\widetilde{q_{i,i+1}}=q_{jj}\widetilde{q_{j-1,j}}=1$ for all $i\in \{2,3\}$ and $j\in \{2,3\}$. By the definition of good $A_6$ neighborhood, we have $a\in \{0,1\}$. Set $q\colon=q_{22},\ r^{-1}\colon=\widetilde{q_{45}}$ and $s^{-1}\colon=\widetilde{q_{56}}$. 
By $a_{35}^{r_4(X)}=-1$, we have $qr\ne 1$. 

Step $4.6.1$. If $a=0$, then $a^{r_5(X)}_{46}=0$.
Hence $rs=1$. Then the reflection of $X$
 \begin{align*}
    X\colon\ \Dchainsixe{$-1$}{$q^{-1}$}{$q$}{$q^{-1}$}{$q$}{$q^{-1}$}{$-1$}{$r^{-1}$}{$-1$}{$r$}{$-1$}\quad\Rightarrow\quad 
    r_5(X)\colon\ \Dchainsix{$-1$}{$q^{-1}$}{$q$}{$q^{-1}$}{$q$}{$q^{-1}$}{$r^{-1}$}{$r$}{$-1$}{$r^{-1}$}{$r$}
\end{align*}
implies $r=-1$ and $p\ne 2$ by $a_{43}^{r_5(X)}=-1$. By Lemma \ref{lem:impossible6} $(a_2)$, this configuration cannot occur.

Step $4.6.2$. If $a=1$, then $a^{r_5(X)}_{46}=-1$.
Hence $rs\ne 1$. The same argument as in Step $3.8.2$ shows that this step is a contradiction. 
 
Step $4.7$. Let $q_{22}=q_{33}=q_{44}=q_{55}=-1$ and  $q_{11}\widetilde{q_{12}}=q_{66}\widetilde{q_{56}}=1$. Hence $a \in \{0, 1\}$. Set $q\colon=q_{11},\ r^{-1}\colon=\widetilde{q_{23}},\ s^{-1}\colon=\widetilde{q_{34}},\ t^{-1}\colon=\widetilde{q_{45}}$ and $u\colon=q_{66}$. By Lemma \ref{jslemma}, we obtain $rs=qr=1$ and $st\ne 1$. Then $qt\ne 1$. Below we split in two steps.

Step $4.7.1$. If $a=0$, that is 
$A^{r_5(X)}=A_6$. Hence $tu=1$. Then the reflection of $X$
 \begin{align*}
    X\colon\ \Dchainsixe{$q$}{$q^{-1}$}{$-1$}{$q$}{$-1$}{$q^{-1}$}{$-1$}{$t^{-1}$}{$-1$}{$t$}{$t^{-1}$}\quad\Rightarrow\quad 
    r_5(X)\colon\ \Dchainsix{$q$}{$q^{-1}$}{$-1$}{$q$}{$-1$}{$q^{-1}$}{$t^{-1}$}{$t$}{$-1$}{$t^{-1}$}{$-1$}
\end{align*}
implies $t=-1$ and $p\ne 2$ by $a_{43}^{r_5(X)}=-1$. Thus this case is impossible by Lemma \ref{lem:impossible6} $(a_5)$.

Step $4.7.2$. If $a=1$, then $a^{r_5(X)}_{46}=-1$. Hence $tu\ne 1$. The reflection of $X$
\begin{align*}
X\colon\ \Dchainsixe{$q$}{$q^{-1}$}{$-1$}{$q$}{$-1$}{$q^{-1}$}{$-1$}{$t^{-1}$}{$-1$}{$u^{-1}$}{$u$}
\quad \Rightarrow \quad 
r_5(X)\colon\ \Dchainsixze{$q$}{$q^{-1}$}{$-1$}{$q$}{$-1$}{$q^{-1}$}{$t^{-1}$}{$(tu)^{-1}$}{$-1$}{$t$}{$u$}{$-1$}
\end{align*}
implies $t=-1$ and $p\ne 2$ by $a_{43}^{r_5(X)}=-1$. On the other hand, the reflection of $X$
\begin{align*}
X\colon\ \Dchainsixc{$q$}{$q^{-1}$}{$-1$}{$q$}{$-1$}{$q^{-1}$}{$-1$}{$-1$}{$-1$}{$u^{-1}$}{$u$}
\quad \Rightarrow \quad 
r_3(X)\colon\ \Dchainsix{$q$}{$q^{-1}$}{$q$}{$q^{-1}$}{$-1$}{$q$}{$q^{-1}$}{$-1$}{$-1$}{$u^{-1}$}{$u$}
\end{align*}
implies $q=-1$ and $p\ne 2$ by $a_{45}^{r_3(X)}=-1$, which contradicts $qt\ne 1$.

Step $4.8$. Let $q_{22}=q_{44}=q_{55}=q_{66}=-1$, $q_{33}\widetilde{q_{32}}=1$ and $q_{ii}\widetilde{q_{i,i+1}}=1$ for $i\in \{1,3\}$. Hence we have either $a=0$ or $a=1$. Set $q\colon=q_{11},\ r\colon=q_{33},\ s^{-1}\colon=\widetilde{q_{45}}$ and $t^{-1}\colon=\widetilde{q_{56}}$. Then $qr=1$ follows from $a_{13}^{r_2(X)}=0$, while $rs\neq 1$ is enforced by $a_{35}^{r_4(X)}=-1$. The remaining reasoning proceeds analogously to Step $4.6$.

Step $5$. Assume five vertices bear label $-1$. In view of $q_{44}=-1$, we split the argument into three disjoint cases.

Step $5.1$. Let $q_{11}=q_{22}=q_{33}=q_{44}=q_{55}=-1$ and $q_{66}\widetilde{q_{56}}=1$. Set $q^{-1}\colon=\widetilde{q_{12}},\ r^{-1}\colon=\widetilde{q_{23}},\ s^{-1}\colon=\widetilde{q_{34}},\ t^{-1}\colon=\widetilde{q_{45}}$ and $u\colon=q_{66}$. We have $rs=1$ by $a_{24}^{r_3(X)}=0$, $qr=1$ by $a_{13}^{r_2(X)}=0$, and $st\ne 1$ by $a_{35}^{r_4(X)}=-1$. Then the reflection of $X$
\begin{align*}
X\colon\ \Dchainsixc{$-1$}{$s^{-1}$}{$-1$}{$s$}{$-1$}{$s^{-1}$}{$-1$}{$t^{-1}$}{$-1$}{$u^{-1}$}{$u$}
\quad \Rightarrow \quad 
r_3(X)\colon\ \Dchainsix{$-1$}{$s^{-1}$}{$s$}{$s^{-1}$}{$-1$}{$s$}{$s^{-1}$}{$t^{-1}$}{$-1$}{$u^{-1}$}{$u$}
\end{align*}
implies $s=-1$ and $p\ne 2$ by $a_{45}^{r_3(X)}=-1$. Hence $t\ne -1$. Then the reflection of $X$
        \begin{align*}
        	X\colon\ \Dchainsixd{$-1$}{$-1$}{$-1$}{$-1$}{$-1$}{$-1$}{$-1$}{$t^{-1}$}{$-1$}{$u^{-1}$}{$u$}\quad\Rightarrow\quad r_4(X)\colon\ \DchainsixM{$-1$}{$-1$}{$-1$}{$-1$}{$-1$}{$-t^{-1}$}{$t^{-1}$}{$u^{-1}$}{$u$}{$-1$}{$t$}{$-1$}
        \end{align*}
implies $tu=1$ by $a^{r_4(X)}_{56}=-1$. By Lemma~\ref{lem:impossible6} $(b_6)$, this configuration cannot occur.  

Consider the symmetric case that $q_{22}=q_{33}=q_{44}=q_{55}=q_{66}=-1$ and $q_{11}\widetilde{q_{12}}=1$. Set $q\colon=q_{11},\ r^{-1}\colon=\widetilde{q_{23}},\ s^{-1}\colon=\widetilde{q_{34}},\ t^{-1}\colon=\widetilde{q_{45}}$, and $u^{-1}\colon=\widetilde{q_{56}}$. Then $rs=1,\ qr=1$ and $st\ne 1$. Employing an argument analogous to the one above, we find from $a_{45}^{r_3(X)}=-1$ that $q=-1$ and $p\ne 2$, and from $a_{56}^{r_4(X)}=-1$ that $tu=1$.
By Lemma~\ref{lem:impossible6} $(b_5)$, this configuration cannot occur.

Step $5.2$. Let $q_{11}=q_{22}=q_{33}=q_{44}=q_{66}=-1$ and $q_{55}\widetilde{q_{56}}=q_{55}\widetilde{q_{45}}=1$. Then $a=0$. Set $q^{-1}\colon=\widetilde{q_{12}},\ r^{-1}\colon=\widetilde{q_{23}},\ s^{-1}\colon=\widetilde{q_{34}}$ and $t\colon=q_{55}$. By Definition \ref{defA6}, we obtain $qr=rs=1$ and $st\ne 1$. Then the reflection of $X$
\begin{align*}
    X\colon\ \Dchainsixc{$-1$}{$q^{-1}$}{$-1$}{$q$}{$-1$}{$q^{-1}$}{$-1$}{$t^{-1}$}{$t$}{$t^{-1}$}{$-1$}\quad
        \Rightarrow\quad r_3(X)\colon\ \Dchainsix{$-1$}{$q^{-1}$}{$q$}{$q^{-1}$}{$-1$}{$q$}{$q^{-1}$}{$t^{-1}$}{$t$}{$t^{-1}$}{$-1$} \end{align*}
implies $q=-1$ and $p\ne 2$ by $a^{r_3(X)}_{45}=-1$. By Lemma~\ref{lem:impossible6} $(b_4)$, this configuration cannot occur. 

Consider the symmetric case that $q_{11}=q_{33}=q_{44}=q_{55}=q_{66}=-1$, and $q_{22}\widetilde{q_{23}}=q_{22}\widetilde{q_{12}}=1$. Set $q\colon=q_{22},\ r^{-1}\colon=\widetilde{q_{34}},\ s^{-1}\colon=\widetilde{q_{45}}$ and $t^{-1}\colon=\widetilde{q_{56}}$. We obtain $qr=1$ by $a^{r_3(X)}_{24}=0$ and $rs\ne 1$ $a^{r_4(X)}_{35}=-1$. Using an argument similar to that in Step $5.1$, we deduce from $a^{r_3(X)}_{45}=-1$ that $q=-1$ and $p\ne 2$, and from $a^{r_4(X)}_{56}=-1$ that $st=1$.
By Lemma~\ref{lem:impossible6} $(b_5)$, this configuration cannot occur. 

Step $5.3$. Let $q_{11}=q_{22}=q_{44}=q_{55}=q_{66}=-1$ and $q_{33}\widetilde{q_{34}}=q_{33}\widetilde{q_{23}}=1$. By Definition \ref{defA6}, it follows that $a \in \{0,1\}$. Set $q^{-1}\colon=\widetilde{q_{12}},\ r\colon=q_{33},\ s^{-1}\colon=\widetilde{q_{45}}$ and $t^{-1}\colon=\widetilde{q_{56}}$. By $a_{13}^{r_2(X)}=0$, we obtain $qr=1$ and from $a_{35}^{r_4(X)}=-1$, we have $rs\ne 1$. The application of the same directional reflection as in Step $4.6$ demonstrates the impossibility of this case.

Step $6$. Suppose all vertices are labeled $-1$. That is $q_{11}=q_{22}=q_{33}=q_{44}=q_{55}=q_{66}=-1$. Set $q^{-1}\colon=\widetilde{q_{12}},\ r^{-1}\colon=\widetilde{q_{23}},\ s^{-1}\colon=\widetilde{q_{34}},\ t^{-1}\colon=\widetilde{q_{45}}$ and $u^{-1}\colon=\widetilde{q_{56}}$. We have $qr=1$ by $a_{13}^{r_2(X)}=0$, $rs=1$ by $a_{24}^{r_3(X)}=0$ and $st\ne 1$ by $a_{35}^{r_4(X)}=-1$. Then the reflection of $X$
     \begin{align*}
     X\colon\ \Dchainsixc{$-1$}{$s^{-1}$}{$-1$}{$s$}{$-1$}{$s^{-1}$}{$-1$}{$t^{-1}$}{$-1$}{$u^{-1}$}{$-1$}\quad
         	\Rightarrow\quad r_3(X)\colon\ \Dchainsix{$-1$}{$s^{-1}$}{$s$}{$s^{-1}$}{$-1$}{$s$}{$s^{-1}$}{$t^{-1}$}{$-1$}{$u^{-1}$}{$-1$}
    \end{align*}
implies $s=-1$ and $p\ne 2$ by $a^{r_3(X)}_{45}=-1$. Hence $t\ne -1$. Then the reflection of $X$
    \begin{align*}
        X\colon\ \Dchainsixd{$-1$}{$-1$}{$-1$}{$-1$}{$-1$}{$-1$}{$-1$}{$t^{-1}$}{$-1$}{$u^{-1}$}{$-1$}\quad\Rightarrow\quad r_4(X)\colon\ \DchainsixM{$-1$}{$-1$}{$-1$}{$-1$}{$-1$}{$-t^{-1}$}{$t^{-1}$}{$u^{-1}$}{$-1$}{$-1$}{$t$}{$-1$}
    \end{align*}
implies $tu=1$ by $a^{r_4(X)}_{56}=-1$. By Lemma~\ref{lem:impossible6} $(b_5)$, this configuration cannot occur.
\end{proof}


\begin{lemma}\label{l-7chainspe-a}
Let $\mathcal{D}_{\chi ,E}$ be 
\setlength{\unitlength}{1mm}
\begin{center}
\Dchainseven{$q_{11}$}{$r$}{$q_{22}$}{$s$}{$q_{33}$}{$t$}{$q_{44}$}{$u$}{$q_{55}$}{$v$}{$q_{66}$}{$w$}{$q_{77}$}
\end{center}
and has a good $A_7$ neighborhood, then $q_{33}=-1$ and the configurations listed below cannot occur.
\begin{itemize}
    \item[$(a_1)$]
\setlength{\unitlength}{1mm}
\ \Dchainseven{$-1$}{$q^{-1}$}{$-1$}{$q$}{$-1$}{$-1$}{$-1$}{$-1$}{$-1$}{$-1$}{$-1$}{$-1$}{$-1$}
,\quad $q\ne -1, \ p\ne 2$
\vspace{-2mm}
   \item[$(a_2)$]
\setlength{\unitlength}{1mm}
\ \Dchainseven{$q$}{$q^{-1}$}{$-1$}{$q$}{$-1$}{$-1$}{$-1$}{$-1$}{$-1$}{$-1$}{$-1$}{$-1$}{$-1$}
,\quad $q\ne -1, \ p\ne 2$
\vspace{-2mm}
\item[$(b_1)$]
\setlength{\unitlength}{1mm}
\ \Dchainseven{$-1$}{$q^{-1}$}{$q$}{$q^{-1}$}{$-1$}{$-1$}{$-1$}{$-1$}{$-1$}{$-1$}{$-1$}{$-1$}{$-1$}
,\quad $q\ne -1, \ p\ne 2$
\vspace{-2mm}
 \item[$(b_2)$]
\setlength{\unitlength}{1mm}
\ \Dchainseven{$q$}{$q^{-1}$}{$q$}{$q^{-1}$}{$-1$}{$-1$}{$-1$}{$-1$}{$-1$}{$-1$}{$-1$}{$-1$}{$-1$}
,\quad $q\ne -1, \ p\ne 2$
\vspace{-2mm}
    \item[$(c_1)$]
\setlength{\unitlength}{1mm}
\ \Dchainseven{$-1$}{$-1$}{$-1$}{$-1$}{$-1$}{$q^{-1}$}{$q$}{$q^{-1}$}{$q$}{$q^{-1}$}{$q$}{$q^{-1}$}{$q$}
,\quad $q\ne -1, \ p\ne 2$
\vspace{-2mm}
\item[$(c_2)$]
\setlength{\unitlength}{1mm}
\ \Dchainseven{$-1$}{$-1$}{$-1$}{$-1$}{$-1$}{$q^{-1}$}{$q$}{$q^{-1}$}{$q$}{$q^{-1}$}{$q$}{$q^{-1}$}{$-1$}
,\quad $q\ne -1, \ p\ne 2$
\vspace{-2mm}
\item[$(c_3)$]
\setlength{\unitlength}{1mm}
\ \Dchainseven{$-1$}{$-1$}{$-1$}{$-1$}{$-1$}{$q^{-1}$}{$q$}{$q^{-1}$}{$q$}{$q^{-1}$}{$-1$}{$q$}{$q^{-1}$}
,\quad $q\ne -1, \ p\ne 2$
\vspace{-2mm}
\item[$(c_4)$]
\setlength{\unitlength}{1mm}
\ \Dchainseven{$-1$}{$-1$}{$-1$}{$-1$}{$-1$}{$q^{-1}$}{$q$}{$q^{-1}$}{$q$}{$q^{-1}$}{$-1$}{$q$}{$-1$}
,\quad $q\ne -1, \ p\ne 2$
\vspace{-2mm}
\item[$(c_5)$]
\setlength{\unitlength}{1mm}
\ \Dchainseven{$-1$}{$-1$}{$-1$}{$-1$}{$-1$}{$q^{-1}$}{$q$}{$q^{-1}$}{$-1$}{$q$}{$-1$}{$q^{-1}$}{$-1$}
,\quad $q\ne -1, \ p\ne 2$
\vspace{-2mm}
\item[$(c_6)$]
\setlength{\unitlength}{1mm}
\ \Dchainseven{$-1$}{$-1$}{$-1$}{$-1$}{$-1$}{$q^{-1}$}{$q$}{$q^{-1}$}{$-1$}{$q$}{$-1$}{$q^{-1}$}{$q$}
,\quad $q\ne -1, \ p\ne 2$
\vspace{-2mm}
\item[$(c_7)$]
\setlength{\unitlength}{1mm}
\ \Dchainseven{$-1$}{$-1$}{$-1$}{$-1$}{$-1$}{$q^{-1}$}{$q$}{$q^{-1}$}{$-1$}{$q$}{$q^{-1}$}{$q$}{$-1$}
,\quad $q\ne -1, \ p\ne 2$
\vspace{-2mm}
\item[$(c_8)$]
\setlength{\unitlength}{1mm}
\ \Dchainseven{$-1$}{$-1$}{$-1$}{$-1$}{$-1$}{$q^{-1}$}{$q$}{$q^{-1}$}{$-1$}{$q$}{$q^{-1}$}{$q$}{$q^{-1}$}
,\quad $q\ne -1, \ p\ne 2$
\end{itemize}
\end{lemma}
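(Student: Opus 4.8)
The plan is to follow verbatim the case-by-case strategy already used for the rank 5 statement in Lemma~\ref{l-5chainspe-a} and for rank 6 in Lemma~\ref{lem:impossible6}. For each forbidden diagram I assume that $\gDd_{\chi,E}=X$ has exactly that shape, apply a short explicitly chosen word of point reflections $r_{i_1}r_{i_2}\cdots r_{i_m}$, compute the resulting generalized Dynkin diagram at each reflected point by the recombination rules of Lemma~\ref{le:Dynkin}, and then read off a single Cartan entry $a_{ij}$ at one of the reflected points and compare it with the rigid constraints built into Definition~\ref{defA7}. Because $s_i^X$ maps $\rersys{X}_{\boldsymbol{+}}\setminus\{\alpha_i\}$ bijectively onto $\rersys{r_i(X)}_{\boldsymbol{+}}\setminus\{\alpha_i\}$, each reflection only relabels the chain in one vertex-neighborhood, so every computation stays local and the whole proof reduces to a bounded list of elementary diagram manipulations.

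First I would dispose of the four ``left-end'' cases $(a_1)$, $(a_2)$, $(b_1)$, $(b_2)$, where the non-trivial labels sit on vertices $1,2,3,4$ and the remaining vertices form a pure $-1$ chain. Here the good $A_7$ neighborhood pins the branch at vertex $3$ and supplies the edge constraints $a_{45}^{r_2r_3(X)}=-1$, $a_{34}^{r_2r_1(X)}=-1$, and $[a_{25}^{r_4r_3(X)},a_{21}^{r_4r_3(X)}]=[0,-1]$. Reflecting at $r_3$, then $r_2$ (and $r_1$ where needed) and tracking via Lemma~\ref{le:Dynkin} how the labels $q,q^{-1},-1$ recombine, I expect each case to force $q=1$ or $q=-1$, contradicting the standing hypotheses $q\ne -1$ and $p\ne 2$. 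Exactly as in the rank 5 proof, I anticipate that $(a_2)$, $(b_1)$, $(b_2)$ follow from $(a_1)$ by the identical reflection pattern, so only one genuinely new computation is required in this family.

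Next I would treat the eight ``right-end'' cases $(c_1)$--$(c_8)$, in which vertices $1,2,3$ carry $-1$ and a $q$-labelled piece occupies vertices $4,\dots,7$. The relevant constraints are now the three propagation conditions $a_{32}^{r_4r_5(X)}=-1$, $a_{32}^{r_4r_5r_6(X)}=-1$, and $a_{32}^{r_4r_5r_6r_7(X)}=-1$. The idea is to sweep the reflection word $r_4r_5r_6r_7$ along the $q$-branch: at each step Lemma~\ref{le:Dynkin} converts a $q$-labelled vertex into a $-1$ vertex and shifts the ``live'' $q$ one position toward vertex $3$, so that after the full sweep the entry $a_{32}$ records the accumulated exponent of $q$. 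Imposing that this entry equal $-1$ then forces $q$ to be a small root of unity or collapses it to $q=1$, again contradicting $q\ne -1$. For the interrupted chains $(c_5)$--$(c_8)$, where intermediate $-1$ vertices break the uniform $q$-pattern, I would use the same sweep but choose the intervening reflections so the interruption is absorbed before the propagation reaches vertex $3$; here too I expect most of $(c_2)$--$(c_8)$ to reduce to $(c_1)$ by the ``same argument'' device used in the earlier lemmas.

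The main obstacle is the bookkeeping forced by the asymmetric branch vertex. Unlike the $A_5$ situation, where the distinguished vertex is central and the chain enjoys a left--right symmetry one can exploit to halve the work, the good $A_7$ neighborhood fixes the branch at vertex $3$, so the reflection words and the precise Cartan entries to be verified genuinely differ between the left-end and right-end families and cannot be transported into one another by a chain symmetry. Applying the formulas of Lemma~\ref{le:Dynkin} correctly across four or five consecutive reflections --- in particular deciding at every step which of the alternatives $q_{ij}q_{ji}=q_{ii}^{a_{ij}}$, $q_{ii}\in G'_{1-a_{ij}}$, or $q_{ii}=1$ governs the update --- is where all the care lies; once the correct branch is identified at each reflection, the contradiction with the defining conditions of Definition~\ref{defA7} is immediate.
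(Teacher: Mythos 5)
Your general framework---apply a short, explicitly chosen reflection word, recompute the generalized Dynkin diagram via Lemma~\ref{le:Dynkin}, and compare a single Cartan entry against the constraints of Definition~\ref{defA7}---is exactly the paper's strategy, and your plan for the left-end family is close to what the paper actually does: there, $(a_1)$ and $(a_2)$ are handled together by the \emph{single} reflection $r_2$ and the entry $a_{34}^{r_2(X)}=-1$ (which is forced simply by $A^{r_2(X)}=A_7$), while $(b_1)$ and $(b_2)$ use the word $r_3$ then $r_2$ together with $a_{45}^{r_2r_3(X)}=-1$, split into the subcases $q^{2}=-1$ and $q^{2}\neq-1$. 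So your claim that $(a_2)$, $(b_1)$, $(b_2)$ all follow from $(a_1)$ by ``the identical reflection pattern'' is not accurate, but this is a repairable discrepancy of bookkeeping rather than of substance.

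The genuine gap is in your plan for $(c_1)$--$(c_8)$. Take $(c_1)$: on the segment $4$--$5$--$6$--$7$ every vertex is labelled $q$ and every edge $q^{-1}$, so each $i\in\{4,5,6,7\}$ satisfies $q_{ii}^{a_{ij}}=q_{ij}q_{ji}$ for both of its neighbours; by Lemma~\ref{le:Dynkin} this is precisely the case in which $r_i$ leaves the generalized Dynkin diagram \emph{unchanged}. Your mechanism---that each reflection ``converts a $q$-labelled vertex into a $-1$ vertex and shifts the live $q$ toward vertex $3$''---is therefore false: $r_4r_5(X)$, $r_4r_5r_6(X)$ and $r_4r_5r_6r_7(X)$ all carry the same diagram as $X$, and since $q_{33}=-1$ and $q_{23}q_{32}=-1$ one gets $a_{32}=-1$ automatically (because $(2)_{-1}=0$). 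Hence the propagation conditions $a_{32}^{r_4r_5(X)}=a_{32}^{r_4r_5r_6(X)}=a_{32}^{r_4r_5r_6r_7(X)}=-1$ are satisfied by $(c_1)$ and can never produce a contradiction. The contradiction must be extracted at the junction of the two chains: reflect at vertex $3$ (which is non-Cartan towards vertex $4$, since $q\neq-1$ gives $q_{33}^{a_{34}}=-1\neq q^{-1}$), producing the triangle diagram, then reflect at vertex $4$, and invoke $a_{21}^{r_4r_3(X)}=-1$ from the condition $[a_{25}^{r_4r_3(X)},a_{21}^{r_4r_3(X)}]=[0,-1]$ of Definition~\ref{defA7}; this (with the two subcases $q^{2}=-1$ and $q^{2}\neq-1$) forces $q=1$, which is how the paper concludes. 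Without replacing your sweep by a word that passes through vertex $3$, the argument for the entire $(c)$ family collapses, including the claimed reductions of $(c_2)$--$(c_8)$ to $(c_1)$.
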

\begin{proof}
If $q_{33}\neq-1$, then $A^{r_3(X)}=A_7$, which contradicts Definition \ref{defA7}. We now proceed to the remaining cases.
\begin{itemize}
    \item[(i)] Suppose $X$ is of the type
\Dchainseven{$q_{11}$}{$q^{-1}$}{$-1$}{$q$}{$-1$}{$-1$}{$-1$}{$-1$}{$-1$}{$-1$}{$-1$}{$-1$}{$-1$}
where $q_{11}=-1$ or $q_{11}=q$. Since $X$ has a good $A_7$ neighborhood, the transformation
\begin{align*}
\setlength{\unitlength}{1mm}
\Dchainsevena{$q_{11}$}{$q^{-1}$}{$-1$}{$q$}{$-1$}{$-1$}{$-1$}{$-1$}{$-1$}{$-1$}{$-1$}{$-1$}{$-1$}
\quad \Rightarrow \quad
\Dchainseven{$q'_{11}$}{$q$}{$-1$}{$q^{-1}$}{$q$}{$-1$}{$-1$}{$-1$}{$-1$}{$-1$}{$-1$}{$-1$}{$-1$}
\end{align*}
implies $q=-1$ and $p\not=2$ by $a_{34}^{r_2(X)}=-1$, where
\begin{align*}
  q'_{11}=&
  \begin{cases}
    q^{-1}& \quad \text{if  $q_{11}=-1$},\\
    -1& \quad \text{if  $q_{11}=q$}.
  \end{cases}
\end{align*} 
which contradicts $q\not=-1$. Hence the cases $(a_1)$ and $(a_2)$ cannot occur.

 \item[(ii)] Suppose $X$ takes the type
\Dchainseven{$q^{11}$}{$q^{-1}$}{$q$}{$q^{-1}$}{$-1$}{$-1$}{$-1$}{$-1$}{$-1$}{$-1$}{$-1$}{$-1$}{$-1$}
where $q_{11}=-1$ or $q_{11}=q$. We split in two cases $q^2=-1$ and $q^2\not=-1$. If $q^{-2}=-1$, the transformations
\begin{align*}
\begin{tikzpicture} 
  \tikzstyle{every node}=[scale=0.7]
  \draw(0,0) circle (1mm);
  \draw(1,0) circle (1mm);
  \filldraw(2,0) circle (1mm);
  \draw(3,0) circle (1mm);
  \draw(4,0) circle (1mm);
  \draw(5,0) circle (1mm);
  \draw(6,0) circle (1mm);
  \draw(0.1,0) -- (0.9,0) 
  (1.1,0) -- (1.9,0) 
  (2.1,0) -- (2.9,0) 
  (3.1,0) -- (3.9,0) 
  (4.1,0) -- (4.9,0) 
  (5.1,0) -- (5.9,0);  
  \node at (0,0.3){$q_{11}$};
  \node at (0.55,0.25) {$q^{-1}$};
  \node at (1,0.3){$q$};
  \node at (1.55,0.25){$q^{-1}$};
  \node at (2,0.3){$-1$};
  \node at (2.5,0.2){$-1$};
  \node at (3,0.3){$-1$};
  \node at (3.5,0.2){$-1$};
  \node at (4,0.3){$-1$};
  \node at (4.5,0.2){$-1$};
  \node at (5,0.3){$-1$};
  \node at (5.5,0.2){$-1$};
  \node at (6,0.3){$-1$};
\end{tikzpicture}
\Rightarrow
\begin{tikzpicture} 
  \tikzstyle{every node}=[scale=0.7]
  \draw(0,0) circle (1mm);
  \filldraw(1,0) circle (1mm);
  \draw(2,0) circle (1mm);
  \draw(3,0) circle (1mm);
  \draw(4,0) circle (1mm);
  \draw(5,0) circle (1mm);
  \draw(1.5,0.95) circle (1mm);
  \draw(0.1,0) -- (0.9,0) 
  (1.1,0) -- (1.9,0) 
  (2.1,0) -- (2.9,0) 
  (3.1,0) -- (3.9,0) 
  (4.1,0) -- (4.9,0) 
  (1.05,0.05) -- (1.45,0.9)
  (1.55,0.9) -- (2,0.05);
  \node at (0,0.3){$q_{11}$};
  \node at (0.5,0.2) {$q^{-1}$};
  \node at (0.9,0.3){$-1$};
  \node at (1.78,0.95){$-1$};
  \node at (1.5,0.25) {$-q^{-1}$};
  \node at (2.05,0.3) {$-1$};
  \node at (2.55,0.25) {$-1$};
  \node at (1.1,0.6) {$q$};
  \node at (1.9,0.6) {$-1$};
  \node at (3,0.3) {$-1$};
  \node at (3.5,0.25) {$-1$};
  \node at (4,0.3) {$-1$};
  \node at (4.5,0.25) {$-1$};
  \node at (5,0.3) {$-1$};
\end{tikzpicture}
\end{align*}

\begin{align*}
\Rightarrow 
\tau_{3124567}~
\begin{tikzpicture} 
  \tikzstyle{every node}=[scale=0.7]
  \draw(0,0) circle (1mm);
  \draw(1,0) circle (1mm);
  \draw(2,0) circle (1mm);
  \draw(3,0) circle (1mm);
  \draw(4,0) circle (1mm);
  \draw(5,0) circle (1mm);
  \draw(1,1) circle (1mm);
  \draw(0.1,0) -- (0.9,0) 
  (1.1,0) -- (1.9,0) 
  (2.1,0) -- (2.9,0) 
  (3.1,0) -- (3.9,0) 
  (4.1,0) -- (4.9,0) 
  (1,0.1) -- (1,0.9);
  \node at (0,0.3){$q$};
  \node at (0.55,0.3) {$q^{-1}$};
  \node at (1,0.2){$-1$};
  \node at (1.2,0.5){$q$};
  \node at (1.35,1){$q'_{11}$};
  \node at (1.45,0.25) {$-q$};
  \node at (2.05,0.35) {$-q^{-1}$};
  \node at (2.5,0.25) {$-1$};
  \node at (3,0.3) {$-1$};
  \node at (3.5,0.25) {$-1$};
  \node at (4,0.3) {$-1$};
  \node at (4.5,0.25) {$-1$};
  \node at (5,0.3) {$-1$};
\end{tikzpicture}
\end{align*}
imply that $q=1$ by $a_{45}^{r_2r_3(x)}=-1$,
where
\begin{align*}
  q'_{11}=&
  \begin{cases}
    q^{-1}& \quad \text{if  $q_{11}=-1$},\\
    -1& \quad \text{if  $q_{11}=q$}.
  \end{cases}
\end{align*}
which is a contradiction. If $q^{-2} \not=-1$, the transformations
\begin{align*}
\begin{tikzpicture} 
  \tikzstyle{every node}=[scale=0.7]
  \draw(0,0) circle (1mm);
  \draw(1,0) circle (1mm);
  \filldraw(2,0) circle (1mm);
  \draw(3,0) circle (1mm);
  \draw(4,0) circle (1mm);
  \draw(5,0) circle (1mm);
  \draw(6,0) circle (1mm);
  \draw(0.1,0) -- (0.9,0) 
  (1.1,0) -- (1.9,0) 
  (2.1,0) -- (2.9,0) 
  (3.1,0) -- (3.9,0) 
  (4.1,0) -- (4.9,0) 
  (5.1,0) -- (5.9,0);  
  \node at (0,0.3){$q_{11}$};
  \node at (0.55,0.25) {$q^{-1}$};
  \node at (1,0.3){$q$};
  \node at (1.55,0.25){$q^{-1}$};
  \node at (2,0.3){$-1$};
  \node at (2.5,0.2){$-1$};
  \node at (3,0.3){$-1$};
  \node at (3.5,0.2){$-1$};
  \node at (4,0.3){$-1$};
  \node at (4.5,0.2){$-1$};
  \node at (5,0.3){$-1$};
  \node at (5.5,0.2){$-1$};
  \node at (6,0.3){$-1$};
\end{tikzpicture}
\Rightarrow
\begin{tikzpicture} 
  \tikzstyle{every node}=[scale=0.7]
  \draw(0,0) circle (1mm);
  \filldraw(1,0) circle (1mm);
  \draw(2,0) circle (1mm);
  \draw(3,0) circle (1mm);
  \draw(4,0) circle (1mm);
  \draw(5,0) circle (1mm);
  \draw(1.5,0.95) circle (1mm);
  \draw(0.1,0) -- (0.9,0) 
  (1.1,0) -- (1.9,0) 
  (2.1,0) -- (2.9,0) 
  (3.1,0) -- (3.9,0) 
  (4.1,0) -- (4.9,0) 
  (1.05,0.05) -- (1.45,0.9)
  (1.55,0.9) -- (2,0.05);
  \node at (0,0.3){$q_{11}$};
  \node at (0.5,0.2) {$q^{-1}$};
  \node at (0.9,0.3){$-1$};
  \node at (1.78,0.95){$-1$};
  \node at (1.5,0.25) {$-q^{-1}$};
  \node at (2.05,0.3) {$-1$};
  \node at (2.55,0.25) {$-1$};
  \node at (1.1,0.6) {$q$};
  \node at (1.9,0.6) {$-1$};
  \node at (3,0.3) {$-1$};
  \node at (3.5,0.25) {$-1$};
  \node at (4,0.3) {$-1$};
  \node at (4.5,0.25) {$-1$};
  \node at (5,0.3) {$-1$};
\end{tikzpicture}
\end{align*}
\begin{align*}
\Rightarrow
\tau_{3214567}~
\begin{tikzpicture} 
  \tikzstyle{every node}=[scale=0.7]
  \draw(0,0) circle (1mm);
  \draw(1,0) circle (1mm);
  \draw(2,0) circle (1mm);
  \draw(3,0) circle (1mm);
  \draw(4,0) circle (1mm);
  \draw(5,0) circle (1mm);
  \draw(1.5,0.95) circle (1mm);
  \draw(0.1,0) -- (0.9,0) 
  (1.1,0) -- (1.9,0) 
  (2.1,0) -- (2.9,0) 
  (3.1,0) -- (3.9,0) 
  (4.1,0) -- (4.9,0) 
  (1.05,0.05) -- (1.45,0.9)
  (1.55,0.9) -- (2,0.05);
  \node at (0,0.3){$q$};
  \node at (0.5,0.2) {$q^{-1}$};
  \node at (0.9,0.3){$-1$};
  \node at (1.85,1){$q'_{11}$};
  \node at (1.5,0.2) {$-q$};
  \node at (2.2,0.3) {$-q^{-1}$};
  \node at (2.6,0.25) {$-1$};
  \node at (1.1,0.6) {$q$};
  \node at (2.1,0.6) {$-q^{-2}$};
  \node at (3.05,0.3) {$-1$};
  \node at (3.5,0.25) {$-1$};
  \node at (4,0.3) {$-1$};
  \node at (4.5,0.25) {$-1$};
  \node at (5,0.3) {$-1$};
\end{tikzpicture}
\end{align*}
imply that $q=1$ by $a_{45}^{r_2r_3(x)}=-1$,
where
\begin{align*}
  q'_{11}=&
  \begin{cases}
    q^{-1}& \quad \text{if  $q_{11}=-1$}\\
    -1& \quad \text{if  $q_{11}=q$}
  \end{cases}
\end{align*}
which is also a contradiction. Hence the cases $(b_1)$ and $(b_2)$ cannot occur.

    \item[(iii)] If $X$ is of the type
\Dchainseven{$-1$}{$-1$}{$-1$}{$-1$}{$-1$}{$q^{-1}$}{$q$}{$q^{-1}$}{$q$}{$q^{-1}$}{$q$}{$q^{-1}$}{$q$}, then the analysis splits depending on whether 
$q^2=-1$ or not. If $q^{-2}=-1$, then the transformations
\begin{align*}
\begin{tikzpicture} 
  \tikzstyle{every node}=[scale=0.7]
  \draw(0,0) circle (1mm);
  \draw(1,0) circle (1mm);
  \filldraw(2,0) circle (1mm);
  \draw(3,0) circle (1mm);
  \draw(4,0) circle (1mm);
  \draw(5,0) circle (1mm);
  \draw(6,0) circle (1mm);
  \draw(0.1,0) -- (0.9,0) 
  (1.1,0) -- (1.9,0)
  (2.1,0) -- (2.9,0) 
  (3.1,0) -- (3.9,0) 
  (4.1,0) -- (4.9,0) 
  (5.1,0) -- (5.9,0);  
  \node at (0,0.3){$-1$};
  \node at (0.5,0.2) {$-1$};
  \node at (1,0.3){$-1$};
  \node at (1.5,0.2){$-1$};
  \node at (2,0.3){$-1$};
  \node at (2.55,0.25){$q^{-1}$};
  \node at (3,0.3){$q$};
  \node at (3.5,0.25){$q^{-1}$};
  \node at (4,0.3){$q$};
  \node at (4.5,0.25){$q^{-1}$};
  \node at (5,0.3){$q$};
  \node at (5.5,0.25){$q^{-1}$};
  \node at (6,0.3){$q$};
\end{tikzpicture}
\Rightarrow
\begin{tikzpicture} 
  \tikzstyle{every node}=[scale=0.7]
  \draw(0,0) circle (1mm);
  \draw(1,0) circle (1mm);
  \filldraw(2,0) circle (1mm);
  \draw(3,0) circle (1mm);
  \draw(4,0) circle (1mm);
  \draw(5,0) circle (1mm);
  \draw(1.5,0.95) circle (1mm);
  \draw(0.1,0) -- (0.9,0) 
  (1.1,0) -- (1.9,0) 
  (2.1,0) -- (2.9,0) 
  (3.1,0) -- (3.9,0) 
  (4.1,0) -- (4.9,0) 
  (1.05,0.05) -- (1.45,0.9)
  (1.55,0.9) -- (2,0.05);
  \node at (0,0.3){$-1$};
  \node at (0.5,0.2) {$-1$};
  \node at (0.9,0.3){$-1$};
  \node at (1.78,0.95){$-1$};
  \node at (1.5,0.25) {$-q^{-1}$};
  \node at (2.05,0.3) {$-1$};
  \node at (2.55,0.25) {$q^{-1}$};
  \node at (1.05,0.6) {$-1$};
  \node at (1.9,0.6) {$q$};
  \node at (3,0.3) {$q$};
  \node at (3.5,0.25) {$q^{-1}$};
  \node at (4,0.3) {$q$};
  \node at (4.5,0.25) {$q^{-1}$};
  \node at (5,0.3) {$q$};
\end{tikzpicture}
\end{align*}
\begin{align*}
\Rightarrow
\begin{tikzpicture} 
  \tikzstyle{every node}=[scale=0.7]
  \draw(0,0) circle (1mm);
  \draw(1,0) circle (1mm);
  \draw(2,0) circle (1mm);
  \draw(3,0) circle (1mm);
  \draw(4,0) circle (1mm);
  \draw(5,0) circle (1mm);
  \draw(2,1) circle (1mm);
  \draw(0.1,0) -- (0.9,0) 
  (1.1,0) -- (1.9,0) 
  (2.1,0) -- (2.9,0) 
  (3.1,0) -- (3.9,0) 
  (4.1,0) -- (4.9,0) 
  (2,0.1) -- (2,0.9);
  \node at (0,0.3){$-1$};
  \node at (0.5,0.2) {$-1$};
  \node at (1,0.35){$-q^{-1}$};
  \node at (1.5,0.2) {$-q$};
  \node at (2.1, 0.2) {$-1$};
  \node at (2.6,0.2) {$q$};
  \node at (3,0.3) {$-1$};
  \node at (3.5,0.25) {$q^{-1}$};
  \node at (4,0.3) {$q$};
  \node at (4.5,0.25) {$q^{-1}$};
  \node at (5,0.3) {$q$};
  \node at (2.3,0.55) {$q^{-1}$};
  \node at (2.25,1) {$q$};
\end{tikzpicture}
\end{align*}
imply $q=1$ by $a_{21}^{r_4r_3(x)}=-1$, which is a contradiction. If $q^{-2} \not=-1$, then the transformations
\begin{align*}
\begin{tikzpicture} 
  \tikzstyle{every node}=[scale=0.7]
  \draw(0,0) circle (1mm);
  \draw(1,0) circle (1mm);
  \filldraw(2,0) circle (1mm);
  \draw(3,0) circle (1mm);
  \draw(4,0) circle (1mm);
  \draw(5,0) circle (1mm);
  \draw(6,0) circle (1mm);
  \draw(0.1,0) -- (0.9,0) 
  (1.1,0) -- (1.9,0)
  (2.1,0) -- (2.9,0) 
  (3.1,0) -- (3.9,0) 
  (4.1,0) -- (4.9,0) 
  (5.1,0) -- (5.9,0);  
  \node at (0,0.3){$-1$};
  \node at (0.5,0.2) {$-1$};
  \node at (1,0.3){$-1$};
  \node at (1.5,0.2){$-1$};
  \node at (2,0.3){$-1$};
  \node at (2.55,0.25){$q^{-1}$};
  \node at (3,0.3){$q$};
  \node at (3.5,0.25){$q^{-1}$};
  \node at (4,0.3){$q$};
  \node at (4.5,0.25){$q^{-1}$};
  \node at (5,0.3){$q$};
  \node at (5.5,0.25){$q^{-1}$};
  \node at (6,0.3){$q$};
\end{tikzpicture}
\Rightarrow
\begin{tikzpicture} 
  \tikzstyle{every node}=[scale=0.7]
  \draw(0,0) circle (1mm);
  \draw(1,0) circle (1mm);
  \filldraw(2,0) circle (1mm);
  \draw(3,0) circle (1mm);
  \draw(4,0) circle (1mm);
  \draw(5,0) circle (1mm);
  \draw(1.5,0.95) circle (1mm);
  \draw(0.1,0) -- (0.9,0) 
  (1.1,0) -- (1.9,0) 
  (2.1,0) -- (2.9,0) 
  (3.1,0) -- (3.9,0) 
  (4.1,0) -- (4.9,0) 
  (1.05,0.05) -- (1.45,0.9)
  (1.55,0.9) -- (2,0.05);
  \node at (0,0.3){$-1$};
  \node at (0.5,0.2) {$-1$};
  \node at (0.9,0.3){$-1$};
  \node at (1.78,0.95){$-1$};
  \node at (1.5,0.25) {$-q^{-1}$};
  \node at (2.05,0.3) {$-1$};
  \node at (2.55,0.25) {$q^{-1}$};
  \node at (1.05,0.6) {$-1$};
  \node at (1.9,0.6) {$q$};
  \node at (3,0.3) {$q$};
  \node at (3.5,0.25) {$q^{-1}$};
  \node at (4,0.3) {$q$};
  \node at (4.5,0.25) {$q^{-1}$};
  \node at (5,0.3) {$q$};
\end{tikzpicture}
\end{align*}
\begin{align*}
\Rightarrow
\begin{tikzpicture} 
  \tikzstyle{every node}=[scale=0.7]
  \draw(0,0) circle (1mm);
  \draw(1,0) circle (1mm);
  \draw(2,0) circle (1mm);
  \draw(3,0) circle (1mm);
  \draw(4,0) circle (1mm);
  \draw(1.5,0.95) circle (1mm);
  \draw(1.5,1.95) circle (1mm);
  \draw(0.1,0) -- (0.9,0) 
  (1.1,0) -- (1.9,0) 
  (2.1,0) -- (2.9,0) 
  (3.1,0) -- (3.9,0) 
  (1.05,0.05) -- (1.45,0.9)
  (1.55,0.9) -- (2,0.05)
  (1.5,1.05) -- (1.5,1.85);
  \node at (-0.07,0.3){$-1$};
  \node at (0.37,0.2) {$-1$};
  \node at (0.9,0.32){$-q^{-1}$};
  \node at (1.8,0.95){$-1$};
  \node at (1.5,0.24) {$-q^{-2}$};
  \node at (2.1,0.3) {$-1$};
  \node at (1.05,0.7) {$-q$};
  \node at (1.85,0.69) {$q$};
  \node at (2.6,0.25) {$q^{-1}$};
  \node at (3,0.3) {$q$};
  \node at (3.5,0.25) {$q^{-1}$};
  \node at (4,0.3) {$q$};
  \node at (1.8,1.5) {$q^{-1}$};
  \node at (1.8,1.95) {$q$};
\end{tikzpicture}
\end{align*}
imply $q=1$ by $a_{21}^{r_4r_3(x)}=-1$, which is a contradiction. Hence case $(c_1)$ cannot occur. The reflection rules and constraints for $(c_2)$–$(c_8)$ follow analogously to $(c_1)$, so the same reasoning eliminates all remaining cases $(c_2)$ through $(c_8)$.
\end{itemize}
\end{proof}

\begin{theorem}\label{theo.rank7}
Suppose ${\roots}^{[M]}$ is a finite set of roots of sporadic finite Cartan graphs of rank $7$. Then every admissable generalized Dynkin diagrams of $V$ occurs in rows $20$-$21$ of Table \ref{tab.1}.
\end{theorem}
\begin{proof}
By Theorem \ref{thm:goodnei}, $C(M)$ is either standard of type $E_7$, or there exists a point $X$ such that $A^X$ has a good $A_7$ neighborhood.

Case $e$. Suppose $C(M)$ is standard of
type $E_7$. Then $A^{X}=E_{7}$ yields $(2)_{q_{22}}(q_{22}\widetilde{q_{24}}-1)=(2)_{q_{ii}}(q_{ii}\widetilde{q_{i,i+1}}-1)=(2)_{q_{jj}}(q_{jj}\widetilde{q_{j-1,j}}-1)=0$ for all $i\in \{1,3,4,5,6\}$ and $j\in \{2,4,5,6,7\}$. Set $q\colon=\widetilde{q_{12}}$.
Since $A^X= E_7$, we have $\cD=\cD^7_{20,1}$ when $q\neq-1$, and $\cD=\cD^7_{20',1}$ when $q=-1$.

Case $a$. Suppose $X$ has a good $A_7$ neighborhood. Since $A^{X}=A_{7}$, we have $(2)_{q_{ii}}(q_{ii}\widetilde{q_{i,i+1}}-1)=(2)_{q_{jj}}(q_{jj}\widetilde{q_{j-1,j}}-1)=0$ for all $i\in \{1,2,3,4,5,6\}$ and $j\in \{2,3,4,5,6,7\}$. Additionally, we obtain $q_{33}=-1$ for all cases by Lemma \ref{l-7chainspe-a}. 
We categorize all possible configurations based on the number of vertices labeled $-1$, which yields the seven cases treated in Steps $1$–$7$.

Step $1$. Suppose exactly one vertex is labelled  $-1$, namely $q_{33}=-1$ and $q_{ii}q_{i,i+1}'=q_{jj}q_{j-1,j}'=1$ for all $i\in \{1,2,4,5,6\}$ and $j\in \{2,4,5,6,7\}$. Set $r\colon=q_{11}$ and $q\colon=q_{44}$. Then we obtain $rq \not=1$ by $a_{24}^{r_3(X)}=-1$. The reflections of $X$ 
\begin{align*}  
  X\colon~\ \Dchainseveng{$r$}{$r^{-1}$}{$r$}{$r^{-1}$}{$-1$}{$q^{-1}$}{$q$}{$q^{-1}$}{$q$}{$q^{-1}$}{$q$}{$q^{-1}$}{$q$}
\quad \Rightarrow \quad
r_3(X)\colon~
\begin{tikzpicture} 
  \tikzstyle{every node}=[scale=0.7]
  \draw(0,0) circle (1mm);
  \filldraw(1,0) circle (1mm);
  \draw(2,0) circle (1mm);
  \draw(3,0) circle (1mm);
  \draw(4,0) circle (1mm);
  \draw(5,0) circle (1mm);
  \draw(1.5,0.95) circle (1mm);
  \draw(0.1,0) -- (0.9,0) 
  (1.1,0) -- (1.9,0) 
  (2.1,0) -- (2.9,0) 
  (3.1,0) -- (3.9,0) 
  (4.1,0) -- (4.9,0) 
  (1,0.05) -- (1.45,0.9)
  (1.55,0.9) -- (2,0.05);
  \node at (0,0.3){$r$};
  \node at (0.5,0.25) {$r^{-1}$};
  \node at (0.9,0.28){$-1$};
  \node at (1.78,1.08){$-1$};
  \node at (1.48,0.2) {$(rq)^{-1}$};
  \node at (2.05,0.28) {$-1$};
  \node at (2.55,0.25) {$q^{-1}$};
  \node at (1.1,0.63) {$r$};
  \node at (1.9,0.63) {$q$};
   \node at (3,0.3) {$q$};
  \node at (3.5,0.25) {$q^{-1}$};
  \node at (4,0.3) {$q$};
  \node at (4.5,0.25) {$q^{-1}$};
  \node at (5,0.3) {$q$};
\end{tikzpicture}
\end{align*}
\vspace{-4mm}
\begin{align*}
\Rightarrow \quad
r_2r_3(X)\colon~\tau_{3214567}
\begin{tikzpicture} 
  \tikzstyle{every node}=[scale=0.7]
  \draw(0,0) circle (1mm);
  \draw(1,0) circle (1mm);
  \draw(2,0) circle (1mm);
  \draw(3,0) circle (1mm);
  \draw(4,0) circle (1mm);
  \draw(5,0) circle (1mm);
  \draw(1.5,0.95) circle (1mm);
  \draw(0.1,0) -- (0.9,0) 
  (1.1,0) -- (1.9,0) 
  (2.1,0) -- (2.9,0) 
  (3.1,0) -- (3.9,0) 
  (4.1,0) -- (4.9,0) 
  (1,0.05) -- (1.45,0.9)
  (1.55,0.9) -- (2,0.05);
  \node at (0,0.3){$r$};
  \node at (0.5,0.25) {$r^{-1}$};
  \node at (0.9,0.28){$-1$};
    \node at (1.78,1.08){$-1$};
  \node at (1.5,0.2) {$rq$};
  \node at (2.05,-0.3) {$(rq)^{-1}$};
  \node at (2.55,0.25) {$q^{-1}$};
  \node at (1.1,0.63) {$r$};
  \node at (2.2,0.63) {$r^{-2}q^{-1}$};
  \node at (3,0.3) {$q$};
  \node at (3.5,0.25) {$q^{-1}$};
  \node at (4,0.3) {$q$};
  \node at (4.5,0.25) {$q^{-1}$};
  \node at (5,0.3) {$q$};
\end{tikzpicture}
\end{align*}
imply $rq^{2}=1$ or $rq=-1$ by $a_{45}^{r_2r_3(X)}=-1$.

Step $1.1$. If $rq^{2}=1$, then the reflection of $X$
\begin{align*}
r_3(X)\colon~
\begin{tikzpicture} 
  \tikzstyle{every node}=[scale=0.7]
  \draw(0,0) circle (1mm);
  \draw(1,0) circle (1mm);
  \filldraw(2,0) circle (1mm);
  \draw(3,0) circle (1mm);
  \draw(4,0) circle (1mm);
  \draw(5,0) circle (1mm);
  \draw(1.5,0.95) circle (1mm);
  \draw(0.1,0) -- (0.9,0) 
  (1.1,0) -- (1.9,0) 
  (2.1,0) -- (2.9,0) 
  (3.1,0) -- (3.9,0) 
  (4.1,0) -- (4.9,0) 
  (1,0.05) -- (1.45,0.9)
  (1.55,0.9) -- (2,0.05);
  \node at (0.1,0.35){$q^{-2}$};
  \node at (0.5,0.25) {$q^{2}$};
  \node at (0.9,0.28){$-1$};
  \node at (1.78,1.08){$-1$};
  \node at (1.5,0.2) {$q$};
  \node at (2.05,0.28) {$-1$};
  \node at (2.55,0.25) {$q^{-1}$};
  \node at (1.1,0.63) {$q^{-2}$};
  \node at (1.9,0.63) {$q$};
  \node at (3,0.3) {$q$};
  \node at (3.5,0.25) {$q^{-1}$};
  \node at (4,0.3) {$q$};
  \node at (4.5,0.25) {$q^{-1}$};
  \node at (5,0.3) {$q$};
\end{tikzpicture}
\quad \Rightarrow \quad
r_4r_3(X)\colon~
\begin{tikzpicture} 
  \tikzstyle{every node}=[scale=0.7]
  \draw(0,0) circle (1mm)
  (1,0) circle (1mm)
  (2,0) circle (1mm)
  (3,0) circle (1mm)
  (4,0) circle (1mm)
  (5,0) circle (1mm)
  (2,1) circle (1mm);
  \draw(0.1,0) -- (0.9,0) 
  (1.1,0) -- (1.9,0) 
  (2.1,0) -- (2.9,0) 
  (3.1,0) -- (3.9,0) 
  (4.1,0) -- (4.9,0) 
  (2,0.1) -- (2,0.9); 
  \node at (0.1,0.35){$q^{-2}$};
  \node at (0.5,0.25) {$q^{2}$};
  \node at (1,0.3){$q$};
  \node at (1.5,0.26){$q^{-1}$};
  \node at (2.2,0.2){$-1$};
  \node at (2.6,0.2){$q$};
  \node at (3,0.3){$-1$};
  \node at (3.5,0.25){$q^{-1}$};
  \node at (4,0.3){$q$};
  \node at (4.5,0.25){$q^{-1}$};
  \node at (5,0.3){$q$};
  \node at (2.3,0.5){$q^{-1}$};
  \node at (2.2,1){$q$};
\end{tikzpicture}
\end{align*}
implies that $q=-1$, $p\not=2$ or $q \in G'_{3}$, $p\not=3$ by  $a_{21}^{r_4r_3(X)}=-1$. Since $q=-1$ yields $r=1$, it is a contradiction. Rewrite $X$ as
\begin{align*}
    \Dchainseven{$q$}{$q^{-1}$}{$q$}{$q^{-1}$}{$-1$}{$q^{-1}$}{$q$}{$q^{-1}$}{$q$}{$q^{-1}$}{$q$}{$q^{-1}$}{$q$}
\end{align*}
where $q \in G'_{3}$, $p\not=3$. Hence $\mathcal{D}={\tau_{7654321}}\mathcal{D}^{7}_{21,1}$ and $p\ne 3$.

Step $1.2$. If $rq=-1$, then the reflection of $X$
\begin{align*}
r_3(X)\colon~
\begin{tikzpicture} 
  \tikzstyle{every node}=[scale=0.7]
  \draw(0,0) circle (1mm);
  \draw(1,0) circle (1mm);
  \filldraw(2,0) circle (1mm);
  \draw(3,0) circle (1mm);
  \draw(4,0) circle (1mm);
  \draw(5,0) circle (1mm);
  \draw(1.5,0.95) circle (1mm);
  \draw(0.1,0) -- (0.9,0) 
  (1.1,0) -- (1.9,0) 
  (2.1,0) -- (2.9,0) 
  (3.1,0) -- (3.9,0) 
  (4.1,0) -- (4.9,0) 
  (1,0.05) -- (1.45,0.9)
  (1.55,0.9) -- (2,0.05);
  \node at (0,0.35){$-q^{-1}$};
  \node at (0.45,0.25) {$-q$};
  \node at (0.9,0.28){$-1$};
  \node at (1.78,1.08){$-1$};
  \node at (1.5,0.2) {$-1$};
  \node at (2.05,0.28) {$-1$};
  \node at (2.55,0.25) {$q^{-1}$};
  \node at (1,0.63) {$-q^{-1}$};
  \node at (1.9,0.63) {$q$};
  \node at (3,0.3) {$q$};
  \node at (3.5,0.25) {$q^{-1}$};
  \node at (4,0.3) {$q$};
  \node at (4.5,0.25) {$q^{-1}$};
  \node at (5,0.3) {$q$};
\end{tikzpicture}
\quad \Rightarrow \quad
r_4r_3(X)\colon~
\begin{tikzpicture} 
  \tikzstyle{every node}=[scale=0.7]
  \draw(0,0) circle (1mm)
  (1,0) circle (1mm)
  (2,0) circle (1mm)
  (3,0) circle (1mm)
  (4,0) circle (1mm)
  (1.5,0.95) circle (1mm)
  (1.5,1.95) circle (1mm);
  \draw(0.1,0) -- (0.9,0) 
  (1.1,0) -- (1.9,0) 
  (2.1,0) -- (2.9,0) 
  (3.1,0) -- (3.9,0) 
  (2,0.1) -- (1.5,0.9)
  (1,0.1) -- (1.5,0.9)
  (1.5,1) -- (1.5,1.9);
  \node at (0,0.35){$-q^{-1}$};
  \node at (0.45,0.2) {$-q$};
  \node at (0.9,0.28){$-1$};
  \node at (1.8,1){$-1$};
  \node at (1.5,0.25) {$-q^{-1}$};
  \node at (2.05,0.28) {$-1$};
  \node at (2.55,0.25) {$q^{-1}$};
  \node at (1,0.6) {$-1$};
  \node at (1.95,0.6) {$q$};
  \node at (3,0.3) {$q$};
  \node at (3.5,0.25) {$q^{-1}$};
  \node at (4,0.3) {$q$};
  \node at (1.9,1.5) {$q^{-1}$};
  \node at (1.75,1.95) {$q$};
\end{tikzpicture}
\end{align*}
implies $q=-1$ and $p\ne 2$ by $a_{25}^{r_4r_3(X)}=0$. Rewrite $X$ as
\[
    \Dchainseven{$r$}{$r^{-1}$}{$r$}{$r^{-1}$}{$-1$}{$-1$}{$-1$}{$-1$}{$-1$}{$-1$}{$-1$}{$-1$}{$-1$}
\]
where $r\not=-1$ and $p\not=2$.
By Lemma~\ref{l-7chainspe-a} $(b_2)$, this configuration cannot occur.
  
Step $2$. Assume exactly two vertices carry label $-1$. Combined with $q_{33}=-1$, we divide the analysis into three subcases.

Step $2.1$. Let $q_{11}=q_{33}=-1$ and $q_{ii}\widetilde{q_{i,i+1}}=q_{jj}\widetilde{q_{j-1,j}}=1$ for all $i\in \{2,4,5,6\}$ and $j\in \{2,4,5,6,7\}$. Set $r\colon=q_{22}$ and $q\colon=q_{44}$. Then $rq \not=1$ by $a_{24}^{r_3(X)}=-1$. The reflections of $X$
\begin{align*}
\setlength{\unitlength}{1mm}
X\colon~
\Dchainsevenb{$-1$}{$r^{-1}$}{$r$}{$r^{-1}$}{$-1$}{$q^{-1}$}{$q$}{$q^{-1}$}{$q$}{$q^{-1}$}{$q$}{$q^{-1}$}{$q$}
\quad \Rightarrow \quad
r_1(X)\colon~
\Dchainsevena{$-1$}{$r$}{$-1$}{$r^{-1}$}{$-1$}{$q^{-1}$}{$q$}{$q^{-1}$}{$q$}{$q^{-1}$}{$q$}{$q^{-1}$}{$q$}
\end{align*}
\vspace{-3mm}
\begin{align*}
\quad \Rightarrow \quad
r_2r_1(X)\colon~
\Dchainseven{$r$}{$r^{-1}$}{$-1$}{$r$}{$r^{-1}$}{$q^{-1}$}{$q$}{$q^{-1}$}{$q$}{$q^{-1}$}{$q$}{$q^{-1}$}{$q$}
\end{align*}
imply that $r=-1$ and $p\ne 2$ by $a_{34}^{r_2r_1(X)}=-1$. By Lemma~\ref{l-7chainspe-a} $(c_1)$, this configuration cannot occur. 

Step $2.2$. Let $q_{22}=q_{33}=-1$ and $q_{ii}\widetilde{q_{i,i+1}}=q_{jj}\widetilde{q_{j-1,j}}=1$ for all $i\in \{1,4,5,6\}$ and $j\in \{4,5,6,7\}$. Set $r\colon=q_{11}$, $q\colon=\widetilde{q_{23}}$ and $s\colon=q_{44}$. Since $X$ has a good $A_7$ neighborhood, we have $r^{-1}q=1$ and $qs^{-1} \not=1$. The reflection of $X$
\begin{align*}    
\setlength{\unitlength}{1mm}
X\colon~
\Dchainsevena{$q$}{$q^{-1}$}{$-1$}{$q$}{$-1$}{$s^{-1}$}{$s$}{$s^{-1}$}{$s$}{$s^{-1}$}{$s$}{$s^{-1}$}{$s$}
\quad \Rightarrow \quad
r_2(X)\colon~
\Dchainseven{$-1$}{$q$}{$-1$}{$q^{-1}$}{$q$}{$s^{-1}$}{$s$}{$s^{-1}$}{$s$}{$s^{-1}$}{$s$}{$s^{-1}$}{$s$}
\end{align*}
implies $q=-1$ and $p\ne 2$ by $a_{34}^{r_2(X)}=-1$. By Lemma~\ref{l-7chainspe-a} $(c_1)$, this configuration cannot occur. 

Step $2.3$. Let $q_{33}=q_{kk}=-1$ for $k \in \{4,5,6,7\}$ and $q_{ii}q_{i,i+1}'=q_{jj}q_{j-1,j}'=1$ for all $i\in \{1,2,4,5,6\}\verb|\|\{k\}$ and $j\in \{2,4,5,6,7\}\verb|\|\{k\}$. Set $r\colon=q_{11}$, $q\colon=q'_{34}$ and $s\colon=q_{k-1,k}$. Then we have $r^{-1}q \not=1$ by $a^{r_3(X)}_{24}=-1$ and $qs=1$ by $A^{r_k(X)}=A_7$. Hence we obtain $q=-1$ and $p\neq 2$ by $a^{r_4\cdots r_k(X)}_{32}=-1$. By Lemma~\ref{l-7chainspe-a} $(b_2)$, these cases cannot occur. 

Step $3$. Assume exactly three vertices are labelled $-1$. Together with $q_{33}=-1$, we separate the discussion into seven subcases.

Step $3.1$. Let $q_{11}=q_{22}=q_{33}=-1$ and $q_{ii}\widetilde{q_{i,i+1}}=q_{jj}\widetilde{q_{j-1,j}}=1$ for all $i\in \{4,5,6\}$ and $j\in \{4,5,6,7\}$. Set $s\colon=\widetilde{q_{12}}$, $r\colon=\widetilde{q_{23}}$ and $q\colon=q_{44}$. We have $rq^{-1} \not=1$ by $a_{24}^{r_3(X)}=-1$ and $sr=1$ by $a_{13}^{r_2(X)}=0$.  Then the reflection of $X$
\begin{align*}    
\setlength{\unitlength}{1mm}
X\colon~
\Dchainsevena{$-1$}{$r^{-1}$}{$-1$}{$r$}{$-1$}{$q^{-1}$}{$q$}{$q^{-1}$}{$q$}{$q^{-1}$}{$q$}{$q^{-1}$}{$q$}
\quad \Rightarrow \quad
r_2(X)\colon~
\Dchainseven{$r^{-1}$}{$r$}{$-1$}{$r^{-1}$}{$r$}{$q^{-1}$}{$q$}{$q^{-1}$}{$q$}{$q^{-1}$}{$q$}{$q^{-1}$}{$q$}
\end{align*}
implies $r=-1$ and $p\ne 2$ by $a_{34}^{r_2(X)}=-1$. By Lemma~\ref{l-7chainspe-a} $(c_1)$, this configuration cannot occur.

Step $3.2$. Let $q_{11}=q_{33}=q_{kk}=-1$ for $k\in \{4,5,6,7\}$ and $q_{ii}\widetilde{q_{i,i+1}}=q_{jj}\widetilde{q_{j-1,j}}=1$ for all $i\in \{2,4,5,6\}\verb|\|\{k\}$ and $j\in \{2,4,5,6,7\}\verb|\|\{k\}$. Set $r\colon=q_{22}$, $s\colon=\widetilde{q_{34}}$, and $q\colon=\widetilde{q_{k-1,k}}$. By $a^{r_3(X)}_{24}=-1$, we have $rs\neq1$.
And for $k \neq 7$, $A^{r_k(X)}=A_7$ implies $\widetilde{q_{k,k+1}}=q^{-1}$. Then we obtain $s=-1$ and $p\ne 2$ by $a_{32}^{r_4(X)}=-1$. By Lemma~\ref{l-7chainspe-a} $(b_1)$, these cases cannot occur.

Step $3.3$. Let $q_{22}=q_{33}=q_{44}=-1$ and $q_{ii}\widetilde{q_{i,i+1}}=q_{jj}\widetilde{q_{j-1,j}}=1$ for all $i\in \{1,5,6\}$ and $j\in \{5,6,7\}$. Set $q\colon=q_{11}$, $s\colon=\widetilde{q_{23}}$, $t\colon=\widetilde{q_{34}}$ and $r\colon=q_{55}$. We have $r^{-1}s=1$ by $a^{r_2(X)}_{13}=0$, $st \not=1$ by $a^{r_3(X)}_{24}=0$, and $tq^{-1}=1$ by $a^{r_4(X)}_{35}=0$. Hence the reflection of $X$
\begin{align*}    
\setlength{\unitlength}{1mm}
X\colon~
\Dchainsevenf{$s$}{$s^{-1}$}{$-1$}{$s$}{$-1$}{$t$}{$-1$}{$t^{-1}$}{$t$}{$t^{-1}$}{$t$}{$t^{-1}$}{$t$}
\quad \Rightarrow \quad
r_4(X)\colon~
\Dchainseven{$s$}{$s^{-1}$}{$-1$}{$s$}{$t$}{$t^{-1}$}{$-1$}{$t$}{$-1$}{$t^{-1}$}{$t$}{$t^{-1}$}{$t$}
\end{align*}
implies $t=-1$ and $p\ne 2$ by $a_{32}^{r_4(X)}=-1$. By Lemma~\ref{l-7chainspe-a} $(a_2)$, this configuration cannot occur.

Step $3.4$. Let $q_{22}=q_{33}=q_{55}=-1$ and $q_{ii}\widetilde{q_{i,i+1}}=q_{jj}\widetilde{q_{j-1,j}}=1$ for all $i\in \{1,4,6\}$ and $j\in \{4,6,7\}$. Set $r\colon=q_{11}$, $s\colon=\widetilde{q_{23}}$, $q\colon=q_{44}$ and $t\colon=q_{66}$. We have $r^{-1}s=1$ by $a^{r_2(X)}_{13}=0$, $sq^{-1}\not=1$ by $a^{r_3(X)}_{24}=0$, and $qt=1$ by $a^{r_5(X)}_{46}=0$. Then the reflection of $X$
\begin{align*}    
\setlength{\unitlength}{1mm}
X\colon~
\Dchainsevena{$s$}{$s^{-1}$}{$-1$}{$s$}{$-1$}{$q^{-1}$}{$q$}{$q^{-1}$}{$-1$}{$q$}{$q^{-1}$}{$q$}{$q^{-1}$}
\quad \Rightarrow \quad
r_2(X)\colon~
\Dchainseven{$-1$}{$s$}{$-1$}{$s^{-1}$}{$s$}{$q^{-1}$}{$q$}{$q^{-1}$}{$-1$}{$q$}{$q^{-1}$}{$q$}{$q^{-1}$}
\end{align*}
implies $s=-1$ and $p\ne 2$ by $a_{34}^{r_2(X)}=-1$. By Lemma~\ref{l-7chainspe-a} $(c_8)$, this configuration cannot occur.

Step $3.5$. Let $q_{22}=q_{33}=q_{66}=-1$ and $q_{ii}\widetilde{q_{i,i+1}}=q_{jj}\widetilde{q_{j-1,j}}=1$ for all $i\in \{1,4,5\}$ and $j\in \{4,5,7\}$. Set $r\colon=q_{11}$, $s\colon=\widetilde{q_{23}}$, $q\colon=q_{44}$ and $t\colon=q_{77}$. From Definition \ref{defA7} and Lemma \ref{jslemma}, we have $r^{-1}s=qt=1$ and $sq^{-1} \not=1$. Furthermore, by $a_{34}^{r_2(X)}=-1$, we obtain $s=-1$ and $p\ne 2$. By Lemma~\ref{l-7chainspe-a} $(c_3)$, this configuration cannot occur. 

Step $3.6$. Let $q_{22}=q_{33}=q_{77}=-1$ and $q_{ii}\widetilde{q_{i,i+1}}=q_{jj}\widetilde{q_{j-1,j}}=1$ for all $i\in \{1,4,5,6\}$ and $j\in \{4,5,6\}$. Set $r\colon=q_{11}$, $s\colon=\widetilde{q_{23}}$ and $q\colon=q_{44}$. We obtain $r^{-1}s=1$ by $a^{r_2(X)}_{13}=0$ and $sq^{-1} \not=1$ by $a^{r_3(X)}_{24}=0$. As in the preceding step, we obtain $s=-1$ and $p\ne 2$ by $a_{34}^{r_2(X)}=-1$. By Lemma~\ref{l-7chainspe-a} $(c_2)$, this configuration cannot occur.

Step $3.7$. Let $q_{33}=q_{i_1,i_1}=q_{i_2,i_2}=-1$ for $i_1, i_2\in \{4,5,6,7\}$ and $q_{ii}\widetilde{q_{i,i+1}}=q_{jj}\widetilde{q_{j-1,j}}=1$ for all $i\in \{1,2,4,5,6\}\verb|\|\{i_1,i_2\}$ and $j\in \{2,4,5,6,7\}\verb|\|\{i_1,i_2\}$. Set $s\colon=\widetilde{q_{23}}$, $q\colon=\widetilde{q_{i_1-1,i_1}}$ and $t\colon=\widetilde{q_{i_2-1,i_2}}$. By Definition \ref{defA7}, we have $qt=1$ and $sq\neq1$. And $q_{77}=t$ if $i_2\neq 7$. Combining this with $a^{r_4\cdots r_{i_1}}_{32}=-1$, we deduce $s=-1$ and $p\neq 2$. In view of condition $(b_2)$ from Lemma~\ref{l-7chainspe-a}, such configurations cannot occur.

Step $4$. Assume exactly four vertices take label $-1$. Owing to $q_{33}=-1$, we partition the argument into ten subcases.

Step $4.1$. Let $q_{11}=q_{22}=q_{33}=q_{44}=-1$ and $q_{ii}\widetilde{q_{i,i+1}}=q_{jj}\widetilde{q_{j-1,j}}=1$ for all $i\in \{5,6\}$ and $j\in \{5,6,7\}$. Set $r\colon=\widetilde{q_{12}}$, $s\colon=\widetilde{q_{23}}$, $t\colon=\widetilde{q_{34}}$ and $q\colon=q_{55}$. Then we have $rs=1$ by $a_{13}^{r_2(X)}=0$, $tq^{-1}=1$ by $a_{35}^{r_4(X)}=0$ and $st \not=1$ by $a_{24}^{r_3(X)}=-1$. 
Hence, by $a_{32}^{r_4(X)}=-1$, we have $t=-1$ and $p\ne 2$. By Lemma~\ref{l-7chainspe-a} $(a_1)$, this configuration cannot occur. 

Step $4.2$. Let $q_{11}=q_{22}=q_{33}=q_{55}=-1$ and $q_{ii}\widetilde{q_{i,i+1}}=q_{jj}\widetilde{q_{j-1,j}}=1$ for all $i\in \{4,6\}$ and $j\in \{4,6,7\}$. Set $r\colon=\widetilde{q_{12}}$, $s\colon=\widetilde{q_{23}}$, $t\colon=q_{44}$ and $q\colon=q_{66}$. Since $X$ has a good $A_7$ neighborhood, we have $rs=tq=1$ and $st^{-1} \not=1$. 
Hence, it follows from $a_{34}^{r_2(X)}=-1$ that $s=-1$ and $p\ne 2$. By Lemma~\ref{l-7chainspe-a} $(c_8)$, this configuration is excluded.


Step $4.3$. Let $q_{11}=q_{22}=q_{33}=q_{66}=-1$ and $q_{ii}\widetilde{q_{i,i+1}}=q_{jj}\widetilde{q_{j-1,j}}=1$ for all $i\in \{4,5\}$ and $j\in \{4,5,7\}$. Set $r\colon=\widetilde{q_{12}}$, $s\colon=\widetilde{q_{23}}$, $t\colon=q_{44}$ and $q\colon=q_{77}$. By Lemma \ref{jslemma} and Definition \ref{defA7}, we have $rs=1$, $tq=1$ and $st^{-1} \not=1$. 
Then the reflection of $X$
\begin{align*}    
\setlength{\unitlength}{1mm}
X\colon~
\Dchainsevena{$-1$}{$s^{-1}$}{$-1$}{$s$}{$-1$}{$t^{-1}$}{$t$}{$t^{-1}$}{$t$}{$t^{-1}$}{$-1$}{$t$}{$t^{-1}$}
\quad \Rightarrow \quad
r_2(X)\colon~
\Dchainseven{$s^{-1}$}{$s$}{$-1$}{$s^{-1}$}{$s$}{$t^{-1}$}{$t$}{$t^{-1}$}{$t$}{$t^{-1}$}{$-1$}{$t$}{$t^{-1}$}
\end{align*}
together with the condition $a_{34}^{r_2(X)}=-1$ forces $s=-1$ and $p\ne 2$. Invoking Lemma~\ref{l-7chainspe-a} $(c_3)$, we rule out this configuration. 

Step $4.4$. Let $q_{11}=q_{22}=q_{33}=q_{77}=-1$ and $q_{ii}\widetilde{q_{i,i+1}}=q_{jj}\widetilde{q_{j-1,j}}=1$ for all $i\in \{4,5,6\}$ and $j\in \{4,5,6\}$. As in Step $4.3$ above, Lemma~\ref{l-7chainspe-a}$(c_2)$.

Step $4.5$. Let $q_{11}=q_{33}=q_{i_1,i_1}=q_{i_2,i_2}=-1$ for $i_1,i_2 \in \{4,5,6,7\}$ and $q_{ii}\widetilde{q_{i,i+1}}=q_{jj}\widetilde{q_{j-1,j}}=1$ for all $i\in \{2,4,5,6\}\verb|\|\{i_1,i_2\}$ and $j\in \{2,4,5,6,7\}\verb|\|\{i_1,i_2\}$. Set $r\colon=q_{22}$, $s\colon=\widetilde{q_{34}}$ and $t\colon=\widetilde{q_{i_1,i_1+1}}$. Then $rs\neq 1$ and $st=1$ by Definition \ref{defA7}. And $\widetilde{q_{i_2,i_2+1}}=q$ if $i_2\neq 7$. Thus, the relation $a^{r_4\cdots r_{i_1}(X)}_{32}=-1$ forces $s=-1$ and $q\neq 2$. Lemma~\ref{l-7chainspe-a} $(b_1)$ teliminates all such cases.

 Step $4.6$. Let $q_{22}=q_{33}=q_{55}=q_{77}=-1$ and $q_{ii}\widetilde{q_{i,i+1}}=q_{jj}\widetilde{q_{j-1,j}}=1$ for all $i\in \{1,4,6\}$ and $j\in \{4,6\}$. Set $r\colon=q_{11}$, $s\colon=\widetilde{q_{23}}$, $t\colon=q_{44}$ and $q\colon=q_{66}$. Here we get $r^{-1}s=tq=1$ and $st^{-1} \not=1$. The remaining analysis follows the argument of Step $4.3$, and Lemma~\ref{l-7chainspe-a} $(c_7)$ rules out this case.

Step $4.7$. Let $q_{22}=q_{33}=q_{44}=q_{kk}=-1$ for $k\in \{5,6,7\}$ and $q_{ii}\widetilde{q_{i,i+1}}=q_{jj}\widetilde{q_{j-1,j}}=1$ for all $i\in \{1,5,6\}\verb|\|\{k\}$ and $j\in \{5,6,7\}\verb|\|\{k\}$. Set $r\colon=q_{11}$, $s\colon=\widetilde{q_{23}}$, $t\colon=\widetilde{q_{34}}$, and $u\colon=\widetilde{q_{45}}$. From Definition \ref{defA7}, we deduce $r^{-1}s=tu=1$ and $st \not=1$. When $k\neq7$, the condition $A^{r_k(X)}=A_7$ yields $\widetilde{q_{k,k+1}}=u$. Hence we obtain $t=-1$ and $p\ne 2$ by $a_{32}^{r_4(X)}=-1$. And Lemma~\ref{l-7chainspe-a} $(a_2)$, excludes all such configurations.  

Step $4.8$. Let $q_{22}=q_{33}=q_{55}=q_{66}=-1$ and $q_{ii}\widetilde{q_{i,i+1}}=q_{jj}\widetilde{q_{j-1,j}}=1$ for all $i\in \{1,4\}$ and $j\in \{4,7\}$. Set $r\colon=q_{11}$, $s\colon=\widetilde{q_{23}}$, $t\colon=q_{44}$ and $q\colon=q_{77}$. From Definition \ref{defA7}, we deduce $r^{-1}s=t^{-1}u=uq^{-1}=1$ and $st^{-1} \not=1$. The subsequent discussion is analogous to that in Step $4.3$. Lemma~\ref{l-7chainspe-a} $(c_6)$ rules out this case.

Step $4.9$. Let $q_{22}=q_{33}=q_{66}=q_{77}=-1$ and $q_{ii}\widetilde{q_{i,i+1}}=q_{jj}\widetilde{q_{j-1,j}}=1$ for all $i\in \{1,4,5\}$ and $j\in \{4,5\}$. Set $r\colon=q_{11}$, $s\colon=\widetilde{q_{23}}$, $t\colon=q_{44}$ and $q\colon=\widetilde{q_{67}}$. From Definition \ref{defA7}, we have $r^{-1}s=t^{-1}q=1$ and $st^{-1} \not=1$. 
Following the argument of Step $4.3$, the identity $a_{34}^{r_2(X)}=-1$ forces $s=-1$ and $p\ne 2$ and Lemma~\ref{l-7chainspe-a} $(c_4)$ excludes this case. 

Step $4.10$. Let $q_{33}=q_{i_1,i_1}=q_{i_2,i_2}=q_{i_3,i_3}=-1$ for $i_1,i_2,i_3\in \{4,5,6,7\}$ and $q_{ii}\widetilde{q_{i,i+1}}=q_{jj}\widetilde{q_{j-1,j}}=1$ for all $i\in \{1,2,4,5,6\}\verb|\|\{i_1,i_2,i_3\}$ and $j\in \{2,4,5,6,7\}\verb|\|\{i_1,i_2,i_3\}$. Set $r\colon=q_{11}$, $s\colon=\widetilde{q_{34}}$,$t\colon=\widetilde{q_{i_1,i_1+1}}$ and $q\colon=\widetilde{q_{i_2,i_2+1}}$. By Lemma \ref{jslemma}, we obtain that $st=tq=1$ and $r^{-1}s \not=1$. And $q_{77}=q$ if $i_3 \neq 7$. Then we obtain $s=-1$ and $p\ne 2$ from $a_{32}^{r_4\cdots r_{i_1}(X)}=-1$. Combined with Lemma~\ref{l-7chainspe-a}$(b_2)$, such a configuration cannot occur.

Step $5$. Suppose exactly five vertices are labelled $-1$. Since $q_{33}=-1$, we divide the argument into seven subsequent cases.

Step $5.1$. Let $q_{11}=q_{22}=q_{33}=q_{44}=q_{kk}=-1$, $q_{ii}\widetilde{q_{i,i+1}}=q_{jj}\widetilde{q_{j-1,j}}=1$ for $i\in \{5,6\}\verb|\|\{k\}$ and $j\in \{6,7\}\verb|\|\{k\}$. Set $r\colon=\widetilde{q_{12}}$, $s\colon=\widetilde{q_{23}}$, $t\colon=\widetilde{q_{34}}$, and $u\colon=\widetilde{q_{45}}$. Then $rs=1$ by $a_{13}^{r_2(X)}=0$, $st \not=1$ by $a_{24}^{r_3(X)}=-1$, and $tu=1$ by $a_{35}^{r_4(X)}=0$. And for $k\neq7$, $A^{r_k(X)}=A_7$ yields $\widetilde{q_{k,k+1}}=u^{-1}$. Hence $a_{32}^{r_4(X)}=-1$ forces $t=-1$ and $p\ne 2$. Combined with Lemma~\ref{l-7chainspe-a} $(a_1)$, these cases cannot occur.   

By Lemma~\ref{l-7chainspe-a} $(b_2)$ and arguments analogous to those above, the configuration satisfying $q_{33}=q_{44}=q_{55}=q_{66}=q_{77}=-1$, $q_{22}\widetilde{q_{12}}=1$, and $q_{ii}\widetilde{q_{i,i+1}}=1$ for $i\in \{2,4,5\}$ is likewise impossible.

Step $5.2$. Let $q_{11}=q_{22}=q_{33}=q_{55}=q_{66}=-1$, $q_{44}\widetilde{q_{45}}=1$ , together with $q_{jj}\widetilde{q_{j-1,j}}=1$ for $j\in \{4,7\}$. Set $r\colon=\widetilde{q_{12}}$, $s\colon=\widetilde{q_{23}}$, $t\colon=q_{44}$, $u\colon=\widetilde{q_{56}}$ and $q\colon=q_{77}$. By Definition \ref{defA7}, we have $rs=t^{-1}u=uq^{-1}=1$ and $st^{-1} \not=1$. Then the reflection of $X$
\begin{align*}    
\setlength{\unitlength}{1mm}
X\colon~
\Dchainsevena{$-1$}{$s^{-1}$}{$-1$}{$s$}{$-1$}{$t^{-1}$}{$t$}{$t^{-1}$}{$-1$}{$t$}{$-1$}{$t^{-1}$}{$t$}
\quad \Rightarrow \quad
r_2(X)\colon~
\Dchainseven{$s^{-1}$}{$s$}{$-1$}{$s^{-1}$}{$s$}{$t^{-1}$}{$t$}{$t^{-1}$}{$-1$}{$t$}{$-1$}{$t^{-1}$}{$t$}
\end{align*}
implies $s=-1$ and $p\ne 2$ by $a_{34}^{r_2(X)}=-1$. Combined with Lemma~\ref{l-7chainspe-a} $(c_6)$, such a configuration cannot occur.

A symmetric case arises when $q_{22}=q_{33}=q_{55}=q_{66}=q_{77}=-1$, $q_{44}\widetilde{q_{34}}=1$ and $q_{ii}\widetilde{q_{i,i+1}}=1$ for $i\in \{1,4\}$. By the same reasoning as before, this case is excluded by Lemma~\ref{l-7chainspe-a} $(c_5)$.

Step $5.3$. Let $q_{11}=q_{22}=q_{33}=q_{55}=q_{77}=-1$ and $q_{ii}\widetilde{q_{i,i+1}}=q_{jj}\widetilde{q_{j-1,j}}=1$ for all $i\in \{4,6\}$ and $j\in \{4,6\}$.  
We impose the same reflection constraints as in the prior step, whence Lemma~\ref{l-7chainspe-a}$(c_7)$ eliminates this configuration

Step $5.4$. Let $q_{11}=q_{22}=q_{33}=q_{66}=q_{77}=-1$ and $q_{ii}\widetilde{q_{i,i+1}}=q_{jj}\widetilde{q_{j-1,j}}=1$ for all $i\in \{4,5\}$ and $j\in \{4,5\}$. Carrying out computations parallel to those of Step $5.3$, Lemma~\ref{l-7chainspe-a}(c$_4$) rules out this case.

Step $5.5$. Let $q_{11}=q_{33}=q_{i_1,i_1}=q_{i_2,i_2}=q_{i_3,i_3}=-1$ for $i_1,i_2,i_3 \in \{4,5,6,7\}$ and  $q_{ii}\widetilde{q_{i+1,i}}=q_{jj}\widetilde{q_{j-1,j}}=1$ for all $i \in \{2,4,5,6\}\verb|\|\{i_1,i_2,i_3\}$ and $j\in \{2,4,5,6,7\}\verb|\|\{i_1,i_2,i_3\}$. 
Set $r\colon=q_{22}$, $s\colon=\widetilde{q_{34}}$, $t\colon=\widetilde{q_{45}}$, $u\colon=\widetilde{q_{56}}$ and $q\colon=q_{77}$. By Definition \ref{defA7}, we have $st=tu=uq^{-1}=1$ and $r^{-1}s \not=1$. Then the reflection of $X$
\begin{align*}    
\setlength{\unitlength}{1mm}
X\colon~
\Dchainsevenf{$-1$}{$r^{-1}$}{$r$}{$r^{-1}$}{$-1$}{$s$}{$-1$}{$s^{-1}$}{$-1$}{$s$}{$-1$}{$s^{-1}$}{$s$}
\quad \Rightarrow \quad
r_4(X)\colon~
\Dchainseven{$-1$}{$r^{-1}$}{$r$}{$r^{-1}$}{$s$}{$s^{-1}$}{$-1$}{$s$}{$s^{-1}$}{$s$}{$-1$}{$s^{-1}$}{$s$}
\end{align*}
together with the condition $a_{32}^{r_4(X)}=-1$,
forces $s=-1$ and $p\ne 2$. Lemma~\ref{l-7chainspe-a}$(b_1)$ excludes all such configurations.

Step $5.6$. Let $q_{22}=q_{33}=q_{44}=q_{55}=q_{66}=-1$ and $q_{11}\widetilde{q_{12}}=q_{77}\widetilde{q_{67}}=1$. Set $r\colon=q_{11}$, $s\colon=\widetilde{q_{23}}$, $t\colon=\widetilde{q_{34}}$, $u\colon=\widetilde{q_{45}}$, $v\colon=\widetilde{q_{56}}$ and $q\colon=q_{77}$. By Definition \ref{defA7}, we have $r^{-1}s=tu=uv=vq^{-1}=1$ and $st \not=1$. Then the reflection of $X$
\begin{align*}    
\setlength{\unitlength}{1mm}
X\colon~
\Dchainsevenf{$s$}{$s^{-1}$}{$-1$}{$s$}{$-1$}{$t$}{$-1$}{$t^{-1}$}{$-1$}{$t$}{$-1$}{$t^{-1}$}{$t$}
\quad \Rightarrow \quad
r_4(X)\colon~
\Dchainseven{$s$}{$s^{-1}$}{$-1$}{$s$}{$t$}{$t^{-1}$}{$-1$}{$t$}{$t^{-1}$}{$t$}{$-1$}{$t^{-1}$}{$t$}
\end{align*}
together with the condition $a_{32}^{r_4(X)}=-1$, forces $t=-1$ and $p\ne 2$. Lemma~\ref{l-7chainspe-a} $(a_2)$ rules out this case.

Step $5.7$. Let $q_{22}=q_{33}=q_{44}=q_{66}=q_{77}=-1$, $q_{55}\widetilde{q_{45}}=1$ and $q_{ii}\widetilde{q_{i,i+1}}=1$ for $i\in \{1,5\}$. Set $r\colon=q_{11}$, $s\colon=\widetilde{q_{23}}$, $t\colon=\widetilde{q_{34}}$, $u\colon=q_{55}$ and $q\colon=\widetilde{q_{67}}$. By Lemma\ref{jslemma}, we obtain $r^{-1}s=tu^{-1}=u^{-1}q=1$ and $st \not=1$. This case is excluded by reasoning parallel to Step $5.6$, using identical reflection conditions.

Step $6$. Assume exactly six vertices bear the label $-1$. In view of $q_{33}=-1$, we split the analysis into four subcases.

Step $6.1$. Let $q_{11}=q_{22}=q_{33}=q_{44}=q_{55}=q_{66}=-1$ and $q_{77}\widetilde{q_{67}}=1$. Set $r\colon=\widetilde{q_{12}}$, $s\colon=\widetilde{q_{23}}$, $t\colon=\widetilde{q_{34}}$, $u\colon=\widetilde{q_{45}}$, $v\colon=\widetilde{q_{56}}$ and $q\colon=q_{77}$. By Lemma \ref{jslemma} and Definition \ref{defA7}, we have $rs=tu=uv=vq^{-1}=1$ and $st \not=1$. Then the reflection of $X$
\begin{align*}    
\setlength{\unitlength}{1mm}
X\colon~
\Dchainsevenf{$-1$}{$s^{-1}$}{$-1$}{$s$}{$-1$}{$t$}{$-1$}{$t^{-1}$}{$-1$}{$t$}{$-1$}{$t^{-1}$}{$t$}
\quad \Rightarrow \quad
r_4(X)\colon~
\Dchainseven{$-1$}{$s^{-1}$}{$-1$}{$s$}{$t$}{$t^{-1}$}{$-1$}{$t$}{$t^{-1}$}{$t$}{$-1$}{$t^{-1}$}{$t$}
\end{align*}
implies $t=-1$ and $p\ne 2$ by $a_{32}^{r_4(X)}=-1$. Lemma~\ref{l-7chainspe-a} $(a_1)$ precludes such a case. 

Consider the symmetric case that $q_{22}=q_{33}=q_{44}=q_{55}=q_{66}=q_{77}=-1$ and $q_{11}\widetilde{q_{12}}=1$. Similarly to the proof above, this case is ruled out by Lemma~\ref{l-7chainspe-a}$(a_2)$.

Step $6.2$. Let $q_{11}=q_{22}=q_{33}=q_{44}=q_{55}=q_{77}=-1$ and $q_{66}\widetilde{q_{56}}=q_{66}\widetilde{q_{67}}=1$. Set $r\colon=\widetilde{q_{12}}$, $s\colon=\widetilde{q_{23}}$, $t\colon=\widetilde{q_{34}}$, $u\colon=\widetilde{q_{45}}$ and $q\colon=q_{66}$. Then $rs=tu=uq^{-1}=1$ and $st \not=1$. Hence the reflection of $X$
\begin{align*}    
\setlength{\unitlength}{1mm}
X\colon~
\Dchainsevenf{$-1$}{$s^{-1}$}{$-1$}{$s$}{$-1$}{$t$}{$-1$}{$t^{-1}$}{$-1$}{$t$}{$t^{-1}$}{$t$}{$-1$}
\quad \Rightarrow \quad
r_4(X)\colon~
\Dchainseven{$-1$}{$s^{-1}$}{$-1$}{$s$}{$t$}{$t^{-1}$}{$-1$}{$t$}{$t^{-1}$}{$t$}{$t^{-1}$}{$t$}{$-1$}
\end{align*}
implies $t=-1$ and $p\ne 2$ by $a_{32}^{r_4(X)}=-1$. Lemma~\ref{l-7chainspe-a}$(a_1)$ excludes this case.

Consider the symmetric case that $q_{11}=q_{33}=q_{44}=q_{55}=q_{66}=q_{77}=-1$ and $q_{22}\widetilde{q_{12}}=q_{22}\widetilde{q_{23}}=1$. By an argument analogous to the one above, Lemma~\ref{l-7chainspe-a}$(b_1)$ excludes this case.

Step $6.3$. Let $q_{11}=q_{22}=q_{33}=q_{44}=q_{66}=q_{77}=-1$ and $q_{55}\widetilde{q_{45}}=q_{55}\widetilde{q_{56}}=1$. Set $r\colon=\widetilde{q_{12}}$, $s\colon=\widetilde{q_{23}}$, $t\colon=\widetilde{q_{34}}$, $u\colon=q_{55}$ and $q\colon=\widetilde{q_{67}}$. Since $X$ has a good $A_7$ neighborhood, we have $rs=tu^{-1}=u^{-1}q=1$ and $st \not=1$. Hence $t=-1$ and $p\ne 2$ by $a_{32}^{r_4(X)}=-1$. The same reflection conditions and results from step $6.2$ shows that this configuration cannot occur.

Step $6.4$. Let $q_{11}=q_{22}=q_{33}=q_{55}=q_{66}=q_{77}=-1$ and $q_{44}\widetilde{q_{34}}=q_{44}\widetilde{q_{45}}=1$. Set $r\colon=\widetilde{q_{12}}$, $s\colon=\widetilde{q_{23}}$, $t\colon=q_{44}$, $u\colon=\widetilde{q_{56}}$ and $q\colon=\widetilde{q_{67}}$. By Definition \ref{defA7}, we obtain $rs=t^{-1}u=uq=1$ and $st^{-1} \not=1$.
Then the reflection of $X$
\begin{align*}    
\setlength{\unitlength}{1mm}
X\colon~
\Dchainsevena{$-1$}{$s^{-1}$}{$-1$}{$s$}{$-1$}{$t^{-1}$}{$t$}{$t^{-1}$}{$-1$}{$t$}{$-1$}{$t^{-1}$}{$-1$}
\quad \Rightarrow \quad
r_2(X)\colon~
\Dchainseven{$s^{-1}$}{$s$}{$-1$}{$s^{-1}$}{$s$}{$t^{-1}$}{$t$}{$t^{-1}$}{$-1$}{$t$}{$-1$}{$t^{-1}$}{$-1$}
\end{align*}
implies $s=-1$ and $p\ne 2$ by $a_{34}^{r_2(X)}=-1$. Lemma~\ref{l-7chainspe-a}$(c_5)$ eliminates this case.

Step $7$. Suppose all vertices are labeled $-1$. That is $q_{11}=q_{22}=q_{33}=q_{44}=q_{55}=q_{66}=q_{77}=-1$. Set $r\colon=\widetilde{q_{12}}$, $s\colon=\widetilde{q_{23}}$, $t\colon=\widetilde{q_{34}}$, $u\colon=\widetilde{q_{45}}$, $v\colon=\widetilde{q_{56}}$, and $q\colon=\widetilde{q_{67}}$. Then in this case, we have $rs=tu=uv=vq=1$ and $st \not=1$. Hence the reflection of $X$
\begin{align*}    
\setlength{\unitlength}{1mm}
X\colon~
\Dchainsevenf{$-1$}{$s^{-1}$}{$-1$}{$s$}{$-1$}{$t$}{$-1$}{$t^{-1}$}{$-1$}{$t$}{$-1$}{$t^{-1}$}{$-1$}
\quad \Rightarrow \quad
r_4(X)\colon~
\Dchainseven{$-1$}{$s^{-1}$}{$-1$}{$s$}{$t$}{$t^{-1}$}{$-1$}{$t$}{$t^{-1}$}{$t$}{$-1$}{$t^{-1}$}{$-1$}
\end{align*}
implies $t=-1$ and $p\ne 2$ by $a_{32}^{r_4(X)}=-1$. Lemma~\ref{l-7chainspe-a}$(a_1)$ eliminates this case.
\end{proof}

\begin{theorem}\label{theo.rank8}
Suppose $\roots^{[M]}$ is the finite root system of a sporadic finite Cartan graph of rank $8$. Then every admissible generalized Dynkin diagram of $V$ appears in the $22$nd row of the table \ref{tab.1}.
\end{theorem}
\begin{proof}
By Theorem \ref{thm:goodnei}, $C(M)$ is standard of type $E_8$. Then $A^{X}=E_{8}$ yields $(2)_{q_{22}}(q_{22}\widetilde{q_{24}}-1)=(2)_{q_{ii}}(q_{ii}\widetilde{q_{i,i+1}}-1)=(2)_{q_{jj}}(q_{jj}\widetilde{q_{j-1,j}}-1)=0$ for all $i\in \{1,3,4,5,6,7\}$ and $j\in \{2,4,5,6,7,8\}$. Set $q\colon=\widetilde{q_{12}}$. By $A^X=E_8$, one obtains $\cD=\cD^8_{22,1}$ if $q\neq-1$ and $\cD=\cD^8_{22',1}$ if $q=-1$.
\end{proof}

\begin{proof}[\textbf{Proof of Theorem~\ref{theoremking}}]
The forward implication is straightforward. Indeed, suppose the generalized Dynkin diagram $\mathcal{D}$ of $V$ lies in any row $s$ or $s'$ of Table~\ref{tab.1}. By Lemmas~\ref{lem:aij} and~\ref{jslemma}, $M$ admits all reflections, so we may attach a Cartan graph $\mathcal{C}(M)$ to $V$ via Theorem~\ref{theo.regualrcar}. This Cartan graph $\mathcal{C}(M)$ coincides with the one constructed from the Dynkin diagrams in row $s$ of Table~4 in~\cite{HeckCAS}, as recorded in the third column of Table~\ref{tab.2}. Furthermore, the arithmetic root systems associated to this Cartan graph are finite; see~\cite[Theorem~22]{HeckCAS}. Consequently, the Weyl groupoid $\mathcal{W}(\mathcal{C}(M))$ is finite, and Theorem~\ref{thm:rootofR_M} implies that $\mathcal{B}(V)$ possesses a finite set of roots $\Delta^{[M]}$.

Next we prove the converse direction. Assume that $\cB(V)$ has a finite set of roots ${\roots}^{[M]}$. By Theorem ~\ref{thm:rootofR_M}, $\cC(M)$ is a finite Cartan graph. According to Theorem \ref{rootsystems}, any such finite Cartan graph $\cC(M)$ is either non-sporadic or sporadic. If $\cC(M)$ is non-sporadic , Theorem \ref{Dydiagrams} implies that the generalized Dynkin diagram $\cD$ of $V$ belongs to rows $1$ to $10$ of Table \ref{tab.1}. If $\cC(M)$ is sporadic, it follows from Theorem \ref{theo.rank5}, \ref{theo.rank6}, \ref{theo.rank7}, and \ref{theo.rank8} that the generalized Dynkin diagram $\cD$ of $V$ lies in rows $11$ to $22$ of Table \ref{tab.1}. 
\end{proof}

\begin{remark}
If a generalized Dynkin diagram $\mathcal{D}$ appears in a row of Table \ref{tab.1}, then that row contains exactly the generalized Dynkin diagrams of all points of $\mathcal{C}(M)$. The corresponding exchange graphs for the sporadic case of $\mathcal{C}(M)$ are listed in Table \ref{tab.2}.
\end{remark}

\section{Generalized Dynkin diagrams and exchanges graphs}
\input{table}

\section{Appendix}\label{app:rslist}
\begin{appendix}
\textnormal{For brevity, we adopt the product notation \(\prod_{i=1}^r i^{x_i}\) as a shorthand for the sum \(\sum_{i=1}^r x_i \alpha_{r + 1 - i}\).}

\hspace{12pt}\\
\baselineskip=10pt
\begin{tiny}
\noindent
\textnormal{ \textbf{Rank $3 \colon$}
Nr. $1$ with 6 positive roots (type $A_3$)$\colon$ $1$, $2$, $3$, $12$, $13$, $123$\\
Nr. $2$ with 7 positive roots (type $D'(3,1)$)$\colon$ $1$, $2$, $3$, $12$, $13$, $23$, $123$\\
Nr. $3$ with 8 positive roots (type $D'(3,2)$) $\colon$ $1$, $2$, $3$, $12$, $13$, $1^22$, $123$, $1^223$\\ 
Nr. $4$ with 9 positive roots (type $C_3$) $\colon$  $1$, $2$, $3$, $12$, $13$, $1^22$, $123$, $1^223$, $1^223^2$\\
Nr. $5$ with 9 positive roots (type $B_3$) $\colon$  $1$, $2$, $3$, $12$, $23$, $1^22$, $123$, $1^223$, $1^22^23$
}
\end{tiny}

\hspace{12pt}\\
\baselineskip=10pt
\begin{tiny}
\noindent
\textnormal{\textbf{Rank $4 \colon$}
Nr. $1$ with 10 positive roots (type $A_4$)$\colon$ $1$, $2$, $3$, $4$, $12$, $23$, $34$, $123$, $234$, $1234$\\
Nr. $2$ with 12 positive roots (type $D_4$)$\colon$ $1$, $2$, $3$, $4$, $13$, $23$, $34$, $123$, $134$, $234$, $1234$, $123^24$\\
Nr. $3$ with 13 positive roots (type $D'(4,1)$)$\colon$ $1$, $2$, $3$, $4$, $12$, $13$, $23$, $34$, $123$, $134$, $234$, $1234$, $123^24$\\
Nr. $4$ with 14 positive roots (type $D'(4,2)$)$\colon$ $1$, $2$, $3$, $4$, $12$, $13$, $23$, $34$, $123$, $134$, $234$, $123^2$, $1234$, $123^24$\\
Nr. $5$ with 15 positive roots (type $D'(4,3)$)$\colon$ $1$, $2$, $3$, $4$, $12$, $13$, $23$, $34$, $123$, $134$, $234$, $123^2$, $1234$, $123^24$, $123^24^2$\\
Nr. $6$ with 16 positive roots (type $C_4$)$\colon$ $1$, $2$, $3$, $4$, $12$, $13$, $34$, $12^2$, $123$, $134$, $12^23$, $1234$, $12^23^2$ $12^234$, $12^23^24$, $12^23^24^2$\\
Nr. $7$ with 16 positive roots (type $B_4$)$\colon$ $1$, $2$, $3$, $4$, $12$, $23$, $34$, $1^22$, $123$, $234$, $1^223$, $1234$, $1^22^23$, $1^2234$, $1^22^234$, $1^22^23^24$
}
\end{tiny}

\hspace{12pt}\\
\baselineskip=10pt
\begin{tiny}
\noindent
\textnormal{\textbf{Rank $5 \colon$}
Nr. $1$ with 15 positive roots (type $A_5$)$\colon$ $1$, $2$, $3$, $4$, $5$, $12$, $23$, $34$, $45$, $123$, $234$, $345$, $1234$, $2345$, $12345$\\
Nr. $2$ with 20 positive roots (type $D_5$)$\colon$ $1$, $2$, $3$, $4$, $5$, $13$, $23$, $34$, $45$, $123$, $134$, $234$, $345$, $1234$, $1345$, $2345$, $123^24$, $12345$, $123^245$, $123^24^25$\\
Nr. $3$ with 21 positive roots (type $D'(5,1)$)$\colon$ $1$, $2$, $3$, $4$, $5$, $12$, $13$, $23$, $34$, $45$, $123$, $134$, $234$, $345$, $1234$, $1345$, $2345$, $123^24$, $12345$, $123^245$, $123^24^25$\\
Nr. $4$ with 22 positive roots (type $D'(5,2)$)$\colon$ $1$, $2$, $3$, $4$, $5$, $12$, $13$, $23$, $34$, $45$, $123$, $134$, $234$, $345$, $123^2$, $1234$, $1345$, $2345$, $123^24$, $12345$, $123^245$, $123^24^25$\\
Nr. $5$ with 23 positive roots (type $D'(5,3)$)$\colon$ $1$, $2$, $3$, $4$, $5$, $12$, $13$, $23$, $34$, $45$, $123$, $134$, $234$, $345$, $123^2$, $1234$, $1345$, $2345$, $123^24$, $12345$, $123^24^2$, $123^245$, $123^24^25$\\
Nr. $6$ with 24 positive roots (type $D'(5,4)$)$\colon$ $1$, $2$, $3$, $4$, $5$, $12$, $13$, $23$, $34$, $45$, $123$, $134$, $234$, $345$, $123^2$, $1234$, $1345$, $2345$, $123^24$, $12345$, $123^24^2$, $123^245$, $123^24^25$, $123^24^25^2$\\
Nr. $7$ with 25 positive roots (type $C_5$)$\colon$ $1$, $2$, $3$, $4$, $5$, $12$, $23$, $34$, $45$, $12^2$, $123$, $234$, $345$, $12^23$, $1234$, $2345$, $12^23^2$, $12^234$, $12345$, $12^23^24$, $12^2345$, $12^23^24^2$, $12^23^245$, $12^23^24^25$, $12^23^24^25^2$\\
Nr. $8$ with 25 positive roots (type $B_5$)$\colon$ $1$, $2$, $3$, $4$, $5$, $12$, $23$, $34$, $45$, $1^22$, $123$, $234$, $345$, $1^223$, $1234$, $2345$, $1^22^23$, $1^2234$, $12345$, $1^22^234$, $1^22345$, $1^22^23^24$, $1^22^2345$, $1^22^23^245$, $1^22^23^24^25$
}
\end{tiny}

\hspace{12pt}\\
\baselineskip=10pt
\begin{tiny}
\noindent
\textnormal{\textbf{Rank $6 \colon$}
Nr. $1$ with 21 positive roots (type $A_6$)$\colon$ $1$, $2$, $3$, $4$, $5$, $6$, $12$, $23$, $34$, $45$, $56$, $123$, $234$, $345$, $456$, $1234$, $2345$, $3456$, $12345$, $23456$, $123456$\\
Nr. $2$ with 30 positive roots (type $D_6$)$\colon$ $1$, $2$, $3$, $4$, $5$, $6$, $13$, $23$, $34$, $45$, $56$, $123$, $134$, $234$, $345$, $456$, $1234$, $1345$, $2345$, $3456$, $123^24$, $12345$, $13456$, $23456$, $123^245$, $123456$, $123^24^25$, $123^2456$, $123^24^256$, $123^24^25^26$\\
Nr. $3$ with 31 positive roots (type $D'(6,1)$)$\colon$ $1$, $2$, $3$, $4$, $5$, $6$, $12$, $13$, $23$, $34$, $45$, $56$, $123$, $134$, $234$, $345$, $456$, $1234$, $1345$, $2345$, $3456$, $123^24$, $12345$, $13456$, $23456$, $123^245$, $123456$, $123^24^25$, $123^2456$, $123^24^256$, $123^24^25^26$\\
Nr. $4$ with 32 positive roots (type $D'(6,2)$)$\colon$ $1$, $2$, $3$, $4$, $5$, $6$, $12$, $13$, $23$, $34$, $45$, $56$, $123$, $134$, $234$, $345$, $456$, $123^2$, $1234$, $1345$, $2345$, $3456$, $123^24$, $12345$, $13456$, $23456$, $123^245$, $123456$, $123^24^25$, $123^2456$, $123^24^256$, $123^24^25^26$\\
Nr. $5$ with 33 positive roots (type $D'(6,3)$)$\colon$ $1$, $2$, $3$, $4$, $5$, $6$, $12$, $13$, $23$, $34$, $45$, $56$, $123$, $134$, $234$, $345$, $456$, $123^2$, $1234$, $1345$, $2345$, $3456$, $123^24$, $12345$, $13456$, $23456$, $123^24^2$, $123^245$, $123456$, $123^24^25$, $123^2456$, $123^24^256$, $123^24^25^26$\\
Nr. $6$ with 34 positive roots (type $D'(6,4)$)$\colon$ $1$, $2$, $3$, $4$, $5$, $6$, $12$, $13$, $23$, $34$, $45$, $56$, $123$, $134$, $234$, $345$, $456$, $123^2$, $1234$, $1345$, $2345$, $3456$, $123^24$, $12345$, $13456$, $23456$, $123^24^2$, $123^245$, $123456$, $123^24^25$, $123^2456$, $123^24^25^2$, $123^24^256$, $123^24^25^26$\\
Nr. $7$ with 35 positive roots (type $D'(6,5)$)$\colon$ $1$, $2$, $3$, $4$, $5$, $6$, $12$, $13$, $23$, $34$, $45$, $56$, $123$, $134$, $234$, $345$, $456$, $123^2$, $1234$, $1345$, $2345$, $3456$, $123^24$, $12345$, $13456$, $23456$, $123^24^2$, $123^245$, $123456$, $123^24^25$, $123^2456$, $123^24^25^2$, $123^24^256$, $123^24^25^26$, $123^24^25^26^2$\\
Nr. $8$ with 36 positive roots (type $C_6$)$\colon$ $1$, $2$, $3$, $4$, $5$, $6$, $12$, $23$, $34$, $45$, $56$, $12^2$, $123$, $234$, $345$, $456$, $12^23$, $1234$, $2345$, $3456$, $12^23^2$, $12^234$, $12345$, $23456$, $12^23^24$, $12^2345$, $123456$, $12^23^24^2$, $12^23^245$, $12^23456$, $12^23^24^25$, $12^23^2456$, $12^23^24^25^2$, $12^23^24^256$, $12^23^24^25^26$, $12^23^24^25^26^2$\\ 
Nr. $9$ with 36 positive roots (type $B_6$)$\colon$ $1$, $2$, $3$, $4$, $5$, $6$, $12$, $23$, $34$, $45$, $56$, $1^22$, $123$, $234$, $345$, $456$, $1^223$, $1234$, $2345$, $3456$, $1^22^23$, $1^2234$, $12345$, $23456$, $1^22^234$, $1^22345$, $123456$, $1^22^23^24$, $1^22^2345$, $1^223456$, $1^22^23^245$, $1^22^23456$, $1^22^23^24^25$, $1^22^23^2456$, $1^22^23^24^256$, $1^22^23^24^25^26$
}
\end{tiny}

\hspace{12pt}\\
\baselineskip=10pt
\begin{tiny}
\noindent
\textnormal{\textbf{Rank $7\colon$}
Nr. $1$ with 28 positive roots (type $A_7$)$\colon$ $1$, $2$, $3$, $4$, $5$, $6$, $7$, $12$, $23$, $34$, $45$, $56$, $67$, $123$, $234$, $345$, $456$, $567$, $1234$, $2345$, $3456$, $4567$, $12345$, $23456$, $34567$, $123456$, $234567$, $1234567$\\
Nr. $2$ with 42 positive roots (type $D_7$)$\colon$ $1$, $2$, $3$, $4$, $5$, $6$, $7$, $13$, $23$, $34$, $45$, $56$, $67$, $123$, $134$, $234$, $345$, $456$, $567$, $1234$, $1345$, $2345$, $3456$, $4567$, $123^24$, $12345$, $13456$, $23456$, $34567$, $123^245$, $123456$, $134567$, $234567$, $123^24^25$, $123^2456$, $1234567$, $123^24^256$, $123^24567$, $123^24^25^26$, $123^24^2567$, $123^24^25^267$, $123^24^25^26^27$\\
Nr. $3$ with 43 positive roots (type $D'(7,1)$)$\colon$ $1$, $2$, $3$, $4$, $5$, $6$, $7$, $12$, $13$, $23$, $34$, $45$, $56$, $67$, $123$, $134$, $234$, $345$, $456$, $567$, $1234$, $1345$, $2345$, $3456$, $4567$, $123^24$, $12345$, $13456$, $23456$, $34567$, $123^245$, $123456$, $134567$, $234567$, $123^24^25$, $123^2456$, $1234567$, $123^24^256$, $123^24567$, $123^24^25^26$, $123^24^2567$, $123^24^25^267$, $123^24^25^26^27$\\
Nr. $4$ with 44 positive roots (type $D'(7,2)$)$\colon$ $1$, $2$, $3$, $4$, $5$, $6$, $7$, $12$, $13$, $23$, $34$, $45$, $56$, $67$, $123$, $134$, $234$, $345$, $456$, $567$, $123^2$, $1234$, $1345$, $2345$, $3456$, $4567$, $123^24$, $12345$, $13456$, $23456$, $34567$, $123^245$, $123456$, $134567$, $234567$, $123^24^25$, $123^2456$, $1234567$, $123^24^256$, $123^24567$, $123^24^25^26$, $123^24^2567$, $123^24^25^267$, $123^24^25^26^27$\\
Nr. $5$ with 45 positive roots (type $D'(7,3)$)$\colon$ $1$, $2$, $3$, $4$, $5$, $6$, $7$, $12$, $13$, $23$, $34$, $45$, $56$, $67$, $123$, $134$, $234$, $345$, $456$, $567$, $123^2$, $1234$, $1345$, $2345$, $3456$, $4567$, $123^24$, $12345$, $13456$, $23456$, $34567$, $123^24^2$, $123^245$, $123456$, $134567$, $234567$, $123^24^25$, $123^2456$, $1234567$, $123^24^256$, $123^24567$, $123^24^25^26$, $123^24^2567$, $123^24^25^267$, $123^24^25^26^27$\\
Nr. $6$ with 46 positive roots (type $D'(7,4)$)$\colon$ $1$, $2$, $3$, $4$, $5$, $6$, $7$, $12$, $13$, $23$, $34$, $45$, $56$, $67$, $123$, $134$, $234$, $345$, $456$, $567$, $123^2$, $1234$, $1345$, $2345$, $3456$, $4567$, $123^24$, $12345$, $13456$, $23456$, $34567$, $123^24^2$, $123^245$, $123456$, $134567$, $234567$, $123^24^25$, $123^2456$, $1234567$, $123^24^25^2$, $123^24^256$, $123^24567$, $123^24^25^26$, $123^24^2567$, $123^24^25^267$, $123^24^25^26^27$\\
Nr. $7$ with 47 positive roots (type $D'(7,5)$)$\colon$ $1$, $2$, $3$, $4$, $5$, $6$, $7$, $12$, $13$, $23$, $34$, $45$, $56$, $67$, $123$, $134$, $234$, $345$, $456$, $567$, $123^2$, $1234$, $1345$, $2345$, $3456$, $4567$, $123^24$, $12345$, $13456$, $23456$, $34567$, $123^24^2$, $123^245$, $123456$, $134567$, $234567$, $123^24^25$, $123^2456$, $1234567$, $123^24^25^2$, $123^24^256$, $123^24567$, $123^24^25^26$, $123^24^2567$, $123^24^25^26^2$, $123^24^25^267$, $123^24^25^26^27$\\
Nr. $8$ with 48 positive roots (type $D'(7,6)$)$\colon$ $1$, $2$, $3$, $4$, $5$, $6$, $7$, $12$, $13$, $23$, $34$, $45$, $56$, $67$, $123$, $134$, $234$, $345$, $456$, $567$, $123^2$, $1234$, $1345$, $2345$, $3456$, $4567$, $123^24$, $12345$, $13456$, $23456$, $34567$, $123^24^2$, $123^245$, $123456$, $134567$, $234567$, $123^24^25$, $123^2456$, $1234567$, $123^24^25^2$, $123^24^256$, $123^24567$, $123^24^25^26$, $123^24^2567$, $123^24^25^26^2$, $123^24^25^267$, $123^24^25^26^27$, $123^24^25^26^27^2$\\
Nr. $9$ with 49 positive roots (type $C_7$)$\colon$ $1$, $2$, $3$, $4$, $5$, $6$, $7$, $12$, $23$, $34$, $45$, $56$, $67$, $12^2$, $123$, $234$, $345$, $456$, $567$, $12^23$, $1234$, $2345$, $3456$, $4567$, $12^23^2$, $12^234$, $12345$, $23456$, $34567$, $12^23^24$, $12^2345$, $123456$, $234567$, $12^23^24^2$, $12^23^245$, $12^23456$, $1234567$, $12^23^24^25$, $12^23^2456$, $12^234567$, $12^23^24^25^2$, $12^23^24^256$, $12^23^24567$, $12^23^24^25^26$, $12^23^24^2567$, $12^23^24^25^26^2$, $12^23^24^25^267$, $12^23^24^25^26^27$, $12^23^24^25^26^27^2$\\
Nr. $10$ with 49 positive roots (type $B_7$)$\colon$ $1$, $2$, $3$, $4$, $5$, $6$, $7$, $12$, $23$, $34$, $45$, $56$, $67$, $1^22$, $123$, $234$, $345$, $456$, $567$, $1^223$, $1234$, $2345$, $3456$, $4567$, $1^22^23$, $1^2234$, $12345$, $23456$, $34567$, $1^22^234$, $1^22345$, $123456$, $234567$, $1^22^23^24$, $1^22^2345$, $1^223456$, $1234567$, $1^22^23^245$, $1^22^23456$, $1^22^234567$, $1^22^23^24^25$, $1^22^23^2456$, $1^22^234567$, $1^22^23^24^256$, $1^22^23^24567$, $1^22^23^24^25^26$, $1^22^23^24^2567$, $1^22^23^24^25^267$, $1^22^23^24^25^26^27$
}
\end{tiny}

\hspace{12pt}\\
\baselineskip=10pt
\begin{tiny}
\noindent
\textnormal{\textbf{Rank $8\colon$ }
Nr. $1$ with 36 positive roots (type $A_8$)$\colon$ $1$, $2$, $3$, $4$, $5$, $6$, $7$, $8$, $12$, $23$, $34$, $45$, $56$, $67$, $78$, $123$, $234$, $345$, $456$, $567$, $678$, $1234$, $2345$, $3456$, $4567$, $5678$, $12345$, $23456$, $34567$, $45678$, $123456$, $234567$, $345678$, $1234567$, $2345678$, $12345678$\\
Nr. $2$ with 56 positive roots (type $D_8$)$\colon$ $1$, $2$, $3$, $4$, $5$, $6$, $7$, $8$, $13$, $23$, $34$, $45$, $56$, $67$, $78$, $123$, $134$, $234$, $345$, $456$, $567$, $678$, $1234$, $1345$, $2345$, $3456$, $4567$, $5678$, $123^24$, $12345$, $13456$, $23456$, $34567$, $45678$, $123^245$, $123456$, $134567$, $234567$, $345678$, $123^24^25$, $123^2456$, $1234567$, $1345678$, $2345678$, $123^24^256$, $123^24567$, $12345678$, $123^24^25^26$, $123^24^2567$, $123^245678$, $123^24^25^267$, $123^24^25678$, $123^24^25^26^27$, $123^24^25^2678$, $123^24^25^26^278$, $123^24^25^26^27^28$\\
Nr. $3$ with 57 positive roots (type $D'(8,1)$)$\colon$$1$, $2$, $3$, $4$, $5$, $6$, $7$, $8$, $12$, $13$, $23$, $34$, $45$, $56$, $67$, $78$, $123$, $134$, $234$, $345$, $456$, $567$, $678$, $1234$, $1345$, $2345$, $3456$, $4567$, $5678$, $123^24$, $12345$, $13456$, $23456$, $34567$, $45678$, $123^245$, $123456$, $134567$, $234567$, $345678$, $123^24^25$, $123^2456$, $1234567$, $1345678$, $2345678$, $123^24^256$, $123^24567$, $12345678$, $123^24^25^26$, $123^24^2567$, $123^245678$, $123^24^25^267$, $123^24^25678$, $123^24^25^26^27$, $123^24^25^2678$, $123^24^25^26^278$, $123^24^25^26^27^28$\\
Nr. $4$ with 58 positive roots (type $D'(8,2)$)$\colon$ $1$, $2$, $3$, $4$, $5$, $6$, $7$, $8$, $12$, $13$, $23$, $34$, $45$, $56$, $67$, $78$, $123$, $134$, $234$, $345$, $456$, $567$, $678$, $123^2$, $1234$, $1345$, $2345$, $3456$, $4567$, $5678$, $123^24$, $12345$, $13456$, $23456$, $34567$, $45678$, $123^245$, $123456$, $134567$, $234567$, $345678$, $123^24^25$, $123^2456$, $1234567$, $1345678$, $2345678$, $123^24^256$, $123^24567$, $12345678$, $123^24^25^26$, $123^24^2567$, $123^245678$, $123^24^25^267$, $123^24^25678$, $123^24^25^26^27$, $123^24^25^2678$, $123^24^25^26^278$, $123^24^25^26^27^28$\\
Nr. $5$ with 59 positive roots (type $D'(8,3)$)$\colon$ $1$, $2$, $3$, $4$, $5$, $6$, $7$, $8$, $12$, $13$, $23$, $34$, $45$, $56$, $67$, $78$, $123$, $134$, $234$, $345$, $456$, $567$, $678$, $123^2$, $1234$, $1345$, $2345$, $3456$, $4567$, $5678$, $123^24$, $12345$, $13456$, $23456$, $34567$, $45678$, $123^24^2$, $123^245$, $123456$, $134567$, $234567$, $345678$, $123^24^25$, $123^2456$, $1234567$, $1345678$, $2345678$, $123^24^256$, $123^24567$, $12345678$, $123^24^25^26$, $123^24^2567$, $123^245678$, $123^24^25^267$, $123^24^25678$, $123^24^25^26^27$, $123^24^25^2678$, $123^24^25^26^278$, $123^24^25^26^27^28$\\
Nr. $6$ with 60 positive roots (type $D'(8,4)$)$\colon$ $1$, $2$, $3$, $4$, $5$, $6$, $7$, $8$, $12$, $13$, $23$, $34$, $45$, $56$, $67$, $78$, $123$, $134$, $234$, $345$, $456$, $567$, $678$, $123^2$, $1234$, $1345$, $2345$, $3456$, $4567$, $5678$, $123^24$, $12345$, $13456$, $23456$, $34567$, $45678$, $123^24^2$, $123^245$, $123456$, $134567$, $234567$, $345678$, $123^24^25$, $123^2456$, $1234567$, $1345678$, $2345678$, $123^24^25^2$, $123^24^256$, $123^24567$, $12345678$, $123^24^25^26$, $123^24^2567$, $123^245678$, $123^24^25^267$, $123^24^25678$, $123^24^25^26^27$, $123^24^25^2678$, $123^24^25^26^278$, $123^24^25^26^27^28$\\
Nr. $7$ with 61 positive roots (type $D'(8,5)$)$\colon$ $1$, $2$, $3$, $4$, $5$, $6$, $7$, $8$, $12$, $13$, $23$, $34$, $45$, $56$, $67$, $78$, $123$, $134$, $234$, $345$, $456$, $567$, $678$, $123^2$, $1234$, $1345$, $2345$, $3456$, $4567$, $5678$, $123^24$, $12345$, $13456$, $23456$, $34567$, $45678$, $123^24^2$, $123^245$, $123456$, $134567$, $234567$, $345678$, $123^24^25$, $123^2456$, $1234567$, $1345678$, $2345678$, $123^24^25^2$, $123^24^256$, $123^24567$, $12345678$, $123^24^25^26$, $123^24^2567$, $123^245678$, $123^24^25^26^2$, $123^24^25^267$, $123^24^25678$, $123^24^25^26^27$, $123^24^25^2678$, $123^24^25^26^278$, $123^24^25^26^27^28$\\
Nr. $8$ with 62 positive roots (type $D'(8,6)$)$\colon$ $1$, $2$, $3$, $4$, $5$, $6$, $7$, $8$, $12$, $13$, $23$, $34$, $45$, $56$, $67$, $78$, $123$, $134$, $234$, $345$, $456$, $567$, $678$, $123^2$, $1234$, $1345$, $2345$, $3456$, $4567$, $5678$, $123^24$, $12345$, $13456$, $23456$, $34567$, $45678$, $123^24^2$, $123^245$, $123456$, $134567$, $234567$, $345678$, $123^24^25$, $123^2456$, $1234567$, $1345678$, $2345678$, $123^24^25^2$, $123^24^256$, $123^24567$, $12345678$, $123^24^25^26$, $123^24^2567$, $123^245678$, $123^24^25^26^2$, $123^24^25^267$, $123^24^25678$, $123^24^25^26^27$, $123^24^25^2678$, $123^24^25^26^27^2$, $123^24^25^26^278$, $123^24^25^26^27^28$\\
Nr. $9$ with 63 positive roots (type $D'(8,7)$)$\colon$ $1$, $2$, $3$, $4$, $5$, $6$, $7$, $8$, $12$, $13$, $23$, $34$, $45$, $56$, $67$, $78$, $123$, $134$, $234$, $345$, $456$, $567$, $678$, $123^2$, $1234$, $1345$, $2345$, $3456$, $4567$, $5678$, $123^24$, $12345$, $13456$, $23456$, $34567$, $45678$, $123^24^2$, $123^245$, $123456$, $134567$, $234567$, $345678$, $123^24^25$, $123^2456$, $1234567$, $1345678$, $2345678$, $123^24^25^2$, $123^24^256$, $123^24567$, $12345678$, $123^24^25^26$, $123^24^2567$, $123^245678$, $123^24^25^26^2$, $123^24^25^267$, $123^24^25678$, $123^24^25^26^27$, $123^24^25^2678$, $123^24^25^26^27^2$, $123^24^25^26^278$, $123^24^25^26^27^28$, $123^24^25^26^27^28^2$\\ 
Nr. $10$ with 64 positive roots (type $C_8$)$\colon$ $1$, $2$, $3$, $4$, $5$, $6$, $7$, $8$, $12$, $23$, $34$, $45$, $56$, $67$, $78$, $12^2$, $123$, $234$, $345$, $456$, $567$, $678$, $12^23$, $1234$, $2345$, $3456$, $4567$, $5678$, $12^23^2$, $12^234$, $12345$, $23456$, $34567$, $45678$, $12^23^24$, $12^2345$, $123456$, $234567$, $345678$, $12^23^24^2$, $12^23^245$, $12^23456$, $1234567$, $2345678$, $12^23^24^25$, $12^23^2456$, $12^234567$, $12345678$, $12^23^24^25^2$, $12^23^24^256$, $12^23^24567$, $12^2345678$, $12^23^24^25^26$, $12^23^24^2567$, $12^23^245678$, $12^23^24^25^26^2$, $12^23^24^25^267$, $12^23^24^25678$, $12^23^24^25^26^27$, $12^23^24^25^2678$, $12^23^24^25^26^27^2$, $12^23^24^25^26^278$, $12^23^24^25^26^27^28$, $12^23^24^25^26^27^28^2$\\ 
Nr. $11$ with 64 positive roots (type $B_8$)$\colon$ $1$, $2$, $3$, $4$, $5$, $6$, $7$, $8$, $12$, $23$, $34$, $45$, $56$, $67$, $78$, $1^22$, $123$, $234$, $345$, $456$, $567$, $678$, $1^223$, $1234$, $2345$, $3456$, $4567$, $5678$, $1^22^23$, $1^2234$, $12345$, $23456$, $34567$, $45678$, $1^22^234$, $1^22345$, $123456$, $234567$, $345678$, $1^22^23^24$, $1^22^2345$, $1^223456$, $1234567$, $2345678$, $1^22^23^245$, $1^22^23456$, $1^2234567$, $12345678$, $1^22^23^24^25$, $1^22^23^2456$, $1^22^234567$, $1^22345678$, $1^22^23^24^256$, $1^22^23^24567$, $1^22^2345678$, $1^22^23^24^25^26$, $1^22^23^24^2567$, $1^22^23^245678$, $1^22^23^24^25^267$, $1^22^23^24^25678$, $1^22^23^24^25^26^27$, $1^22^23^24^25^2678$, $1^22^23^24^25^26^278$, $1^22^23^24^25^26^27^28$
}
\end{tiny}

\hspace{12pt}\\
\baselineskip=10pt
\begin{tiny}
\noindent
\textnormal{\textbf{Rank $9\colon$}
Nr. $1$ with 45 positive roots (type $A_9$)$\colon$ $1$, $2$, $3$, $4$, $5$, $6$, $7$, $8$, $9$, $12$, $23$, $34$, $45$, $56$, $67$, $78$, $89$, $123$, $234$, $345$, $456$, $567$, $678$, $789$, $1234$, $2345$, $3456$, $4567$, $5678$, $6789$, $12345$, $23456$, $34567$, $45678$, $56789$, $123456$, $234567$, $345678$, $456789$, $1234567$, $2345678$, $3456789$, $12345678$, $23456789$, $123456789$\\
Nr. $2$ with 72 positive roots (type $D_9$)$\colon$ $1$, $2$, $3$, $4$, $5$, $6$, $7$, $8$, $9$, $13$, $23$, $34$, $45$, $56$, $67$, $78$, $89$, $123$, $134$, $234$, $345$, $456$, $567$, $678$, $789$, $1234$, $1345$, $2345$, $3456$, $4567$, $5678$, $6789$, $123^24$, $12345$, $13456$, $23456$, $34567$, $45678$, $56789$, $123^245$, $123456$, $134567$, $234567$, $345678$, $456789$, $123^24^25$, $123^2456$, $1234567$, $1345678$, $2345678$, $3456789$, $123^24^256$, $123^24567$, $12345678$, $13456789$, $23456789$, $123^24^2567$, $123^24^25^26$, $123^245678$, $123456789$, $123^24^25^267$, $123^24^25678$, $123^2456789$, $123^24^25^26^27$, $123^24^256789$, $123^24^25^2678$, $123^24^25^26^278$, $123^24^25^26789$, $123^24^25^26^27^28$, $123^24^25^26^2789$, $123^24^25^26^27^289$, $123^24^25^26^27^28^29$\\ 
Nr. $3$ with 73 positive roots (type $D'(9,1)$)$\colon$ $1$, $2$, $3$, $4$, $5$, $6$, $7$, $8$, $9$, $12$, $13$, $23$, $34$, $45$, $56$, $67$, $78$, $89$, $123$, $134$, $234$, $345$, $456$, $567$, $678$, $789$, $1234$, $1345$, $2345$, $3456$, $4567$, $5678$, $6789$, $123^24$, $12345$, $13456$, $23456$, $34567$, $45678$, $56789$, $123^245$, $123456$, $134567$, $234567$, $345678$, $456789$, $123^24^25$, $123^2456$, $1234567$, $1345678$, $2345678$, $3456789$, $123^24^256$, $123^24567$, $12345678$, $13456789$, $23456789$, $123^24^2567$, $123^24^25^26$, $123^245678$, $123456789$, $123^24^25^267$, $123^24^25678$, $123^2456789$, $123^24^25^26^27$, $123^24^256789$, $123^24^25^2678$, $123^24^25^26^278$, $123^24^25^26789$, $123^24^25^26^27^28$, $123^24^25^26^2789$, $123^24^25^26^27^289$, $123^24^25^26^27^28^29$\\ 
Nr. $4$ with 74 positive roots (type $D'(9,2)$)$\colon$ $1$, $2$, $3$, $4$, $5$, $6$, $7$, $8$, $9$, $12$, $13$, $23$, $34$, $45$, $56$, $67$, $78$, $89$, $123$, $134$, $234$, $345$, $456$, $567$, $678$, $789$, $123^2$ $1234$, $1345$, $2345$, $3456$, $4567$, $5678$, $6789$, $123^24$, $12345$, $13456$, $23456$, $34567$, $45678$, $56789$, $123^245$, $123456$, $134567$, $234567$, $345678$, $456789$, $123^24^25$, $123^2456$, $1234567$, $1345678$, $2345678$, $3456789$, $123^24^256$, $123^24567$, $12345678$, $13456789$, $23456789$, $123^24^2567$, $123^24^25^26$, $123^245678$, $123456789$, $123^24^25^267$, $123^24^25678$, $123^2456789$, $123^24^25^26^27$, $123^24^256789$, $123^24^25^2678$, $123^24^25^26^278$, $123^24^25^26789$, $123^24^25^26^27^28$, $123^24^25^26^2789$, $123^24^25^26^27^289$, $123^24^25^26^27^28^29$\\
Nr. $5$ with 75 positive roots (type $D'(9,3)$)$\colon$ $1$, $2$, $3$, $4$, $5$, $6$, $7$, $8$, $9$, $12$, $13$, $23$, $34$, $45$, $56$, $67$, $78$, $89$, $123$, $134$, $234$, $345$, $456$, $567$, $678$, $789$, $123^2$ $1234$, $1345$, $2345$, $3456$, $4567$, $5678$, $6789$, $123^24$, $12345$, $13456$, $23456$, $34567$, $45678$, $56789$, $123^24^2$, $123^245$, $123456$, $134567$, $234567$, $345678$, $456789$, $123^24^25$, $123^2456$, $1234567$, $1345678$, $2345678$, $3456789$, $123^24^256$, $123^24567$, $12345678$, $13456789$, $23456789$, $123^24^2567$, $123^24^25^26$, $123^245678$, $123456789$, $123^24^25^267$, $123^24^25678$, $123^2456789$, $123^24^25^26^27$, $123^24^256789$, $123^24^25^2678$, $123^24^25^26^278$, $123^24^25^26789$, $123^24^25^26^27^28$,\\ $123^24^25^26^2789$, $123^24^25^26^27^289$, $123^24^25^26^27^28^29$\\
Nr. $6$ with 76 positive roots (type $D'(9,4)$)$\colon$ $1$, $2$, $3$, $4$, $5$, $6$, $7$, $8$, $9$, $12$, $13$, $23$, $34$, $45$, $56$, $67$, $78$, $89$, $123$, $134$, $234$, $345$, $456$, $567$, $678$, $789$, $123^2$ $1234$, $1345$, $2345$, $3456$, $4567$, $5678$, $6789$, $123^24$, $12345$, $13456$, $23456$, $34567$, $45678$, $56789$, $123^24^2$, $123^245$, $123456$, $134567$, $234567$, $345678$, $456789$, $123^24^25$, $123^2456$, $1234567$, $1345678$, $2345678$, $3456789$, $123^24^25^2$, $123^24^256$, $123^24567$, $12345678$, $13456789$, $23456789$, $123^24^2567$, $123^24^25^26$, $123^245678$, $123456789$, $123^24^25^267$, $123^24^25678$, $123^2456789$, $123^24^25^26^27$, $123^24^256789$, $123^24^25^2678$, $123^24^25^26^278$, $123^24^25^26789$, $123^24^25^26^27^28$, $123^24^25^26^2789$, $123^24^25^26^27^289$, $123^24^25^26^27^28^29$\\ 
Nr. $7$ with 77 positive roots (type $D'(9,5)$)$\colon$ $1$, $2$, $3$, $4$, $5$, $6$, $7$, $8$, $9$, $12$, $13$, $23$, $34$, $45$, $56$, $67$, $78$, $89$, $123$, $134$, $234$, $345$, $456$, $567$, $678$, $789$, $123^2$ $1234$, $1345$, $2345$, $3456$, $4567$, $5678$, $6789$, $123^24$, $12345$, $13456$, $23456$, $34567$, $45678$, $56789$, $123^24^2$, $123^245$, $123456$, $134567$, $234567$, $345678$, $456789$, $123^24^25$, $123^2456$, $1234567$, $1345678$, $2345678$, $3456789$, $123^24^25^2$, $123^24^256$, $123^24567$, $12345678$, $13456789$, $23456789$, $123^24^2567$, $123^24^25^26$, $123^245678$, $123456789$, $123^24^25^26^2$, $123^24^25^267$, $123^24^25678$, $123^2456789$, $123^24^25^26^27$, $123^24^256789$, $123^24^25^2678$, $123^24^25^26^278$, $123^24^25^26789$, $123^24^25^26^27^28$, $123^24^25^26^2789$, $123^24^25^26^27^289$, $123^24^25^26^27^28^29$\\
Nr. $8$ with 78 positive roots (type $D'(9,6)$)$\colon$ $1$, $2$, $3$, $4$, $5$, $6$, $7$, $8$, $9$, $12$, $13$, $23$, $34$, $45$, $56$, $67$, $78$, $89$, $123$, $134$, $234$, $345$, $456$, $567$, $678$, $789$, $123^2$ $1234$, $1345$, $2345$, $3456$, $4567$, $5678$, $6789$, $123^24$, $12345$, $13456$, $23456$, $34567$, $45678$, $56789$, $123^24^2$, $123^245$, $123456$, $134567$, $234567$, $345678$, $456789$, $123^24^25$, $123^2456$, $1234567$, $1345678$, $2345678$, $3456789$, $123^24^25^2$, $123^24^256$, $123^24567$, $12345678$, $13456789$, $23456789$, $123^24^2567$, $123^24^25^26$, $123^245678$, $123456789$, $123^24^25^26^2$, $123^24^25^267$, $123^24^25678$, $123^2456789$, $123^24^25^26^27$, $123^24^256789$, $123^24^25^2678$, $123^24^25^26^27^2$, $123^24^25^26^278$, $123^24^25^26789$, $123^24^25^26^2789$, $123^24^25^26^27^28$, $123^24^25^26^27^289$, $123^24^25^26^27^28^29$\\ 
Nr. $9$ with 79 positive roots (type $D'(9,7)$)$\colon$ $1$, $2$, $3$, $4$, $5$, $6$, $7$, $8$, $9$, $12$, $13$, $23$, $34$, $45$, $56$, $67$, $78$, $89$, $123$, $134$, $234$, $345$, $456$, $567$, $678$, $789$, $123^2$ $1234$, $1345$, $2345$, $3456$, $4567$, $5678$, $6789$, $123^24$, $12345$, $13456$, $23456$, $34567$, $45678$, $56789$, $123^24^2$, $123^245$, $123456$, $134567$, $234567$, $345678$, $456789$, $123^24^25$, $123^2456$, $1234567$, $1345678$, $2345678$, $3456789$, $123^24^25^2$, $123^24^256$, $123^24567$, $12345678$, $13456789$, $23456789$, $123^24^2567$, $123^24^25^26$, $123^245678$, $123456789$, $123^24^25^26^2$, $123^24^25^267$, $123^24^25678$, $123^2456789$, $123^24^25^26^27$, $123^24^256789$, $123^24^25^2678$, $123^24^25^26^27^2$, $123^24^25^26^278$, $123^24^25^26789$, $123^24^25^26^27^28$, $123^24^25^26^2789$, $123^24^25^26^27^28^2$, $123^24^25^26^27^289$, $123^24^25^26^27^28^29$\\ 
Nr. $10$ with 80 positive roots (type $D'(9,8)$)$\colon$ $1$, $2$, $3$, $4$, $5$, $6$, $7$, $8$, $9$, $12$, $13$, $23$, $34$, $45$, $56$, $67$, $78$, $89$, $123$, $134$, $234$, $345$, $456$, $567$, $678$, $789$, $123^2$ $1234$, $1345$, $2345$, $3456$, $4567$, $5678$, $6789$, $123^24$, $12345$, $13456$, $23456$, $34567$, $45678$, $56789$, $123^24^2$, $123^245$, $123456$, $134567$, $234567$, $345678$, $456789$, $123^24^25$, $123^2456$, $1234567$, $1345678$, $2345678$, $3456789$, $123^24^25^2$, $123^24^256$, $123^24567$, $12345678$, $13456789$, $23456789$, $123^24^2567$, $123^24^25^26$, $123^245678$, $123456789$, $123^24^25^26^2$, $123^24^25^267$, $123^24^25678$, $123^2456789$, $123^24^25^26^27$, $123^24^256789$, $123^24^25^2678$, $123^24^25^26^27^2$, $123^24^25^26^278$, $123^24^25^26789$, $123^24^25^26^27^28$, $123^24^25^26^2789$, $123^24^25^26^27^28^2$, $123^24^25^26^27^289$, $123^24^25^26^27^28^29$, $123^24^25^26^27^28^29^2$\\
Nr. $11$ with 81 positive roots (type $C_9$)$\colon$ $1$, $2$, $3$, $4$, $5$, $6$, $7$, $8$, $9$, $12$, $23$, $34$, $45$, $56$, $67$, $78$, $89$, $12^2$, $123$, $234$, $345$, $456$, $567$, $678$, $789$, $12^23$, $1234$, $2345$, $3456$, $4567$, $5678$, $6789$, $12^23^2$, $12^234$, $12345$, $23456$, $34567$, $45678$, $56789$, $12^23^24$, $12^2345$, $123456$, $234567$, $345678$, $456789$, $12^23^24^2$, $12^23^245$, $12^23456$, $1234567$, $2345678$, $3456789$, $12^23^24^25$, $12^23^2456$, $12^234567$, $12345678$, $23456789$, $12^23^24^25^2$, $12^23^24^256$, $12^23^24567$, $12^2345678$, $123456789$, $12^23^24^25^26$, $12^23^24^2567$, $12^23^245678$, $12^23456789$, $12^23^24^25^26^2$, $12^23^24^25^267$, $12^23^24^25678$, $12^23^2456789$, $12^23^24^25^26^27$, $12^23^24^25^2678$, $12^23^24^256789$, $12^23^24^25^26^27^2$, $12^23^24^25^26^278$, $12^23^24^25^26789$, $12^23^24^25^26^27^28$, $12^23^24^25^26^2789$, $12^23^24^25^26^27^28^2$,\\$12^23^24^25^26^27^289$, $12^23^24^25^26^27^28^29$, $12^23^24^25^26^27^28^29^2$\\
Nr. $12$ with 81 positive roots (type $B_9$)$\colon$ $1$, $2$, $3$, $4$, $5$, $6$, $7$, $8$, $9$, $12$, $23$, $34$, $45$, $56$, $67$, $78$, $89$, $1^22$, $123$, $234$, $345$, $456$, $567$, $678$, $789$, $1^223$, $1234$, $2345$, $3456$, $4567$, $5678$, $6789$, $1^22^23$, $1^2234$, $12345$, $23456$, $34567$, $45678$, $56789$, $1^22^234$, $1^22345$, $123456$, $234567$, $345678$, $456789$, $1^22^23^24$, $1^22^2345$, $1^223456$, $1234567$, $2345678$, $3456789$, $1^22^23^245$, $1^22^23456$, $1^2234567$, $12345678$, $23456789$, $1^22^23^24^25$, $1^22^23^2456$, $1^22^234567$, $1^22345678$, $123456789$, $1^22^23^24^256$, $1^22^23^24567$, $1^22^2345678$, $1^223456789$, $1^22^23^24^25^26$, $1^22^23^24^2567$, $1^22^23^245678$, $1^22^23456789$, $1^22^23^24^25^267$, $1^22^23^24^25678$, $1^22^23^2456789$, $1^22^23^24^25^26^27$, $1^22^23^24^25^2678$, $1^22^23^24^256789$, $1^22^23^24^25^26^278$, $1^22^23^24^25^26789$, $1^22^23^24^25^26^27^28$,\\ $1^22^23^24^25^26^2789$, $1^22^23^24^25^26^27^289$, $1^22^23^24^25^26^27^28^29$
}
\end{tiny}

\end{appendix}

\section*{Dedication}
\begin{center}
\textit{Dedicated to Professor Zhixiang Wu on the occasion of his fifty-fifth birthday}
\end{center}
\vspace{3em}

\section*{Acknowledgment}
The authors thank N. Andruskiewitsch and H. Yamane for the invaluable suggestions to the references. Furthermore, the author J. Wang would like to thank her PhD advisor Prof. I. Heckenberger for the fruitful discussions during her PhD study in Germany, which have been crucial to the entire project. 
\clearpage
\bibliographystyle{plain}
\bibliography{ref}

\end{document}